\setlist[enumerate,1]{label=\textup{(\arabic*)}}
\numberwithin{equation}{section}
\theoremstyle{plain}
\newtheorem{thm}[subsection]{Theorem}
\newtheorem{lem}[subsection]{Lemma}
\newtheorem{cor}[subsection]{Corollary}
\newtheorem{prop}[subsection]{Proposition}
\theoremstyle{definition}
\newtheorem{defn}[subsection]{Definition}
\theoremstyle{remark}
\newtheorem{rem}[subsection]{Remark}
\newtheorem{example}[subsection]{Example}
\newcommand{\ZZ}{\mathbb{Z}}
\newcommand{\FF}{\mathbb{F}}
\newcommand{\NN}{\mathbb{N}}
\newcommand{\TT}{\mathbb{T}}
\newcommand{\CC}{\mathbb{C}}
\DeclareMathOperator{\obj}{ob}
\newcommand*{\nb}{\nobreakdash}
\newcommand*{\Star}{\(^*\)\nobreakdash-}
\newcommand{\Cst}{\mathrm{C}^*}
\newcommand{\idealin}{\mathrel{\triangleleft}} 
\newcommand*{\Bound}{\mathbb B}
\newcommand*{\Comp}{\mathbb K}
\newcommand*{\defeq}{\mathrel{\vcentcolon=}}
\newcommand{\Hilm}[1][E]{\mathcal{#1}}
\newcommand{\Toep}{\mathcal{T}}
\newcommand{\CP}{\mathcal{O}}
\newcommand{\Corr}{\mathfrak{C}}
\newcommand{\Bim}{\mathrm{pr,*}}
\newcommand{\proper}{\mathrm{pr}}
\newcommand{\id}{\mathrm{id}}
\newcommand*\xbar[1]{%
   \hbox{%
     \vbox{%
       \hrule height 0.5pt 
       \kern0.5ex
       \hbox{%
         \kern-0.1em
         \ensuremath{#1}%
         \kern-0.1em
       }%
     }%
   }%
} 
\DeclarePairedDelimiter{\bra}{\langle}{\rvert}
\DeclarePairedDelimiter{\ket}{\lvert}{\rangle}
\DeclarePairedDelimiterX{\braket}[2]{\langle}{\rangle}{#1\,\delimsize\vert\,\mathopen{}#2}
\DeclarePairedDelimiterX{\BRAKET}[2]{\langle}{\rangle}{\!\delimsize\langle#1\,\delimsize\vert\,\mathopen{}#2\delimsize\rangle\!}
\DeclarePairedDelimiterX{\setgiven}[2]{\{}{\}}{#1\,{:}\,\mathopen{}#2}
\begin{document}

\title[Fell bundles over quasi-lattice ordered groups]{Fell bundles over quasi-lattice ordered groups and $\mathrm{C}^*$-algebras of compactly aligned product systems}

\thanks{The author was supported by CNPq (Brazil) through PhD fellowship 248938/2013-4 and CAPES/PrInt, grant number 88887.370650/2019-00.}

\author{Camila F. Sehnem}
\address{School of Mathematics and Statistics, Victoria University of Wellington, P.O. Box 600, Wellington 6140, New Zealand}
\email{camila.sehnem@vuw.ac.nz}

\keywords{relative Cuntz--Pimsner algebra; product system; semi-saturated Fell bundle; simplifiable product system of Hilbert bimodules}

\begin{abstract}
We define notions of semi-saturatedness and orthogonality for a Fell bundle over a quasi-lattice ordered group. We show that a compactly aligned product system of Hilbert bimodules can be naturally extended to a semi-saturated and orthogonal Fell bundle whenever it is simplifiable. Conversely, a semi-saturated and orthogonal Fell bundle is completely determined by the positive fibres and its cross-sectional $\Cst$\nb-algebra is isomorphic to a relative Cuntz--Pimsner algebra of a simplifiable product system of Hilbert bimodules. We show that this correspondence is part of an equivalence between bicategories and use this to generalise several results of Meyer and the author in the context of single correspondences. We apply functoriality for relative Cuntz--Pimsner algebras to study Morita equivalence between $\Cst$\nb-algebras attached to compactly aligned product systems over Morita equivalent $\Cst$\nb-algebras. 
\end{abstract}

\maketitle

\section{Introduction}

 Given a discrete group~$G$ and a subsemigroup~$P$ of~$G$, in some situations a partial representation of~$G$ on a $\Cst$\nb-algebra may be reconstructed from its restriction to~$P$. When $G=P^{-1}P$, a representation of~$P$ by isometries can be extended to a partial representation of~$G$ if and only if the range projections associated to the isometries commute with each other \cite[Theorem~31.16]{Exel:Partial_dynamical}. Such an extension is unique and so isometric representations of~$P$ with commuting range projections are in one-to-one correspondence with partial representations of~$G$ which restrict to isometric representations of~$P$. A similar correspondence holds, for example, in the context of circle-valued cocycles under the same assumption $G=P^{-1}P$~\cite{Laca,10.2307/2160888}.

In this paper, we are primarily interested in extensions of semigroup actions by Hilbert bimodules. We assume that~$P$ is a subsemigroup of~$G$ so that~$(G,P)$ is a quasi-lattice ordered group in the sense of Nica~\cite{Nica:Wiener--hopf_operators}. In this case any element in~$PP^{-1}$ has a reduced expression of the form $pq^{-1}$, with $p,q\in P$. We use this to give sufficient conditions for a compactly aligned product system over~$P$ of Hilbert bimodules to extend uniquely to a Fell bundle over the group~$G$, and to identify which Fell bundles over~$G$ can be reconstructed in this way by its positive fibres. We believe this may be profitable in several ways. For instance, $P$ may embed into another group~$H$ with~$H$ amenable even for a non-amenable~$G$~\cite{Hochster}. This may be used to establish amenability of the Fell bundle, that is, an isomorphism between full and reduced cross-sectional $\Cst$\nb-algebras through the regular representation.

A Fell bundle $(B_g)_{g\in G}$ over a discrete group is called saturated if $B_g\cdot B_h=B_{gh}$ for all~$g,h\in G$. In this case, each fibre may be viewed as an imprimitivity bimodule over the unit fibre algebra~$B_e$ with the structure coming from the multiplication and involution operations on~$(B_g)_{g\in G}$. In the general case of a non-saturated Fell bundle, each fibre carries a structure of a Hilbert $B_e$\nb-bimodule.

 Exel introduced in \cite{Exel:Circle_actions} a notion of semi-saturatedness for actions of the unit circle~$\TT$. An action of~$\TT$ on a $\Cst$\nb-algebra~$B$ is said to be \emph{semi-saturated} if $B$ is generated as a $\Cst$\nb-algebra by the spectral subspace~$B_1$ and the fixed-point algebra~$B_0$. He proved that this is the case for the canonical action of~$\TT$ on the crossed product of a $\Cst$\nb-algebra by a partial automorphism. A Fell bundle $(B_n)_{n\in\ZZ}$ is then called semi-saturated if $B_m\cdot B_{n}=B_{m+n}$ for all~$m,n\geq 0$ (equivalently, $m,n\leq 0$). Abadie, Eilers and Exel proved that the cross-sectional $\Cst$\nb-algebra of a semi-saturated Fell bundle~$(B_n)_{n\in \ZZ}$ is canonically isomorphic to the crossed product of the unit fibre~$B_0$ by the Hilbert $B_0$\nb-bimodule~$B_1$. They also provided examples to illustrate that the crossed product of a $\Cst$\nb-algebra by a Hilbert bimodule need not come from a partial automorphism. The crossed product by a Hilbert bimodule is a special case of a Cuntz--Pimsner algebra~\cite{Pimsner:Generalizing_Cuntz-Krieger, Katsura:Cstar_correspondences, Muhly-Solel:Tensor}.

Despite the fact that a free group~$\FF$ on more than one generator is not amenable, there is a non-trivial class of amenable Fell bundles over~$\FF$. A Fell bundle $(B_g)_{g\in \FF}$ over the free group~$\FF$ on the set of generators~$S$ is said to be \emph{semi-saturated} in~\cite{Exel:Partial_amenable_free} if $B_g\cdot B_h=B_{gh}$ whenever the product $g\cdot h$ involves no cancellation. It is called \emph{orthogonal} if $B_s^*B_t=\{0\}$ for all distinct generators $s,t\in S$. A Fell bundle over~$\FF$ with separable fibres that is semi-saturated and orthogonal is then amenable \cite[Theorem~6.3]{Exel:Partial_amenable_free}. This happens because~$\FF$ has a \emph{quasi-lattice ordered group} structure arising from the free unital subsemigroup~$\FF^+$ generated by~$S$, and $(\FF,\FF^+)$ is amenable in the sense of Nica~\cite{Nica:Wiener--hopf_operators}. A semi-saturated and orthogonal partial representation of~$\FF$ as introduced by Exel is completely determined by its restriction to~$\FF^+$.

In this paper, we introduce concepts of semi-saturatedness and orthogonality for Fell bundles over quasi-lattice orders. Recall that a pair~$(G,P)$ with $P\cap P^{-1}=\{e\}$ is \emph{quasi-lattice ordered} if whenever a pair of elements $g,h\in G$ has a common upper bound in~$P$, then it also has a least upper bound $g\vee h$ in~$P$. We say that~$(B_g)_{g\in G}$ is \emph{orthogonal} if~$B_g=\{0\}$ for all~$g\in G$ with no upper bound in~$P$. That is, $g\vee e=\infty$. Semi-saturatedness is defined so that, when combined with orthogonality, the restriction of~$(B_g)_{g\in G}$ to the positive fibres gives a product system of Hilbert bimodules~$\mathcal{B}=(B_p)_{p\in P}$ which generates $(B_g)_{g\in G}$ in an appropriate sense (see Definition~\ref{def:semi-saturatedness}). 

Our main motivation to define these notions of semi-saturatedness and orthogonality for Fell bundles over quasi-lattice orders was the canonical topological~$G$\nb-grading of a Nica--Toeplitz algebra of a compactly aligned product system over~$P$, and its quotients by gauge-invariant ideals (see Section~\ref{sec:preliminaries}). In the context of single correspondences, the Fell bundle~$(\CP^n_{J,\Hilm})_{n\in\ZZ}$ over~$\ZZ$ associated to a relative Cuntz--Pimsner algebra is always semi-saturated. And $(\CP^0_{J,\Hilm},\CP^1_{J,\Hilm})=(A,\Hilm)$ if and only if~$\Hilm$ is a Hilbert bimodule over~$A$ and~$J=\BRAKET{\Hilm}{\Hilm}$ is Katsura's ideal, that is, $J$ is the largest ideal acting faithfully and by compact operators on~$\Hilm$ \cite[Proposition~5.18]{Katsura:Cstar_correspondences}. 

 It turns out that the Fell bundle $(\mathcal{N}\Toep^g_{\Hilm})_{g\in G}$ associated to a Nica--Toeplitz algebra is semi-saturated and orthogonal. This is also the case for a relative Cuntz--Pimsner algebra of~$\Hilm$ when we define it as a quotient of~$\mathcal{N}\Toep_{\Hilm}$. Following ideas for~$(G,P)=(\ZZ,\NN)$, we find precisely the class of compactly aligned product systems of Hilbert bimodules that can be extended to semi-saturated and orthogonal Fell bundles over~$G$ as above. We call a product system in this class \emph{simplifiable}. In Theorem~\ref{thm:uniqueness_and_extension}, we obtain an extension of a simplifiable product system of Hilbert bimodules to a semi-saturated and orthogonal Fell bundle over~$G$. This also arises from the canonical $G$\nb-grading of the relative Cuntz--Pimsner algebra determined by the family of Katsura's ideals for the product system, and we prove that it is unique up to isomorphism.

The restriction of a semi-saturated and orthogonal Fell bundle~$(B_g)_{g\in G}$ to the positive fibres gives a product system of Hilbert bimodules. We show in Proposition~\ref{prop:associated_product_system} that this product system is simplifiable, and its relative Cuntz--Pimsner algebra for the family of Katsura's ideals is canonically isomorphic to the cross-sectional $\Cst$\nb-algebra of~$(B_g)_{g\in G}$. Hence representations of~$(B_g)_{g\in G}$ are in bijection with Cuntz--Pimsner covariant representations of the product system~$(B_p)_{p\in P}$ on the family of Katsura's ideals. So we say that a semi-saturated and orthogonal Fell bundle over~$G$ is \emph{extended} from~$P$.

As the work of Exel \cite{Exel:Partial_amenable_free} already suggested, amenability of a Fell bundle extended from the positive cone of a quasi-lattice order seems to be related to amenability of the underlying quasi-lattice. We prove that a Fell bundle extended from~$\FF^+$ can be described as a Cuntz--Pimsner algebra of a correspondence over the unit fibre~$B_e$ and hence it is amenable (see Lemma~\ref{lem:cp_picture} and Proposition~\ref{prop:amenability_free_groups}). In particular, the main result of~\cite{Exel:Partial_amenable_free} remains true if we remove the separability assumption. This allows us to establish nuclearity of the cross-sectional $\Cst$\nb-algebra as well when the unit fibre algebra is nuclear. As for free semigroups, a Fell bundle extended from the Baumslag--Solitar semigroup $\mathrm{BS}(c,d)^+$ with $c,d\geq 1$ is isomorphic to a Cuntz--Pimsner algebra of a single correspondence and so it is also amenable. In general we do not know whether a Fell bundle extended from the positive cone of an amenable quasi-lattice order is amenable too.

In Section~\ref{sec:bicategorical-part} we consider bicategories of compactly aligned product systems.  In Theorem~\ref{thm:induced_equivalence}, we build an equivalence between a bicategory of simplifiable product systems of Hilbert bimodules over~$P$ and a bicategory of Fell bundles over~$G$ extended from~$P$ that sends a product system to its unique extension from Theorem~\ref{thm:uniqueness_and_extension}. We observe that, as opposed to Fowler's original definition of a Cuntz--Pimsner algebra, we take the relative Cuntz--Pimsner algebra of a product system~$\Hilm=(\Hilm_p)_{p\in P}$ as a quotient of the Nica--Toeplitz algebra of~$\Hilm$. The relative Cuntz--Pimsner algebra $\CP_{\mathcal{J},\Hilm}$ is determined by a family of ideals $\mathcal{J}=\{J_p\}_{p\in P}$ in the coefficient algebra~$A$, where each ideal~$J_p$ acts by compact operators on~$\Hilm_p$, and the canonical representation of~$\Hilm$ in~$\CP_{\mathcal{J},\Hilm}$ is also Nica covariant. This allows us to attach a simplifiable product system of Hilbert bimodules~$(\CP^p_{\mathcal{J},\Hilm})_{p\in P}$ with coefficient algebra~$\CP^e_{\mathcal{J},\Hilm}$ to each triple $(A,\Hilm,\mathcal{J})$. 

A triple $(A,\Hilm,\mathcal{J})$ as above is an object of our bicategory $\Corr^P_{\proper}$. A morphism from $(A,\Hilm,\mathcal{J}_A)$ to $(B,\mathcal{G},\mathcal{J}_B)$ is a proper covariant correspondence $(\Hilm[F],V)$ (see Definition~\ref{defn:cov-correspondence}). We generalise several results of Meyer and the author~\cite{Meyer-Sehnem:Bicategorical_Pimsner} to compactly aligned product systems over positive cones of quasi-lattice orders using the equivalence between Fell bundles extended from~$P$ and simplifiable product systems of Hilbert bimodules over~$P$. In particular, we show that the construction of a relative Cuntz--Pimsner algebra is part of a reflector from a bicategory of compactly aligned product systems to a bicategory of simplifiable product systems of Hilbert bimodules. That is, the reflector sends an object $(A,\Hilm,\mathcal{J})$ to $(\CP^p_{\mathcal{J},\Hilm})_{p\in P}$. This is \cite[Corollary~4.7]{Meyer-Sehnem:Bicategorical_Pimsner} in the context of single correspondences. 

In Subsection~\ref{subsec: Morita}, we study Morita equivalence for relative Cuntz--Pimsner algebras. The construction of a relative Cuntz--Pimsner algebra induces a functor from $\Corr^P_{\proper}$ into a bicategory of $\Cst$\nb-algebras with correspondences as morphisms. Since an arrow between objects $(A,\Hilm,\mathcal{J}_A)$ and $(B,\mathcal{G},\mathcal{J}_B)$ comes from a correspondence $\Hilm[F]\colon A\leadsto B$, invertible morphisms in $\Corr^P_{\proper}$ imply Morita equivalence between the underlying relative Cuntz--Pimsner algebras. This is~\cite{Abadie-Eilers-Exel:Morita_bimodules, Muhly-Solel:Morita_equivalence_of_tensor_algebras} for~$(G,P)=(\ZZ,\NN)$. When $\tensor*[_\alpha]{A}{}$ and $\tensor*[_\beta]{B}{}$ are the product systems of Hilbert bimodules built out of actions~$\alpha, \beta$ of~$P$ on~$A$ and $B$ by injective and extendible endomorphisms with hereditary range, we generalise~\cite[Proposition 2.4]{Muhly-Solel:Morita_equivalence_of_tensor_algebras} of Muhly and Solel and characterise the invertible proper covariant correspondences in $\Corr^P_{\Bim}$ between $\tensor*[_\alpha]{A}{}$ and $\tensor*[_\beta]{B}{}$ (see Proposition~\ref{prop:charact_extendible}). Our result is an analogue of Morita equivalence for actions of groups by Combes~\cite{10.1112/plms/s3-49.2.289} and Curto, Muhly and Williams~\cite{CMW}.

\subsection*{Acknowledgements} The content of this article is part of my PhD dissertation, written at the University of Göttingen, under the supervision of Ralf Meyer. I thank Ralf Meyer for his support during my PhD studies. I also would like to thank Aidan Sims for an observation that led me to include Corollary~\ref{cor:cp-cross-section}. A part of this article was completed during a post-doctoral position at Universidade Federal de Santa Catarina.

\section{Relative Cuntz--Pimsner algebras of compactly aligned product systems}\label{sec:preliminaries}

 In this section we recall some basic concepts and known results on product systems and their $\Cst$-algebras. We will be interested in compactly aligned product systems over semigroups that are positive cones of quasi-lattice orders.

\subsection{Notation and basic notions} Let $A$ and $B$ be $\Cst$\nb-algebras. A \emph{correspondence} $\Hilm\colon A\leadsto B$ is a Hilbert $B$\nb-module~$\Hilm$ with a nondegenerate left action of~$A$ implemented by a \Star homomorphism $\varphi\colon A\to\Bound(\Hilm)$. We say that $\Hilm$ is a \emph{Hilbert $A,B$\nb-bimodule} if the left action of~$A$ comes from a left Hilbert $A$\nb-module structure $\BRAKET{\cdot}{\cdot}_A$ on~$\Hilm$ that is compatible with $\braket{\cdot}{\cdot}_B$. That is, $\BRAKET{\xi}{\eta}\zeta=\xi\braket{\eta}{\zeta}$ for all $\xi,\eta,\zeta\in\Hilm$.

 Let~$P$ be a semigroup with identity~$e$. A \emph{product system} over~$P$ of $A$\nb-correspon\-dences consists of:
\begin{enumerate}
\item[(i)] a correspondence $\Hilm_p\colon A\leadsto A$ for each~$p\in P$, where $\Hilm_e=A$  is the identity correspondence over~$A$;
\item[(ii)] correspondence isomorphisms $\mu_{p,q}\colon\Hilm_p\otimes_A\Hilm_q\overset{\cong}{\rightarrow}\Hilm_{pq}$, also called \emph{multiplication maps}, for all $p,q\in P\setminus\{e\}$;
\end{enumerate}

We let~$\varphi_p\colon A\to\Bound(\Hilm_p)$ be the multiplication map~$\mu_{e,p}$ and let~$\mu_{p,e}$ implement the right action of~$A$ on~$\Hilm_p$, respectively. The multiplication maps must be associative. That is, the following diagram commutes for all $p, q, r\in P$:
  \[
  \xymatrix{
    (\Hilm_p\otimes_A\Hilm_q)\otimes_A\Hilm_r  \ar@{->}[d]^{\mu_{p,q}\otimes1}
   \ar@{<->}[rr]& &     \Hilm_p\otimes_A(\Hilm_q\otimes_A\Hilm_r)
   \ar@{->}[rr]^{1\otimes\mu_{q,r}}&&
    \Hilm_p\otimes_A\Hilm_{qr} \ar@{->}[d]^{\mu_{p,qr}} \\
    \Hilm_{pq}\otimes_A\Hilm_r   \ar@{->}[rrrr]^{\mu_{pq,r}}&& &&
    \Hilm_{pqr},
  }
  \] where the first isomorphism on the top row of the diagram is simply the associativity isomorphism for internal tensor products of Hilbert modules. We will say that a product system $\Hilm=(\Hilm_p)_{p\in P}$ is \emph{faithful} if~$\varphi_p$ is injective for all~$p\in P$. It is \emph{proper} if~$\varphi_p(A)\subseteq \Comp(\Hilm_p)$ for all~$p$ in~$P$. If each~$\Hilm_p$ is a Hilbert $A$\nb-bimodule, we will speak of a \emph{product system of Hilbert bimodules}.

 A \emph{representation} of a product system~$\Hilm=(\Hilm_p)_{p\in P}$ in a $\Cst$\nb-algebra~$B$ consists of linear maps~$\psi_p\colon\Hilm_p\rightarrow B$, for all~$p\in P\setminus\{e\},$ and a \Star homomorphism $\psi_e\colon A\rightarrow B$, satisfying the following two axioms:
  \begin{enumerate}
  \item[(T1)] $\psi_p(\xi)\psi_q(\eta)=\psi_{pq}(\xi\eta)$ for all $p,q\in P$, $\xi\in\Hilm_p$ and  $\eta\in\Hilm_q$;
  \item[(T2)] $\psi_p(\xi)^*\psi_p(\eta)=\psi_e(\braket{\xi}{\eta})$ for all $p\in P$ and $\xi, \eta\in\Hilm_p$.  
  \end{enumerate}
    
 If $\psi_e$ is faithful, we say that~$\psi$ is \emph{injective}. In this case, relation (T2) implies that~$\|\psi_p(\xi)\|=\|\xi\|$ for all~$\xi\in\Hilm_p$ and~$p\in P$.

\subsection{Compactly aligned product systems and Nica--Toeplitz algebras} Let us restrict our attention to semigroups arising from quasi-lattice orders in the sense of~\cite{Nica:Wiener--hopf_operators}: let~$G$ be a group and let~$P$ be a subsemigroup of~$G$ with~$P\cap P^{-1}=\{e\}$. We say that~$(G,P)$ is a \emph{quasi-lattice ordered group} if any two elements~$g_1, g_2$ of~$G$ with a common upper bound in~$P$ with respect to the partial order $g_1\leq g_2\Leftrightarrow g_1^{-1}g_2\in P$ also have a least upper bound~$g_1\vee g_2$ in~$P$. We write $g_1\vee g_2=\infty$ if~$g_1$ and~$g_2$ have no common upper bound in~$P$. Following~\cite{crisp_laca_2002}, we call~$P$ the \emph{positive cone} of~$(G,P)$, observing that~$P=\left\{g\in G\middle|\, g\geq e\right\}$.

Let~$(G,P)$ be a quasi-lattice ordered group and let~$\Hilm=(\Hilm_p)_{p\in P}$ be a product system over~$P$. For~$p\in P$ and $\xi, \eta\in \Hilm_p$, we denote by $\ket{\xi}\bra{\eta}$ the generalised rank\nobreakdash-$1$ operator on~$\Hilm_p$ that sends $\zeta\in\Hilm_p$ to $\xi\braket{\eta}{\zeta}\in\Hilm_p$. Let~$\psi=\{\psi_p\}_{p\in P}$ be a representation of~$\Hilm$ in a $\Cst$\nb-algebra~$B$. For each~$p\in P$, we will denote by~$\psi^{(p)}$ the \Star homomorphism from~$\Comp(\Hilm_p)$ to~$B$ obtained as in~\cite{Pimsner:Generalizing_Cuntz-Krieger}. This is defined on a generator~$\ket{\xi}\bra{\eta}$ by $$\psi^{(p)}\big(\ket{\xi}\bra{\eta}\big)\coloneqq\psi_p(\xi)\psi_p(\eta)^*.$$
We may use the multiplication maps on~$\Hilm$ to define \Star homomorphisms  $\iota_p^{pq}\colon\Bound(\Hilm_p)\allowbreak\to\Bound(\Hilm_{pq})$. Explicitly, $\iota_p^{pq}$ sends~$T\in\Bound(\Hilm_p)$ to~$\mu_{p,q}\circ( T\otimes\id_{\Hilm_q})\circ\mu_{p,q}^{-1}$. We say that~$\Hilm=(\Hilm_p)_{p\in P}$ is \emph{compactly aligned} if, for all~$p, q\in P $ with~$p\vee q<\infty$, we have $$\iota_p^{p\vee q}(T)\iota_q^{p\vee q}(S)\in\Comp(\Hilm_{p\vee q}),\qquad\text{for all } T\in\Comp(\Hilm_p)\text{ and } S\in\Comp(\Hilm_q).$$ Here $\iota_p^{p\vee q}$ and $\iota_q^{p\vee q}$ denote the \Star homomorphisms $\iota_p^{p\cdot p^{-1}(p\vee q)}\colon\Bound(\Hilm_p)\to\Bound(\Hilm_{p\vee q})$ and $\iota_q^{q\cdot q^{-1}(p\vee q)}\colon\Bound(\Hilm_q)\to\Bound(\Hilm_{p\vee q}),$ respectively.

If~$\Hilm$ is compactly aligned, a representation~$\psi=\{\psi_p\}_{p\in P}$ of~$\Hilm$ in a $\Cst$\nb-algebra~$B$ is \emph{Nica covariant} if, for all~$p,q\in P$, $T\in \Comp(\Hilm_p)$ and~$S\in\Comp(\Hilm_q)$, we have $$\psi^{(p)}(T)\psi^{(q)}(S)=\begin{cases} \psi^{(p\vee q)}\big(\iota_p^{p\vee q}(T)\iota_q^{p\vee q}(S)\big)  &\text{if } p\vee q<\infty,\\
0 & \text{otherwise.}
\end{cases}$$

 There is a canonical Nica covariant representation associated to a compactly aligned product system: let~$\Hilm^+$ be the right Hilbert $A$\nb-module given by the direct sum of all $\Hilm_p$'s. That is, $$\Hilm^+=\bigoplus_{\substack{p\in P}}\Hilm_p.$$  Define a representation of~$\Hilm$ in~$\Bound(\Hilm^+)$ as follows. Given~$\xi\in \Hilm_p$ and $\eta^+=\bigoplus_{\substack{s\in P}}\eta_s$, set $$\psi^+_p(\xi)(\eta^+)_s=\begin{cases}\mu_{p,p^{-1}s}(\xi\otimes\eta_s)&\text{if } s\in pP,\\
0  &\text{otherwise}.
\end{cases}$$
We view~$\Hilm_{ps}$ as the correspondence $\Hilm_p\otimes_A\Hilm_s$ through the correspondence isomorphism~$\mu_{p,s}^{-1}$. In this way, $\psi_p^+(\xi)^*(\eta)_s$ is the image of~$\eta_{ps}$ in~$\Hilm_s$ under the operator defined on elements of the form $\mu_{p,s}(\zeta_p\otimes\zeta_s)$ by the formula $$\psi^+_p(\xi)^*(\mu_{p,s}(\zeta_p\otimes\zeta_s))=\varphi_s(\braket{\xi}{\zeta_p})\zeta_s.$$ So~$\psi_p^+(\xi)^*$ is the adjoint of~$\psi_p^+(\xi)$. This gives rise to a Nica covariant representation $\psi^+=\{\psi^+_p\}_{p\in P}$ of~$\Hilm$ in~$\Bound(\Hilm^+)$ called the \emph{Fock representation} of~$\Hilm$. This representation is injective.

\begin{prop}[\cite{Fowler:Product_systems}*{Theorem 6.3}]\label{prop:defn_Nica_Toep} Let~$(G,P)$ be a quasi-lattice ordered group and let~$\Hilm$ be a compactly aligned product system over~$P$. Then there is a $\Cst$\nb-algebra~$\mathcal{N}\Toep_{\Hilm}$ and a Nica covariant representation~$\bar{\pi}=\{\bar{\pi}_p\}_{p\in P}$ of~$\Hilm$ in~$\mathcal{N}\Toep_{\Hilm}$ so that $\bar{\pi}(\Hilm)$ generates~$\mathcal{N}\Toep_{\Hilm}$ as a $\Cst$\nb-algebra and, given a Nica covariant representation~$\psi=\{\psi_p\}_{p\in P}$ of~$\Hilm$ in a $\Cst$\nb-algebra~$B$, there is a unique \Star homomorphism~$\bar{\psi}\colon\Toep_{\Hilm}\to B$ such that~$\bar{\psi}\circ\bar{\pi}_p=\psi_p$ for all~$p\in P$. Moreover, $\bar{\pi}$ is injective and the pair~$(\mathcal{N}\Toep_{\Hilm},\bar{\pi})$ is unique up to canonical isomorphism. 
\end{prop}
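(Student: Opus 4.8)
The plan is to realise $\mathcal{N}\Toep_{\Hilm}$ as the universal $\Cst$\nb-algebra for Nica covariant representations of~$\Hilm$ and to read off the nondegeneracy statements from the Fock representation. The first observation is that every Nica covariant representation is automatically contractive on each fibre: relation~(T2) gives $\psi_p(\xi)^*\psi_p(\xi)=\psi_e(\braket{\xi}{\xi})$, and since~$\psi_e$ is a \Star homomorphism we obtain $\|\psi_p(\xi)\|^2=\|\psi_e(\braket{\xi}{\xi})\|\leq\|\xi\|^2$. Hence
\[
\|x\|_{\mathrm{u}}\defeq\sup\setgiven{\|\psi(x)\|}{\psi\text{ a Nica covariant representation of }\Hilm}
\]
is finite on the free \Star algebra generated by symbols~$\psi_p(\xi)$ modulo linearity in~$\xi$, (T1), (T2), and Nica covariance. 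This defines a $\Cst$\nb-seminorm, and I would take~$\mathcal{N}\Toep_{\Hilm}$ to be its Hausdorff completion with~$\bar\pi$ the tautological representation, so that~$\bar\pi(\Hilm)$ generates it by construction. The universal property is then immediate: any Nica covariant~$\psi$ satisfies $\|\psi(x)\|\leq\|x\|_{\mathrm{u}}$, so it descends to a unique \Star homomorphism~$\bar\psi$ on the completion with~$\bar\psi\circ\bar\pi_p=\psi_p$.

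The crux is injectivity of~$\bar\pi$, and this is exactly where the Fock representation is essential. Applying the universal property to~$\psi^+$ produces $\overline{\psi^+}\colon\mathcal{N}\Toep_{\Hilm}\to\Bound(\Hilm^+)$ with $\overline{\psi^+}\circ\bar\pi=\psi^+$. Here~$\psi^+_e(a)$ acts diagonally on~$\Hilm^+$ by the left actions~$\varphi_s(a)$, and on the summand~$\Hilm_e=A$ this is left multiplication by~$a$, which is isometric; hence~$\psi^+_e$ is faithful. More generally~$\psi^+_p(\xi)$ sends~$\Hilm_e=A$ into~$\Hilm_p$ by $a\mapsto\xi a$, whose norm tends to~$\|\xi\|$ along an approximate unit of~$A$, so $\|\psi^+_p(\xi)\|=\|\xi\|$. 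Combined with the contractivity above, this forces $\|\bar\pi_p(\xi)\|\geq\|\psi^+_p(\xi)\|=\|\xi\|$, whence~$\bar\pi_p$ is isometric; in particular~$\bar\pi_e$ is faithful and~$\bar\pi$ is injective.

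Uniqueness up to canonical isomorphism is the standard two-sided argument: if~$(\mathcal{N}\Toep'_{\Hilm},\bar\pi')$ is a second pair with the same universal property, applying the property of each to the other's representation yields \Star homomorphisms fixing the respective generating sets; their composites then fix generators and so are the identity, making the maps mutually inverse isomorphisms that intertwine~$\bar\pi$ and~$\bar\pi'$. The one genuinely nontrivial input is the existence of an \emph{injective} Nica covariant representation, needed to certify that~$\|\cdot\|_{\mathrm{u}}$ is a norm and not merely a seminorm; this is supplied by the Fock representation, whose Nica covariance---established preceding the statement---is the real substance, everything else being routine bookkeeping with the universal construction.
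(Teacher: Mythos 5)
The paper itself does not prove this proposition: it is quoted from Fowler (\cite{Fowler:Product_systems}*{Theorem~6.3}), so there is no in-paper argument to compare against. Your reconstruction follows what is essentially the standard (and Fowler's) route: a universal sup-seminorm construction, with the Fock representation supplying an injective Nica covariant representation that forces the universal one to be isometric. The contractivity estimate $\|\psi_p(\xi)\|\leq\|\xi\|$ from (T2), the computation $\|\psi^+_p(\xi)\|=\|\xi\|$ on the summand $\Hilm_e=A$ of the Fock module, and the two-sided uniqueness argument are all correct.

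One step, however, needs repair. Nica covariance is not a \Star algebraic relation: it is phrased through the maps $\psi^{(p)}$ on $\Comp(\Hilm_p)$, which extend the finite-rank formula $\ket{\xi}\bra{\eta}\mapsto\psi_p(\xi)\psi_p(\eta)^*$ by a norm limit, and the operator $\iota_p^{p\vee q}(T)\iota_q^{p\vee q}(S)$ on the right-hand side is in general compact but not finite-rank, hence not represented by any element of your free \Star algebra. So you cannot literally quotient ``modulo Nica covariance''; you can only impose linearity, (T1) and (T2) algebraically and define $\|\cdot\|_{\mathrm{u}}$ as the supremum over Nica covariant representations of the resulting Toeplitz \Star algebra. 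After completing, it is then a genuine (if small) lemma --- not true ``by construction'' --- that the tautological representation $\bar\pi$ is itself Nica covariant, which is part of the statement being proved. It follows from the observation that $\|D\|=\sup_\psi\|\bar\psi(D)\|$ for \emph{every} element $D$ of the completion, not only for elements of the dense \Star algebra: approximate $D$ by such elements and use contractivity of each $\bar\psi$. Since every $\bar\psi$ annihilates the difference $\bar\pi^{(p)}(T)\bar\pi^{(q)}(S)-\bar\pi^{(p\vee q)}\big(\iota_p^{p\vee q}(T)\iota_q^{p\vee q}(S)\big)$, that difference vanishes in $\mathcal{N}\Toep_{\Hilm}$. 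Equivalently, one may define $\mathcal{N}\Toep_{\Hilm}$ as the quotient of the Toeplitz algebra of~$\Hilm$ by the closed ideal generated by these differences. With this repair, and the routine set-theoretic care needed to make the supremum over ``all'' representations legitimate, your argument is complete.
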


 We call~$\Hilm[N]\Toep_{\Hilm}$ the \emph{Nica--Toeplitz algebra} of~$\Hilm$.  

\subsection{Relative Cuntz--Pimsner algebras} Let~$\Hilm=(\Hilm_p)_{p\in P}$ be a product system. For each~$p\in P$, let~$J_p\idealin A$ be an ideal that acts by compact operators on~$\Hilm_p$ and set~$\mathcal{J}=\{J_p\}_{p\in P}$. We say that a representation~$\psi=\{\psi_p\}_{p\in P}$ is \emph{Cuntz--Pimsner covariant} on~$\mathcal{J}$ if, for all~$p\in P$ and  all~$a$ in~$J_p$, $$\psi^{(p)}(\varphi_p(a))=\psi_e(a).$$

\begin{prop} Let~$(G,P)$ be a quasi-lattice ordered group and let~$\Hilm$ be a compactly aligned product system over~$P$. Let~$\mathcal{J}=\{J_p\}_{p\in P}$ be a family of ideals in~$A$ with~$\varphi_p(J_p)\subseteq\Comp(\Hilm_p)$ for all~$p\in P$. Then there is a $\Cst$\nb-algebra~$\CP_{\Hilm[J],\Hilm}$ and a Nica covariant representation~$j=\{j_p\}_{p\in P}$ of~$\Hilm$ in~$\CP_{\Hilm[J],\Hilm}$ that is also Cuntz--Pimsner covariant on~$\Hilm[J]$ and such that
\begin{enumerate}
\item[\textup{(i)}]~$\CP_{\Hilm[J],\Hilm}$ is generated by~$j(\Hilm)$ as a $\Cst$\nb-algebra;

\item[\textup{(ii)}] given a Nica covariant representation~$\psi=\{\psi_p\}_{p\in P}$ of~$\Hilm$ in a $\Cst$\nb-algebra $B$ that is Cuntz--Pimsner covariant on~$\Hilm[J]$, there is a unique \Star homomor\-phism $\bar{\psi}_{\Hilm[J]}\colon\CP_{\Hilm[J],\Hilm}\to B$ such that~$\bar{\psi}_{\Hilm[J]}\circ j_p=\psi_p$ for all~$p\in P$.
\end{enumerate}
 \noindent Moreover, the pair~$(\CP_{\Hilm[J],\Hilm},j)$ is unique up to canonical isomorphism. 

\begin{proof} Let~$\mathcal{N}\Toep_{\Hilm}$ be the Nica--Toeplitz algebra of~$\Hilm$. Let~$\CP_{\Hilm[J],\Hilm}$ be the quotient of $\mathcal{N}\Toep_{\Hilm}$ by the ideal generated by $$\{ \bar{\pi}_e(a)-\bar{\pi}^{(p)}(\varphi_p(a))\mid a\in J_p, p\in P\}.$$ For each~$p\in P$, we let $j_p\colon\Hilm_p\to\CP_{\Hilm[J],\Hilm}$ be the composition of $\bar{\pi}_p$ with the quotient map and we set~$j=\{j_p\}_{p\in P}$. The pair $(\CP_{\Hilm[J],\Hilm},j)$ satisfies the required properties.
\end{proof}
  \end{prop}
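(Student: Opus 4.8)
The plan is to realise $\CP_{\Hilm[J],\Hilm}$ as a quotient of the Nica--Toeplitz algebra $\mathcal{N}\Toep_{\Hilm}$ supplied by Proposition~\ref{prop:defn_Nica_Toep}, imposing Cuntz--Pimsner covariance by dividing out exactly the obstruction to it. Let $\bar{\pi}=\{\bar{\pi}_p\}_{p\in P}$ be the universal Nica covariant representation of~$\Hilm$ in~$\mathcal{N}\Toep_{\Hilm}$. Since $\varphi_p(J_p)\subseteq\Comp(\Hilm_p)$ for each~$p$, the element $\bar{\pi}^{(p)}(\varphi_p(a))$ is defined for every $a\in J_p$, and I would let $I\idealin\mathcal{N}\Toep_{\Hilm}$ be the ideal generated by the set
\[
\{\bar{\pi}_e(a)-\bar{\pi}^{(p)}(\varphi_p(a))\mid a\in J_p,\ p\in P\}.
\]
Put $\CP_{\Hilm[J],\Hilm}\defeq\mathcal{N}\Toep_{\Hilm}/I$, write~$q$ for the quotient map, and define $j_p\defeq q\circ\bar{\pi}_p$ with $j=\{j_p\}_{p\in P}$.

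First I would check that~$j$ is a Nica covariant representation of~$\Hilm$: axioms (T1) and (T2), as well as Nica covariance, are identities in~$\bar{\pi}$, and applying the \Star homomorphism~$q$ preserves them; in particular $j^{(p)}=q\circ\bar{\pi}^{(p)}$. Cuntz--Pimsner covariance on~$\Hilm[J]$ is then immediate, for $j_e(a)-j^{(p)}(\varphi_p(a))=q(\bar{\pi}_e(a)-\bar{\pi}^{(p)}(\varphi_p(a)))=0$ whenever $a\in J_p$. Property~(i) follows because $\bar{\pi}(\Hilm)$ generates~$\mathcal{N}\Toep_{\Hilm}$ and~$q$ is surjective, so $j(\Hilm)=q(\bar{\pi}(\Hilm))$ generates the quotient. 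For the universal property~(ii), given a Nica covariant~$\psi$ in a $\Cst$\nb-algebra~$B$ that is Cuntz--Pimsner covariant on~$\Hilm[J]$, Proposition~\ref{prop:defn_Nica_Toep} furnishes a unique \Star homomorphism $\bar{\psi}\colon\mathcal{N}\Toep_{\Hilm}\to B$ with $\bar{\psi}\circ\bar{\pi}_p=\psi_p$; I would then show $\bar{\psi}$ annihilates~$I$ and so factors through~$q$ to give the desired $\bar{\psi}_{\Hilm[J]}$.

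The only point requiring care---and the step I expect to be the main (if modest) obstacle---is verifying that $\bar{\psi}$ kills the generators of~$I$, which amounts to the intertwining identity $\bar{\psi}\circ\bar{\pi}^{(p)}=\psi^{(p)}$ on~$\Comp(\Hilm_p)$. This holds on rank-one operators since
\[
\bar{\psi}\big(\bar{\pi}^{(p)}(\ket{\xi}\bra{\eta})\big)=\bar{\psi}\big(\bar{\pi}_p(\xi)\bar{\pi}_p(\eta)^*\big)=\psi_p(\xi)\psi_p(\eta)^*=\psi^{(p)}(\ket{\xi}\bra{\eta}),
\]
and extends by linearity and continuity, using that the rank-one operators span a dense subspace of~$\Comp(\Hilm_p)$. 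Granting this, $\bar{\psi}(\bar{\pi}_e(a)-\bar{\pi}^{(p)}(\varphi_p(a)))=\psi_e(a)-\psi^{(p)}(\varphi_p(a))=0$ for $a\in J_p$ by Cuntz--Pimsner covariance of~$\psi$, so $\bar{\psi}$ vanishes on the generating set, hence on~$I$. Uniqueness of~$\bar{\psi}_{\Hilm[J]}$ follows from surjectivity of~$q$ together with~(i), and uniqueness of the pair $(\CP_{\Hilm[J],\Hilm},j)$ up to canonical isomorphism is the standard consequence of the universal property~(ii).
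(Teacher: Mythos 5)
Your proposal is correct and follows exactly the paper's own construction: the quotient of $\mathcal{N}\Toep_{\Hilm}$ by the ideal generated by $\{\bar{\pi}_e(a)-\bar{\pi}^{(p)}(\varphi_p(a))\mid a\in J_p,\ p\in P\}$, with $j_p=q\circ\bar{\pi}_p$. The paper simply asserts that this pair ``satisfies the required properties,'' whereas you have spelled out the verifications (descent of Nica covariance, the intertwining identity $\bar{\psi}\circ\bar{\pi}^{(p)}=\psi^{(p)}$ on rank-one operators, and factoring through the quotient), all of which are correct.
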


\begin{defn} Given~$\Hilm$ and~$\mathcal{J}$ as above, we call~$\CP_{\Hilm[J],\Hilm}$ the \emph{relative Cuntz--Pimsner algebra} determined by~$\mathcal{J}$.
\end{defn}

We emphasize two particular cases. If~$J_p=\{0\}$ for all~$p\in P$, then~$\CP_{\Hilm[J],\Hilm}=\mathcal{NT}_{\Hilm}$. As a consequence of \cite[Corollary~3.7]{Pimsner:Generalizing_Cuntz-Krieger}, if~$(G,P)=(\ZZ,\NN)$, $\Hilm$ is a product system of Hilbert bimodules and~$J_p=\BRAKET{\Hilm_p}{\Hilm_p}$ for all $p$ in~$P$, then~$\CP_{\Hilm[J],\Hilm}$ is the $\Cst$\nb-algebra studied by Katsura in~\cite{Katsura:Cstar_correspondences}. He proved that the canonical \Star homomorphism from~$A$ to~$\CP_{\Hilm[J],\Hilm}$ is an isomorphism onto the fixed-point algebra of~$\CP_{\Hilm[J],\Hilm}$ with respect to the gauge action of~$\TT$. In this case, $\Hilm$ extends to a semi-saturated Fell bundle over~$\ZZ$ (see~\cite{Abadie-Eilers-Exel:Morita_bimodules}). We will generalise this to a certain class of compactly aligned product systems of Hilbert bimodules over semigroups arising from quasi-lattice orders.

\begin{rem}\label{rem:fowlers_construction} Fowler defined the Cuntz--Pimsner algebra of a product system~$\Hilm$ to be the universal $\Cst$\nb-algebra for representations of~$\Hilm$ that are Cuntz--Pimsner covariant on $\Hilm[J]=\{J_p\}_{p\in P}$, where $J_p=\varphi_p^{-1}(\Comp(\Hilm_p))$ for all~$p\in P$ (see~\cite{Fowler:Product_systems}). Here we consider the class of compactly aligned product systems and define the relative Cuntz--Pimsner algebra with respect to a family of ideals as a quotient of the Nica--Toeplitz algebra of~$\Hilm$. This provides the construction of relative Cuntz--Pimsner algebras with a special feature and will allow us to generalise most of the results obtained in \cite{Meyer-Sehnem:Bicategorical_Pimsner} to quasi-lattice ordered groups. Our approach applies to Fowler's Cuntz--Pimsner algebras of \emph{proper} product systems~$\Hilm=(\Hilm_p)_{p\in P}$ if~$(G,P)$ is a quasi-lattice ordered group and~$P$ is directed. This is so because, in this case, a Cuntz--Pimsner covariant representation of~$\Hilm$ in the sense of Fowler is also Nica covariant~\cite[Proposition 5.4]{Fowler:Product_systems}.
\end{rem}

The next is the analogue of \cite[Proposition~2.15]{Meyer-Sehnem:Bicategorical_Pimsner} in the context of product systems.

\begin{prop}\label{prop:equivalent_covariant_cond}
  A representation~$\psi=\{\psi_p\}_{p\in P}$ of~$\Hilm=(\Hilm_p)_{p\in P}$
  in~\(B\)
  is Cuntz--Pimsner covariant on~\(\Hilm[J]=\{J_p\}_{p\in P}\)
  if and only if \(\psi_e(J_p) \subseteq \psi_p(\Hilm_p)\cdot B\) for all~$p\in P$.
  \label{pro:covariant_through_subspaces}
\end{prop}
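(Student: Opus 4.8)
The plan is to reduce the equivalence to two elementary consequences of the representation axioms (T1) and (T2). Concretely, for $a\in A$, $\eta\in\Hilm_p$ and $T\in\Comp(\Hilm_p)$ I would first record
\begin{equation}\label{eq:cp-key-identities}
\psi_e(a)\psi_p(\eta)=\psi_p(\varphi_p(a)\eta)\qquad\text{and}\qquad\psi^{(p)}(T)\psi_p(\eta)=\psi_p(T\eta).
\end{equation}
The left-hand identity is just (T1) applied to $a\in\Hilm_e=A$, since $\mu_{e,p}(a\otimes\eta)=\varphi_p(a)\eta$. For the right-hand identity I would check it on a rank-one operator $T=\ket{\xi}\bra{\zeta}$, where collapsing $\psi_p(\zeta)^*\psi_p(\eta)=\psi_e(\braket{\zeta}{\eta})$ by (T2) and then reassembling by (T1) gives $\psi^{(p)}(\ket{\xi}\bra{\zeta})\psi_p(\eta)=\psi_p(\xi\braket{\zeta}{\eta})=\psi_p(\ket{\xi}\bra{\zeta}\eta)$; the general case follows by linearity and continuity of $\psi^{(p)}$. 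The same rank-one computation yields the inclusion $\psi^{(p)}(\Comp(\Hilm_p))\subseteq\psi_p(\Hilm_p)\cdot B$, because $\psi^{(p)}(\ket{\xi}\bra{\zeta})=\psi_p(\xi)\psi_p(\zeta)^*\in\psi_p(\Hilm_p)\cdot B$ and finite-rank operators are dense in $\Comp(\Hilm_p)$.

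The forward implication is then immediate. If $\psi$ is Cuntz--Pimsner covariant on $\Hilm[J]$ and $a\in J_p$, then $\psi_e(a)=\psi^{(p)}(\varphi_p(a))$, and $\varphi_p(a)\in\Comp(\Hilm_p)$ puts this element in $\psi_p(\Hilm_p)\cdot B$ by the inclusion above; hence $\psi_e(J_p)\subseteq\psi_p(\Hilm_p)\cdot B$.

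For the converse, fix $p\in P$ and $a\in J_p$ and set $d\defeq\psi_e(a)-\psi^{(p)}(\varphi_p(a))$, which makes sense since $\varphi_p(a)\in\Comp(\Hilm_p)$. The two identities in~\eqref{eq:cp-key-identities} give $d\,\psi_p(\eta)=0$ for every $\eta\in\Hilm_p$. As $J_p$ is an ideal it is $*$-closed, so $a^*\in J_p$ and $d^*=\psi_e(a^*)-\psi^{(p)}(\varphi_p(a^*))$ is of the same form; thus also $d^*\psi_p(\eta)=0$, and taking adjoints $\psi_p(\eta)^*d=0$ for all $\eta$. Consequently $x^*d=0$ for every $x\in\psi_p(\Hilm_p)\cdot B$. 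By hypothesis $\psi_e(a)\in\psi_p(\Hilm_p)\cdot B$, while $\psi^{(p)}(\varphi_p(a))\in\psi_p(\Hilm_p)\cdot B$ by the inclusion above, so $d\in\psi_p(\Hilm_p)\cdot B$; choosing $x=d$ yields $d^*d=0$ and hence $d=0$, which is precisely Cuntz--Pimsner covariance at~$p$.

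The bulk of this is routine density and continuity bookkeeping. The one step needing care is the converse, where the relations derived from~\eqref{eq:cp-key-identities} only say that $d$ annihilates $\psi_p(\Hilm_p)$, not that $d=0$. The device I would rely on is that $d$ itself lies in $\psi_p(\Hilm_p)\cdot B$, which upgrades the one-sided annihilation to $d^*d=0$; this uses that $J_p$ is self-adjoint so that $d^*$ has the same shape as $d$. I expect this to be the only genuine, if modest, obstacle.
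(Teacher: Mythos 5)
Your proof is correct, and it is exactly the standard argument: the two identities $\psi_e(a)\psi_p(\eta)=\psi_p(\varphi_p(a)\eta)$ and $\psi^{(p)}(T)\psi_p(\eta)=\psi_p(T\eta)$, the inclusion $\psi^{(p)}(\Comp(\Hilm_p))\subseteq\psi_p(\Hilm_p)\cdot B$ for the forward direction, and for the converse the device of using self-adjointness of $J_p$ so that $d^*=\psi_e(a^*)-\psi^{(p)}(\varphi_p(a^*))$ has the same shape, whence $d\in\psi_p(\Hilm_p)\cdot B$ together with $\psi_p(\Hilm_p)^*d=\{0\}$ forces $d^*d=0$. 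The paper states this proposition without proof, as the product-system analogue of \cite{Meyer-Sehnem:Bicategorical_Pimsner}*{Proposition~2.15}, and your argument is precisely the fibrewise adaptation of that proof (only $\Hilm_e=A$ and the single fibre $\Hilm_p$ enter for each fixed $p$), so there is nothing to add.
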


\subsection{Coaction on relative Cuntz--Pimsner algebras} Let~$(G,P)$ be a quasi-lattice order and let~$\Hilm$ be a compactly aligned product system over~$P$. The representation of~$\Hilm$ in~$\CP_{\Hilm[J],\Hilm}\otimes \Cst(G)$ which sends~$\xi\in\Hilm_p$ to~$\xi\otimes u_p$ is Nica covariant and also Cuntz--Pimsner covariant on~$\Hilm[J]$. So this yields a \Star homomorphism $\delta\colon\CP_{\Hilm[J],\Hilm}\to\CP_{\Hilm[J],\Hilm}\otimes \Cst(G)$.

\begin{prop}\label{prop:CP_grading_description} The \Star homomorphism $\delta\colon\CP_{\Hilm[J],\Hilm}\to\CP_{\Hilm[J],\Hilm}\otimes \Cst(G)$ gives a full nondegenerate coaction of~$G$ on~$\CP_{\Hilm[J],\Hilm}$. Moreover, the spectral subspace~$\CP^g_{\Hilm[J],\Hilm}$ for~$\delta$ at~$g\in G$ with~$g\vee e<\infty$ is the closure of sums of elements of the form $$j_p(\xi)j_q(\eta)^*$$ with~$\xi\in\Hilm_p$ and~$\eta\in\Hilm_q,$ where $pq^{-1}=g$ and $p,q\in P$. If~$g\vee e=\infty$, then~$\CP^g_{\Hilm[J],\Hilm}$ is the trivial subspace.
\end{prop}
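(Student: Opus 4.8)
The plan is to handle the two assertions in turn. That $\delta$ is a coaction is essentially formal once $\delta$ has been produced by the universal property, so I would verify the coaction axioms directly on the generators $j_p(\xi)$, $\xi\in\Hilm_p$, which generate $\CP_{\Hilm[J],\Hilm}$ as a $\Cst$-algebra. The identification of the spectral subspaces is the substantive part; it rests on the spanning description of $\CP_{\Hilm[J],\Hilm}$ by monomials $j_p(\xi)j_q(\eta)^*$ together with the bounded spectral projections attached to $\delta$.

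First the coaction axioms. Write $\delta_G\colon\Cst(G)\to\Cst(G)\otimes\Cst(G)$ for the comultiplication $u_g\mapsto u_g\otimes u_g$. Since $\delta(j_p(\xi))=j_p(\xi)\otimes u_p$, both $(\delta\otimes\id)\circ\delta$ and $(\id\otimes\delta_G)\circ\delta$ send $j_p(\xi)$ to $j_p(\xi)\otimes u_p\otimes u_p$; as both are \Star homomorphisms agreeing on generators, the coaction identity holds. Injectivity follows from $(\id\otimes\epsilon)\circ\delta=\id$, where $\epsilon\colon\Cst(G)\to\CC$ is the trivial representation $u_g\mapsto 1$, since this composite fixes each $j_p(\xi)$. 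For nondegeneracy, observe that multiplicativity of $\delta$ forces every word $b$ in the $j_p(\xi)$ and $j_q(\eta)^*$ to be homogeneous, $\delta(b)=b\otimes u_h$ for a single $h\in G$, whence $b\otimes u_g=\delta(b)(1\otimes u_{h^{-1}g})\in\delta(\CP_{\Hilm[J],\Hilm})(1\otimes\Cst(G))$; as such words densely span $\CP_{\Hilm[J],\Hilm}$ and the $u_g$ span $\Cst(G)$, the set $\delta(\CP_{\Hilm[J],\Hilm})(1\otimes\Cst(G))$ is dense in $\CP_{\Hilm[J],\Hilm}\otimes\Cst(G)$. Fullness is built in, as $\delta$ and $\delta_G$ use the full group $\Cst$-algebra $\Cst(G)$.

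For the spectral subspace $\CP^g_{\Hilm[J],\Hilm}=\{b:\delta(b)=b\otimes u_g\}$ I would argue both inclusions. Since $\delta$ is multiplicative, one computes $\delta(j_p(\xi)j_q(\eta)^*)=j_p(\xi)j_q(\eta)^*\otimes u_{pq^{-1}}$, so each such monomial with $pq^{-1}=g$ lies in the closed subspace $\CP^g_{\Hilm[J],\Hilm}$; this gives the inclusion $\supseteq$. For $\subseteq$ I would use two facts. The first is the standard spanning result for Nica covariant representations: Nica covariance allows one to rewrite $j_q(\eta)^*j_p(\xi)$ as $0$ when $p\vee q=\infty$ and, when $p\vee q<\infty$, in the closed span of monomials built from the fibres $\Hilm_{q^{-1}(p\vee q)}$ and $\Hilm_{p^{-1}(p\vee q)}$, so that the closed linear span of all $j_p(\xi)j_q(\eta)^*$ is a $\Cst$-subalgebra; containing $j(\Hilm)$, it equals $\CP_{\Hilm[J],\Hilm}$. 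The second is the contractive map $\Phi_g\defeq(\id\otimes\tau_g)\circ\delta$, where $\tau_g$ is the bounded functional on $\Cst(G)$ with $\tau_g(u_h)=1$ if $h=g$ and $0$ otherwise (a matrix coefficient of the left regular representation); then $\Phi_g$ fixes $\CP^g_{\Hilm[J],\Hilm}$ and sends $j_p(\xi)j_q(\eta)^*$ to itself if $pq^{-1}=g$ and to $0$ otherwise. Given $b\in\CP^g_{\Hilm[J],\Hilm}$, I would approximate $b$ by finite sums $b_n$ of monomials and apply the continuous $\Phi_g$: since $\Phi_g(b)=b$, this exhibits $b=\lim_n\Phi_g(b_n)$ as a limit of finite sums of monomials with $pq^{-1}=g$, giving $\subseteq$.

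Finally, if $g\vee e=\infty$ there are no $p,q\in P$ with $pq^{-1}=g$: such a factorisation would give $q=g^{-1}p\in P$ and $p\in P$, making $p$ a common upper bound of $e$ and $g$ and contradicting $g\vee e=\infty$. Hence $\Phi_g$ kills every monomial, and the approximation argument above forces any $b\in\CP^g_{\Hilm[J],\Hilm}$ to equal $\lim_n\Phi_g(b_n)=0$, so $\CP^g_{\Hilm[J],\Hilm}=\{0\}$. I expect the only non-formal step to be the spanning result, that is, the Nica-covariance product computation for $j_q(\eta)^*j_p(\xi)$; everything else is bookkeeping with $\delta$. This computation can, however, be quoted from the structure theory of Nica--Toeplitz algebras and transported along the quotient map $\mathcal{N}\Toep_{\Hilm}\to\CP_{\Hilm[J],\Hilm}$.
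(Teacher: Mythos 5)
Your proposal is correct, and its essential ingredient coincides with the paper's: the Nica covariance rewriting of $j_q(\eta)^*j_p(\xi)$, which vanishes when $p\vee q=\infty$ and lies in $\overline{\mathrm{span}}\,j_{q^{-1}(p\vee q)}(\Hilm_{q^{-1}(p\vee q)})\,j_{p^{-1}(p\vee q)}(\Hilm_{p^{-1}(p\vee q)})^*$ otherwise. The organisation differs in two respects, though. For the coaction itself, the paper simply cites \cite{Carlsen-Larsen-Sims-Vittadello:Co-universal}*{Proposition~3.5}, whereas you verify the comultiplication identity, injectivity via the trivial representation, and nondegeneracy via homogeneity of words directly; this is sound. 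For the spectral subspaces, the paper works with arbitrary words $j_{p_1}(\xi_{p_1})j_{p_2}(\xi_{p_2})^*\cdots j_{p_{2n-1}}(\xi_{p_{2n-1}})j_{p_{2n}}(\xi_{p_{2n}})^*$ of total degree $g$ and collapses them by successive applications of the rewriting, taking as given that $\CP^g_{\Hilm[J],\Hilm}$ is spanned by words of degree $g$. You instead first use the rewriting to show that length-two monomials $j_p(\xi)j_q(\eta)^*$ span all of $\CP_{\Hilm[J],\Hilm}$, and then cut down to degree $g$ with the explicit Fourier-coefficient map $\Phi_g=(\id\otimes\tau_g)\circ\delta$ and a norm approximation; the observation that $\tau_g$ is a (bounded, norm-one) matrix coefficient of the regular representation, hence slices correctly, is exactly right. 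What your route buys is self-containedness: the step the paper leaves implicit (that the spectral subspace is the closed span of homogeneous words) is precisely what your $\Phi_g$-argument proves, and the same map disposes of the case $g\vee e=\infty$ uniformly, since then no $p,q\in P$ satisfy $pq^{-1}=g$ and $\Phi_g$ annihilates every monomial. The paper's route avoids introducing slice maps but leans more heavily on the cited coaction machinery. Both arguments are valid.
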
 
\begin{proof} That $\delta$ is a full nondegenerate coaction follows as in~\cite[Proposition~3.5]{Carlsen-Larsen-Sims-Vittadello:Co-universal}. For the last part of the statement, notice that the Nica covariance condition entails~$j_p(\Hilm_p)^*j_q(\Hilm_q)=\{0\}$ whenever~$p\vee q=\infty$ as $j_p(\Hilm_p)=j_p(\Hilm_p)j_p(\Hilm_p)^*j_p(\Hilm_p)$ for all~$p\in P$. In case~$p\vee q<\infty$, we have 
$$j_p(\Hilm_p)^*j_q(\Hilm_q)\subseteq\overline{\mathrm{span}}\left\{j_{p^{-1}(p\vee q)}(\xi)j_{q^{-1}(p\vee q)}(\eta)^*\middle|\, \xi\in\Hilm_{p^{-1}(p\vee q)}, \eta\in\Hilm_{q^{-1}(p\vee q)}\right\}.$$ So take~$g\in G$ with~$g\vee e=\infty$. In particular, $g$ has no presentation of the form~$pq^{-1}$ with~$p,q$ in~$P$. Thus, by successive applications of the above simplification for elements of the form~$j_p(\xi_p)^*j_q(\xi_q)$, it follows that $$j_{p_1}(\xi_{p_1})j_{p_2}(\xi_{p_2})^*\ldots j_{p_{2n-1}}(\xi_{p_{2n-1}})j_{p_{2n}}(\xi_{p_{2n}})^*=0$$ whenever~$p_1p_2^{-1}\ldots p_{2n-1}p_{2n}^{-1}=g$ and~$\xi_{p_i}\in \Hilm_{p_i}$ for all $i\in\{1,2,\ldots,2n\}$. As a consequence, 
$\CP^g_{\Hilm[J],\Hilm}=\{0\}$ because it is spanned by elements of that form. Now a similar reasoning shows that if~$g\in G$ satisfies~$g\vee e<\infty$ and~$p_1p_2^{-1}\ldots p_{2n-1}p_{2n}^{-1}=g$, then $j_{p_1}(\xi_{p_1})j_{p_2}(\xi_{p_2})^*\ldots j_{p_{2n-1}}(\xi_{p_{2n-1}})j_{p_{2n}}(\xi_{p_{2n}
})^*$ lies in the closed subspace spanned by $$\left\{j_p(\xi)j_q(\eta)^*\middle|\,  pq^{-1}=g,\, \xi\in\Hilm_p\text{ and }\eta\in\Hilm_q\right\}.$$ This completes the proof.
\end{proof}

\begin{cor}\label{cor:positive_fibres_isomorphism} Let~$\Hilm=(\Hilm_p)_{p\in P}$ be a compactly aligned product system and~$\Hilm[J]$ as above. Then, for all~$p\in P$, we have an isomorphism $$\CP^p_{\Hilm[J],\Hilm}\cong \Hilm_p\otimes_A\CP^e_{\Hilm[J],\Hilm}$$ of correspondences $A\leadsto \CP^e_{\Hilm[J],\Hilm}$. Moreover, 
$(\CP^p_{\Hilm[J],\Hilm})_{p\in P}$ is a product system of Hilbert $\CP^e_{\Hilm[J],\Hilm}$\nb-bimodules.
\end{cor}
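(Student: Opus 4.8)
The plan is to construct the isomorphism in the first assertion explicitly and then read off the product system structure from it together with the Fell bundle coming from the coaction~$\delta$ of Proposition~\ref{prop:CP_grading_description}. Write $C\defeq\CP^e_{\Hilm[J],\Hilm}$ and recall from that proposition that $\CP^p_{\Hilm[J],\Hilm}$ is the closed span of elements $j_r(\xi)j_s(\eta)^*$ with $r,s\in P$ and $rs^{-1}=p$, and that the spectral subspaces multiply as $\CP^g_{\Hilm[J],\Hilm}\cdot\CP^h_{\Hilm[J],\Hilm}\subseteq\CP^{gh}_{\Hilm[J],\Hilm}$. First I would define
\[
\Phi_p\colon \Hilm_p\otimes_A C\to\CP^p_{\Hilm[J],\Hilm},\qquad \xi\otimes x\mapsto j_p(\xi)\,x,
\]
which lands in $\CP^p_{\Hilm[J],\Hilm}$ since $j_p(\xi)\in\CP^p_{\Hilm[J],\Hilm}$, $x\in C=\CP^e_{\Hilm[J],\Hilm}$, and the grading gives $\CP^p_{\Hilm[J],\Hilm}\cdot C\subseteq\CP^p_{\Hilm[J],\Hilm}$. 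It is balanced over~$A$ by~(T1), as $j_p(\xi a)=j_p(\xi)j_e(a)$ and $j_e$ implements the left $A$\nb-action on~$C$. Using~(T2), $j_p(\xi)^*j_p(\eta)=j_e(\braket{\xi}{\eta})$, a direct computation shows that $\Phi_p$ preserves the $C$\nb-valued inner products, hence is isometric; and it intertwines the left $A$\nb-actions, again by~(T1), via $j_p(\varphi_p(a)\xi)=j_e(a)j_p(\xi)$.

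The key point is surjectivity, i.e.\ $\CP^p_{\Hilm[J],\Hilm}=\overline{j_p(\Hilm_p)\cdot C}$, which is the range of~$\Phi_p$. I would take a spanning element $j_r(\xi)j_s(\eta)^*$ with $rs^{-1}=p$; then $r=ps$, and writing $\xi$ as a limit of sums of $\mu_{p,s}(\zeta_p\otimes\zeta_s)$ yields, by~(T1), $j_r(\mu_{p,s}(\zeta_p\otimes\zeta_s))=j_p(\zeta_p)j_s(\zeta_s)$ on elementary tensors. Hence $j_r(\xi)j_s(\eta)^*$ is a limit of sums of $j_p(\zeta_p)\bigl(j_s(\zeta_s)j_s(\eta)^*\bigr)$, where $j_s(\zeta_s)j_s(\eta)^*\in\CP^s_{\Hilm[J],\Hilm}\cdot\CP^{s^{-1}}_{\Hilm[J],\Hilm}\subseteq\CP^{ss^{-1}}_{\Hilm[J],\Hilm}=C$. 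Continuity of~$j_r$ then places every spanning element of $\CP^p_{\Hilm[J],\Hilm}$ into $\overline{j_p(\Hilm_p)\cdot C}$, so $\Phi_p$ has dense range. An isometry of Hilbert modules with dense range is a unitary, so $\Phi_p$ is the desired correspondence isomorphism $A\leadsto\CP^e_{\Hilm[J],\Hilm}$. This factorization is where I expect the main work to lie: it is what converts the abstract grading description of Proposition~\ref{prop:CP_grading_description} into concrete generation by the single fibre~$\Hilm_p$, and it requires the passage from elementary tensors through~$\mu_{p,s}$ to general elements by density.

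For the second assertion I would first invoke that the full nondegenerate coaction~$\delta$ makes $(\CP^g_{\Hilm[J],\Hilm})_{g\in G}$ into a Fell bundle; in particular each $\CP^p_{\Hilm[J],\Hilm}$ is a Hilbert $C$\nb-bimodule with right inner product $\braket{a}{b}_C=a^*b$ and left inner product $\BRAKET{a}{b}_C=ab^*$, the compatibility $\BRAKET{a}{b}c=a\braket{b}{c}$ being associativity of multiplication in $\CP_{\Hilm[J],\Hilm}$. It then remains to produce the multiplication maps. I would set
\[
m_{p,q}\colon\CP^p_{\Hilm[J],\Hilm}\otimes_C\CP^q_{\Hilm[J],\Hilm}\to\CP^{pq}_{\Hilm[J],\Hilm},\qquad a\otimes b\mapsto ab .
\]
Using $a_1^*a_2\in\CP^{p^{-1}p}_{\Hilm[J],\Hilm}=C$ one checks directly that $m_{p,q}$ preserves $C$\nb-valued inner products, since $\braket{a_1\otimes b_1}{a_2\otimes b_2}_C=b_1^*(a_1^*a_2)b_2=(a_1b_1)^*(a_2b_2)$, and it intertwines the left $C$\nb-action by associativity.

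Finally I would establish density of the range, $\overline{\CP^p_{\Hilm[J],\Hilm}\cdot\CP^q_{\Hilm[J],\Hilm}}=\CP^{pq}_{\Hilm[J],\Hilm}$, from the first assertion and~(T1): for $z\in C$ one has $j_{pq}(\mu_{p,q}(\xi\otimes\eta))z=j_p(\xi)\bigl(j_q(\eta)z\bigr)$ with $j_q(\eta)z\in\CP^q_{\Hilm[J],\Hilm}$, and such elements span $\CP^{pq}_{\Hilm[J],\Hilm}=\overline{j_{pq}(\Hilm_{pq})\cdot C}$ densely. Thus each $m_{p,q}$ is isometric with dense range, hence a correspondence isomorphism; associativity of the~$m_{p,q}$ is inherited from associativity of multiplication in $\CP_{\Hilm[J],\Hilm}$ together with associativity of the~$\mu_{p,q}$, and $\CP^e_{\Hilm[J],\Hilm}=C$ is the unit fibre. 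This exhibits $(\CP^p_{\Hilm[J],\Hilm})_{p\in P}$ as a product system of Hilbert $C$\nb-bimodules, completing the proof.
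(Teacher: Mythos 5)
Your proposal is correct and follows essentially the same route as the paper: you use the grading description of Proposition~\ref{prop:CP_grading_description}, factor $r=ps$ through $\mu_{p,s}$ to show $\CP^p_{\Hilm[J],\Hilm}=\overline{j_p(\Hilm_p)\CP^e_{\Hilm[J],\Hilm}}$, obtain the correspondence isomorphism via $\xi\otimes x\mapsto j_p(\xi)x$, and then read the Hilbert bimodule and multiplication structure off the graded algebra $\CP_{\Hilm[J],\Hilm}$ exactly as the paper does. Your write-up merely spells out the density and isometry details that the paper leaves implicit.
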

\begin{proof} By Proposition~\ref{prop:CP_grading_description}, $\CP^p_{\Hilm[J],\Hilm}$ is generated by elements of the form~$j_r(\xi)j_s(\eta)^*$, with~$\xi\in\Hilm_r$, $\eta\in\Hilm_s$ and~$rs^{-1}=p$. In particular, $r=ps$ and we can use the isomorphism~$\mu_{p,s}^{-1}$ to show that~$j_r(\xi)j_s(\eta)^*$ lies in~$j_p(\Hilm_p)j_s(\Hilm_s)j_s(\Hilm_s)^*$, which in turn is contained in~$j_p(\Hilm_p)\CP^e_{\Hilm[J],\Hilm}$. The inclusion $j_p(\Hilm_p)\CP^e_{\Hilm[J],\Hilm}\subseteq \CP^p_{\Hilm[J],\Hilm}$ is trivial. So~$\CP^p_{\Hilm[J],\Hilm}=j_p(\Hilm_p)\CP^e_{\Hilm[J],\Hilm}$. Hence~$\Hilm_p\otimes_A\CP^e_{\Hilm[J],\Hilm}\to\CP^p_{\Hilm[J],\Hilm}$, $\xi\otimes\eta\mapsto j_p(\xi)\eta$ gives an isomorphism of correspondences~$A\leadsto \CP^e_{\Hilm[J],\Hilm}$.

For each~$p\in P$, $\CP^p_{\Hilm[J],\Hilm}$ is a Hilbert $\CP^e_{\Hilm[J],\Hilm}$\nb-bimodule with the structure obtained from the multiplication and involution operations on~$\CP_{\Hilm[J],\Hilm}$. In particular, $\CP^e_{\Hilm[J],\Hilm}\CP^p_{\Hilm[J],\Hilm}=\CP^p_{\Hilm[J],\Hilm}$. Hence, if~$p,q\in P$, we have a correspondence isomorphism \begin{align*}\CP^p_{\Hilm[J],\Hilm}\otimes_{\CP^e_{\Hilm[J],\Hilm}}\CP^q_{\Hilm[J],\Hilm}&\cong\CP^p_{\Hilm[J],\Hilm}\CP^q_{\Hilm[J],\Hilm}= (j_p(\Hilm_p)\CP^e_{\Hilm[J],\Hilm})\CP^q_{\Hilm[J],\Hilm}=j_p(\Hilm_p)(\CP^e_{\Hilm[J],\Hilm}\CP^q_{\Hilm[J],\Hilm})\\&=j_p(\Hilm_p)\CP^q_{\Hilm[J],\Hilm}=j_p(\Hilm_p)j_q(\Hilm_q)\CP^e_{\Hilm[J],\Hilm}=j_{pq}(\Hilm_{pq})\CP^e_{\Hilm[J],\Hilm}=\CP^{pq}_{\Hilm,\Hilm[J]}.\end{align*} These multiplication maps are associative because they coincide with the multiplication on~$\CP_{\Hilm[J],\Hilm}$.
\end{proof}

 \section{Fell bundles over quasi-lattice ordered groups}~\label{sec:Fell_bundles_positive}

This section contains our main findings. We show that a simplifiable product system of Hilbert bimodules~$\Hilm=(\Hilm_p)_{p\in P}$ admits a unique extension~$\hat{\Hilm}=(\hat{\Hilm}_g)_{g\in G}$ to a semi-saturated and orthogonal Fell bundle over~$G$. A Fell bundle over~$G$ that is semi-saturated and orthogonal is then completely determined by its positive fibres. Albandik and Meyer obtained a similar correspondence between proper product systems over an Ore monoid and saturated Fell bundles over its enveloping group \cite[Proposition~3.17]{Albandik-Meyer:Product}.

\subsection{From product systems of Hilbert bimodules to Fell bundles} In order to define semi-saturatedness for a Fell bundle over~$G$, we recall that an element of~$G$ with an upper bound in the positive cone has a certain reduced form in terms of elements of~$P$.

\begin{lem} Let~$(G,P)$ be a quasi-lattice ordered group and let~$g\in G$ with~$g\vee e<\infty$. Then $g^{-1}\vee e<\infty$ and $g=(g\vee e)(g^{-1}\vee e)^{-1}$. 
\end{lem}
\begin{proof} Let $q\in P$ be such that $g^{-1}(g\vee e)=q$. Then $g\vee e=gq=(g^{-1})^{-1}q$. This shows that $g^{-1}\vee e<\infty$ and $g^{-1}\vee e\leq q$. But $g(g^{-1}\vee e)$ belongs to~$P$ and $g\leq g (g^{-1}\vee e)$. So $g\vee e\leq g (g^{-1}\vee e)$. Since the partial order $g_1\leq g_2\Leftrightarrow g_1^{-1}g_2\in P$ is invariant under left-translation by elements of~$G$, it follows that $q=g^{-1}(g\vee e)\leq g^{-1}\vee e$. So $q=g^{-1}\vee e$ and, therefore, $g=(g\vee e)(g^{-1}\vee e)^{-1}$.
\end{proof}

\begin{defn}\label{def:semi-saturatedness} Let~$(G,P)$ be a quasi-lattice ordered group and let~$(B_g)_{g\in G}$ be a Fell bundle over~$G$. We will say that~$(B_g)_{g\in G}$ is \emph{semi-saturated} with respect to the quasi-lattice ordered group structure of~$(G,P)$ if it satisfies the following conditions: 
\begin{enumerate}
\item[(S1)] $B_pB_q=B_{pq}$ for all $p,q\in P$;

\item[(S2)] $B_g=B_{(g\vee e)}B^*_{(g^{-1}\vee e)}$ for all~$g\in G$ with~$g\vee e<\infty$; 
\end{enumerate} 
\end{defn}

\begin{defn}\label{defn:orthogonal} A Fell bundle over~$G$ will be called \emph{orthogonal} with respect to~$(G,P)$ if~$B_g=\{0\}$ whenever~$g\vee e=\infty$.
\end{defn}

 Let~$\mathbb{F}$ be the free group on a set of generators~$S$. A Fell bundle over~$\mathbb{F}$ is semi-saturated in the sense of Exel if~$B_gB_h=B_{gh}$ for all~$g,h\in\mathbb{F}$ such that the multiplication~$g\cdot h$ involves no cancellation. It is called orthogonal if~$B_s^*B_t=\{0\}$ whenever~$s$ and~$t$ are distinct generators of~$\mathbb{F}$ (see~\cite{Exel:Partial_amenable_free} for further details). Let~$\mathbb{F}^+$ be the unital subsemigroup of~$\mathbb{F}$ generated by~$S$. Recall from \cite{Nica:Wiener--hopf_operators} that~$(\mathbb{F},\mathbb{F}^+)$ is a quasi-lattice ordered group and that an element~$g\in\FF$ satisfies~$g\vee e<\infty$ if and only if its reduced form is~$pq^{-1}$, with~$p,q$ in~$\FF^+$. In this case, $g\vee e=p$ and $g^{-1}\vee e=q$. The following result compares our definitions of semi-saturatedness and orthogonality for Fell bundles over~$\FF$ with those introduced by Exel.

\begin{prop} A Fell bundle~$(B_g)_{g\in \mathbb{F}}$ is semi-saturated and orthogonal with respect to~$(\mathbb{F},\mathbb{F}^+)$ if and only if it is both semi-saturated and orthogonal as defined in~\cite{Exel:Partial_amenable_free}. 
\end{prop}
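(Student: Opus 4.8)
The plan is to show that, for $(\FF,\FF^+)$, each of the two sets of conditions is equivalent to a single description of the bundle phrased in terms of reduced words: namely, that $B_g=\{0\}$ precisely when $g\vee e=\infty$, and that, when $g\vee e<\infty$, the fibre $B_g$ is the product of the fibres sitting over the letters of the reduced form of~$g$. The bridge between the two notions is the fact recalled just above the proposition, that $g\vee e<\infty$ if and only if the reduced form of~$g$ is $pq^{-1}$ with $p,q\in\FF^+$, in which case $g\vee e=p$ and $g^{-1}\vee e=q$. Throughout I will use the elementary Fell bundle identities $B_{g^{-1}}=B_g^*$ and $B_gB_h\subseteq B_{gh}$.

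For the implication from Exel's conditions to ours, I would argue as follows. Condition~(S1) is immediate: if $p,q\in\FF^+$ then the concatenation of their reduced forms is again reduced, so $p\cdot q$ involves no cancellation and $B_pB_q=B_{pq}$. For~(S2), take $g$ with $g\vee e<\infty$ and write its reduced form as $pq^{-1}$ with $p=s_1\cdots s_k$ and $q=t_1\cdots t_m$ in~$\FF^+$; applying Exel's semi-saturatedness along the reduced word $s_1\cdots s_k t_m^{-1}\cdots t_1^{-1}$ of~$g$ and along those of~$p$ and~$q$, and using $B_{t_j^{-1}}=B_{t_j}^*$, one obtains $B_g=B_{s_1}\cdots B_{s_k}B_{t_m}^*\cdots B_{t_1}^*=B_pB_q^*$, which is exactly~(S2). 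Finally, if $g\vee e=\infty$ then the reduced form of~$g$ is not of the shape $pq^{-1}$, so somewhere it has a negative letter immediately followed by a positive one, i.e.\ a factor $s^{-1}t$ with $s,t$ distinct generators. Decomposing $B_g$ fully into its letter-fibres via Exel's semi-saturatedness, this product contains the consecutive factor $B_s^*B_t$, which vanishes by orthogonality; hence $B_g=\{0\}$, giving our orthogonality.

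For the converse, assume $(B_g)_{g\in\FF}$ satisfies~(S1),~(S2) and is orthogonal in our sense. Exel's orthogonality comes out directly: for distinct generators $s,t$ the reduced form of $s^{-1}t$ is not of the form $pq^{-1}$, so $(s^{-1}t)\vee e=\infty$ and $B_{s^{-1}t}=\{0\}$; since $B_s^*B_t=B_{s^{-1}}B_t\subseteq B_{s^{-1}t}$, we get $B_s^*B_t=\{0\}$. The core step is then to establish the letter-decomposition $B_g=B_{w_1}\cdots B_{w_n}$ for every $g$ with reduced form $w_1\cdots w_n$. When $g\vee e<\infty$ this follows by expanding $B_g=B_pB_q^*$ through~(S2) and unfolding $B_p$ and $B_q$ into letter-fibres through iterated~(S1). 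When $g\vee e=\infty$ the left-hand side is $\{0\}$ by orthogonality, while the right-hand side contains a consecutive factor $B_s^*B_t$ with $s\neq t$ generators, which has just been shown to vanish; so both sides are zero. With the letter-decomposition in hand, Exel's semi-saturatedness follows: if $g\cdot h$ involves no cancellation, the reduced form of $gh$ is the concatenation of those of~$g$ and~$h$, and decomposing all three into letter-fibres gives $B_{gh}=B_gB_h$.

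I expect the point deserving most care to lie in this second direction: conditions~(S1) and~(S2) only speak of positive products and of the single decomposition $B_g=B_{g\vee e}B_{g^{-1}\vee e}^*$, whereas Exel's semi-saturatedness concerns every pair with no cancellation. The letter-decomposition claim is what bridges this gap, and its proof genuinely needs the case $g\vee e=\infty$, where one must invoke our orthogonality together with the already-established vanishing of $B_s^*B_t$. A minor but real bookkeeping point throughout is that a single vanishing consecutive factor collapses an entire product of fibres to zero; this relies on $B_gB_h$ denoting the closed linear span of products, together with associativity of the bundle multiplication.
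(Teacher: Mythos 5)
Your proof is correct. For the direction from Exel's conditions to (S1), (S2) and orthogonality, your argument is essentially the paper's (the paper gets (S2) in one step by noting that the product $(g\vee e)\cdot(g^{-1}\vee e)^{-1}$ involves no cancellation, where you unfold into letter fibres; this difference is immaterial). The genuine divergence is in the converse direction. The paper argues by direct case analysis on a pair $g,h$ with no cancellation: if $(gh)\vee e=\infty$ both sides vanish, and otherwise the absence of cancellation forces either $g\in\FF^+$ or $h\in(\FF^+)^{-1}$; in the first case it computes $B_gB_h=B_gB_{h\vee e}B^*_{h^{-1}\vee e}=B_{g(h\vee e)}B^*_{h^{-1}\vee e}=B_{(gh)\vee e}B^*_{(gh)^{-1}\vee e}=B_{gh}$ using (S1), (S2) and the lattice identities $g(h\vee e)=(gh)\vee e$ and $(gh)^{-1}\vee e=h^{-1}\vee e$, and the second case follows from the first by taking adjoints. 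You instead prove a normal-form lemma --- every fibre equals the product of the letter fibres of the corresponding reduced word, in both the bounded and unbounded cases --- and then obtain Exel's semi-saturatedness by concatenation of reduced words. The paper's route is shorter and needs no induction on word length. Your route is more uniform, and it has one concrete advantage: it makes explicit why $B_gB_h=\{0\}$ when $(gh)\vee e=\infty$, which is not automatic, since $g$ and $h$ may each admit upper bounds in $\FF^+$ (e.g.\ $g=pq^{-1}$ and $h=p'q'^{-1}$ with $q\neq e\neq p'$); there the vanishing comes from a collapsing consecutive factor $B_s^*B_t$ exactly as in your lemma, a point the paper's proof asserts without comment.
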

\begin{proof} Suppose that~$(B_g)_{g\in \mathbb{F}}$ is semi-saturated and orthogonal with respect to~$(\mathbb{F},\mathbb{F}^+)$. Then orthogonality implies that~$(B_g)_{g\in\mathbb{F}}$ is orthogonal as defined by Exel, since $(p^{-1}q)\vee e=\infty$ if~$p$ and~$q$ are distinct generators of~$\FF$. In order to prove that~$(B_g)_{g\in\mathbb{F}}$ is also semi-saturated according to~\cite{Exel:Partial_amenable_free}, let~$g,h\in\FF$ be such that the product~$g\cdot h$ involves no cancellation. If~$gh\vee e=\infty$, then~$B_{gh}=\{0\}=B_gB_h$. Assume that~$(gh)\vee e<\infty$. First, this implies that either~$g$ belongs to~$\FF^+$ and~$h\vee e<\infty$ or~$g\vee e<\infty$ and~$h\in(\FF^+)^{-1}$ because~$gh$ has reduced form~$pq^{-1}$ with~$p,q\in P$ and the product~$g\cdot h$ involves no cancellation. In case~$g\in \FF^+$, we then have~$g(h\vee e)=gh\vee e$ and~$(gh)^{-1}\vee e=(h^{-1}g^{-1})\vee e=h^{-1}\vee e$. So axioms (S1) and (S2) give us $$B_gB_h=B_gB_{h\vee e}B_{h^{-1}\vee e}^*=B_{g(h\vee e)}B_{h^{-1}\vee e}^*=B_{(gh)\vee e}B_{(gh)^{-1}\vee e}^*=B_{gh}.$$ Now if~$h\in(\FF^+)^{-1}$, it follows from the previous case that $$B_gB_h=(B_{h^{-1}}B_{g^{-1}})^*=B_{h^{-1}g^{-1}}^*=B_{gh}.$$ This shows that~$(B_g)_{g\in\mathbb{F}}$  is semi-saturated as defined in~\cite{Exel:Partial_amenable_free}.

Now suppose that~$(B_g)_{g\in \mathbb{F}}$ is a Fell bundle that is semi-saturated and orthogonal according to~\cite{Exel:Partial_amenable_free}. Clearly, $(B_g)_{g\in \mathbb{F}}$ satisfies (S1). Any element of~$\FF$ has a reduced form, so that orthogonality as in Definition~\ref{defn:orthogonal} follows by combining semi-saturatedness and orthogonality of~$(B_g)_{g\in \mathbb{F}}$. Given~$g\in\FF$ with~$g\vee e<\infty$, the product~$(g\vee e) (g^{-1}\vee e)^{-1}$ involves no cancellation. Therefore, semi-saturatedness gives us 
$$B_g=B_{g\vee e}B_{(g^{-1}\vee e)^{-1}}=B_{g\vee e}B^*_{(g^{-1}\vee e)}.$$ This completes the proof of the statement.
\end{proof}

Our main examples of Fell bundles that are semi-saturated and orthogonal come from the grading of relative Cuntz--Pimsner algebras associated to compactly aligned product systems obtained in Proposition~\ref{prop:CP_grading_description}. In fact, we will prove that any Fell bundle that is semi-saturated and orthogonal is isomorphic to one of this form.

\begin{example}\label{ex:semisaturatedness_for_CP} Let~$\Hilm=(\Hilm_p)_{p\in P}$ be a compactly aligned product system and let~$\Hilm[J]=\{J_p\}_{p\in P}$ be a family of ideals in~$A$ with $J_p\subseteq\varphi_p^{-1}(\Comp(\Hilm_p))$ for all~$p\in P$. Then $(\CP^g_{\Hilm[J],\Hilm})_{g\in G}$ is orthogonal because~$\CP^g_{\Hilm[J],\Hilm}=\{0\}$ whenever~$g\vee e=\infty$. To see that it is also semi-saturated, observe that if~$p,q\in P$ satisfy~$pq^{-1}=g$, then there is~$r\in P$ with~$p=(g\vee e) r$ and $q=(g^{-1}\vee e)r$. Indeed, since $g\vee e$ and $g^{-1}\vee e$ are the least upper bounds for~$g$ and~$g^{-1}$ in~$P$, respectively, there are~$r,s\in P$ such that~$p=(g\vee e) r$ and~$q=(g^{-1}\vee e)s$. The equality~$g=(g\vee e)(g^{-1}\vee e)^{-1}=(g\vee e) rs^{-1}(g^{-1}\vee e)^{-1}$ entails~$r=s$.

Thus, given~$g$ in~$G$ with $g\vee e<\infty$, write $g=(g\vee e)(g^{-1}\vee e)^{-1}$. By Proposition~\ref{prop:CP_grading_description}, $\CP^g_{\Hilm[J],\Hilm}$ is spanned by the elements of the form~$j_p(\xi)j_q(\eta)^*$, with $\xi\in \Hilm_p$, $\eta\in \Hilm_q$ and~$pq^{-1}=g$. Given such an element~$j_p(\xi)j_q(\eta)^*$, let~$r\in P$ be such that~$p=(g\vee e)r$ and~$q=(g^{-1}\vee e)r$. We then employ the isomorphisms~$\mu_{g\vee e, r}^{-1}$ and~$\mu_{g^{-1}\vee e, r}^{-1}$ to conclude that $$j_p(\xi)j_q(\eta)^*\in j_{g\vee e}(\Hilm_{g\vee e})j_r(\Hilm_r)j_r(\Hilm_r)^*j_{g^{-1}\vee e}(\Hilm_{g^{-1}\vee e})^*\subseteq \CP^{g\vee e}_{\Hilm[J],\Hilm} (\CP^{g^{-1}\vee e}_{\Hilm[J],\Hilm})^*.$$ Therefore, $(\CP^g_{\Hilm[J],\Hilm})_{g\in G}$ satisfies (S2). Now axiom (S1) follows from Corollary~\ref{cor:positive_fibres_isomorphism}. Thus $(\CP^g_{\Hilm[J],\Hilm})_{g\in G}$ is also semi-saturated.
\end{example}

\begin{defn}\label{def:simplifiable_product_bimodules} A product system of \emph{Hilbert bimodules} $\Hilm=(\Hilm_p)_{p\in P}$ will be called \emph{simplifiable} if for all~$p,q\in P$ one has
\begin{enumerate}
\item[(i)]$\BRAKET{\Hilm_p}{\Hilm_p}\BRAKET{\Hilm_q}{\Hilm_q}\subseteq\BRAKET{\Hilm_{p\vee q}}{\Hilm_{p\vee q}}\quad\text { if } p\vee q<\infty$;
\item[(ii)]$\BRAKET{\Hilm_p}{\Hilm_p}\BRAKET{\Hilm_q}{\Hilm_q}=\{0\}$\quad if $p\vee q=\infty$;
\end{enumerate}here $\BRAKET{\cdot}{\cdot}$ denotes the left $A$\nb-valued inner product.
\end{defn}
\begin{rem} A simplifiable product system of Hilbert bimodules is compactly aligned. The converse is not true in general. For instance, take a nontrivial Hilbert bimodule~$\Hilm$ over a $\Cst$\nb-algebra~$A$ satisfying~$\Hilm\otimes_A\Hilm=\{0\}$. This produces a product system over~$\NN\times\NN$ such that $\Hilm_{(1,0)}=\Hilm_{(0,1)}=\Hilm$. It is compactly aligned because~$\Hilm_{(1,1)}=\{0\}$, but $\BRAKET{\Hilm_{(1,0)}}{\Hilm_{(1,0)}}=\BRAKET{\Hilm_{(0,1)}}{\Hilm_{(0,1)}}\neq\{0\}$.
\end{rem}  

A canonical example of a simplifiable product system of Hilbert bimodules comes from the underlying irreversible dynamical system of a semigroup $\Cst$\nb-algebra.
\begin{example}\label{ex:semigroup-product-system} Let $(G,P)$ be quasi-lattice ordered. Let $\chi_{qP} \in\ell^{\infty}(P)$ denote the characteristic function on the right ideal of~$P$ given by $\{qr\mid r\in P\}$. Let $A$ be the $\Cst$\nb-subalgebra generated by $\{\chi_{qP}\mid  q\in P\}$. There is an action $\beta=\{\beta_p\}_{p\in P}$ of~$P$ on~$A$ by injective endomorphisms with hereditary range. The endomorphism $\beta_p$ is defined on a generator by $\beta_p(\chi_{qP})\coloneqq\chi_{pqP}$. So $\beta_p(A)$ is the corner determined by the projection~$\chi_{pP}$. This gives a product system of Hilbert bimodules $A_{\beta}=(A_{\beta_p})_{p\in P}$ as follows. We set $A_{\beta_p}\coloneqq A\chi_{pP}$ and define the right $A$\nb-valued inner product by $$\braket{a\chi_{pP}}{b\chi_{pP}}\coloneqq \beta_{p}^{-1}(\chi_{pP}a^*\cdot b\chi_{pP}).$$ The right action of~$A$ on~$A_{\beta_p}$ is implemented by~$\beta_p$ and the homomorphism $\varphi_p\colon A\to \Bound(A_{\beta_p})$ is given by left multiplication. The multiplication map $\mu_{p,q}\colon A_{\beta_p}\otimes A_{\beta_q}\to A_{\beta_{pq}}$ sends an elementary tensor $a\chi_{pP}\otimes b\chi_{qP}$ to $a\beta_p(b)\chi_{pqP}$. See, for example, \cite[Section~4.3]{SEHNEM2019558}.

For each~$p\in P$, the correspondence $A_{\beta_p}$ carries a structure of Hilbert $A$\nb-bimodule, with left $A$\nb-valued inner product given by $\BRAKET{a\chi_{pP}}{b\chi_{pP}}=a\chi_{pP}b^*$. Thus $A_{\beta}=(A_{\beta_p})_{p\in P}$ is a product system of Hilbert bimodules that is simplifiable because the projections $\{\chi_{qP}\mid q\in P\}$ satisfy the relations $$\chi_{pP}\cdot \chi_{qP}=\begin{cases} \chi_{(p\vee q)P} &\text{if } p\vee q<\infty,\\
0 & \text{otherwise.}
\end{cases}$$ In this example, the relative Cuntz--Pimsner algebra for the family of Katsura's ideals of $A_{\beta}$ coincides with Nica's semigroup $\Cst$\nb-algebra $\Cst(G,P)$ \cite{Nica:Wiener--hopf_operators}. See Propositions 4.8 and 4.11 of \cite{SEHNEM2019558}.

\end{example}

\begin{prop}\label{prop:cuntz_nica} Let~$\Hilm=(\Hilm_p)_{p\in P}$ be a simplifiable product system of Hilbert bimodules. For each~$p\in P$, let $I_p\coloneqq\BRAKET{\Hilm_p}{\Hilm_p}$ and set $\mathcal{I}=\{I_p\}_{p\in P}$. If~$\psi=\{\psi_p\}_{p\in P}$ is a representation of~$\Hilm$ in a $\Cst$\nb-algebra~$B$ that is Cuntz--Pimsner covariant on~$\mathcal{I}$, then it is also Nica covariant.
\end{prop}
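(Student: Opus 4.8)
The plan is to exploit the defining feature of a Hilbert bimodule: that the left action carries the ideal~$I_p=\BRAKET{\Hilm_p}{\Hilm_p}$ onto the compact operators. The identity $\BRAKET{\xi}{\eta}\zeta=\xi\braket{\eta}{\zeta}$ says precisely that $\varphi_p(\BRAKET{\xi}{\eta})=\ket{\xi}\bra{\eta}$, so passing to closed linear spans gives $\varphi_p(I_p)=\Comp(\Hilm_p)$ for every~$p\in P$. Hence every compact operator on~$\Hilm_p$ is of the form~$\varphi_p(a)$ with~$a\in I_p$, and it suffices to verify the Nica covariance identity for generators $T=\varphi_p(a)$ and $S=\varphi_q(b)$ with $a\in I_p$, $b\in I_q$, since these exhaust~$\Comp(\Hilm_p)$ and~$\Comp(\Hilm_q)$.

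For such~$T$ and~$S$, Cuntz--Pimsner covariance on~$\mathcal{I}$ gives $\psi^{(p)}(T)=\psi_e(a)$ and $\psi^{(q)}(S)=\psi_e(b)$, so $\psi^{(p)}(T)\psi^{(q)}(S)=\psi_e(ab)$ because $\psi_e$ is a \Star homomorphism. If $p\vee q=\infty$, then simplifiability~(ii) forces $ab\in I_pI_q=\{0\}$, whence $\psi^{(p)}(T)\psi^{(q)}(S)=0$, which is the Nica condition in this case. If $p\vee q<\infty$, then simplifiability~(i) gives $ab\in I_pI_q\subseteq I_{p\vee q}$, and a second application of Cuntz--Pimsner covariance, now at~$p\vee q$, yields $\psi_e(ab)=\psi^{(p\vee q)}(\varphi_{p\vee q}(ab))$.

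It then remains to identify $\varphi_{p\vee q}(ab)$ with $\iota_p^{p\vee q}(T)\iota_q^{p\vee q}(S)$, and this is where the real work lies. The key lemma I would establish is the compatibility of the left action with the inclusion maps: $\iota_p^{p\vee q}(\varphi_p(a))=\varphi_{p\vee q}(a)$ for all~$a\in A$. Writing $r=p^{-1}(p\vee q)$ and unwinding $\iota_p^{p\vee q}(T)=\mu_{p,r}\circ(T\otimes\id_{\Hilm_r})\circ\mu_{p,r}^{-1}$, this reduces to the product-system identity $\mu_{p,r}(\varphi_p(a)\xi\otimes\eta)=\varphi_{p\vee q}(a)\,\mu_{p,r}(\xi\otimes\eta)$, which expresses that~$\mu_{p,r}$ intertwines the left actions. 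Granting this, $\iota_p^{p\vee q}(T)\iota_q^{p\vee q}(S)=\varphi_{p\vee q}(a)\varphi_{p\vee q}(b)=\varphi_{p\vee q}(ab)$, so that $\psi^{(p\vee q)}\big(\iota_p^{p\vee q}(T)\iota_q^{p\vee q}(S)\big)=\psi^{(p\vee q)}(\varphi_{p\vee q}(ab))=\psi_e(ab)=\psi^{(p)}(T)\psi^{(q)}(S)$, completing the verification. I expect the only genuine obstacle to be the bookkeeping in this compatibility lemma; the rest is direct substitution using the two simplifiability relations together with the bimodule identity $\varphi_p(I_p)=\Comp(\Hilm_p)$.
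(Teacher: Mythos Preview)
Your proof is correct and follows essentially the same route as the paper's: write $T=\varphi_p(a)$, $S=\varphi_q(b)$ with $a\in I_p$, $b\in I_q$, use Cuntz--Pimsner covariance to reduce $\psi^{(p)}(T)\psi^{(q)}(S)$ to $\psi_e(ab)$, and then invoke the two simplifiability conditions together with the identity $\iota_p^{p\vee q}(\varphi_p(a))=\varphi_{p\vee q}(a)$. The only difference is that you spell out the compatibility $\iota_p^{p\vee q}\circ\varphi_p=\varphi_{p\vee q}$ explicitly, whereas the paper uses it without comment.
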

\begin{proof} Let~$p,q\in P$, $T\in \Comp(\Hilm_p)$ and $S\in\Comp(\Hilm_q)$. Let~$a\in I_p$ and $b\in I_q$ be such that $\varphi_p(a)=T$ and~$\varphi_q(b)=S$. Cuntz--Pimsner covariance on~$\Hilm[I]$ gives us $$\psi^{(p)}(T)\psi^{(q)}(S)=\psi_e(a)\psi_e(b)=\psi_e(ab).$$ So by condition (ii) of Definition~\ref{def:simplifiable_product_bimodules}, $\psi^{(p)}(T)\psi^{(q)}(S)=0$ if~$p\vee q=\infty$. In case~$p\vee q<\infty$, it follows that $\iota^{p\vee q}_p(T)\iota_q^{p\vee q}(S)=\varphi_{p\vee q}(ab).$ Applying the Cuntz--Pimsner covariance condition to~$ab\in I_{p\vee q}$, we obtain $$\psi^{(p)}(T)\psi^{(q)}(S)=\psi_e(ab)=\psi^{(p\vee q)}(\varphi_{p\vee q}(ab))=\psi^{(p\vee q)}\big(\iota^{p\vee q}_p(T)\iota_q^{p\vee q}(S)\big).$$ Therefore, $\psi$ is Nica covariant.
\end{proof}

\begin{lem}\label{pro:relative_embedding} Let~$\Hilm=(\Hilm_p)_{p\in P}$ be a simplifiable product system of Hilbert bimodules. For each~$p\in P$, set~$I_p\coloneqq\BRAKET{\Hilm_p}{\Hilm_p}$ and~$\mathcal{I}=\{I_p\}_{p\in P}$. Then the canonical \Star homomorphism from~$A$ to the relative Cuntz--Pimsner algebra~$\CP_{\mathcal{I},\Hilm}$  is an isomorphism onto the gauge-fixed point algebra~$\CP^e_{\mathcal{I},\Hilm}$. Moreover, $\CP^p_{\Hilm[I],\Hilm}\cong \Hilm_p$ for all~$p\in P$.
\end{lem}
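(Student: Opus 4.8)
The plan is to show first that the canonical map $j_e\colon A\to\CP_{\mathcal{I},\Hilm}$ is an isomorphism onto $\CP^e_{\mathcal{I},\Hilm}$, and then to read off $\CP^p_{\mathcal{I},\Hilm}\cong\Hilm_p$ from Corollary~\ref{cor:positive_fibres_isomorphism}. Surjectivity onto the unit fibre is the routine direction. By Proposition~\ref{prop:CP_grading_description}, $\CP^e_{\mathcal{I},\Hilm}$ is the closed span of the elements $j_p(\xi)j_p(\eta)^*$ with $p\in P$ and $\xi,\eta\in\Hilm_p$. Since $\Hilm_p$ is a Hilbert bimodule, the compatibility $\BRAKET{\xi}{\eta}\zeta=\xi\braket{\eta}{\zeta}$ yields $\varphi_p(\BRAKET{\xi}{\eta})=\ket{\xi}\bra{\eta}$, so $j_p(\xi)j_p(\eta)^*=j^{(p)}(\varphi_p(\BRAKET{\xi}{\eta}))$; because $\BRAKET{\xi}{\eta}\in I_p$, Cuntz--Pimsner covariance on $\mathcal{I}$ turns this into $j_e(\BRAKET{\xi}{\eta})\in j_e(A)$. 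As the reverse inclusion $j_e(A)\subseteq\CP^e_{\mathcal{I},\Hilm}$ is clear from the coaction, we get $j_e(A)=\CP^e_{\mathcal{I},\Hilm}$.

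The substance of the lemma is injectivity of $j_e$, and here the universal property lets me reduce to a single construction: it is enough to produce one representation $\psi=\{\psi_p\}_{p\in P}$ of $\Hilm$ that is Cuntz--Pimsner covariant on $\mathcal{I}$ and has $\psi_e$ faithful. Indeed, Proposition~\ref{prop:cuntz_nica} then makes $\psi$ automatically Nica covariant, so it factors as $\psi_p=\bar{\psi}_{\mathcal{I}}\circ j_p$; faithfulness of $\psi_e=\bar{\psi}_{\mathcal{I}}\circ j_e$ forces $j_e$ to be injective. To build such a $\psi$ I would start from a faithful nondegenerate representation of $A$ and assemble an augmented Fock-type module out of the fibres $\Hilm_p$, imitating Katsura's treatment of a single Hilbert bimodule. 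The feature that makes this work fibrewise is that, for a Hilbert bimodule, $\varphi_p$ restricts to an isomorphism $I_p\overset{\cong}{\rightarrow}\Comp(\Hilm_p)$, which lets one impose $\psi^{(p)}(\varphi_p(a))=\psi_e(a)$ on the nose while still acting faithfully in degree zero; conditions (i) and (ii) of Definition~\ref{def:simplifiable_product_bimodules} are exactly what guarantees that the relations attached to different $p,q\in P$ are mutually consistent, reinforcing one another when $p\vee q<\infty$ and decoupling when $p\vee q=\infty$. I expect this construction of a faithful Cuntz--Pimsner covariant representation --- the gauge-invariant-uniqueness content of the statement --- to be the main obstacle; alternatively one could invoke a gauge-invariant uniqueness theorem for the coaction of Proposition~\ref{prop:CP_grading_description} once it is known that such a faithful representation exists.

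Granting that $j_e\colon A\to\CP^e_{\mathcal{I},\Hilm}$ is an isomorphism, the second assertion is immediate. Corollary~\ref{cor:positive_fibres_isomorphism} gives a correspondence isomorphism $\CP^p_{\mathcal{I},\Hilm}\cong\Hilm_p\otimes_A\CP^e_{\mathcal{I},\Hilm}$, and transporting the coefficient algebra back along $j_e^{-1}$ identifies the right-hand side with $\Hilm_p\otimes_A A\cong\Hilm_p$ as Hilbert bimodules. This gives $\CP^p_{\mathcal{I},\Hilm}\cong\Hilm_p$ for every $p\in P$.
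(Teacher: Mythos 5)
Your reduction is sound where it is carried out. The surjectivity argument is essentially the paper's own: for $g=e$ the grading of Proposition~\ref{prop:CP_grading_description} spans $\CP^e_{\Hilm[I],\Hilm}$ by elements $j_p(\xi)j_p(\eta)^*$, the bimodule identity gives $\varphi_p(\BRAKET{\xi}{\eta})=\ket{\xi}\bra{\eta}$, and covariance on $\Hilm[I]$ collapses these into $j_e(A)$. Your deduction of $\CP^p_{\Hilm[I],\Hilm}\cong\Hilm_p$ by tensoring the isomorphism of Corollary~\ref{cor:positive_fibres_isomorphism} with $j_e^{-1}$ is also correct, and is in fact a cleaner route than the paper's direct verification that $j_p$ is injective and surjective. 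You have likewise identified the right reduction for injectivity: one faithful Cuntz--Pimsner covariant representation suffices, and Proposition~\ref{prop:cuntz_nica} supplies Nica covariance for free.

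The genuine gap is that this faithful covariant representation is never constructed; it is announced as a plan ("assemble an augmented Fock-type module \dots imitating Katsura") backed by the heuristic that conditions (i) and (ii) of Definition~\ref{def:simplifiable_product_bimodules} make the relations for different $p,q$ consistent. That heuristic is precisely the hard content of the lemma, and it is not a routine fibrewise adaptation of Katsura's construction: his augmented Fock space exploits the total order on $\ZZ$, whereas for a general quasi-lattice ordered group the ordinary Fock representation is Nica covariant but fails Cuntz--Pimsner covariance on any nonzero $I_p$, and no direct analogue of the augmented module is available. The paper closes this gap by citation rather than construction: conditions (i) and (ii) verify the hypotheses of \cite[Proposition~4.8]{SEHNEM2019558}, so $\CP_{\Hilm[I],\Hilm}$ is canonically isomorphic to the covariance algebra $A\times_{\Hilm}P$, and injectivity of $j_e$ is \cite[Theorem~3.10-(C3)]{SEHNEM2019558}; the covariance-algebra machinery behind that theorem is a substantial inductive-limit construction, not a fibrewise argument. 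Your fallback suggestion --- invoke a gauge-invariant uniqueness theorem ``once it is known that such a faithful representation exists'' --- is circular as stated. So the proposal isolates the key difficulty correctly but does not overcome it.
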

\begin{proof} Conditions (i) and (ii) of Definition~\ref{def:simplifiable_product_bimodules} imply that the family of ideals~$\mathcal{I}=\{I_p\}_{p\in P}$ satisfies the hypothesis of~\cite[Proposition~4.8]{SEHNEM2019558}. So~$\CP_{\mathcal{I},\Hilm}$ is canonically isomorphic to the covariance algebra~$A\times_{\Hilm}P$ of~$\Hilm$. In particular, the universal \Star homomorphism $j_e\colon A\to \CP_{\mathcal{I},\Hilm}$ is injective by \cite[Theorem~3.10-(C3)]{SEHNEM2019558}.

Since~$\Hilm$ is simplifiable, its representation in~$\CP_{\Hilm[I],\Hilm}$ is Nica covariant by Proposition~\ref{prop:cuntz_nica}. Thus, $\CP^e_{\Hilm[I],\Hilm}$ is the closed linear span of the set $$\{j_p(\xi)j_p(\eta)^*\vert  \xi,\eta\in\Hilm_p, p\in P\}.$$ Hence the Cuntz--Pimsner covariance condition implies that~$j_e\colon A\rightarrow\CP^e_{\Hilm[I],\Hilm}$ is an isomorphism. 

It follows that~$j_p\colon\Hilm_p\rightarrow\CP^p_{\Hilm[I],\Hilm}$ is injective for all~$p\in P$. Again because the representation of~$\Hilm$ in~$\CP_{\Hilm[I],\Hilm}$ is Nica covariant, $\CP^p_{\Hilm[I],\Hilm}$ is generated by elements of the form~$j_q(\xi)j_r(\eta)^*$ with~$qr^{-1}=p$. Using that~$\mu_{p,r}$ is a correspondence isomorphism, we deduce from Cuntz--Pimsner covariance that~$j_p$ is also surjective, as asserted. 
\end{proof}

\begin{thm}\label{thm:uniqueness_and_extension} Let~$(G,P)$ be a quasi-lattice ordered group and~$\Hilm=(\Hilm_p)_{p\in P}$ a simplifiable product system of Hilbert bimodules. There is a semi-saturated and orthogonal Fell bundle~$\hat{\Hilm}=(\hat{\Hilm}_g)_{g\in G}$ extending the structure of product system of~$\Hilm$, in the sense that 
\begin{itemize}
\item[\textup{(i)}] there are isomorphisms~$j_p\colon\Hilm_p\cong\hat{\Hilm}_p$ of complex vector spaces such that $j_e\colon A\to\hat{\Hilm}_e$ is a \Star isomorphism and $j_p(\xi)j_q(\eta)=j_{pq}(\mu_{p,q}(\xi\otimes\eta))$ for all $p,q\in P$; 
\item[\textup{(ii)}] $j_p(\xi)^*j_p(\eta)=j_e(\braket{\xi}{\eta})$ for all $\xi,\eta\in\Hilm_p$ and~$p\in P$, where $^*\colon\hat{\Hilm}_{p}\to\hat{\Hilm}_{p^{-1}}$ is the involution operation on~$\hat{\Hilm}$.
\end{itemize}Moreover, $\hat{\Hilm}$ is unique up to canonical isomorphism of Fell bundles.
\end{thm}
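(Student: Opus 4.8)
The plan is to obtain the extension as the positive part of a relative Cuntz--Pimsner algebra, and to prove uniqueness by showing that the norm of a ``basic'' homogeneous element is already dictated by~$\Hilm$ alone.

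For existence, I would set $I_p\coloneqq\BRAKET{\Hilm_p}{\Hilm_p}$ and $\mathcal{I}=\{I_p\}_{p\in P}$. Since a simplifiable product system is compactly aligned, the relative Cuntz--Pimsner algebra $\CP_{\mathcal{I},\Hilm}$ carries the canonical $G$\nb-grading of Proposition~\ref{prop:CP_grading_description}, and I would take $\hat{\Hilm}\coloneqq(\CP^g_{\mathcal{I},\Hilm})_{g\in G}$. Example~\ref{ex:semisaturatedness_for_CP} shows that this Fell bundle is semi-saturated and orthogonal, while Lemma~\ref{pro:relative_embedding} supplies the \Star isomorphism $j_e\colon A\to\CP^e_{\mathcal{I},\Hilm}$ and the vector-space isomorphisms $j_p\colon\Hilm_p\to\CP^p_{\mathcal{I},\Hilm}$. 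Properties~(i) and~(ii) are then nothing but the representation axioms (T1) and (T2) for the canonical representation $j=\{j_p\}$ of~$\Hilm$ in $\CP_{\mathcal{I},\Hilm}$, together with $\xi\eta=\mu_{p,q}(\xi\otimes\eta)$.

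For uniqueness, suppose $(B_g)_{g\in G}$ and $(B'_g)_{g\in G}$ are two semi-saturated and orthogonal Fell bundles extending~$\Hilm$ via isomorphisms $\{j_p\}$ and $\{j'_p\}$ as in (i)--(ii). I would construct a bundle isomorphism $\phi=\{\phi_g\}$ with $\phi_p\circ j_p=j'_p$, setting $\phi_e=j'_e\circ j_e^{-1}$ and $\phi_p=j'_p\circ j_p^{-1}$ on the positive fibres. For $g$ with $g\vee e<\infty$, axiom~(S2) together with the reduction used in Example~\ref{ex:semisaturatedness_for_CP} shows that every element of~$B_g$ is a limit of finite sums $\sum_i j_{g\vee e}(\xi_i)\,j_{g^{-1}\vee e}(\eta_i)^*$; I would define $\phi_g$ on such a sum by replacing each~$j$ with the corresponding~$j'$, and set $\phi_g=0$ when $g\vee e=\infty$, where orthogonality forces $B_g=\{0\}=B'_g$. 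Everything hinges on showing that $\phi_g$ is well defined; multiplicativity and compatibility with the involution will then be formal.

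The main step, and the point where simplifiability is essential, is the claim that $j_p$ also preserves the \emph{left} inner product, i.e.\ $j_p(\xi)j_p(\eta)^*=j_e(\BRAKET{\xi}{\eta})$. Transporting the Hilbert $B_e$\nb-bimodule structure of the fibre~$B_p$ back to~$\Hilm_p$ through $j_p$ and $j_e$ produces a second Hilbert $A$\nb-bimodule structure on~$\Hilm_p$ with the same right inner product and the same left and right actions; hence it has the same operator $\ket{\xi}\bra{\eta}$ and the same $\varphi_p$. Both left inner products thus send $(\xi,\eta)$ to the same compact operator under~$\varphi_p$, and both take values in an ideal acting faithfully and by compact operators on~$\Hilm_p$, hence inside Katsura's ideal $I_p=\BRAKET{\Hilm_p}{\Hilm_p}$, on which $\varphi_p$ is injective; so the two values coincide. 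Granting this, a short computation using (ii) gives, for $b=\sum_i j_{g\vee e}(\xi_i)j_{g^{-1}\vee e}(\eta_i)^*$,
\[
b^*b=j_e\Bigl(\,\textstyle\sum_{i,k}\BRAKET{\eta_i\braket{\xi_i}{\xi_k}}{\eta_k}\Bigr),
\]
so that $\lVert b\rVert^2$ equals the $A$\nb-norm of $\sum_{i,k}\BRAKET{\eta_i\braket{\xi_i}{\xi_k}}{\eta_k}$, because $j_e$ is isometric. The right-hand side depends only on~$\Hilm$, and the identical formula holds for $(B'_g)$ with $j'$ in place of~$j$. Hence $\lVert b\rVert=\lVert\phi_g(b)\rVert$ for every basic sum, so $\phi_g$ sends~$0$ to~$0$, is isometric, and extends to an isometric linear isomorphism $B_g\to B'_g$.

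Finally I would verify that $\phi$ is multiplicative and intertwines the involutions. On products of basic elements both sides are reduced to standard form by identical manipulations using (S1), (S2), the multiplication maps~$\mu_{p,q}$, the relation (ii) and the left-inner-product identity above; since all of these hold verbatim in both bundles, any identity valid in $(B_g)$ transports to $(B'_g)$, and $\phi$ matches corresponding basic elements. This yields the asserted canonical isomorphism of Fell bundles with $\phi_p\circ j_p=j'_p$. I expect the rigidity statement $j_p(\xi)j_p(\eta)^*=j_e(\BRAKET{\xi}{\eta})$ to be the crux: it is exactly this consequence of simplifiability, through the faithfulness of the left action on Katsura's ideal, that makes the norm of a homogeneous element depend only on~$\Hilm$ and hence forces uniqueness.
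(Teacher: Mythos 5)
Your proposal is correct, and its existence half coincides with the paper's: both take $\hat{\Hilm}_g\coloneqq\CP^g_{\Hilm[I],\Hilm}$ for the family of Katsura's ideals and quote Example~\ref{ex:semisaturatedness_for_CP} and Lemma~\ref{pro:relative_embedding}. You also isolate the same crux for uniqueness, the rigidity identity $j_p(\xi)j_p(\eta)^*=j_e(\BRAKET{\xi}{\eta})$, though you prove it differently: the paper checks that $j'_e(\BRAKET{\xi}{\eta})$ and $j'_p(\xi)j'_p(\eta)^*$ have the same left action on $j'_p(\Hilm_p)$ and lie in ideals acting faithfully, while you transport the fibre's Hilbert bimodule structure back to $\Hilm_p$ and note that any left inner product compatible with the given right structure has the same image under $\varphi_p$ and takes values in Katsura's ideal, where $\varphi_p$ is injective; these two mechanisms are equivalent. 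Where you genuinely diverge is the endgame. The paper observes that rigidity says exactly that $j'$ is Cuntz--Pimsner covariant on $\Hilm[I]$ (hence Nica covariant, by Proposition~\ref{prop:cuntz_nica}), so the universal property of $\CP_{\Hilm[I],\Hilm}$ yields a \Star homomorphism $\bar{j'}\colon\CP_{\Hilm[I],\Hilm}\to\Cst((\hat{\Hilm}'_g)_{g\in G})$ restricting to the desired bundle isomorphism, with multiplicativity and compatibility with the involution coming for free. You instead build $\phi_g$ fibrewise from the norm formula $\lVert b\rVert^2=\lVert\sum_{i,k}\BRAKET{\eta_i\braket{\xi_i}{\xi_k}}{\eta_k}\rVert$, a clean and self-contained way to get well-definedness and isometry, but this shifts the burden to multiplicativity, and that step is less ``formal'' than you suggest: a cross term $j_q(\eta)^*j_p(\xi)$ with $p,q$ incomparable is not a finite algebraic combination of standard-form elements, so reducing it identically in both bundles requires orthogonality when $p\vee q=\infty$ and, when $p\vee q<\infty$, an approximate unit of $I_{q^{-1}(p\vee q)}$ together with rigidity and the continuity of your already-constructed isometries. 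These limit arguments do go through, so there is no gap; they are precisely what the paper's universal-property argument is engineered to bypass.
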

\begin{proof} Let~$(\CP^g_{\Hilm[I],\Hilm})_{g\in G}$ be the Fell bundle associated to the canonical coaction of~$G$ on~$\CP_{\Hilm[I],\Hilm}$. This is semi-saturated and orthogonal by Example~\ref{ex:semisaturatedness_for_CP}. For each~$g\in G$, we set~$\hat{\Hilm}_g\coloneqq\CP^g_{\Hilm[I],\Hilm}$. So, for all~$p\in P$, $j_p\colon \Hilm_p\to \hat{\Hilm}_p$ is an isomorphism of complex vector spaces by Lemma~\ref{pro:relative_embedding} and we have~$j_p(\xi)^*j_p(\eta)=j_e(\braket{\xi}{\eta})$ for all $\xi,\eta\in\Hilm_p$. Hence~$\hat{\Hilm}=(\hat{\Hilm}_g)_{g\in G}$ is a semi-saturated and orthogonal Fell bundle over~$G$ that extends the structure of~$\Hilm=(\Hilm_p)_{p\in P}$.

In order to prove the uniqueness property, let~$\hat{\Hilm}'=(\hat{\Hilm}'_g)_{g\in G}$ be another Fell bundle that is semi-saturated and orthogonal and extends the structure of product system of~$\Hilm$. Let~$j'=\{j'_p\}_{p\in P}$ be the family of isomorphisms~$\Hilm_p\cong\hat{\Hilm}_p'$. Then~$j'=\{j'_p\}_{p\in P}$ is a representation of~$\Hilm$ in the cross-sectional $\Cst$\nb-algebra~$\Cst((\hat{\Hilm}'_g)_{g\in G})$. We will now see that~$j'=\{j'_p\}_{p\in P}$ is Cuntz--Pimsner covariant on~$\Hilm[I]=\{I_p\}_{p\in P}$. Indeed, the ideal in~$A$ determined by $j'_p(\Hilm_p)j'_p(\Hilm_p)^*$ is contained in $(\ker\varphi_p)^\perp$ because~$j'$ preserves multiplication and $j_e'$ is an isomorphism from~$A$ onto the fixed-point algebra of~$\hat{\Hilm}'$. And for all~$\xi,\eta$ and~$\zeta\in \Hilm_p,$ we have that $$j'_e(\BRAKET{\xi}{\eta})j'_p(\zeta)=j'_p(\xi\braket{\eta}{\zeta})=j'_p(\xi)j'_e(\braket{\eta}{\zeta})=j'_p(\xi)j_p'(\eta)^*j_p'(\zeta).$$ This implies that~$j_e'(\BRAKET{\xi}{\eta})=j'_p(\xi)j_p'(\eta)^*$.

We obtain a \Star homomorphism $\bar{j'}\colon \CP_{\Hilm[I],\Hilm}\to \Cst((\hat{\Hilm}'_g)_{g\in G})$ by universal property. Such a \Star homomorphism induces an isomorphism of Fell bundles~$\hat{\Hilm}\cong\hat{\Hilm}'$ because~$\hat{\Hilm}'$ is also semi-saturated and orthogonal and, for all~$p\in P$, $j'_p\colon\Hilm_p\to\hat{\Hilm}_p'$ is an isomorphism. This completes the proof of the theorem.    
\end{proof}

\begin{cor}\label{cor:cp-cross-section} Let $(G,P)$ be a quasi-lattice ordered group, with~$G$ amenable. Let $\Hilm=(\Hilm_p)_{p\in P}$ be a simplifiable product system of Hilbert bimodules and let $\hat{\Hilm}=(\hat{\Hilm}_g)_{g\in G}$ be the Fell bundle extending the structure of product system of~$\Hilm$ as in Theorem~\textup{\ref{thm:uniqueness_and_extension}}. Then the regular representation of~$\hat{\Hilm}$ induces an injective \Star homomorphism of $$\CP_{\Hilm[I],\Hilm}\to\Bound(\oplus_{\substack{g\in G}}\hat{\Hilm}_g).$$ The compression by the projection $Q_+\colon\oplus_{\substack{g\in G}}\hat{\Hilm}_g\to\oplus_{\substack{p\in P}}\Hilm_p$ gives a completely positive map $\CP_{\Hilm[I], \Hilm}\to \mathcal{N}\Toep_{\Hilm}$ which is a cross-section to the quotient map $q\colon \mathcal{N}\Toep_{\Hilm}\to \CP_{\Hilm[I], \Hilm}$.
\end{cor}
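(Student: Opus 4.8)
The plan is to get injectivity in the first assertion from amenability of the group, and then to show that the positive compression of the regular representation reproduces, on homogeneous elements written in reduced form, exactly the Fock operators that generate $\mathcal{N}\Toep_{\Hilm}$.

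For the first assertion, I would use that the coaction of Proposition~\ref{prop:CP_grading_description} makes $\CP_{\Hilm[I],\Hilm}$ a topologically $G$\nb-graded $\Cst$\nb-algebra with Fell bundle $\hat{\Hilm}=(\hat{\Hilm}_g)_{g\in G}$. There are then canonical $\Star$ homomorphisms $\Cst(\hat{\Hilm})\twoheadrightarrow\CP_{\Hilm[I],\Hilm}\xrightarrow{\Lambda}\Cst_{\mathrm r}(\hat{\Hilm})$, the first surjective by the universal property of the full cross-sectional algebra and the second being the regular representation on the Hilbert module $\bigoplus_{g\in G}\hat{\Hilm}_g$; their composite agrees with the regular representation on each fibre and hence equals the canonical map from the full to the reduced cross-sectional $\Cst$\nb-algebra. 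Since $G$ is amenable, every Fell bundle over $G$ is amenable, so this composite is an isomorphism; therefore the first arrow is injective and so is $\Lambda$. This is precisely the asserted injective $\Star$ homomorphism $\CP_{\Hilm[I],\Hilm}\to\Bound(\bigoplus_{g\in G}\hat{\Hilm}_g)$.

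For the second assertion, I would set $E\colon\CP_{\Hilm[I],\Hilm}\to\Bound(\Hilm^+)$, $E(x)=Q_+\Lambda(x)Q_+$, identifying $\Hilm^+=\bigoplus_{p\in P}\Hilm_p$ with $\bigoplus_{p\in P}\hat{\Hilm}_p$ through the isomorphisms $j_p\colon\Hilm_p\cong\hat{\Hilm}_p$ of Lemma~\ref{pro:relative_embedding}. As a compression of a $\Star$ homomorphism by a projection, $E$ is completely positive and contractive, so the only points to verify are that its range lies in $\mathcal{N}\Toep_{\Hilm}$ and that $q\circ E=\id$. On a generator $j_p(\xi)$ with $p\in P$ one reads off directly that $Q_+\Lambda(j_p(\xi))Q_+=\psi_p^+(\xi)$, since $\Lambda(j_p(\xi))$ raises degrees by $p$ and hence preserves the positive part. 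Because $E$ is \emph{not} multiplicative, I would not argue on products but instead compute $E$ on homogeneous elements in reduced form: for $g$ with $g\vee e<\infty$, put $p_0=g\vee e$ and $q_0=g^{-1}\vee e$, so $g=p_0q_0^{-1}$, and evaluate $Q_+\Lambda(j_{p_0}(\xi)j_{q_0}(\eta)^*)Q_+$ on a summand $\hat{\Hilm}_s$ with $s\in P$; the outcome is the left action of $j_{p_0}(\xi)j_{q_0}(\eta)^*$ when $gs\in P$ and $0$ otherwise.

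The hard part will be to match this with the Fock operator, and it hinges on the lattice identity $\{s\in P\mid gs\in P\}=q_0P$: indeed $s\in P$ with $gs\in P$ means exactly that $s$ is a common upper bound of $e$ and $g^{-1}$, whence $s\geq g^{-1}\vee e=q_0$, and conversely $s\geq q_0$ forces $s\in P$ and $gs\in P$. This is precisely the support of $\psi^+_{p_0}(\xi)\psi^+_{q_0}(\eta)^*$, on which the two operators coincide, so I would conclude $E(j_{p_0}(\xi)j_{q_0}(\eta)^*)=\psi^+_{p_0}(\xi)\psi^+_{q_0}(\eta)^*\in\mathcal{N}\Toep_{\Hilm}$. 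By Example~\ref{ex:semisaturatedness_for_CP} together with Lemma~\ref{pro:relative_embedding}, such elements densely span $\hat{\Hilm}_g$, so linearity and continuity give $E(\hat{\Hilm}_g)\subseteq\mathcal{N}\Toep_{\Hilm}$ and hence $E(\CP_{\Hilm[I],\Hilm})\subseteq\mathcal{N}\Toep_{\Hilm}$. Finally, since $\CP_{\Hilm[I],\Hilm}$ is the quotient of $\mathcal{N}\Toep_{\Hilm}$ with $q(\psi_p^+(\xi))=j_p(\xi)$, applying $q$ to the previous identity recovers $j_{p_0}(\xi)j_{q_0}(\eta)^*$; by linearity and continuity this yields $q\circ E=\id$, so $E$ is the desired completely positive cross-section. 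The essential obstacle is the non-multiplicativity of the compression, which is overcome exactly by passing to reduced form and invoking the lattice identity above.
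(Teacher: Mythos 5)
Your strategy is the same as the paper's: amenability of $G$ gives faithfulness of the regular representation on $\CP_{\Hilm[I],\Hilm}\cong\Cst((\hat{\Hilm}_g)_{g\in G})$, and the compression by $Q_+$ is computed on the elements $j_{p_0}(\xi)j_{q_0}(\eta)^*$, $p_0=g\vee e$, $q_0=g^{-1}\vee e$, which densely span $\hat{\Hilm}_g$ by Example~\ref{ex:semisaturatedness_for_CP} and Lemma~\ref{pro:relative_embedding}. Your first assertion is proved correctly, and the computation in the second part --- the identity $\{s\in P: gs\in P\}=q_0P$ and the agreement of $Q_+\Lambda\bigl(j_{p_0}(\xi)j_{q_0}(\eta)^*\bigr)Q_+$ with $\psi^+_{p_0}(\xi)\psi^+_{q_0}(\eta)^*$ on each summand --- is correct as well; it is precisely the verification that the paper compresses into the single assertion that the compression of $\CP_{\Hilm[I],\Hilm}$ lies in the $\Cst$\nb-subalgebra of $\Bound(\Hilm^+)$ generated by the Fock representation.

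There is, however, one genuine gap: amenability is needed twice, and you use it only once. In this paper $\mathcal{N}\Toep_{\Hilm}$ is the universal $\Cst$\nb-algebra of Proposition~\ref{prop:defn_Nica_Toep}, with canonical generators $\bar{\pi}_p(\xi)$; the Fock representation induces a surjection $\overline{\psi^+}\colon\mathcal{N}\Toep_{\Hilm}\to\Toep^+$ onto the $\Cst$\nb-subalgebra $\Toep^+\subseteq\Bound(\Hilm^+)$ generated by the operators $\psi^+_p(\xi)$, but injectivity of $\overline{\psi^+}$ is a theorem, not a definition, and it fails for general non-amenable $(G,P)$. Your steps ``$\psi^+_{p_0}(\xi)\psi^+_{q_0}(\eta)^*\in\mathcal{N}\Toep_{\Hilm}$'' and ``$q(\psi^+_p(\xi))=j_p(\xi)$'' silently identify $\Toep^+$ with $\mathcal{N}\Toep_{\Hilm}$, i.e., they assume $\overline{\psi^+}$ is faithful. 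Without that, your map $E$ takes values in $\Toep^+$, while $q$ is defined on $\mathcal{N}\Toep_{\Hilm}$, and there is no canonical homomorphism $\Toep^+\to\mathcal{N}\Toep_{\Hilm}$ through which $q\circ E$ could even be formed. This is exactly where the paper invokes amenability a second time, citing \cite[Theorem~7.2]{Fowler:Product_systems} and \cite[Proposition~4.4]{SEHNEM2019558}: since $G$ is amenable, $\overline{\psi^+}$ is an isomorphism onto $\Toep^+$. Inserting this one sentence before your final step, your identity $q\circ E=\id$ on the spanning elements (now using $q(\bar{\pi}_p(\xi))=j_p(\xi)$) closes the argument by linearity and continuity.
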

\begin{proof} By definition, $\hat{\Hilm}_g=\CP^g_{\Hilm[I],\Hilm}$ for all~$g\in G$ and hence $\Cst((\hat{\Hilm}_g)_{g\in G})$ is canonically isomorphic to $\CP_{\Hilm[I], \Hilm}$. From this we deduce that the regular representation of~$\hat{\Hilm}$ induces a \Star homomorphism $\CP_{\Hilm[I],\Hilm}\to\Bound(\bigoplus_{\substack{g\in G}}\hat{\Hilm}_g)$. This is injective because~$G$ is amenable (see~\cite[Theorem~20.7]{Exel:Partial_dynamical}). The image of $\CP_{\Hilm[I],\Hilm}$ under the compression $b\mapsto Q_+bQ_+$ is precisely the $\Cst$\nb-subalgebra of $\Bound(\Hilm^+)$ generated by the Fock representation of~$\Hilm$. This $\Cst$\nb-algebra is isomorphic to $\mathcal{N}\Toep_{\Hilm}$ because~$G$ is amenable (see \cite[Theorem~7.2]{Fowler:Product_systems} and \cite[Proposition~4.4]{SEHNEM2019558}). We then get a cross-section $\CP_{\Hilm[I],\Hilm}\to\mathcal{N}\Toep_{\Hilm}$ to the quotient map~$q$ as asserted.
\end{proof}

A product system of Hilbert bimodules~$\Hilm=(\Hilm_p)_{p\in P}$ gives rise to a product system~$\Hilm^*$ over~$P^{\mathrm{op}}$ by setting $\Hilm^*\coloneqq (\Hilm_p^*)_{p\in P},$ where $\Hilm_p^*$ is the Hilbert bimodule adjoint to~$\Hilm_p$. The multiplication map $\Hilm_p^*\otimes_A\Hilm_q^*\cong\Hilm^*_{qp}$ is given by the isomorphism $\Hilm^*_p\otimes_A\Hilm_q^*\cong (\Hilm_q\otimes_A\Hilm_p)^*$, $\xi^*\otimes\eta^*\mapsto(\eta\otimes\xi)^*$, followed by the multiplication map~$\mu_{q,p}$. We identify~$A$ with its adjoint Hilbert bimodule~$A^*$ through the isomorphism~$a\mapsto \widetilde{a^*}$ implemented by the involution operation on~$A$, where~$\widetilde{a^*}$ is the image of~$a^*$ in~$A^*$ under the canonical conjugate-linear map. We regard $\Hilm^*$ as a product system of correspondences over~$A$ using this identification. So $\Hilm^*_p$ has the canonical structure of Hilbert $A$\nb-bimodule adjoint of $\Hilm_p$ for each~$p\in P\setminus\{e\}$. Notice that $\Hilm^{**}=\Hilm$. Before providing concrete examples of relative Cuntz--Pimsner algebras for simplifiable product systems of Hilbert bimodules, we need the following lemma.

\begin{lem}\label{lem:adjoint_product_system}  Let~$\Hilm=(\Hilm_p)_{p\in P}$ be a product system of Hilbert bimodules. For each~$p\in P$, let~$I_{\Hilm_p}\coloneqq\BRAKET{\Hilm_p}{\Hilm_p}$ and set~$\Hilm[I]_{\Hilm}=\{I_{\Hilm_p}\}_{p\in P}$. A representation~$\psi=\{\psi_p\}_{p\in P}$ of~$\Hilm$ in a $\Cst$\nb-algebra~$B$ that is Cuntz--Pimsner covariant on~$\Hilm[I]_{\Hilm}$ naturally induces a representation of~$\Hilm^*=(\Hilm_p^*)_{p\in P}$ that is Cuntz--Pimsner covariant on~$\Hilm[I]_{\Hilm^*}$, where~$\Hilm[I]_{\Hilm^*}=\{I_{\Hilm_p^*}\}_{p\in P}$ and~$I_{\Hilm_p^*}=\BRAKET{\Hilm^*_p}{\Hilm_p^*}=\braket{\Hilm_p}{\Hilm_p}$. As a consequence, representations of~$\Hilm$ that are Cuntz--Pimsner covariant on~$\Hilm[I]_{\Hilm}$ are in one-to-one correspondence with representations of~$\Hilm^*$ that are Cuntz--Pimsner covariant on~$\Hilm[I]_{\Hilm^*}$.
\end{lem}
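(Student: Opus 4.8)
The plan is to write the induced representation down explicitly and then to observe that passing to the adjoint bimodule merely interchanges the roles of axiom~(T2) and Cuntz--Pimsner covariance. Concretely, I would set $\psi^*_p(\xi^*)\defeq\psi_p(\xi)^*$ for $p\in P\setminus\{e\}$ and $\psi^*_e\defeq\psi_e$ under the identification $A\cong A^*$ fixed above. The first routine point is that each $\psi^*_p$ is complex-linear: it is the composite of the conjugate-linear canonical map $\Hilm_p\to\Hilm_p^*$, $\xi\mapsto\xi^*$, with the conjugate-linear involution of~$B$, and the composite of two conjugate-linear maps is linear. Since the identification $A\cong A^*$ is a \Star isomorphism, $\psi^*_e$ is a \Star homomorphism.

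Next I would verify (T1) for $\psi^*$ as a representation of $\Hilm^*$ over~$P^{\op}$. Recall that the multiplication map of $\Hilm^*$ sends $\xi^*\otimes\eta^*$ to $(\mu_{q,p}(\eta\otimes\xi))^*\in\Hilm^*_{qp}$ for $\xi\in\Hilm_p$ and $\eta\in\Hilm_q$. Using (T1) for $\psi$ together with the anti-multiplicativity of the involution of~$B$,
\[
\psi^*_p(\xi^*)\psi^*_q(\eta^*)=\psi_p(\xi)^*\psi_q(\eta)^*=\bigl(\psi_q(\eta)\psi_p(\xi)\bigr)^*=\psi_{qp}\bigl(\mu_{q,p}(\eta\otimes\xi)\bigr)^*,
\]
which is exactly $\psi^*_{qp}$ applied to the $\Hilm^*$-product of $\xi^*$ and $\eta^*$; the order reversal is precisely the passage to $P^{\op}$.

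The heart of the argument is the interchange of (T2) and Cuntz--Pimsner covariance, which rests on the fact that the adjoint bimodule swaps the two inner products, $\braket{\xi^*}{\eta^*}_{\Hilm_p^*}=\BRAKET{\xi}{\eta}$ and $\BRAKET{\xi^*}{\eta^*}_{\Hilm_p^*}=\braket{\xi}{\eta}$. For (T2) of $\psi^*$ I would compute $\psi^*_p(\xi^*)^*\psi^*_p(\eta^*)=\psi_p(\xi)\psi_p(\eta)^*$ and invoke Cuntz--Pimsner covariance of $\psi$ on $I_{\Hilm_p}=\BRAKET{\Hilm_p}{\Hilm_p}$: since $\varphi_p(\BRAKET{\xi}{\eta})=\ket{\xi}\bra{\eta}$, covariance gives $\psi_p(\xi)\psi_p(\eta)^*=\psi_e(\BRAKET{\xi}{\eta})=\psi^*_e(\braket{\xi^*}{\eta^*}_{\Hilm_p^*})$. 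Dually, to check Cuntz--Pimsner covariance of $\psi^*$ on $I_{\Hilm_p^*}=\braket{\Hilm_p}{\Hilm_p}$ I would compute $\psi^*_p(\xi^*)\psi^*_p(\eta^*)^*=\psi_p(\xi)^*\psi_p(\eta)$ and use (T2) for $\psi$ to obtain $\psi_e(\braket{\xi}{\eta})=\psi^*_e(\BRAKET{\xi^*}{\eta^*}_{\Hilm_p^*})$. Thus (T2) for $\psi$ becomes covariance for $\psi^*$, and covariance for $\psi$ becomes (T2) for $\psi^*$.

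For the final one-to-one correspondence I would use $\Hilm^{**}=\Hilm$ together with the fact that $\psi\mapsto\psi^*$ is involutive: since $\xi^{**}=\xi$ and $(\psi_p(\xi)^*)^*=\psi_p(\xi)$, applying the construction to $\psi^*$ returns~$\psi$, so the assignment is a bijection with the asserted properties. I expect none of the algebraic identities to be difficult; the only genuinely delicate point is the bookkeeping of the adjoint bimodule---keeping the interchange of the left and right inner products straight and matching the order reversal of the $P^{\op}$-multiplication with the anti-multiplicativity of the involution---together with the check that $\psi^*_e$ is a bona fide \Star homomorphism under the identification $A\cong A^*$.
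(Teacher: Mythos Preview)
Your proposal is correct and follows essentially the same approach as the paper: define $\psi^*_p(\xi^*)\defeq\psi_p(\xi)^*$, verify (T1) via anti-multiplicativity of the involution, obtain (T2) for $\psi^*$ from Cuntz--Pimsner covariance of $\psi$ (and vice versa), and deduce the bijection from $\Hilm^{**}=\Hilm$. You are slightly more explicit than the paper in checking linearity and in spelling out the covariance computation for $\psi^*$, but the argument is the same.
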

\begin{proof} For~$p=e$, put~$\psi_e^*\coloneqq\psi_e$. Given~$p\in P\setminus\{e\}$, define~$\psi^*_p\colon\Hilm_p^*\to B$ by $\psi^*_p(\xi^*)\coloneqq\psi_p(\xi)^*$ and set~$\psi^*=\{\psi^*_p\}_{p\in P}$. Then, for all~$\xi\in\Hilm_p$ and~$\eta\in\Hilm_q$, $$\psi^*_p(\xi^*)\psi^*_q(\eta^*)=\psi_p(\xi)^*\psi_q(\eta)^*=\psi_{qp}\left(\mu_{q,p}(\eta\otimes\xi)\right)^*=\psi_{qp}^*\left(\mu_{q,p}(\eta\otimes\xi)^*\right).$$ Since~$\psi$ is Cuntz--Pimsner covariant on~$\Hilm[I]_{\Hilm}$, it follows that $$\psi^*_p(\xi^*)^*\psi_p^*(\eta^*)=\psi_p(\xi)\psi_p(\eta)^*=\psi^{(p)}(\ket{\xi}\bra{\eta})=\psi_e(\BRAKET{\xi}{\eta})=\psi^*_e(\BRAKET{\xi}{\eta})$$ for all~$\xi,\eta\in\Hilm_p$. That~$\psi^*$ is Cuntz--Pimsner covariant on~$\Hilm[I]_{\Hilm^*}$ follows from the fact that~$\psi$ is a representation of~$\Hilm$. So the last statement is obtained from the identity~$\Hilm=\Hilm^{**}$.
\end{proof}

Let us present an important class of examples of product systems of Hilbert bimodules, with Proposition~\ref{prop:charact_extendible} in mind. This generalises Example~\ref{ex:semigroup-product-system}.

  \begin{example}[Crossed products by semigroups of endomorphisms]\label{ex:semigroup_of_endomorphisms} An endomorphism $\alpha\colon A\to A$ of a $\Cst$\nb-algebra~$A$ is said to be~\emph{extendible} if it extends to a strictly continuous endomorphism of the multiplier algebra~$M(A)$ (see~\cite{Adji-S}). This happens if and only if there is a projection~$Q\in M(A)$ so that~$\alpha(u_{\lambda})$ converges to~$Q$ in the strict topology of~$M(A)$, where~$(u_{\lambda})_{\lambda\in\Lambda}$ is an approximate unit for~$A$. In particular, we have $Q=\alpha(1)$, where here we still denote by~$\alpha$ the induced endomorphism of~$M(A)$. Let $\alpha\colon P\to \mathrm{End}(A)$ be an action by extendible endomorphisms with~$\alpha_e=\id_A$.\footnote{If $\alpha_e$ is injective, the equality $\alpha_e=\alpha_e\circ\alpha_e$ entails $\alpha_e=\id_A$.} For each~$p\in P$, let~$\tensor*[_{\alpha_p}]{A}{}\coloneqq\alpha_p(1)A$ be equipped with the structure of right Hilbert $A$\nb-module coming from the multiplication and involution operations on~$A$. That is, $\alpha_p(1)a\cdot b\coloneqq\alpha_p(1)ab$ and $\braket{\alpha_p(1)a}{\alpha_p(1)b}\coloneqq a^*\alpha_p(1)b$ for all~$a,b\in A$. We let~$\varphi_p\colon A\to\Bound(\tensor*[_{\alpha_p}]{A}{})$ be the \Star homomorphism implemented by~$\alpha_p$. So $\varphi_p(b)(\alpha_p(1)a)=\alpha_p(b)a$. This turns~$\tensor*[_{\alpha_p}]{A}{}$ into a correspondence over~$A$.
  
  We let~$\mu_{p,q}\colon\tensor*[_{\alpha_p}]{A}{}\otimes_A\tensor*[_{\alpha_q}]{A}{}\to\tensor*[_{\alpha_{qp}}]{A}{}$ be defined on elementary tensors by $$\alpha_p(1)a\otimes_A\alpha_q(1)b\mapsto \alpha_{qp}(1)\alpha_q(a)b.$$ This intertwines the left and right actions of~$A$ and preserves the $A$\nb-valued inner product. It is surjective because $$\alpha_{qp}(1)a=\lim_{\substack{\lambda}}\alpha_{qp}(u_{\lambda})a=\lim_{\substack{\lambda}}\alpha_q(\alpha_p(u_{\lambda}))a=\lim_{\substack{\lambda}}\alpha_q(\alpha_p(1)\alpha_p(u_{\lambda}))a.$$ Since~$p\mapsto \alpha_p$ is an action by endomorphisms, the multiplication maps are associative. Thus, $\alpha\colon P\to \text{End}(A)$ gives rise to a product system $\tensor*[_{\alpha}]{A}{}=(\tensor*[_{\alpha_p}]{A}{})_{p\in P}$ over~$P^{\mathrm{op}}$, where~$P^{\mathrm{op}}$ is the opposite semigroup of~$P$. Moreover, $\tensor*[_{\alpha}]{A}{}$ is proper, since~$\Comp(\tensor*[_{\alpha_p}]{A}{})\cong \alpha_p(1)A\alpha_p(1)$ and $\alpha_p(a)=\alpha_p(1)\alpha_p(a)\alpha_p(1)$ for all~$a\in A$ and~$p\in P$.

Now suppose further that~$\alpha\colon P\to \mathrm{End}(A)$ is an action by extendible endomorphisms as above with the property that, for all~$p\in P$, $\alpha_p$ is an injective endomorphism with hereditary range. In this case, $\tensor*[_{\alpha}]{A}{}$ is faithful and may be enriched to a product system of Hilbert bimodules over~$P^{\mathrm{op}}$. The left $A$\nb-valued inner product is given by $$\BRAKET{\alpha_p(1)a}{\alpha_p(1)b}=\alpha_p^{-1}(\alpha_p(1)ab^*\alpha_p(1))$$ for all~$a,b\in A$ and~$p\in P$. In particular, this yields a product system $A_{\alpha}=(A_{\alpha_p})_{p\in P}$ over~$P$, where~$A_{\alpha_p}=A\alpha_p(1)$ with the Hilbert $A$\nb-bimodule structure obtained from~$\tensor*[_{\alpha_p}]{A}{}^*$ through the identification $\widetilde{\alpha_p(1)a}\mapsto a^*\alpha_p(1)$. Observe that  $\tensor*[_{\alpha}]{A}{}$ is always compactly aligned. It is simplifiable if and only if~$P$ is right reversible. It is not clear when~$A_{\alpha}$ is simplifiable (or even compactly aligned).

Let us now relate constructions of semigroup crossed products with relative Cuntz--Pimsner algebras: the ideal $I_p\idealin A$ given by the left inner product of~$A_{\alpha_p}$ is precisely $\overline{A\alpha_p(1)A}$. Given a nondegenerate representation $\psi=\{\psi_p\}_{p\in P}$ of~$A_{\alpha}$ in a $\Cst$\nb-algebra~$B$, we obtain a strictly continuous unital \Star homomorphism~$\bar{\psi}_e\colon M(A)\to M(B)$ by nondegeneracy of~$\psi_e$. In addition, we define a semigroup homomorphism from~$P$ to the semigroup of isometries in~$M(B)$ by setting~$$v_p\coloneqq\lim_{\substack{\lambda}}\psi_p(u_{\lambda}\alpha_p(1)).$$ Here the limit is taken in the strict topology of~$M(B)$. It indeed exists because~$\|\psi_p(u_{\lambda}\alpha_p(1))\|\leq 1$ for each~$\lambda$ and, for~$a\in A$ and~$b\in B$, $$\lim_{\substack{\lambda}}\psi_p(u_{\lambda}\alpha_p(1))(\psi_e(a)b)=\lim_{\substack{\lambda}}\psi_p(u_{\lambda}\alpha_p(a)) b=\psi_p(\alpha_p(a))b$$ and $$\lim_{\substack{\lambda}}(b\psi_e(a))\psi_p(u_{\lambda}\alpha_p(1))=\lim_{\substack{\lambda}}b\psi_p(au_{\lambda}\alpha_p(1))=b\psi_p(a\alpha_p(1)).$$ To see that $v_p^*v_p=1$, observe that \begin{equation*}\begin{aligned}v_p^*v_p(\psi_e(a)b)&=\lim_{\substack{\lambda}}\psi_p(u_{\lambda}\alpha_p(1))^*\psi_p(\alpha_p(a))b\\&=\lim_{\substack{\lambda}}\psi_e\big(\alpha_p^{-1}(\alpha_p(1)u_{\lambda}\alpha_p(a))\big)b=\psi_e(a)b.\end{aligned}\end{equation*}

 The semigroup of isometries~$\left\{v_p\middle|\, p\in P\right\}$ together with the \Star homomorphism $\bar{\psi}_e\colon M(A)\to M(B)$ satisfy the relation 
$$v_p\cdot \bar{\psi}_e(c)=\bar{\psi}_e(\alpha_p(c))v_p$$ for all~$c\in M(A)$ and~$p\in P$. Hence \begin{equation}\label{eq:covariant_pair} \bar{\psi}_e(\alpha_p(c))v_pv_p^*=v_p \bar{\psi}_e(c)v_p^*.\end{equation} In addition, $\psi_p(a\alpha_p(1))=\psi_e(a)v_p$ for all~$a\in A$ and~$p\in P$. If~$\psi$ is Cuntz--Pimsner covariant on
$\Hilm[I]_{A_{\alpha}}=\{I_p\}_{p\in P},$ it follows that for all $c\in M(A)$ and $p\in P$, $$\bar{\psi}_e(\alpha_p(c))=\bar{\psi}_e(\alpha_p(c))v_pv_p^*.$$ Indeed, for~$c$ in~$A$ and~$a\alpha_p(1)$ in~$A_{\alpha_p}$, we compute \begin{align*}\alpha_p(c^*c)a\alpha_p(1)&=\alpha_p(c^*)\alpha_p(\alpha_{p^{-1}}(\alpha_p(c)a\alpha_p(1)))\\&=\alpha_p(c^*)\alpha_p\left(\braket{\alpha_p(c^*)\alpha_p(1)}{a\alpha_p(1)}\right)\\&=\alpha_p(c^*)\cdot \braket{\alpha_p(c^*)\alpha_p(1)}{a\alpha_p(1)}\\&=\ket{\alpha_p(c^*)}\bra{\alpha_p(c^*)}(a\alpha_p(1)).\end{align*} Hence Cuntz--Pimsner covariance gives us \begin{align*}\psi_e(\alpha_p(c^*c))&=\psi_p(\alpha_p(c^*))\psi_p(\alpha_p(c^*))^*=\psi_e(\alpha_p(c^*))v_pv_p^*\psi_e(\alpha_p(c))\\&=\psi_e(\alpha_p(c^*))v_p\psi_e(c)v_p^*=\psi_e(\alpha_p(c^*c))v_pv_p^*.\end{align*} Since~$A$ is spanned by positive elements, the same relation holds for all~$c\in A$ and thus for all~$c\in M(A)$ if we replace~$\psi_e$ by its extension~$\bar{\psi}_e$. So combining this with~\eqref{eq:covariant_pair}, we deduce the relation $$\psi_e(\alpha_p(c))=v_p\psi_e(c)v_p^*$$ for all~$c\in A$. The same holds for~$\bar{\psi}_e$ and~$c$ in $M(A)$. 

Conversely, we claim that a nondegenerate \Star homomorphism $\pi\colon A\to B$ together with a semigroup of isometries~$\left\{v_p\middle| p\in P\right\}$ satisfying the relation \begin{equation}\label{eq:covariance_cp}\pi(\alpha_p(a))=v_p\pi(a)v_p^*\end{equation} yields a representation of~$A_{\alpha}$ that is Cuntz--Pimsner covariant on~$\Hilm[I]_{A_{\alpha}}$. First, notice that the projection~$v_pv_p^*$ coincides with~$\bar{\pi}(\alpha_p(1))$, where~$\bar{\pi}$ is the strictly continuous \Star homomorphism $M(A)\to M(B)$ extending~$\pi$. For each~$p\in P$ and $a\in A$, we set~$\psi_p(a \alpha_p(1))\coloneqq \pi(a)v_p$. Put $\psi=\{\psi_p\}_{p\in P}$. Then $\bar{\pi}(\alpha_p(1))=v_pv_p^*$ implies that~$\psi$ is Cuntz--Pimsner covariant on~$I_p=\overline{A\alpha_p(1)A}$ for all~$p\in P$, since \begin{align*}\psi_e\left(a\alpha_p(1)b\right)&=\pi\left(a\alpha_p(1)b\right)=\pi(a)\bar{\pi}(\alpha_p(1))\pi(b)\\&=\pi(a)v_pv_p^*\pi(b)=\pi(a)v_p(\pi(b^*)v_p)^*\\&=\psi_p(a\alpha_p(1))\psi_p(b^*\alpha_p(1))^*=\psi^{(p)}\left(\ket{a\alpha_p(1)}\bra{b^*\alpha_p(1)}\right)\end{align*} for all~$a$ and~$b$ in~$A$. Moreover, \eqref{eq:covariance_cp} tells us that $\psi_e(\alpha_p(a))v_p=v_p\psi_e(a)$ for all~$a\in A$ and~$p\in P$. This also gives $$\psi_p(a\alpha_p(1))\psi_q(b\alpha_q(1))=\psi_{pq}(a\alpha_p(b)\alpha_{pq}(1))=\psi_{pq}\big(\mu_{p,q}(a\alpha_p(1)\otimes b\alpha_q(1))\big).$$ Again by \eqref{eq:covariance_cp},\begin{align*}\psi_e\left(\alpha_{p^{-1}}(\alpha_p(1)a^*b\alpha_p(1))\right)&=v_p^*v_p\psi_e\left(\alpha_{p^{-1}}(\alpha_p(1)a^*b\alpha_p(1))\right)v_p^*v_p\\&=v_p^*\psi_e\left(\alpha_p(1)a^*b\alpha_p(1)\right)v_p\\&=v_p^*\psi_e(a^*b)v_p.\end{align*} This shows that~$\psi$ is a representation of~$A_{\alpha}$ that is Cuntz--Pimsner covariant on~$\Hilm[I]_{A_{\alpha}}$. 

As a result, the crossed product $A\rtimes_{\alpha}P$ of~$A$ by the semigroup of endomorphisms provided by~$\alpha$ has a description as the universal $\Cst$\nb-algebra of representations of~$A_{\alpha}$ that are Cuntz--Pimsner covariant on~$\Hilm[I]_{A_{\alpha}}$. By Lemma~\ref{lem:adjoint_product_system}, $A\rtimes_{\alpha}P$ may also be described as the universal $\Cst$\nb-algebra for representations of~$\tensor*[_{\alpha}]{A}{}$ that are Cuntz--Pimsner covariant on $\Hilm[I]_{\tensor*[_{\alpha}]{A}{}}$. If~$P^{\mathrm{op}}$ is the positive cone of a quasi-lattice order and~$P$ is right reversible (so~$P^{\mathrm{op}}$ is directed), a representation of $\tensor*[_{\alpha}]{A}{}$ that is Cuntz--Pimsner covariant on $\Hilm[I]_{\tensor*[_{\alpha}]{A}{}}$ is also Nica covariant by \cite[Proposition 5.4]{Fowler:Product_systems}. In this case $\CP_{\Hilm[I]_{\tensor*[_{\alpha}]{A}{} },\tensor*[_{\alpha}]{A}{}}\cong A\rtimes_{\alpha}P.$ In general, $A\rtimes_{\alpha}P$ is the Cuntz--Pimsner algebra of $\tensor*[_{\alpha}]{A}{}$ as defined by Fowler~\cite[Proposition~3.4]{Fowler:Product_systems}. See, for instance, \cite{LACA1996415} and \cite{MR1955617} for constructions of crossed products by semigroups of endomorphisms. We also refer the reader to~\cite{Larsen:Crossed_abelian} for this and further constructions of crossed products out of product systems.
\end{example}

\subsection{From Fell bundles to product systems}

Let~$(B_g)_{g\in G}$ be a semi-saturated Fell bundle with respect to~$(G,P)$. There is a canonical product system associated to~$(B_g)_{g\in G}$: for each~$p\in P$, view~$B_p$ as a Hilbert $B_e$\nb-bimodule with left and right actions inherited from the multiplication in~$(B_g)_{g\in G}$. The left inner product is given by~$\BRAKET{\xi}{\eta}\coloneqq\xi\eta^*$, while the right inner product is~$\braket{\xi}{\eta}\coloneqq\xi^*\eta$. The property (S1) of Definition~\ref{def:semi-saturatedness} says that~$\Hilm[B]=(B_p)_{p\in P}$ is a product system with isomorphisms~$B_p\otimes_{B_e} B_q\cong B_{pq}$ coming from the multiplication in~$(B_g)_{g\in G}$. If~$(B_g)_{g\in G}$ is also orthogonal, the next result states that the cross-sectional $\Cst$\nb-algebra of~$(B_g)_{g\in G}$ can be recovered from~$\Hilm[B].$

\begin{prop}\label{prop:associated_product_system} Let $(B_g)_{g\in G}$ be a Fell bundle that is semi-saturated and orthogonal with respect to~$(G,P)$. Then~$\Hilm[B]=(B_p)_{p\in P}$ is a simplifiable product system of Hilbert bimodules. Its relative Cuntz--Pimsner algebra~$\CP_{\Hilm[I],\Hilm[B]}$ is naturally isomorphic to the cross-sectional $\Cst$\nb-algebra of~$(B_g)_{g\in G}$. Such an isomorphism comes from the canonical isomorphism between the Fell bundles $(\CP^g_{\Hilm[I],\Hilm[B]})_{g\in G}$ and $(B_g)_{g\in G}$.
\end{prop}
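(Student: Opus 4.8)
The plan is to exhibit $\Hilm[B]=(B_p)_{p\in P}$ as a simplifiable product system and then identify its relative Cuntz--Pimsner algebra with $\Cst((B_g)_{g\in G})$ by appealing to the uniqueness part of Theorem~\ref{thm:uniqueness_and_extension}. First I would record the structure maps: axiom (S1) supplies the multiplication isomorphisms $B_p\otimes_{B_e}B_q\cong B_{pq}$, each $B_p$ is a Hilbert $B_e$\nb-bimodule for $\BRAKET{\xi}{\eta}=\xi\eta^*$ and $\braket{\xi}{\eta}=\xi^*\eta$, and since $\varphi_p(\xi\eta^*)=\ket{\xi}\bra{\eta}\in\Comp(B_p)$ the ideal $I_p=\BRAKET{B_p}{B_p}=\overline{B_pB_p^*}$ acts on $B_p$ by compact operators; hence $\Hilm[I]=\{I_p\}_{p\in P}$ is an admissible family and $\CP_{\Hilm[I],\Hilm[B]}$ is defined.

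The first substantive step is simplifiability, for which I would first prove the order-theoretic fact that, for $p,q\in P$, one has $p\vee q=\infty$ if and only if $(p^{-1}q)\vee e=\infty$, and moreover $(p^{-1}q)\vee e=p^{-1}(p\vee q)$ and $(q^{-1}p)\vee e=q^{-1}(p\vee q)$ whenever $p\vee q<\infty$. Indeed, if $m=(p^{-1}q)\vee e$ exists then $pm$ is a common upper bound of $p$ and $q$, while if $w=p\vee q$ exists then $p^{-1}w$ is the least upper bound of $e$ and $p^{-1}q$ (any upper bound $r$ of these satisfies $pr\geq p,q$, hence $pr\geq w$ and $r\geq p^{-1}w$). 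Condition (ii) of Definition~\ref{def:simplifiable_product_bimodules} is then immediate: when $p\vee q=\infty$ orthogonality gives $B_{p^{-1}q}=\{0\}$, so the inner factor $B_p^*B_q\subseteq B_{p^{-1}q}$ vanishes and $\BRAKET{B_p}{B_p}\BRAKET{B_q}{B_q}=\{0\}$. For condition (i), write $w=p\vee q$, $p'=p^{-1}w$, $q'=q^{-1}w$; axiom (S2) applied to $p^{-1}q$ gives $B_{p^{-1}q}=B_{p'}B_{q'}^*$, and then (S1) yields
$$B_pB_p^*B_qB_q^*=B_p(B_p^*B_q)B_q^*\subseteq B_pB_{p'}B_{q'}^*B_q^*=B_wB_w^*=\BRAKET{B_{p\vee q}}{B_{p\vee q}},$$
which is condition (i).

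With simplifiability in hand I would invoke Theorem~\ref{thm:uniqueness_and_extension}. The point is that $(B_g)_{g\in G}$ is \emph{itself} a semi-saturated and orthogonal Fell bundle extending $\Hilm[B]$ in the sense of that theorem: taking $j'_p=\id_{B_p}$, the relations $j'_p(\xi)j'_q(\eta)=\mu_{p,q}(\xi\otimes\eta)$ and $j'_p(\xi)^*j'_p(\eta)=j'_e(\braket{\xi}{\eta})$ hold tautologically and $j'_e=\id_{B_e}$ is a \Star isomorphism onto the unit fibre. Concretely, $j'$ is Cuntz--Pimsner covariant on $\Hilm[I]$ because $\BRAKET{\xi}{\eta}=\xi\eta^*=j'_p(\xi)j'_p(\eta)^*$, hence Nica covariant by Proposition~\ref{prop:cuntz_nica}, so the universal property produces $\bar{j'}\colon\CP_{\Hilm[I],\Hilm[B]}\to\Cst((B_g)_{g\in G})$. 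The uniqueness clause of Theorem~\ref{thm:uniqueness_and_extension} then shows that $\bar{j'}$ restricts to an isomorphism of Fell bundles $(\CP^g_{\Hilm[I],\Hilm[B]})_{g\in G}\cong(B_g)_{g\in G}$, since both bundles are semi-saturated, orthogonal and agree on the positive fibres.

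The main obstacle is to pass from this fibrewise identification to an isomorphism of the \emph{full} cross-sectional $\Cst$\nb-algebras, i.e.\ to see that $\bar{j'}$ is injective and not merely a graded fibrewise isomorphism. This is where fullness matters: by Proposition~\ref{prop:CP_grading_description} the coaction of $G$ on $\CP_{\Hilm[I],\Hilm[B]}$ is full, so $\CP_{\Hilm[I],\Hilm[B]}$ is canonically the full cross-sectional $\Cst$\nb-algebra of its grading $(\CP^g_{\Hilm[I],\Hilm[B]})_{g\in G}$, while $\Cst((B_g)_{g\in G})$ is by definition the full cross-sectional algebra of $(B_g)_{g\in G}$. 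Functoriality of the full cross-sectional construction under the Fell-bundle isomorphism of the previous paragraph therefore turns it into the asserted natural isomorphism $\CP_{\Hilm[I],\Hilm[B]}\cong\Cst((B_g)_{g\in G})$, realised precisely by $\bar{j'}$.
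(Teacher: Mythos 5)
Everything in your proposal up to and including the Fell-bundle isomorphism is correct and coincides with the paper's own proof: the order-theoretic identities $(p^{-1}q)\vee e=p^{-1}(p\vee q)$ and $(q^{-1}p)\vee e=q^{-1}(p\vee q)$, the verification of conditions (i) and (ii) of Definition~\ref{def:simplifiable_product_bimodules} from (S1), (S2) and orthogonality, and the use of Cuntz--Pimsner covariance of $j'_p=\id_{B_p}$ (hence Nica covariance, by Proposition~\ref{prop:cuntz_nica}) together with the uniqueness clause of Theorem~\ref{thm:uniqueness_and_extension} to produce $\bar{j'}\colon\CP_{\Hilm[I],\Hilm[B]}\to\Cst((B_g)_{g\in G})$ restricting to a Fell-bundle isomorphism $(\CP^g_{\Hilm[I],\Hilm[B]})_{g\in G}\cong(B_g)_{g\in G}$. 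The gap is in your last paragraph. Proposition~\ref{prop:CP_grading_description} asserts that the coaction $\delta$ is \emph{full} and nondegenerate, but ``full'' there only means that $\delta$ takes values in $\CP_{\Hilm[I],\Hilm[B]}\otimes\Cst(G)$ with the \emph{full group} $\Cst$\nb-algebra, as opposed to the reduced one; it does not mean that $\CP_{\Hilm[I],\Hilm[B]}$ is the full (maximal) cross-sectional $\Cst$\nb-algebra of its grading bundle. These are genuinely different: a topologically graded $\Cst$\nb-algebra carrying a full nondegenerate coaction sits in general strictly between $\Cst(\mathcal{B})$ and $\Cst_r(\mathcal{B})$. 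Concretely, for $G=\FF_2$ the algebra $\Cst_r(\FF_2)$ carries the full nondegenerate coaction $\lambda_g\mapsto\lambda_g\otimes u_g$, its grading bundle is the trivial line bundle over $\FF_2$, and yet $\Cst_r(\FF_2)$ is not the full cross-sectional algebra $\Cst(\FF_2)$ of that bundle. So fullness of the coaction cannot be the reason $\bar{j'}$ is injective, and as written your final step does not go through.

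The repair is short and is exactly what the paper does. The inverse of the fibrewise isomorphism $(\CP^g_{\Hilm[I],\Hilm[B]})_{g\in G}\cong(B_g)_{g\in G}$ is a representation of the Fell bundle $(B_g)_{g\in G}$ in the $\Cst$\nb-algebra $\CP_{\Hilm[I],\Hilm[B]}$, so the universal property of the full cross-sectional algebra $\Cst((B_g)_{g\in G})$ yields a \Star homomorphism $\phi\colon\Cst((B_g)_{g\in G})\to\CP_{\Hilm[I],\Hilm[B]}$. Then $\bar{j'}\circ\phi$ and $\phi\circ\bar{j'}$ are the identity on the fibres $B_g$ and on $j(\Hilm[B])$ respectively, and these generate the two algebras, so both composites are the identity and $\bar{j'}$ is an isomorphism. (Equivalently, one can prove that $\CP_{\Hilm[I],\Hilm[B]}$ \emph{is} maximal over its grading by checking that the inclusion of the fibres $(\CP^g_{\Hilm[I],\Hilm[B]})_{g\in G}$ into $\Cst\big((\CP^g_{\Hilm[I],\Hilm[B]})_{g\in G}\big)$ restricts to a Nica covariant and Cuntz--Pimsner covariant representation of $\Hilm[B]$, whence the canonical surjection $\Cst\big((\CP^g_{\Hilm[I],\Hilm[B]})_{g\in G}\big)\to\CP_{\Hilm[I],\Hilm[B]}$ admits an inverse; but this maximality must be argued via the universal property, not cited from Proposition~\ref{prop:CP_grading_description}.)
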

\begin{proof} Let $p,q\in P$ and set $g=p^{-1}q$. Notice that~$p\vee q=\infty$ if and only if~$g\vee e=\infty$ and hence $\BRAKET{B_p}{B_p}\BRAKET{B_q}{B_q}=B_pB_p^*B_qB_q^*=\{0\}$ provided $p\vee q=\infty$. Suppose that $p\vee q<\infty$. Then~$g\vee e=p^{-1}(p\vee q)$ and $g^{-1}\vee e=q^{-1}(p\vee q)$ so that $$
\begin{aligned}\BRAKET{B_p}{B_p}\BRAKET{B_q}{B_q}&\subseteq B_pB_{p^{-1}q}B_q^*\\&=B_pB_{p^{-1}(p\vee q)}B^*_{q^{-1}(p\vee q)}B^*_q\\&=B_{p\vee q}B^*_{p\vee q}=\BRAKET{B_{p\vee q}}{B_{p\vee q}}.
\end{aligned}
$$

The representation of~$(B_g)_{g\in G}$ in $\Cst((B_g)_{g\in G})$ restricted to the fibres over~$P$ is Cuntz--Pimsner covariant on~$\Hilm[I]$. This gives us a \Star homomorphism $\psi\colon\CP_{\Hilm[I],\Hilm[B]}\rightarrow\Cst((B_g)_{g\in G})$ that is an isomorphism between the Fell bundles $(\CP^g_{\Hilm[I],\Hilm[B]})_{g\in G}$ and $(B_g)_{g\in G}$ by the uniqueness property established in Theorem~\ref{thm:uniqueness_and_extension}. Such an isomorphism yields a representation of~$(B_g)_{g\in G}$ in~$\CP_{\Hilm[I],\Hilm[B]}$ and hence $\CP_{\Hilm[I],\Hilm[B]}$ and $\Cst((B_g)_{g\in G})$ are canonically isomorphic to each other. 
\end{proof}

Combining Example~\ref{ex:semisaturatedness_for_CP} with the previous proposition, we obtain the following:

\begin{cor}\label{cor:CP_simplifiable_psystem} Let~$\Hilm=(\Hilm_p)_{p\in P}$ be a compactly aligned product system and $\CP_{\Hilm[J],\Hilm}$ a relative Cuntz--Pimsner algebra associated to~$\Hilm$. Then $(\CP^p_{\Hilm[J],\Hilm})_{p\in P}$ is simplifiable.
\end{cor}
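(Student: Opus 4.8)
The plan is to recognise $(\CP^p_{\Hilm[J],\Hilm})_{p\in P}$ as the product system of positive fibres of a semi-saturated and orthogonal Fell bundle over~$G$, and then to quote Proposition~\ref{prop:associated_product_system}. The coaction~$\delta$ from Proposition~\ref{prop:CP_grading_description} equips $\CP_{\Hilm[J],\Hilm}$ with the topological $G$\nb-grading $(\CP^g_{\Hilm[J],\Hilm})_{g\in G}$, whose cross-sectional $\Cst$\nb-algebra recovers $\CP_{\Hilm[J],\Hilm}$, so this is the Fell bundle I would feed into the proposition.

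First I would check that the hypotheses of Example~\ref{ex:semisaturatedness_for_CP} are in force. By definition, a relative Cuntz--Pimsner algebra $\CP_{\Hilm[J],\Hilm}$ is attached to a family $\Hilm[J]=\{J_p\}_{p\in P}$ with $\varphi_p(J_p)\subseteq\Comp(\Hilm_p)$, equivalently $J_p\subseteq\varphi_p^{-1}(\Comp(\Hilm_p))$, for all~$p\in P$. This is exactly the standing assumption of Example~\ref{ex:semisaturatedness_for_CP}, so that example applies and shows that $(\CP^g_{\Hilm[J],\Hilm})_{g\in G}$ is orthogonal and semi-saturated with respect to~$(G,P)$: orthogonality is the vanishing $\CP^g_{\Hilm[J],\Hilm}=\{0\}$ for $g\vee e=\infty$ coming from Proposition~\ref{prop:CP_grading_description}, axiom~(S1) comes from Corollary~\ref{cor:positive_fibres_isomorphism}, and axiom~(S2) from the inclusion $j_p(\xi)j_q(\eta)^*\in\CP^{g\vee e}_{\Hilm[J],\Hilm}(\CP^{g^{-1}\vee e}_{\Hilm[J],\Hilm})^*$ established there for $pq^{-1}=g$.

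Then I would apply Proposition~\ref{prop:associated_product_system} to the Fell bundle $(B_g)_{g\in G}\coloneqq(\CP^g_{\Hilm[J],\Hilm})_{g\in G}$. Since it is semi-saturated and orthogonal, the proposition asserts that its associated product system of positive fibres $\Hilm[B]=(B_p)_{p\in P}$ is a simplifiable product system of Hilbert bimodules. As $B_p=\CP^p_{\Hilm[J],\Hilm}$ by construction, we get $\Hilm[B]=(\CP^p_{\Hilm[J],\Hilm})_{p\in P}$, which is precisely the product system in the statement, yielding the claim.

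There is no substantial obstacle here: the corollary is a pure combination of two previously established facts, and the only genuine content is the bookkeeping identification that the product system of positive fibres extracted from the graded bundle in Proposition~\ref{prop:associated_product_system} is literally the family $(\CP^p_{\Hilm[J],\Hilm})_{p\in P}$. The single point worth spelling out, were the argument given in full, is that the Hilbert $\CP^e_{\Hilm[J],\Hilm}$\nb-bimodule structure used in Proposition~\ref{prop:associated_product_system}, with inner products $\BRAKET{\xi}{\eta}=\xi\eta^*$ and $\braket{\xi}{\eta}=\xi^*\eta$, coincides with the structure that $\CP^p_{\Hilm[J],\Hilm}$ already carries from the multiplication and involution on $\CP_{\Hilm[J],\Hilm}$; this is immediate from the definition of the grading.
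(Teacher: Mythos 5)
Your proposal is correct and matches the paper's own argument exactly: the corollary is stated there as an immediate combination of Example~\ref{ex:semisaturatedness_for_CP} (the grading $(\CP^g_{\Hilm[J],\Hilm})_{g\in G}$ is semi-saturated and orthogonal) with Proposition~\ref{prop:associated_product_system} (the positive fibres of such a Fell bundle form a simplifiable product system of Hilbert bimodules). Your extra remark identifying the bimodule structure on $\CP^p_{\Hilm[J],\Hilm}$ with the one used in that proposition is a harmless piece of bookkeeping the paper leaves implicit.
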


\begin{defn} Let $(G,P)$ be a quasi-lattice order. A Fell bundle over~$G$ is said to be \emph{extended from P} if it is semi-saturated and orthogonal with respect to the quasi-lattice ordered group structure of~$(G,P)$.
\end{defn}

\subsection{Amenability for Fell bundles extended from free semigroups}\label{sec:amenability_for_Fellbundles}

A quasi-lattice ordered group~$(G,P)$ is called~\emph{amenable} if the Fock representation of~$P$ in~$\Bound(\ell_2(P))$ induces an injective \Star homomorphism $\psi^+\colon\Cst(G,P)\to \Bound(\ell_2(P))$ (see~\cite[Section 4.2]{Nica:Wiener--hopf_operators}). Examples of amenable quasi-lattice orders are free groups~\cite{Exel:Partial_amenable_free,Nica:Wiener--hopf_operators}, Baumslag--Solitar groups~$\mathrm{BS}(c,d)$ with~$c,d$ positive integers~\cite{MR3595491} and, of course, $(G,P)$ for an amenable group~$G$. Counterexamples are, for instance, nonabelian Artin groups of finite type~\cite{crisp_laca_2002}. In~\cite{Exel:Partial_amenable_free}, Exel proved that Fell bundles extended from a free semigroup~$\FF^+$ are amenable, under a separability hypothesis. In this section, we follow the ideas of~\cite{MR3595491} to show that any Fell bundle extended from~$\FF^+$ is amenable, with no extra assumptions. But here we deduce faithfulness of the regular representation from gauge-invariant uniqueness theorems for relative Cuntz--Pimsner algebras of single correspondences.  The same techniques are employed to show that a Fell bundle extended from~$\mathrm{BS}(c,d)^+$ is always amenable. This suggests that amenability for Fell bundles extended from a positive cone is connected with amenability of the underlying quasi-lattice ordered group. We have been unable to establish a general result in this direction.

\begin{lem}\label{lem:cp_picture} Let~$(B_g)_{g\in\FF}$ be a Fell bundle extended from~$\FF^+$. View~$\Hilm[G]\coloneqq {\bigoplus}_{a\in S}B_{a}$ as a correspondence over~$B_e$. Then~$\Cst\left((B_g)_{g\in \FF}\right)\cong\CP_{I_{\Hilm[G]},\Hilm[G]}$. 
\begin{proof} Let~$\theta\colon\FF\to \ZZ$ be the group homomorphism defined on the generators by~$a\mapsto 1$, for all~$a\in S$. So for~$b\in\FF^+$, $\theta(b)=|b|$ is the length of~$b$ in its reduced form. This induces a coaction of~$\ZZ$ on~$(B_g)_{g\in G}$ by~\cite[Example A.28]{Echterhoff-Kaliszewski-Quigg-Raeburn:Categorical}. Hence it provides~$\Cst((B_g)_{g\in\FF})$ with a topological $\ZZ$\nb-grading, for which the corresponding spectral subspace at~$m\in \ZZ$ is the closure of $$\mathrm{span}\{\xi_p\cdot\eta_q^*\vert\,p,q\in\FF^+\,\text{ and }\theta(p)-\theta(q)=m\}.$$

Now let~$\Hilm[G]$ be the direct sum ${\bigoplus}_{a\in S}B_{a}$ viewed as a correspondence over~$B_e$ in the usual way. Let~$I_{\Hilm[G]}$ be Katsura's ideal for~$\Hilm[G]$. That is, $$I_{\Hilm[G]}=\varphi^{-1}_{\Hilm[G]}(\Comp(\Hilm[G]))\cap(\ker\varphi_{\Hilm[G]})^\perp=\bigoplus_{\substack{a\in S}}B_{a}B_{a}^*.$$ This sum is indeed orthogonal because $B_{a}B_{a}^*B_{b}B^*_{b}=\{0\}$ for~$a\neq b$. It follows that $$\left(\bigoplus_{\substack{a\in S}}\xi_{a}\right)^*\left(\bigoplus_{\substack{a\in S}}\eta_{a}\right)=\bigoplus_{\substack{a\in S}}\xi_{a}^*\eta_{a}$$ in~$\Cst((B_g)_{g\in \FF})$, where~$\xi_{a},\eta_{a}\in B_{a}$ for all~$a\in S$. Thus we get a representation of~$\Hilm[G]$ in~$\Cst((B_g)_{g\in\FF})$ obtained by restricting the representation of~$(B_g)_{g\in G}$ to the~$B_{a}$'s. This is a gauge-compatible injective representation of~$\Hilm[G]$ that is covariant on~$I_{\Hilm[G]}$, and whose image generates~$\Cst((B_g)_{g\in \FF})$ as a $\Cst$\nb-algebra. Hence it induces an isomorphism~$\CP_{I_{\Hilm[G]},\Hilm[G]}\to\Cst((B_g)_{g\in \FF})$ by~\cite[Theorem 6.4]{Katsura:Cstar_correspondences}.
\end{proof}
\end{lem}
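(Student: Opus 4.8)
The plan is to realise $\Cst((B_g)_{g\in\FF})$ as a Cuntz--Pimsner algebra of the single correspondence $\Hilm[G]=\bigoplus_{a\in S}B_a$ over $B_e$ and then invoke Katsura's gauge-invariant uniqueness theorem \cite[Theorem~6.4]{Katsura:Cstar_correspondences}. First I would restrict the canonical representation of the Fell bundle in its cross-sectional $\Cst$\nb-algebra to the unit fibre $B_e$ and to the fibres $B_a$ over the generators $a\in S$. Writing $\pi$ for the inclusion of $B_e$ and $t=\bigoplus_a t_a$ for the map $\Hilm[G]\to\Cst((B_g)_{g\in\FF})$ assembled from these restrictions, the relations $t(\xi)^*t(\eta)=\pi(\braket{\xi}{\eta})$ and $\pi(c)t(\xi)=t(c\xi)$ follow directly from the algebraic operations on the bundle. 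The crucial input here is \emph{orthogonality}: for distinct generators $a\neq b$ one has $B_a^*B_b\subseteq B_{a^{-1}b}=\{0\}$ since $a^{-1}b\vee e=\infty$, so the right inner product on $\Hilm[G]$ is diagonal, $\bigl(\bigoplus_a\xi_a\bigr)^*\bigl(\bigoplus_a\eta_a\bigr)=\bigoplus_a\xi_a^*\eta_a$, and $(\pi,t)$ is a genuine Toeplitz representation of $\Hilm[G]$.

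Next I would identify Katsura's ideal $I_{\Hilm[G]}=\varphi_{\Hilm[G]}^{-1}(\Comp(\Hilm[G]))\cap(\ker\varphi_{\Hilm[G]})^\perp$ as $\bigoplus_{a\in S}B_aB_a^*$. Since each $B_a$ is a Hilbert $B_e$\nb-bimodule, its left action is faithful and by compacts exactly on $\BRAKET{B_a}{B_a}=B_aB_a^*$, so $\ker\varphi_a=(B_aB_a^*)^\perp$; intersecting over $a$ gives $(\ker\varphi_{\Hilm[G]})^\perp=\bigoplus_a B_aB_a^*$, where orthogonality again guarantees that the summands are mutually orthogonal, as $B_aB_a^*B_bB_b^*=B_a(B_a^*B_b)B_b^*=\{0\}$ for $a\neq b$. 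For the compactness condition I would use that an element $c=\sum_a c_a$ of this orthogonal sum acts diagonally --- again because $B_b^*B_a=\{0\}$ for $a\neq b$ --- with components $\varphi_a(c_a)\in\Comp(B_a)$ of norm $\|c_a\|\to 0$, so that $\varphi_{\Hilm[G]}(c)$ is a norm-limit of finite diagonal truncations and hence compact. This pins down $I_{\Hilm[G]}=\bigoplus_a B_aB_a^*$, so that $\CP_{I_{\Hilm[G]},\Hilm[G]}$ is precisely Katsura's algebra $\CP$ of $\Hilm[G]$.

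With the correspondence and its Katsura ideal in hand, I would verify the remaining hypotheses of the gauge-invariant uniqueness theorem. Injectivity of $\pi$ is immediate because the universal representation of a Fell bundle is isometric on each fibre. Cuntz--Pimsner covariance on $I_{\Hilm[G]}$ holds because on a generator $\xi\eta^*\in B_aB_a^*$ one has $t^{(1)}(\varphi_{\Hilm[G]}(\xi\eta^*))=t^{(1)}(\ket{\xi}\bra{\eta})=t(\xi)t(\eta)^*=\xi\eta^*=\pi(\xi\eta^*)$, and such generators span $I_{\Hilm[G]}$. Gauge-compatibility is where the free group enters decisively: the length homomorphism $\theta\colon\FF\to\ZZ$, $a\mapsto 1$ for $a\in S$, composes with the canonical $\FF$\nb-coaction to endow $\Cst((B_g)_{g\in\FF})$ with a $\ZZ$\nb-grading \cite[Example~A.28]{Echterhoff-Kaliszewski-Quigg-Raeburn:Categorical}, hence a gauge action of $\TT$ fixing $\pi(B_e)$ (degree $0$) and scaling $t(\Hilm[G])$ (degree $1$). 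Finally, the image generates: semi-saturatedness (S1) writes each positive fibre $B_p=B_{a_1}\cdots B_{a_n}$ as a product of generator fibres, while (S2) together with orthogonality expresses every nonzero $B_g$ in terms of positive fibres and their adjoints, so $\pi(B_e)$ and $t(\Hilm[G])$ generate the whole cross-sectional algebra.

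Applying \cite[Theorem~6.4]{Katsura:Cstar_correspondences} to the injective, gauge-compatible, covariant, generating representation $(\pi,t)$ then yields the desired isomorphism $\CP_{I_{\Hilm[G]},\Hilm[G]}\cong\Cst((B_g)_{g\in\FF})$. I expect the main obstacle to be the identification of $I_{\Hilm[G]}=\bigoplus_a B_aB_a^*$ for an arbitrary, possibly infinite, generating set $S$: controlling both the faithful part $(\ker\varphi_{\Hilm[G]})^\perp$ and the compactness of the diagonal action on the $\Cst$\nb-module direct sum is the one step that must be argued carefully rather than formally, and it is exactly where orthogonality of the bundle is indispensable.
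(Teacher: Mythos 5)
Your proposal follows the same route as the paper's proof: restrict the universal representation of the bundle to $B_e$ and the generator fibres $B_a$, use the length homomorphism $\theta\colon\FF\to\ZZ$ and \cite[Example~A.28]{Echterhoff-Kaliszewski-Quigg-Raeburn:Categorical} to obtain the gauge action, identify Katsura's ideal of $\Hilm[G]=\bigoplus_{a\in S}B_a$ as $\bigoplus_{a\in S}B_aB_a^*$, and invoke \cite[Theorem~6.4]{Katsura:Cstar_correspondences}; your verifications of injectivity, covariance and generation are in fact more detailed than the paper's.

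However, one intermediate step is false as stated: the claim that $(\ker\varphi_{\Hilm[G]})^\perp=\bigoplus_{a\in S}B_aB_a^*$. Writing $J\coloneqq\overline{\sum_a B_aB_a^*}$, what intersecting the kernels actually gives is $\ker\varphi_{\Hilm[G]}=J^\perp$, hence $(\ker\varphi_{\Hilm[G]})^\perp=J^{\perp\perp}$, and taking annihilators twice need not return the ideal: already for $S=\{a\}$ (so $\FF=\ZZ$), $B_e=C[0,1]$ and $B_a$ a Hilbert bimodule with $\BRAKET{B_a}{B_a}=C_0((0,1])$, one has $\ker\varphi_{\Hilm[G]}=\{0\}$ and $(\ker\varphi_{\Hilm[G]})^\perp=C[0,1]\supsetneq C_0((0,1])$. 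Consequently your argument establishes only the inclusion $\bigoplus_a B_aB_a^*\subseteq I_{\Hilm[G]}$, whereas the reverse inclusion is the one you really need: you check covariance only on $\bigoplus_a B_aB_a^*$, and Katsura's theorem requires covariance on all of $I_{\Hilm[G]}$. The repair uses the two defining conditions of $I_{\Hilm[G]}$ together rather than separately. Given $c\in\varphi_{\Hilm[G]}^{-1}(\Comp(\Hilm[G]))\cap(\ker\varphi_{\Hilm[G]})^\perp$, compactness of the diagonal operator $\varphi_{\Hilm[G]}(c)$ forces $\varphi_a(c)\in\Comp(B_a)=\varphi_a(B_aB_a^*)$ for every $a$, with $\sup_{a\notin F}\norm{\varphi_a(c)}\to 0$ along finite subsets $F\subseteq S$; choose $j_a\in B_aB_a^*$ with $\varphi_a(j_a)=\varphi_a(c)$, so that $\norm{j_a}=\norm{\varphi_a(c)}$ and $\sum_a j_a$ converges, the ideals $B_aB_a^*$ being mutually orthogonal. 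Then $c-\sum_a j_a$ annihilates every $B_a$, so it lies in $\ker\varphi_{\Hilm[G]}$, but it also lies in $(\ker\varphi_{\Hilm[G]})^\perp$, hence it vanishes and $c=\sum_a j_a\in\bigoplus_a B_aB_a^*$. With this correction your proof is complete, and it supplies a justification for the ideal computation that the paper itself states without proof.
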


The next corollary is a consequence of the previous lemma and nuclearity and exactness results for Cuntz--Pimsner algebras.

\begin{cor} Let~$(B_g)_{g\in\FF}$ be a Fell bundle extended from~$\FF^+$. If~$B_e$ is nuclear (resp. exact), then~$\Cst\left((B_g)_{g\in \FF}\right)$ is nuclear (resp. exact).
\begin{proof} The nuclearity (resp. exactness) of~$\Cst((B_g)_{g\in\FF})$ follows from the fact that the Cuntz--Pimsner algebra of a correspondence is nuclear (resp. exact) whenever the coefficient algebra is nuclear (resp. exact) (see~\cite{Dykema-shlyakhtenko, Katsura:Cstar_correspondences}).
\end{proof}
\end{cor}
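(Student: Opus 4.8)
The plan is to reduce the statement to a known permanence property of the Cuntz--Pimsner construction for a single correspondence, using Lemma~\ref{lem:cp_picture} as the crucial bridge. That lemma identifies $\Cst((B_g)_{g\in\FF})$ with the relative Cuntz--Pimsner algebra $\CP_{I_{\Hilm[G]},\Hilm[G]}$ of the correspondence $\Hilm[G]=\bigoplus_{a\in S}B_a$ over the unit fibre~$B_e$. Crucially, the defining ideal $I_{\Hilm[G]}$ computed there is exactly Katsura's ideal $\varphi^{-1}_{\Hilm[G]}(\Comp(\Hilm[G]))\cap(\ker\varphi_{\Hilm[G]})^\perp$, so that $\CP_{I_{\Hilm[G]},\Hilm[G]}$ is Katsura's Cuntz--Pimsner algebra of~$\Hilm[G]$ in the sense of~\cite{Katsura:Cstar_correspondences}, with coefficient algebra~$B_e$.

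First I would invoke Lemma~\ref{lem:cp_picture} to obtain the canonical isomorphism $\Cst((B_g)_{g\in\FF})\cong\CP_{I_{\Hilm[G]},\Hilm[G]}$. Since nuclearity and exactness are both isomorphism invariants, it then suffices to show that $\CP_{I_{\Hilm[G]},\Hilm[G]}$ inherits these properties from~$B_e$.

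For the nuclearity statement I would apply Katsura's theorem that the Cuntz--Pimsner algebra of a correspondence over a nuclear $\Cst$\nb-algebra is again nuclear~\cite{Katsura:Cstar_correspondences}, specialised to $\Hilm[G]$ over~$B_e$. For exactness I would instead pass through the Toeplitz algebra of~$\Hilm[G]$, which is exact whenever $B_e$ is exact by~\cite{Dykema-shlyakhtenko}, and then use that $\CP_{I_{\Hilm[G]},\Hilm[G]}$ is a quotient of this Toeplitz algebra together with the fact that exactness is inherited by quotients.

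I do not expect any genuine obstacle here, since both permanence results are already available in the literature and Lemma~\ref{lem:cp_picture} has done the structural work of realising the cross-sectional algebra as a single Cuntz--Pimsner algebra over~$B_e$. The only point demanding care is to confirm that the family of ideals supplied by Lemma~\ref{lem:cp_picture} really is Katsura's ideal, so that the cited theorems---stated for Katsura's Cuntz--Pimsner algebra---apply directly; this identification was, however, already carried out in the proof of that lemma.
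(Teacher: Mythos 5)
Your proposal is correct and takes essentially the same route as the paper: both rest on Lemma~\ref{lem:cp_picture} to identify $\Cst\left((B_g)_{g\in \FF}\right)$ with the Katsura Cuntz--Pimsner algebra $\CP_{I_{\Hilm[G]},\Hilm[G]}$ of the correspondence $\Hilm[G]=\bigoplus_{a\in S}B_a$ over~$B_e$, and then invoke the permanence of nuclearity and exactness for Cuntz--Pimsner algebras from \cite{Katsura:Cstar_correspondences} and \cite{Dykema-shlyakhtenko}. The only difference is cosmetic: you spell out the exactness step via the Toeplitz algebra and passage to quotients, a detail the paper leaves to the cited references.
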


Now Lemma~\ref{lem:cp_picture} combined with gauge-invariant uniqueness theorems for Cuntz--Pimsner algebras implies the following:

\begin{prop}\label{prop:amenability_free_groups} A Fell bundle~$(B_g)_{g\in \FF}$ extended from~$\FF^+$ is amenable, where~$\FF$ denotes the free group on a set of generators~$S$.
\begin{proof} We begin by proving that, as  $\Cst((B_g)_{g\in \FF})$, the reduced cross-sectional $\Cst$\nb-algebra $\Cst_r((B_g)_{g\in \FF})$ also carries a topological $\ZZ$\nb-grading, for which the regular representation $\Lambda\colon\Cst((B_g)_{g\in \FF})\to\Cst_r((B_g)_{g\in \FF})$ is a grading-preserving \Star homomor\-phism. Indeed, for each~$z\in \TT$, define a unitary $U_z\in\Bound(\ell_2((B_g)_{g\in \FF}))$ by setting $$\eta^+=\bigoplus_{\substack{g\in \FF}}\eta_g\mapsto U_z(\eta^+)=\bigoplus_{\substack{g\in \FF}} z^{\theta(g)}\eta_g.$$ Then $\Lambda(b)\mapsto U_{z}\Lambda(b)U_z^*$ is a continuous action of~$\TT$ on the reduced cross-sectional $\Cst$\nb-algebra of~$(B_g)_{g\in\FF}$. Hence~$\Cst_r((B_g)_{g\in \FF})$ is a topologically $\ZZ$\nb-graded $\Cst$\nb-algebra (see~\cite{Exel:Circle_actions}). 

Thus the composition of the regular representation~$\Lambda$ with the isomorphism $\CP_{I_{\Hilm[G]},\Hilm[G]}\cong \Cst((B_g)_{g\in \FF})$ from Lemma~\ref{lem:cp_picture} gives a gauge-compatible injective representation of~$\Hilm[G]$ that is covariant on~$I_{\Hilm[G]}$. So we invoke again the gauge-invariant uniqueness theorem for Katsura's relative Cuntz--Pimsner algebra of a single correspondence, namely~\cite[Theorem 6.4]{Katsura:Cstar_correspondences}, to derive faithfulness of~$\Lambda$. This shows that~$(B_g)_{g\in \FF}$  is amenable. 
\end{proof}
\end{prop}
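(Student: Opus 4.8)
The plan is to prove that the regular representation $\Lambda\colon\Cst((B_g)_{g\in\FF})\to\Cst_r((B_g)_{g\in\FF})$ is injective, which is precisely the assertion that $(B_g)_{g\in\FF}$ is amenable. The decisive reduction is Lemma~\ref{lem:cp_picture}: since $(B_g)_{g\in\FF}$ is extended from $\FF^+$, its full cross-sectional algebra is isomorphic to the relative Cuntz--Pimsner algebra $\CP_{I_{\Hilm[G]},\Hilm[G]}$ of the single correspondence $\Hilm[G]=\bigoplus_{a\in S}B_a$ over $B_e$, with $I_{\Hilm[G]}$ Katsura's ideal. This converts a question about a Fell bundle over a non-amenable group into a gauge-invariant uniqueness question for a single correspondence, for which \cite[Theorem~6.4]{Katsura:Cstar_correspondences} is available.

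The first step is to equip the reduced algebra $\Cst_r((B_g)_{g\in\FF})$ with a compatible circle action. Using the length homomorphism $\theta\colon\FF\to\ZZ$, $a\mapsto 1$, I would define for each $z\in\TT$ a diagonal unitary $U_z$ on $\ell_2((B_g)_{g\in\FF})$ by $U_z(\bigoplus_g\eta_g)=\bigoplus_g z^{\theta(g)}\eta_g$ and verify that $b\mapsto U_zbU_z^*$ defines a strongly continuous $\TT$\nb-action scaling the fibre at $g$ by $z^{\theta(g)}$. This makes $\Cst_r((B_g)_{g\in\FF})$ a topologically $\ZZ$\nb-graded $\Cst$\nb-algebra in the sense of~\cite{Exel:Circle_actions} and renders $\Lambda$ a grading-preserving homomorphism, matching the $\ZZ$\nb-grading already present on $\Cst((B_g)_{g\in\FF})$ coming from the coaction of $\ZZ$ induced by $\theta$.

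With the gauge action in place, the second step is to compose $\Lambda$ with the isomorphism of Lemma~\ref{lem:cp_picture} to obtain a representation of $\Hilm[G]$ inside $\Cst_r((B_g)_{g\in\FF})$, and to check the three hypotheses of the gauge-invariant uniqueness theorem. Covariance on $I_{\Hilm[G]}$ and gauge-compatibility transport directly through the grading-preserving map. The point requiring a separate argument is injectivity of the representation on the coefficient algebra $B_e$: here I would invoke that the restriction of the regular representation to the unit fibre is faithful, a standard consequence of the faithful conditional expectation onto $B_e$ on the reduced side. Applying \cite[Theorem~6.4]{Katsura:Cstar_correspondences} then forces $\Lambda$ to be faithful, whence $(B_g)_{g\in\FF}$ is amenable.

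I expect the main obstacle to be the careful construction of the $\ZZ$\nb-grading on the reduced algebra together with the verification that $\Lambda$ is grading-preserving, since one must confirm strong continuity of the circle action and that the reduced spectral subspaces correspond to the full ones under $\Lambda$. Once gauge-compatibility and faithfulness on $B_e$ are secured, the conclusion is immediate from Katsura's theorem; the genuine work is in the reduction of Lemma~\ref{lem:cp_picture} and in setting up a gauge action on the reduced algebra that is compatible with Katsura's framework.
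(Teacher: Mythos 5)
Your proposal is correct and follows essentially the same route as the paper's proof: reduce via Lemma~\ref{lem:cp_picture} to Katsura's Cuntz--Pimsner algebra of the single correspondence $\Hilm[G]=\bigoplus_{a\in S}B_a$, endow $\Cst_r((B_g)_{g\in\FF})$ with the $\ZZ$\nb-grading coming from the diagonal unitaries $U_z$ built from the length homomorphism $\theta$, and apply the gauge-invariant uniqueness theorem \cite[Theorem 6.4]{Katsura:Cstar_correspondences} to conclude that $\Lambda$ is faithful. Your additional remark that injectivity on $B_e$ follows from the faithful conditional expectation on the reduced side is exactly the implicit justification in the paper's argument.
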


Let~$c$ and~$d$ be positive integers. Recall from \cite{Spielberg:Baumslag-Solitar} that the Baumslag--Solitar group~$\mathrm{BS}(c,d)$ is the universal group on two generators~$a$ and~$b$ subject to the relation~$ab^c=b^da$ and~$(\mathrm{BS}(c,d),\mathrm{BS}(c,d)^+)$ is a quasi-lattice ordered group, where~$\mathrm{BS}(c,d)^+$ is the unital subsemigroup generated by~$a$ and~$b$. As for free groups, there is a group homomorphism~$\theta\colon\mathrm{BS}(c,d)\to\ZZ$ which is given on generators by~$a\mapsto 1$ and~$b\mapsto 0$. We follow~\cite{MR3595491} and~\cite{Spielberg:Baumslag-Solitar} and call~$\theta(g)$ for~$g\in\mathrm{BS}(c,d)$ the~\emph{height} of~$g$. 

Each~$p\in \mathrm{BS}(c,d)^+$ has a reduced form $$p=b^{s_0}ab^{s_1}\ldots b^{s_{k-1}}ab^{s_k},$$ with $0\leq s_i< d$ for all $i\in\{1,\ldots,k-1\}$ and~$\theta(p)=k$. As in~\cite{MR3595491}, we set $$\mathrm{stem}(p)\coloneqq b^{s_0}ab^{s_1}\ldots b^{s_{k-1}}a.$$  Given a Fell bundle extended from~$\mathrm{BS}(c,d)^+$, we will again construct a correspondence~$\Hilm[G]$ over a $\Cst$\nb-algebra~$B$ so that~$\CP_{\Hilm[I]_{\Hilm[G]},\Hilm[G]}$ is $\TT$\nb-equivariantly isomorphic to~$\Cst((C_g)_{g\in{BS}(c,d)})$. 

We need the following lemma:
\begin{lem}[\cite{MR3595491}*{Lemma 3.4}] Let $p,q\in  \mathrm{BS}(c,d)^+$~be such that $p\vee q<\infty$. Then,
\begin{enumerate}
\item[\rm (i)] if $\theta(p)>\theta(q)$, there is $m\in\NN$ with $p\vee q=pb^m$;
\item[\rm (ii)] if $\theta(p)=\theta(q)$, there is $m\in\NN$ with either~$$p\vee q=pb^m=q,\quad\text{ or }\quad p\vee q=qb^m=p.$$
\end{enumerate}
\end{lem}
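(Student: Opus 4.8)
My plan is to work with the reduced normal form $p=b^{s_0}ab^{s_1}\cdots ab^{s_{k-1}}ab^{s_k}$ and to exhibit the least upper bound explicitly, as a tail extension of the \emph{taller} of $p$ and $q$ by a power of~$b$. Three elementary facts about $\mathrm{BS}(c,d)^+$ drive the argument. First, the only elements of height $0$ are the powers of~$b$; hence if $x\le y$ and $\theta(x)=\theta(y)$, then $x^{-1}y\in \mathrm{BS}(c,d)^+$ has height~$0$, so $y=xb^m$ for some $m\in\NN$. Second, when $y=xw$ with $w\in \mathrm{BS}(c,d)^+$, left multiplication followed by reduction only enlarges the syllable of~$x$ at position $\theta(x)$ and propagates carries to the right through the relation $ab^c=b^da$; consequently $\mathrm{stem}(x)$ is an initial segment of the reduced form of~$y$, that is, the first $\theta(x)$ syllable blocks of~$y$ coincide with those of~$x$. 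Third, the monoid is directed by appending $b$'s at the tail: $p\le pb^m$ for all $m\in\NN$.

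In both cases of the statement we have $\theta(q)\le\theta(p)$; write $k=\theta(p)$ and $\ell=\theta(q)$. Applying the second fact to a common upper bound of $p$ and~$q$ (which exists since $p\vee q<\infty$) shows that both $\mathrm{stem}(p)$ and $\mathrm{stem}(q)$ are initial segments of it, so $\mathrm{stem}(q)$ is an initial segment of $\mathrm{stem}(p)$; thus $p$ and~$q$ share the syllables $s_0,\dots,s_{\ell-1}$, and writing $S=\mathrm{stem}(q)$ we have $q=Sb^{r_\ell}$ and $p=ST$ with $T=b^{s_\ell}ab^{s_{\ell+1}}\cdots ab^{s_k}$. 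The core step is to show that $q\le pb^m$ for some $m\in\NN$. A telescoping cancellation gives $q^{-1}(pb^m)=b^{s_\ell-r_\ell}\,ab^{s_{\ell+1}}\cdots ab^{s_k+m}$, and I would then push the enlarged trailing power $b^{s_k+m}$ leftwards through the $a$'s using $ab^{lc+e}=b^{ld}ab^e$. Each such push feeds a large power of~$b$ into the block to its left, and iterating through the finitely many $a$'s of~$T$ floods the leftmost block; since the accumulated exponent tends to $\infty$ with~$m$, it eventually exceeds $r_\ell-s_\ell$ and renders $q^{-1}(pb^m)$ positive. Hence $M=\{m\in\NN : q\le pb^m\}$ is a nonempty up-set; set $m_0=\min M$.

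Leastness is now immediate. Indeed $pb^{m_0}$ is a common upper bound, so $p\le p\vee q\le pb^{m_0}$ and therefore $\theta(p)\le\theta(p\vee q)\le\theta(pb^{m_0})=\theta(p)$; by the first fact $p\vee q=pb^{m'}$, while $q\le p\vee q$ forces $m'\ge m_0$ and $p\vee q\le pb^{m_0}$ forces $m'\le m_0$, so $p\vee q=pb^{m_0}$. When $\theta(p)>\theta(q)$ this is exactly~(i). When $\theta(p)=\theta(q)$ we have $\ell=k$, so $T=b^{s_k}$ and the nesting of stems sharpens to $\mathrm{stem}(p)=\mathrm{stem}(q)$; then $q^{-1}p=b^{s_k-r_k}$ shows $p$ and~$q$ are comparable, and $p\vee q$ is the larger of the two, which is~(ii). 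The one real difficulty is the displayed push-through computation of the second paragraph: one must track the carries generated by $ab^c=b^da$ across the syllables of~$T$ to be sure that enlarging the tail genuinely absorbs the single negative exponent on the left, and this is the only place where the integers $c$ and~$d$ enter the argument.
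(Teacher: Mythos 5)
The paper does not actually prove this lemma: it is quoted, with a citation, from Clark--an Huef--Raeburn \cite{MR3595491}*{Lemma~3.4}, so there is no internal proof to measure your argument against; judged on its own terms, your argument is correct, and it uses the same toolkit as the cited source (normal forms, the height homomorphism $\theta$, and stem rigidity). Two small points of care in your preliminary facts: in Fact~2 you mean \emph{right} multiplication by $w$, not left, and the prefix rigidity you invoke requires all of $s_0,\dots,s_{k-1}$ to be reduced modulo $d$, not only the middle exponents as in the paper's display of the normal form. Granting Fact~2, your deduction of leastness from $\theta(p)\le\theta(p\vee q)\le\theta(pb^{m_0})=\theta(p)$ together with minimality of $m_0$ is complete, and case~(ii) is handled correctly by comparability. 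The one step you leave as a plan, the flooding computation, does go through: pushing $b^{s_k+m}$ through the rightmost $a$ of $T$ via $ab^{lc+e}=b^{ld}ab^{e}$ deposits $d\lfloor (s_k+m)/c\rfloor$ on the block to its left, each further push is a nondecreasing unbounded function of the exponent it receives, and every intermediate exponent produced is nonnegative, so the exponent reaching the leftmost block is unbounded in $m$ and $q^{-1}(pb^m)$ is eventually a positive word; note that $T$ contains an $a$ (so flooding is available) precisely when $\theta(p)>\theta(q)$, which is why your separate treatment of case~(ii) is needed and not just convenient. So the proof is sound; it only needs that computation written out in full.
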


In particular, by the previous lemma, $p\vee q=\infty$ and hence $C_p^*C_q=\{0\}$ whenever~$p$ and~$q$ have reduced forms~$b^{s_0}a$ and~$b^{t_0}a$ with~$s_0\neq t_0$.

\begin{lem}\label{lem:pic_forBaumslag-Sol} Let~$(C_g)_{g\in \mathrm{BS}(c,d)}$ be a Fell bundle extended from~$\mathrm{BS}(c,d)^+$. Let~$B$ be the $\Cst$\nb-subalgebra of~$\Cst((C_g)_{g\in \mathrm{BS}(c,d)})$ generated by the fibre $C_b$ and the unit fibre~$C_e$. For each~$0\leq i< d$, let $\Hilm[G]_i=C_{b^{i}a}\otimes_{C_e}B$ and set $$\Hilm[G]\coloneqq\bigoplus^{d-1}_{\substack{i=0}}\Hilm[G]_i.$$ Then the multiplication on $(C_g)_{g\in \mathrm{BS}(c,d)}$ provides~$\Hilm[G]$ with a structure of correspondence over~$B$. Moreover, $\Cst((B_g)_{g\in \mathrm{BS}(c,d)})\cong  \CP_{I_{\Hilm[G]},\Hilm[G]}$.
\begin{proof} Observe that the $\Cst$\nb-subalgebra~$B$ of~$\Cst((C_g)_{g\in \mathrm{BS}(c,d)})$ generated by the fibre~$C_b$ and the unit fibre~$C_e$ is a topologically $\ZZ$\nb-graded $\Cst$\nb-algebra, where the conditional expectation onto~$C_e$ coincides with that of~$\Cst((C_g)_{g\in \mathrm{BS}(c,d)})$. The corresponding spectral subspace at~$m\in \ZZ$ is~$C_{b^{m}}$. In particular, $B$ is (isomorphic to) the cross-sectional $\Cst$\nb-algebra of the Fell bundle~$(C_{b^m})_{m\in\ZZ}$. 

Let $\Hilm[G]_i=C_{b^{i}a}\otimes_{C_e}B$, for~$0\leq i< d$, and set $$\Hilm[G]=\bigoplus^{d-1}_{\substack{i=0}}\Hilm[G]_i.$$ This is a correspondence~$C_e\leadsto B$. We extend the left action of~$C_e$ to~$B$ by using the multiplication on~$(C_g)_{g\in \mathrm{BS}(c,d)}$. By the above discussion, it suffices to find a representation of the Hilbert $C_e$\nb-bimodule~$C_b$ in~$\Bound(\Hilm[G])$ that is Cuntz--Pimsner covariant on~$C_bC_b^*$  by \cite[Proposition~2.19]{Meyer-Sehnem:Bicategorical_Pimsner} (see also~\cite{Abadie-Eilers-Exel:Morita_bimodules}). Thus for~$\xi\in C_b$ and~$i+1<d$, take an elementary tensor~$\eta\otimes \zeta\in\Hilm[G]_i$. We define $$\varphi_{\Hilm[G]_i}(\xi)(\eta\otimes \zeta)\coloneqq(\xi\cdot\eta) \otimes \zeta\in\Hilm[G]_{i+1}.$$ If~$i+1=d$, we use the relation~$b^da=ab^c$ and that~$(C_g)_{g\in \mathrm{BS}(c,d)}$ is extended from~$\mathrm{BS}(c,d)^+$ to identify the multiplication $\xi\cdot\eta\otimes c$ with an element of~$\Hilm[G]_0$. Notice that $a\vee b=b^{d}a=ab^c$ and hence $C_{b}^*C_a\subseteq C_{b^{d-1}a}C^*_{b^c}$. This guarantees that~$\varphi_{\Hilm[G]}(\xi)$ is adjointable for all~$\xi\in C_b$ and~$\varphi_{\Hilm[G]}(\xi)^*$ is given in a similar way by multiplication with~$\xi^*$. This produces a \Star homomorphism~$\varphi_{\Hilm[G]}\colon B\to\Bound(\Hilm[G])$, which turns~$\Hilm[G]$ into a correspondence over~$B$. Using the relation~$b^{d}a=ab^c$ and also $ab^{-c}=b^{-d}a$, we deduce that~$C_{b^m}C_aC_a^*C_{b^n}$ is contained in~$C_{b^{i}a}\cdot B\cdot C_{b^{j}a}^*$ in~$\Cst\big((C_g)_{g\in F}\big)$, where~$0\leq i,j<d$ are uniquely determined by~$m$ and~$n$, respectively, and $m,n\in \ZZ$. From this we see that Katsura's ideal for~$\Hilm[G]$ is $$I_{\Hilm[G]}=\overline{\mathrm{span}}\{C_{b^m}C_aC_a^*C_{b^n}\vert\, m,n\in\ZZ\}\idealin B,$$ since the left action of~$B$ on~$\Hilm[G]$ involves the multiplication on~$(C_g)_{g\in\mathrm{BS}(c,d)}$. 

Because~$C_p^*C_q=\{0\}$ whenever~$p$ and~$q$ have reduced forms~$b^{s_0}a$ and~$b^{t_0}a$ with~$s_0\neq t_0$, we have a canonical representation of~$\Hilm[G]$ in~$\Cst((C_g)_{g\in \mathrm{BS}(c,d)})$ coming from the identification~$\Hilm[G]_i\cong C_{b^ia}B$. If $g\in  \mathrm{BS}(c,d)$ has normal form $b^{s_0}ab^m$ with $m\leq 0$, we have~$g\vee e=b^{s_0}a$ and $g^{-1}\vee e= b^{-m}$. This implies that the representation of~$\Hilm[G]$ in~$\Cst((C_g)_{g\in \mathrm{BS}(c,d)})$ is Cuntz--Pimsner covariant on~$I_{\Hilm[G]}$, Furthermore, it is injective and gauge-compatible. This gives a surjective \Star homomorphism $\phi\colon\CP_{I_{\Hilm[G]},\Hilm[G]}\to\Cst((C_g)_{g\in \mathrm{BS}(c,d)})$ because~$(C_g)_{g\in \mathrm{BS}(c,d)}$ is extended from the positive cone~$\mathrm{BS}(c,d)^+$. Now \cite[Theorem 6.4]{Katsura:Cstar_correspondences} shows that~$\phi$ is an isomorphism.
\end{proof}
\end{lem}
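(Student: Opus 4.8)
The plan is to mirror the free\nb-group argument of Lemma~\ref{lem:cp_picture}, but with the word\nb-length homomorphism replaced by the \emph{height} homomorphism $\theta\colon\mathrm{BS}(c,d)\to\ZZ$, $a\mapsto 1$, $b\mapsto 0$, so that all powers of~$b$ get absorbed into the coefficient algebra. First I would record that $(C_{b^m})_{m\in\ZZ}$ is a semi\nb-saturated Fell bundle over~$\ZZ$, so its cross\nb-sectional $\Cst$\nb-algebra~$B$ is topologically $\ZZ$\nb-graded with spectral subspace $C_{b^m}$ at~$m$ and with the conditional expectation onto~$C_e$ inherited from $\Cst((C_g)_{g\in\mathrm{BS}(c,d)})$; this realises~$B$ as the subalgebra generated by $C_b$ and $C_e$. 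Each $\Hilm[G]_i=C_{b^ia}\otimes_{C_e}B$ is then a right Hilbert $B$\nb-module in the obvious way, and orthogonality gives $C_p^*C_q=\{0\}$ whenever $p,q$ have reduced forms $b^{s_0}a,\ b^{t_0}a$ with $s_0\neq t_0$ (such $p,q$ have no common upper bound by~\cite[Lemma~3.4]{MR3595491}), so the summands $\Hilm[G]_0,\dots,\Hilm[G]_{d-1}$ are mutually orthogonal inside any representation.

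The crux is the left $B$\nb-action on $\Hilm[G]=\bigoplus_{i=0}^{d-1}\Hilm[G]_i$. Since~$B$ is the crossed product of~$C_e$ by the Hilbert $C_e$\nb-bimodule~$C_b$ (Abadie--Eilers--Exel~\cite{Abadie-Eilers-Exel:Morita_bimodules}), by~\cite[Proposition~2.19]{Meyer-Sehnem:Bicategorical_Pimsner} it suffices to produce a representation of~$C_b$ in $\Bound(\Hilm[G])$ that is Cuntz--Pimsner covariant on $C_bC_b^*$. For $\xi\in C_b$ and $i+1<d$ I would set $\varphi_{\Hilm[G]}(\xi)(\eta\otimes\zeta)=(\xi\eta)\otimes\zeta\in\Hilm[G]_{i+1}$, using $C_bC_{b^ia}\subseteq C_{b^{i+1}a}$; on the top summand $\Hilm[G]_{d-1}$ I would invoke the defining relation $b^da=ab^c$ together with semi\nb-saturatedness to rewrite $C_bC_{b^{d-1}a}\subseteq C_{b^da}=C_aC_{b^c}$ and so land in $\Hilm[G]_0\cdot C_{b^c}\subseteq\Hilm[G]_0$. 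The step demanding the most care is adjointability: here I would use that $a\vee b=b^da=ab^c$, which by orthogonality forces $C_b^*C_a\subseteq C_{b^{d-1}a}C_{b^c}^*$ and thereby exhibits $\varphi_{\Hilm[G]}(\xi)^*$, again as multiplication by~$\xi^*$. This turns~$\Hilm[G]$ into a genuine correspondence over~$B$.

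With~$\Hilm[G]$ in hand, I would compute Katsura's ideal by moving $b$\nb-powers across~$a$ via $b^da=ab^c$ and $ab^{-c}=b^{-d}a$: every product $C_{b^m}C_aC_a^*C_{b^n}$ then lands in $C_{b^ia}\cdot B\cdot C_{b^ja}^*$ for uniquely determined $0\leq i,j<d$, yielding $$I_{\Hilm[G]}=\overline{\mathrm{span}}\{C_{b^m}C_aC_a^*C_{b^n}\mid m,n\in\ZZ\}\idealin B.$$ I would then assemble the canonical representation of~$\Hilm[G]$ in $\Cst((C_g)_{g\in\mathrm{BS}(c,d)})$ from the identifications $\Hilm[G]_i\cong C_{b^ia}B$, which respect the inner products by the orthogonality of the summands. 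Reading off normal forms $g=b^{s_0}ab^m$ with $m\leq 0$, for which $g\vee e=b^{s_0}a$ and $g^{-1}\vee e=b^{-m}$, gives by semi\nb-saturatedness that $C_g\subseteq C_{b^{s_0}a}\,B$, so this representation generates $\Cst((C_g)_{g\in\mathrm{BS}(c,d)})$; it is moreover injective, gauge\nb-compatible for the circle action induced by~$\theta$, and Cuntz--Pimsner covariant on~$I_{\Hilm[G]}$. The universal property then gives a surjection $\phi\colon\CP_{I_{\Hilm[G]},\Hilm[G]}\to\Cst((C_g)_{g\in\mathrm{BS}(c,d)})$, and Katsura's gauge\nb-invariant uniqueness theorem~\cite[Theorem~6.4]{Katsura:Cstar_correspondences} upgrades it to an isomorphism.

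I expect the main obstacle to be exactly the wrap\nb-around summand~$\Hilm[G]_{d-1}$, where the Baumslag--Solitar relation enters: one must check that the $C_{b^c}$\nb-factor created by $b^da=ab^c$ is absorbed into the coefficient algebra~$B$ rather than escaping~$\Hilm[G]$, and that the resulting $\varphi_{\Hilm[G]}$ is a well\nb-defined, adjointable \Star homomorphism into $\Bound(\Hilm[G])$. Controlling $C_b^*C_a$ through the least upper bound $a\vee b=b^da$ (see~\cite[Lemma~3.4]{MR3595491}) is the device that makes both the action and its adjoint land in the correct summands; once adjointability is secured, the identification of Katsura's ideal, covariance, injectivity and gauge\nb-compatibility follow routinely from semi\nb-saturatedness, orthogonality and the gauge\nb-invariant uniqueness theorem.
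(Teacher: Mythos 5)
Your proposal is correct and takes essentially the same route as the paper's own proof: the same coefficient algebra $B\cong\Cst\big((C_{b^m})_{m\in\ZZ}\big)$, the same correspondence $\Hilm[G]=\bigoplus_{i=0}^{d-1}C_{b^ia}\otimes_{C_e}B$ whose left $B$\nb-action is obtained from a representation of the Hilbert bimodule~$C_b$ via \cite{Meyer-Sehnem:Bicategorical_Pimsner}*{Proposition~2.19} and \cite{Abadie-Eilers-Exel:Morita_bimodules}, the same handling of the wrap-around summand and of adjointability through $a\vee b=b^da=ab^c$ and $C_b^*C_a\subseteq C_{b^{d-1}a}C_{b^c}^*$, the same identification of Katsura's ideal, and the same conclusion via the gauge-invariant uniqueness theorem \cite{Katsura:Cstar_correspondences}*{Theorem~6.4}. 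The only cosmetic difference is that you use the normal form $g=b^{s_0}ab^m$, $m\leq 0$, to argue that the representation generates $\Cst\big((C_g)_{g\in\mathrm{BS}(c,d)}\big)$, while the paper invokes it for Cuntz--Pimsner covariance and gets surjectivity from the bundle being extended from $\mathrm{BS}(c,d)^+$; both facts are needed and both arguments supply them.
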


As for free groups, we obtain nuclearity and exactness results for Fell bundles extended from~$\mathrm{BS}(c,d)^+$.
\begin{cor} Let~$(C_g)_{g\in \mathrm{BS}(c,d)}$ be a Fell bundle extended from~$\mathrm{BS}(c,d)^+$.  If~$C_e$ is nuclear (resp. exact), then $\Cst((B_g)_{g\in \mathrm{BS}(c,d)})$ is nuclear (resp. exact).
\begin{proof} If~$C_e$ is nuclear (resp. exact), then~$B$ is nuclear (resp. exact). Using the description of $\Cst((C_g)_{g\in \mathrm{BS}(c,d)})$ as a Katsura's relative Cuntz--Pimsner algebra of a correspondence over~$B$ from Lemma~\ref{lem:pic_forBaumslag-Sol}, we then conclude that $\Cst((C_g)_{g\in \mathrm{BS}(c,d)})$ is nuclear (resp. exact) whenever~$C_e$ is.
\end{proof}
\end{cor}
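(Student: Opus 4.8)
The plan is to reduce the statement to the single\nb-correspondence case supplied by Lemma~\ref{lem:pic_forBaumslag-Sol} and then apply the standard permanence properties of Cuntz--Pimsner algebras. Because the correspondence $\Hilm[G]$ of that lemma lives over the intermediate algebra $B$ rather than over the unit fibre $C_e$, the argument naturally splits into two stages: first one controls $B$, and then one controls $\Cst((C_g)_{g\in\mathrm{BS}(c,d)})$ as a relative Cuntz--Pimsner algebra over $B$.

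First I would show that $B$ is nuclear (resp.\ exact) whenever $C_e$ is. Recall from the proof of Lemma~\ref{lem:pic_forBaumslag-Sol} that $B$ is the cross-sectional $\Cst$\nb-algebra of the Fell bundle $(C_{b^m})_{m\in\ZZ}$ over~$\ZZ$. This Fell bundle is semi-saturated, since $C_bC_{b^m}=C_{b^{m+1}}$ for all $m\geq 0$ by axiom~(S1), so each positive fibre is a Hilbert $C_e$\nb-bimodule and the theorem of Abadie, Eilers and Exel identifies $B$ with the crossed product of $C_e$ by the Hilbert $C_e$\nb-bimodule $C_b$; equivalently, $B$ is the Cuntz--Pimsner algebra of the correspondence $C_b\colon C_e\leadsto C_e$. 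Since nuclearity and exactness are preserved under the formation of Cuntz--Pimsner algebras \cite{Katsura:Cstar_correspondences, Dykema-shlyakhtenko}, $B$ inherits the relevant property from $C_e$.

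With $B$ under control, I would invoke Lemma~\ref{lem:pic_forBaumslag-Sol}, which identifies $\Cst((C_g)_{g\in\mathrm{BS}(c,d)})$ with the relative Cuntz--Pimsner algebra $\CP_{I_{\Hilm[G]},\Hilm[G]}$ of the correspondence $\Hilm[G]$ over~$B$. The same permanence results then apply to this second layer: the nuclearity (resp.\ exactness) of $B$ established above propagates to $\CP_{I_{\Hilm[G]},\Hilm[G]}\cong\Cst((C_g)_{g\in\mathrm{BS}(c,d)})$, which completes the argument.

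The proof is essentially formal once Lemma~\ref{lem:pic_forBaumslag-Sol} is available, and it runs in close parallel with the free\nb-group corollary above. The only genuine subtlety, and the step I expect to require the most care, is the first stage: one must recognise that the coefficient algebra over which $\Hilm[G]$ is defined is the intermediate algebra $B$ and not the unit fibre $C_e$, and then observe that $B$ is itself a Cuntz--Pimsner algebra over $C_e$. The permanence property must therefore be applied twice, through both tiers of this nested structure; checking that the relative (rather than absolute) nature of the outer Cuntz--Pimsner construction does not affect the conclusion is handled by the same results that cover ordinary Cuntz--Pimsner algebras.
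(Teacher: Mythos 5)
Your proof is correct and takes essentially the same route as the paper's: both arguments first pass nuclearity (resp.\ exactness) from $C_e$ to $B$ and then apply Lemma~\ref{lem:pic_forBaumslag-Sol} together with the Katsura/Dykema--Shlyakhtenko permanence results for Cuntz--Pimsner algebras over nuclear (resp.\ exact) coefficient algebras. The only difference is that you spell out the first step --- identifying $B$ as the cross-sectional algebra of the semi-saturated Fell bundle $(C_{b^m})_{m\in\ZZ}$ and hence, by Abadie--Eilers--Exel, as a Cuntz--Pimsner algebra over $C_e$ --- which the paper asserts without justification.
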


Again using gauge-invariant uniqueness theorems for Cuntz--Pimsner algebras, we have the following.
\begin{prop} A Fell bundle $(C_g)_{g\in \mathrm{BS}(c,d)}$ extended from~$\mathrm{BS}(c,d)^+$ is amenable.
\begin{proof} Employing the same argument used in Proposition~\ref{prop:amenability_free_groups}, we deduce that $\Cst_r((C_g)_{g\in \mathrm{BS}(c,d)})$ also carries a topological $\ZZ$\nb-grading, for which the regular representation is compatible. Thus~$\Lambda\colon \Cst((C_g)_{g\in \mathrm{BS}(c,d)})\to\Cst_r((C_g)_{g\in \mathrm{BS}(c,d)})$ produces a gauge-compatible representation of~$\CP_{I_{\Hilm[G]},\Hilm[G]}$ that is faithful on~$B$, so that the gauge-invariant uniqueness theorem for~$\CP_{I_{\Hilm[G]},\Hilm[G]}$ implies the desired isomorphism. 
\end{proof}
\end{prop}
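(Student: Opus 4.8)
The plan is to follow the strategy of Proposition~\ref{prop:amenability_free_groups}, replacing the free group by $\mathrm{BS}(c,d)$ and exploiting the description of $\Cst((C_g)_{g\in \mathrm{BS}(c,d)})$ as a Katsura relative Cuntz--Pimsner algebra $\CP_{I_{\Hilm[G]},\Hilm[G]}$ of a correspondence over~$B$ furnished by Lemma~\ref{lem:pic_forBaumslag-Sol}. Amenability amounts to faithfulness of the regular representation $\Lambda\colon\Cst((C_g)_{g\in \mathrm{BS}(c,d)})\to\Cst_r((C_g)_{g\in \mathrm{BS}(c,d)})$, and the engine for establishing this is the gauge-invariant uniqueness theorem \cite[Theorem~6.4]{Katsura:Cstar_correspondences}.

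First I would equip $\Cst_r((C_g)_{g\in \mathrm{BS}(c,d)})$ with a topological $\ZZ$\nb-grading for which $\Lambda$ is grading-preserving. For each $z\in\TT$ I define a unitary $U_z$ on $\ell_2((C_g)_{g\in \mathrm{BS}(c,d)})$ by $U_z\bigl(\bigoplus_{g}\eta_g\bigr)=\bigoplus_{g} z^{\theta(g)}\eta_g$, where $\theta$ is the height homomorphism. Then $\Lambda(b)\mapsto U_z\Lambda(b)U_z^*$ is a continuous $\TT$\nb-action whose spectral decomposition yields the grading. Since $\theta(b)=0$, the degree-zero part contains the copy of~$B$ and the degree-one part contains each $C_{b^ia}$, so this grading is compatible with the $\ZZ$\nb-grading of $\Cst((C_g)_{g\in \mathrm{BS}(c,d)})$ and hence with the gauge action on $\CP_{I_{\Hilm[G]},\Hilm[G]}$. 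Transporting $\Lambda$ through the isomorphism of Lemma~\ref{lem:pic_forBaumslag-Sol} therefore produces a gauge-compatible representation of $\CP_{I_{\Hilm[G]},\Hilm[G]}$.

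The crux is to verify that $\Lambda$ is faithful on the coefficient algebra~$B$. Here I would use that $B$ is (isomorphic to) the cross-sectional $\Cst$\nb-algebra of the Fell bundle $(C_{b^m})_{m\in\ZZ}$ over the subgroup $\ZZ\leq\mathrm{BS}(c,d)$. Because $\ZZ$ is amenable, this sub-bundle is amenable, so its full and reduced cross-sectional algebras coincide, and the restriction $\Lambda|_B$ is a gauge-compatible representation that is injective on the unit fibre~$C_e$; the bimodule gauge-invariant uniqueness theorem for the $\ZZ$\nb-graded algebra~$B$ (equivalently, amenability of $\ZZ$) then forces $\Lambda|_B$ to be faithful. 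I expect this identification of $\Lambda|_B$ with the regular representation of the sub-bundle $(C_{b^m})_{m\in\ZZ}$ to be the main obstacle, since it requires comparing the two Fock-type Hilbert modules and matching the associated conditional expectations; one clean route is to observe that the inclusion $\ZZ\hookrightarrow\mathrm{BS}(c,d)$ induces a faithful conditional expectation onto~$B$ compatible with the gradings, so that faithfulness of $\Lambda|_B$ reduces to the amenable case over~$\ZZ$.

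Finally, with $\Lambda$ gauge-compatible and injective on the coefficient algebra~$B$, the gauge-invariant uniqueness theorem \cite[Theorem~6.4]{Katsura:Cstar_correspondences} for $\CP_{I_{\Hilm[G]},\Hilm[G]}$ shows that $\Lambda$ is faithful. Hence the regular representation is an isomorphism and $(C_g)_{g\in \mathrm{BS}(c,d)}$ is amenable.
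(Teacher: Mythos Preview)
Your proposal is correct and follows essentially the same route as the paper's proof: construct the topological $\ZZ$\nb-grading on the reduced cross-sectional algebra via the height homomorphism, transport $\Lambda$ through the isomorphism of Lemma~\ref{lem:pic_forBaumslag-Sol}, and invoke \cite[Theorem~6.4]{Katsura:Cstar_correspondences}. The paper simply asserts that $\Lambda$ is faithful on~$B$, whereas you supply the justification via amenability of the sub-bundle $(C_{b^m})_{m\in\ZZ}$; your conditional-expectation argument (that $E|_B$ is faithful because $\ZZ$ is amenable, and $E_r\circ\Lambda=E$) is the cleanest way to close that step and is exactly what the paper is tacitly using.
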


 \section{Functoriality for relative Cuntz--Pimsner algebras}\label{sec:bicategorical-part}
In this section we introduce the bicategory of compactly aligned product systems~$\Corr^P_{\proper}$ and its sub-bicategory of simplifiable product systems of Hilbert bimodules~$\Corr^P_{\Bim}$. We upgrade the main results from Section~\ref{sec:Fell_bundles_positive} to an equivalence between~$\Corr^P_{\Bim}$  and a bicategory of Fell bundles over~$G$ that are extended from the positive cone~$P$. We also prove that the construction of a relative Cuntz--Pimsner algebra is part of a functor from~$\Corr^P_{\proper}$ into a bicategory of~$\Cst$\nb-algebras.

\subsection{Bicategories of compactly aligned product systems} We define covariant correspondences between compactly aligned product systems as in \cite[Definition~2.20]{Meyer-Sehnem:Bicategorical_Pimsner}, also following the ideas of Schweizer~\cite{Schweizer:Crossed_Cuntz-Pimsner}. Let $(G,P)$ be a quasi-lattice ordered group. Let $\Hilm=(\Hilm_p)_{p\in P}$ and $\Hilm[G]=(\Hilm[G]_p)_{p\in P}$ be compactly aligned product systems of correspondences over $\Cst$\nb-algebras~$A$ and~$B$, respectively. Let~$\mathcal{J}_A=\{J^A_p\}_{p\in P}$ and~$\mathcal{J}_{B}=\{J^B_p\}_{p\in P}$ be families of ideals in~$A$ and~$B$, with~$\varphi^A_p(J^A_p)\subseteq\Comp(\Hilm_p)$ and~$\varphi^B_p(J^B_p)\subseteq\Comp(\Hilm[G]_p)$ for all~$p\in P$.

\begin{defn}\label{defn:cov-correspondence} A \emph{covariant correspondence} from 
$(A,\Hilm,\mathcal{J}_A)$ to~$(B,\Hilm[G],\mathcal{J}_B)$ is a pair~$(\Hilm[F],V)$, where $\Hilm[F]\colon A\leadsto B$ is a correspondence such that~$J_p^A\Hilm[F]\subseteq\Hilm[F]J^B_p$ for all $p\in P$ and $V=\{V_p\}_{p\in P}$ is a family of correspondence isomorphisms 
$V_p\colon\Hilm_p\otimes_A\Hilm[F]\cong\Hilm[F]\otimes_B\Hilm[G]_p$, where~$V_e\colon A\otimes_A\Hilm[F]\cong\Hilm[F]\otimes_BB$ is the isomorphism which sends~$a\otimes(\xi b)$ to $\psi(a)\xi\otimes b$. These must make the following diagram commute for all~$p,q\in P$:
\begin{equation}\label{eq:coherence_covariant_corres}\begin{gathered}{\xymatrix@C12pt{(\Hilm_p\otimes_A\Hilm_q)\otimes_A\Hilm[F]  \ar@{<->}[d]_{} \ar@{->}[rr]^{\ \ \ \ \mu^{1}_{p,q}\otimes1}& & \Hilm_{pq}\otimes_A\Hilm[F]\ar@{->}[rr]^{V_{pq}}&&\Hilm[F]\otimes_B\Hilm[G]_{pq} \ar@{<-}[d]^{1\otimes \mu^2_{p,q}} \\ \Hilm_p\otimes_A(\Hilm_q\otimes_A\Hilm[F]) && &&  \Hilm[F]\otimes_B(\Hilm[G]_p\otimes_B\Hilm[G]_q)  \\  \Hilm_p\otimes_A(\Hilm[F]\otimes_B\Hilm[G]_q)
\ar@{<->}[rr]^{}\ar@{<-}[u]^{1\otimes V_q} & &(\Hilm_p\otimes_A\Hilm[F])\otimes_B\Hilm[G]_q \ar@{->}[rr]^{V_p\otimes 1}&& (\Hilm[F]\otimes_B\Hilm[G]_p)\otimes_B\Hilm[G]_q \ar@{<->}[u]^{} .}}
\end{gathered}\end{equation}
A covariant correspondence~$(\Hilm[F], V)$ is called \emph{proper} if~$\Hilm[F]$ is a proper correspondence. 
\end{defn}

\begin{defn}
  \label{def:P_bicategory}
  The bicategory \(\Corr^P\)
  has the following data:
  \begin{itemize}
  \item Objects are triples \((A, \Hilm,\Hilm[J])\),
    where \(A\)
    is a \(\Cst\)\nb-algebra,~$\Hilm=(\Hilm_p)_{p\in P}$
    is a compactly aligned product system over~$P$ of $A$\nb-correspondences,
    and \(\Hilm[J]=\{J_p\}_{p\in P}\) is a family of ideals in~$A$ with $J_p\subseteq\varphi_p^{-1}(\Comp(\Hilm_p))$ for all~$p\in P$.
  \item Arrows \((A, \Hilm, \Hilm[J])\to (A_1, \Hilm_1,\Hilm[J]_1)\)
    are covariant correspondences \((\Hilm[F],V)\)
    from \((A,\Hilm,\Hilm[J])\)
    to~\((A_1, \Hilm_1,\Hilm[J]_1)\).

  \item \(2\)\nb-Arrows
    \((\Hilm[F]_0,V_0) \Rightarrow (\Hilm[F]_1,V_1)\)
    are isomorphisms of covariant correspondences, that is,
    correspondence isomorphisms
    \(w\colon\Hilm[F]_0\rightarrow \Hilm[F]_1\)
    for which the following diagram commutes for all~$p\in P$:
    \[
    \xymatrix{
      \Hilm_p\otimes_A\Hilm[F]_0\ar@{->}[r]^{V_{0,p}} \ar@{->}[d]_{1_{\Hilm_p}\otimes w}&
      \Hilm[F]_0\otimes_{A_1}\Hilm_{1,p}\ar@{->}[d]^{w\otimes 1_{\Hilm_{1,p}}} \\
      \Hilm_p\otimes_A\Hilm[F]_1 \ar@{->}[r]^{V_{1,p}} &
      \Hilm[F]_1\otimes_{A_1}\Hilm_{1,p}.
    }
    \]

  \item The vertical product of \(2\)\nb-arrows
    \[
    w_0\colon(\Hilm[F]_0,V_0)\Rightarrow(\Hilm[F]_1,V_1),\qquad
    w_1\colon(\Hilm[F]_1,V_1)\Rightarrow(\Hilm[F]_2,V_2)
    \]
    is the usual product
    \(w_1\cdot w_0\colon\Hilm[F]_0\to\Hilm[F]_2\).  
    The
    arrows \[(A, \Hilm, \Hilm[J])\to (A_1, \Hilm_1,\Hilm[J]_1)\]
    and the \(2\)\nb-arrows
    between them form a groupoid
    \(\Corr^P\!\big(\!(\!A, \Hilm, \!\Hilm[J]),\! (\!A_1, \Hilm_1,\!\Hilm[J]_1)\!\big)\).

  \item Let \((\Hilm[F],V)\colon(A, \Hilm, \Hilm[J])\to(A_1, \Hilm_1, \Hilm[J]_1)\) and \(
   (\Hilm[F]_1,V_1)\colon(A_1, \Hilm_1, \Hilm[J]_1)\to(A_2,\allowbreak \Hilm_2,
    \Hilm[J]_2)\) be arrows. For each~$p\in P$, let $V_p\bullet V_{1,p}$ be the composite correspondence isomorphism
    \[\ \ \ \ \quad \ \ \ \ \Hilm_p\!\otimes_A\Hilm[F]\!\otimes_{A_1}\Hilm[F]_1   \overset{V_p\otimes 1_{\Hilm[F]_1}}{\longrightarrow}
    \Hilm[F]\!\otimes_{A_1}\Hilm_{1,p}\!\otimes_{A_1}\!\Hilm[F]_1   \overset{1_{\Hilm[F]}\otimes V_{1,p}}{\longrightarrow}
    \Hilm[F]\!\otimes_{A_1}\!\Hilm[F]_1\!\otimes_{A_2}\!\Hilm_{2,p}.
    \] We define the product  \((\Hilm[F]_1,V_1)\circ(\Hilm[F],V)\) by 
    \[(\Hilm[F]_1,V_1)\circ(\Hilm[F],V)\defeq
    (\Hilm[F]\otimes_{A_1}\Hilm[F]_1, V\bullet V_1),\]
    where~\(V\bullet V_1=\{V_p\bullet V_{1,p}\}_{p\in P}.\)

  \item The horizontal product for a diagram of arrows and
    \(2\)\nb-arrows
    \[
    \xymatrix{(A, \Hilm, \Hilm[J])\ar@/^1pc/[rr]^{(\Hilm[F],V)}="a"
      \ar@/^-1pc/[rr]_{(\widetilde{\Hilm[F]},\widetilde{V})}="b"&
      &(A_1, \Hilm_1, J_1) \ar@/^1pc/[rr]^{(\Hilm[F]_1,V_1)}="c"
      \ar@/^-1pc/[rr]_{(\widetilde{\Hilm[F]_1},\widetilde{V_1})}="d"
      & &(A_2, \Hilm_2, J_2) \ar@{=>}^{w}"a";"b" \ar@{=>}^{w_1}"c";"d"
    }
    \]
    is the \(2\)\nb-arrow
    \[
    \xymatrix{
      (A, \Hilm, \Hilm[J]) \ar@/^1pc/[rrrr]^{(\Hilm[F]\otimes_{A_1}\Hilm[F]_1,V\bullet V_1)}="a"
      \ar@/^-1pc/[rrrr]_{(\widetilde{\Hilm[F]}\otimes_{A_1}\widetilde{\Hilm[F]_1},
        \widetilde{V}\bullet\widetilde{V_1})}="b" & & &&
      ( A_2, \Hilm_2, \Hilm[J]_2).
      \ar@{=>}^{w\otimes w_1 }"a";"b"}
    \]
    This horizontal product and the product of arrows produce
    composition bifunctors
    \begin{multline*}
      \quad\Corr^P((A, \Hilm, \Hilm[J]), (A_1, \Hilm_1,\Hilm[J]_1))
      \times \Corr^P((A_1, \Hilm_1, \Hilm[J]_1), (A_2, \Hilm_2,\Hilm[J]_2))\\
      \to \Corr^P((A, \Hilm, \Hilm[J]), (A_2, \Hilm_2,\Hilm[J]_2)).
    \end{multline*}

  \item The unit arrow on the object~\((A,\Hilm,\Hilm[J])\)
    is the proper covariant correspondence~\((A,\iota_{\Hilm})\),
    where~\(A\)
    is the identity correspondence
    and~\(\iota_{\Hilm}=\{\iota_{\Hilm_p}\}_{p\in P}\) is the family of canonical isomorphisms 
    \[
    \Hilm_p\otimes_A A \cong  \Hilm_p \cong  A\otimes_A \Hilm_p
    \]
    obtained from the right and left actions of~\(A\) on~\(\Hilm\).

  \item The associators and unitors are the same as in the
    correspondence bicategory~\cite[Section~2]{Buss-Meyer-Zhu:Higher_twisted}. That is, $$a\colon\Hilm[F]_0\otimes_{A}(\Hilm[F]_1\otimes_{A_1}\Hilm[F]_{2})\xRightarrow{\cong} (\Hilm[F]_0\otimes_{A}\Hilm[F]_1)\otimes_{A_1}\Hilm[F]_{2}$$ is the obvious isomorphism, and the isomorphisms $A\otimes_A\Hilm[F]\xRightarrow{\cong} \Hilm[F]$ and $\Hilm[F]\otimes_{A_1}A_1\xRightarrow{\cong}\Hilm[F]$ implement the left and right actions of~$A$ and~$A_1$, respectively.  
     \end{itemize} 
    
      We let~$\Corr^P_{\proper}$ be the sub-bicategory of~$\Corr^P$ whose arrows are~\emph{proper} covariant correspondences. 
\end{defn}

\begin{defn} We denote by $\Corr^P_{\proper,\ast}$ the full sub-bicategory of~$\Corr^P_{\proper}$ whose objects are triples $(A,\Hilm, \Hilm[I])$, where~$\Hilm$ is a simplifiable product system of Hilbert bimodules and~$\Hilm[I]=\{I_p\}_{p\in P}$ is the family of Katsura's ideals for~$\Hilm$, that is, $I_p=\BRAKET{\Hilm_p}{\Hilm_p}$ for all~$p\in P$. 
\end{defn}

\begin{example}\label{ex:canonical_proper_ccorres} View~$\CP^e_{\Hilm[J],\Hilm}$ as a correspondence~$A\leadsto \CP^e_{\Hilm[J],\Hilm}$. For each~$p\in P$, let $\hat{\iota}_{\Hilm_p}$ be the isomorphism $$\Hilm_p\otimes_A \CP^e_{\Hilm[J],\Hilm}\cong\CP^p_{\Hilm[J],\Hilm}\cong \CP^e_{\Hilm[J],\Hilm}\otimes_{\CP^e_{\Hilm[J],\Hilm}}\CP^p_{\Hilm[J],\Hilm},$$ where the first isomorphism is that of Corollary~\ref{cor:positive_fibres_isomorphism}. Cuntz--Pimsner covariance on~$\Hilm[J]=\{J_p\}_{p\in P}$ implies that $$j_e(J_p)\subseteq j_p(\Hilm_p)j_p(\Hilm_p)^*\subseteq \CP^p_{\Hilm[J],\Hilm}\CP^{p\,\,*}_{\Hilm[J],\Hilm}$$
for all $p\in P$. So $(\CP^e_{\Hilm[J],\Hilm}, \hat{\iota}_{\Hilm})$ is a proper covariant correspondence $$(A,\Hilm,\Hilm[J])\to(\CP^e_{\Hilm[J],\Hilm}, (\CP^p_{\Hilm[J],\Hilm})_{p\in P}, \Hilm[I]_{\CP_{\Hilm[J],\Hilm}}),$$ where~$\hat{\iota}_{\Hilm}=\{\hat{\iota}_{\Hilm_p}\}_{p\in P}$ and $\Hilm[I]_{\CP_{\Hilm[J],\Hilm}}=\{I_p^{\CP_{\Hilm[J],\Hilm}}\}_{p\in P}$ with $$I_p^{\CP_{\Hilm[J],\Hilm}}=\CP^p_{\Hilm[J],\Hilm}\CP^{p\,\,*}_{\Hilm[J],\Hilm}=\BRAKET{\CP^p_{\Hilm[J],\Hilm}}{\CP^p_{\Hilm[J],\Hilm}}.$$

\end{example}

In order to prove that the construction of a relative Cuntz--Pimsner algebra is functorial, we begin by building correspondences between the underlying relative Cuntz--Pimsner algebras out of morphisms in~$\Corr^P_{\proper}$. 
\begin{prop}\label{prop:induced_correspondence} Let~$(\Hilm[F],V)\colon(A,\Hilm, \mathcal{J}_A)\to(B,\Hilm[G], \mathcal{J}_B)$ be a proper covariant correspondence. It induces a proper correspondence~$\CP_{\Hilm[F],V}\colon\CP_{\mathcal{J}_A,\Hilm}\leadsto\CP_{\mathcal{J}_B,\Hilm[G]}.$ In particular, a morphism in~$\Corr^P_{\Bim}$ between two simplifiable product systems of Hilbert bimodules produces a proper correspondence between the cross-sectional $\Cst$\nb-algebras of the associated Fell bundles.
\end{prop}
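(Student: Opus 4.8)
The plan is to realise $\CP_{\Hilm[F],V}$ as the internal tensor product $M\defeq\Hilm[F]\otimes_B\CP_{\mathcal{J}_B,\Hilm[G]}$, a right Hilbert $\CP_{\mathcal{J}_B,\Hilm[G]}$-module, and to equip it with a left action of $\CP_{\mathcal{J}_A,\Hilm}$ coming from the universal property of the latter. Here $k=\{k_p\}_{p\in P}$ denotes the canonical Nica covariant representation of $\Hilm[G]$ in $\CP_{\mathcal{J}_B,\Hilm[G]}$, which is Cuntz--Pimsner covariant on $\mathcal J_B$, and $\CP_{\mathcal{J}_B,\Hilm[G]}$ is viewed as a left $B$-module through $k_e$. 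I would first build a representation $S=\{S_p\}_{p\in P}$ of $\Hilm$ in $\Bound(M)$. For $p\in P$ and $\xi\in\Hilm_p$, let $S_p(\xi)\in\Bound(M)$ send $x\in M$ to $\Xi_p(\xi\otimes x)$, where $\Xi_p$ is the adjointable composite
\[
\Hilm_p\otimes_A M\xrightarrow{\,V_p\otimes 1\,}(\Hilm[F]\otimes_B\Hilm[G]_p)\otimes_B\CP_{\mathcal{J}_B,\Hilm[G]}\xrightarrow{\,1_{\Hilm[F]}\otimes\mathsf m\,}M,
\]
using the identification $\Hilm_p\otimes_A M\cong(\Hilm_p\otimes_A\Hilm[F])\otimes_B\CP_{\mathcal{J}_B,\Hilm[G]}$ and the multiplication map $\mathsf m\colon\Hilm[G]_p\otimes_B\CP_{\mathcal{J}_B,\Hilm[G]}\to\CP_{\mathcal{J}_B,\Hilm[G]}$, $\eta\otimes c\mapsto k_p(\eta)c$, of $k$; set $S_e\defeq\varphi_{\Hilm[F]}\otimes 1$, where $\varphi_{\Hilm[F]}\colon A\to\Bound(\Hilm[F])$ is the left action. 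Since $V_p$ is unitary and $\mathsf m$ is isometric, each $S_p(\xi)$ is adjointable.

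Next one checks that $S$ is a Nica covariant representation of $\Hilm$ which is Cuntz--Pimsner covariant on $\mathcal J_A$. Axiom (T1), $S_p(\xi)S_q(\zeta)=S_{pq}(\mu_{p,q}(\xi\otimes\zeta))$, follows from the coherence diagram~\eqref{eq:coherence_covariant_corres} for $V$ together with multiplicativity of $k$, and axiom (T2), $S_p(\xi)^{*}S_p(\eta)=S_e(\braket{\xi}{\eta})$, follows because the correspondence isomorphisms $V_p$ preserve inner products and $\mathsf m$ is isometric. The main obstacle is Nica covariance, which has no analogue in the single-correspondence setting of~\cite{Meyer-Sehnem:Bicategorical_Pimsner}. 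By multiplicativity it suffices to control the products $S_p(\xi)^{*}S_q(\zeta)$; expressing $S_q(\zeta)$ through $V_q$ and $k_q$ reduces these to the expressions $k_p(\cdot)^{*}k_q(\cdot)$, to which Nica covariance of $k$ applies, so that they vanish when $p\vee q=\infty$ and otherwise land in $\overline{k_{p'}(\Hilm[G]_{p'})\,k_{q'}(\Hilm[G]_{q'})^{*}}$, where $p'=p^{-1}(p\vee q)$ and $q'=q^{-1}(p\vee q)$. Re-expressing this through the coherence diagram for the factorisations $p\vee q=pp'=qq'$ and using compact alignment of $\Hilm$ and $\Hilm[G]$ yields the Nica covariance identity for $S$.

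For Cuntz--Pimsner covariance on $\mathcal J_A$ I use the $*$-homomorphism $\rho_p\colon\Comp(\Hilm[F]\otimes_B\Hilm[G]_p)\to\Bound(M)$, $\rho_p(K)\defeq(1_{\Hilm[F]}\otimes\mathsf m)(K\otimes 1)(1_{\Hilm[F]}\otimes\mathsf m)^{*}$, for which $S^{(p)}(T)=\rho_p\big(V_p(T\otimes 1_{\Hilm[F]})V_p^{*}\big)$ for all $T\in\Comp(\Hilm_p)$. Fix $a\in J^A_p$; then $\varphi^A_p(a)\in\Comp(\Hilm_p)$ and, as $V_p$ intertwines the left $A$-actions, $V_p(\varphi^A_p(a)\otimes 1_{\Hilm[F]})V_p^{*}=\varphi_{\Hilm[F]}(a)\otimes 1_{\Hilm[G]_p}$, so that $S^{(p)}(\varphi^A_p(a))=\rho_p(\varphi_{\Hilm[F]}(a)\otimes 1_{\Hilm[G]_p})$. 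The hypothesis $J^A_p\Hilm[F]\subseteq\Hilm[F]J^B_p$ places the range of $\varphi_{\Hilm[F]}(a)$ inside $\overline{\Hilm[F]J^B_p}$, so $\varphi_{\Hilm[F]}(a)$ is a norm-limit of finite sums of rank-one operators $\ket{n}\bra{h}$ with $n\in\overline{\Hilm[F]J^B_p}$. For such a term, writing $n$ as a limit of vectors of the form $g\cdot b$ with $b\in J^B_p$ and invoking Cuntz--Pimsner covariance of $k$ on $\mathcal J_B$ (in the form $k_e(J^B_p)\,\CP_{\mathcal{J}_B,\Hilm[G]}\subseteq\overline{k_p(\Hilm[G]_p)\,\CP_{\mathcal{J}_B,\Hilm[G]}}$, the range of $\mathsf m$, cf.\ Proposition~\ref{prop:equivalent_covariant_cond}), the projection $1_{\Hilm[F]}\otimes\mathsf m\mathsf m^{*}$ built into $\rho_p$ acts trivially and one obtains $\rho_p(\ket{n}\bra{h}\otimes 1_{\Hilm[G]_p})=\ket{n}\bra{h}\otimes 1_{\CP_{\mathcal{J}_B,\Hilm[G]}}$. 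Summing and taking limits gives $S^{(p)}(\varphi^A_p(a))=\varphi_{\Hilm[F]}(a)\otimes 1_{\CP_{\mathcal{J}_B,\Hilm[G]}}=S_e(a)$. Hence the universal property of $\CP_{\mathcal{J}_A,\Hilm}$ supplies a $*$-homomorphism $\CP_{\mathcal{J}_A,\Hilm}\to\Bound(M)$, the desired left action on $M$.

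It remains to verify that the resulting correspondence $\CP_{\Hilm[F],V}\colon\CP_{\mathcal{J}_A,\Hilm}\leadsto\CP_{\mathcal{J}_B,\Hilm[G]}$ is nondegenerate and proper. Nondegeneracy is immediate, since $\overline{S_e(A)M}=\overline{(\varphi_{\Hilm[F]}(A)\Hilm[F])\otimes_B\CP_{\mathcal{J}_B,\Hilm[G]}}=M$ by nondegeneracy of $\Hilm[F]$. For properness it suffices that each generator of $\CP_{\mathcal{J}_A,\Hilm}$ act compactly: as $\Hilm[F]$ is proper, $\varphi_{\Hilm[F]}(a)\in\Comp(\Hilm[F])$, and $\Comp(\Hilm[F])\otimes 1\subseteq\Comp(M)$ because each $\ket{f}\bra{f'}\otimes 1_{\CP_{\mathcal{J}_B,\Hilm[G]}}$ factors as $T_fT_{f'}^{*}$ with $T_f\colon\CP_{\mathcal{J}_B,\Hilm[G]}\to M$, $c\mapsto f\otimes c$; thus $S_e(a)\in\Comp(M)$. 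Then (T2) gives $S_p(\xi)^{*}S_p(\xi)=S_e(\braket{\xi}{\xi})\in\Comp(M)$, and since $x^{*}x\in\Comp(M)$ forces $x\in\Comp(M)$, also $S_p(\xi)\in\Comp(M)$; as these generate the image of $\CP_{\mathcal{J}_A,\Hilm}$, the left action is by compacts. Finally, the last assertion of the statement is immediate: for simplifiable product systems of Hilbert bimodules with $\mathcal J_A,\mathcal J_B$ the families of Katsura's ideals, Proposition~\ref{prop:associated_product_system} identifies $\CP_{\mathcal{J}_A,\Hilm}$ and $\CP_{\mathcal{J}_B,\Hilm[G]}$ with the cross-sectional $\Cst$\nb-algebras of the associated Fell bundles, and $\CP_{\Hilm[F],V}$ is the required proper correspondence between them.
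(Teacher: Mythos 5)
Your overall architecture matches the paper's (represent $\Hilm$ on $M=\Hilm[F]\otimes_B\CP_{\mathcal{J}_B,\Hilm[G]}$ via the $V_p$'s and the multiplication maps, check the two covariance conditions, apply the universal property), but there is a genuine gap at the two places where you invoke $(1_{\Hilm[F]}\otimes\mathsf m)^{*}$. The map $\mathsf m\colon\Hilm[G]_p\otimes_B\CP_{\mathcal{J}_B,\Hilm[G]}\to\CP_{\mathcal{J}_B,\Hilm[G]}$ is an isometric module map onto $\overline{k_p(\Hilm[G]_p)\,\CP_{\mathcal{J}_B,\Hilm[G]}}$, and an isometry between Hilbert modules is adjointable only when its range is orthogonally complemented; this fails in general. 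Concretely, take $B=\CC$, let $\Hilm[G]$ be the product system over $\NN$ with $\Hilm[G]_n=(\ell^2(\NN))^{\otimes n}$, and let $\mathcal J_B=\{0\}$ (these are Katsura's ideals here), so that $\CP_{\mathcal J_B,\Hilm[G]}\cong\mathcal O_\infty$, generated by isometries $s_i$ with pairwise orthogonal ranges; the range of $\mathsf m$ for $p=1$ is $\overline{\sum_i s_i\mathcal O_\infty}$. If this were complemented, writing $1=x+y$ with $x$ in the range and $s_i^{*}y=0$ for all $i$ forces $y$, in the faithful Fock representation, to be the vacuum projection, a nonzero compact operator; simplicity of $\mathcal O_\infty$ would then place $\mathcal O_\infty$ inside the compacts on Fock space, contradicting that $1$ is not compact. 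Proper covariant correspondences into this object exist (e.g.\ the identity one, with $\Hilm[F]=\CC$), so your $\rho_p$ and the step ``the projection $1_{\Hilm[F]}\otimes\mathsf m\mathsf m^{*}$ acts trivially'' are simply not defined in the stated generality. The same issue undermines your claim that $S_p(\xi)$ is adjointable: ``$V_p$ unitary and $\mathsf m$ isometric'' is a non sequitur, since composing with a non-adjointable isometry does not preserve adjointability, and your later argument ($x^{*}x\in\Comp(M)\Rightarrow x\in\Comp(M)$) is circular at this point because it presupposes $S_p(\xi)\in\Bound(M)$.

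The repair is exactly the paper's route, and you in fact possess its key ingredient but deploy it too late: properness of $\Hilm[F]$ gives $\varphi_{\Hilm[F]}(A)\otimes 1\subseteq\Comp(M)$ (your $T_f$ argument, once one factors $f=f'b$ so that $T_f$ itself is compact from $\CP_{\mathcal{J}_B,\Hilm[G]}$ to $M$ — a product of merely adjointable operators is not automatically compact), hence every insertion $M\to\Hilm_p\otimes_AM$, $x\mapsto\xi\otimes x$, is compact; and post-composing a compact operator with an isometric module map again yields a compact operator, which is \cite{Meyer-Sehnem:Bicategorical_Pimsner}*{Lemma~2.1} and requires no adjoint of the isometry. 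Thus each $S_p(\xi)$ is compact, hence adjointable, from the outset, and the representation lands in $\Comp(M)$, so properness needs no separate argument at the end. Cuntz--Pimsner covariance should then be verified not through $\rho_p$ but through the subspace criterion of Proposition~\ref{prop:equivalent_covariant_cond}: the hypothesis $J_p^A\Hilm[F]\subseteq\Hilm[F]J_p^B$, covariance of $k$ on $\mathcal J_B$ (in the form $k_e(J^B_p)\,\CP_{\mathcal{J}_B,\Hilm[G]}\subseteq\overline{k_p(\Hilm[G]_p)\,\CP_{\mathcal{J}_B,\Hilm[G]}}$, which you do cite), and unitarity of $V_p$ give $S_e(J^A_p)M\subseteq\overline{S_p(\Hilm_p)M}$, and no $\mathsf m^{*}$ ever appears. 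Your Nica-covariance paragraph is only a sketch; when fleshed out it should take the shape of the paper's range-containment argument, which additionally uses that $(\CP^p_{\mathcal J_B,\Hilm[G]})_{p\in P}$ is simplifiable (Corollary~\ref{cor:CP_simplifiable_psystem}). The final reduction of the last assertion to Proposition~\ref{prop:associated_product_system} is fine.
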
 
\begin{proof}Let~$\Hilm[F]_{\CP}\coloneqq\Hilm[F]\otimes_B\CP_{\mathcal{J}_{B},\Hilm[G]}$. This is a proper correspondence~$A\leadsto\CP_{\mathcal{J}_{B},\Hilm[G]}$. We define a family of isometries $V^!=\{V_p^!\}_{p\in P}$ by setting, for all~$p\in P$, \[V_p^!\colon\Hilm_p\otimes_A\Hilm[F]_{\CP}=\Hilm_p\otimes_A\Hilm[F]\otimes_B\CP_{\mathcal{J}_B,\Hilm[G]}\xRightarrow{V_p\otimes\id}\Hilm[F]\otimes_B\Hilm[G]_p\otimes_B\CP_{\mathcal{J}_{B},\Hilm[G]}\xRightarrow{\id\otimes\mu_{\Hilm[G]_p}}\Hilm[F]_{\CP},\] where $\mu_{\Hilm[G]_p}$ is the isometry $\Hilm[G]_p\otimes_B\CP_{\mathcal{J}_B,\Hilm[G]}\Rightarrow\CP_{\mathcal{J}_B,\Hilm[G]}$ obtained from the representation of~$\Hilm[G]_p$ in $\CP_{\mathcal{J}_B,\Hilm[G]}$. For each~$\xi\in\Hilm_p$, we set $$\psi_p(\xi)(\eta)\coloneqq V_p^!(\xi\otimes_A\eta),\quad\eta\in\Hilm[F]_\CP.$$  Because~$\Hilm[F]_{\CP}$ is a proper correspondence, the map~$\eta\mapsto\xi\otimes_A\eta$ gives a compact operator from $\Hilm[F]$ to $\Hilm_p\otimes_A\Hilm[F]_{\CP}$. This is mapped to~$\Comp(\Hilm[F]_{\CP})$ when composed with~$V_p^!$ by \cite[Lemma~2.1]{Meyer-Sehnem:Bicategorical_Pimsner}. In particular, $\psi_p(\xi)$ is adjointable. The coherence axiom~\eqref{eq:coherence_covariant_corres} for~$(\Hilm[F],V)$ implies that~$\psi=\{\psi_p\}_{p\in P}$ preserves the multiplication on~$\Hilm$. In addition, for all~$\xi,\eta\in \Hilm_p$ and~$\zeta,\zeta'\in\Hilm[F]_{\CP}$, we have that $$\braket{\psi_p(\xi)^*\psi_p(\eta)\zeta}{\zeta'}=\braket{\psi_p(\eta)\zeta}{\psi_p(\xi)\zeta'}=\braket{\zeta}{\psi_e(\braket{\eta}{\xi})\zeta'}=\braket{\psi_e(\braket{\xi}{\eta})\zeta}{\zeta'}$$ provided~$V_p^!$ is an isometry. Therefore, $\psi=\{\psi_p\}_{p\in P}$ is a representation of~$\Hilm$ by compact operators on~$\Hilm[F]_\CP$.

We are left with the task of proving that~$\psi$ factors through~$\CP_{\mathcal{J}_A,\Hilm}$. To do so, we will first prove that it is Cuntz--Pimsner covariant on~$\mathcal{J}_A=\{J_p^A\}_{p\in P}$. The Nica covariance condition will then follow from the fact that the $G$\nb-grading of~$\CP_{\mathcal{J}_B,\Hilm[G]}$ is a Fell bundle extended from~$P$. The representation of~$\Hilm[G]$ in~$\CP_{\mathcal{J}_B,\Hilm[G]}$ is covariant on~$\mathcal{J}_B$. Hence the \Star homomorphism $j_{J_B}\colon B\to\CP_{\mathcal{J}_B,\Hilm[G]}$ satisfies  $$j_{\mathcal{J}_B}(J_p^B)\CP_{\mathcal{J}_B,\Hilm[G]}\subseteq \mu_{\Hilm[G]_p}(\Hilm[G]_p\otimes_B\CP_{\mathcal{J}_B,\Hilm[G]})$$ for all~$p\in P$. It follows that~$\psi_e(J_p^A)$ maps~$\Hilm[F]_{\CP}$ into~$\Hilm[F]\otimes_B\mu_{\Hilm[G]_p}(\Hilm[G]_p\otimes_B\CP_{\mathcal{J}_B,\Hilm[G]})$, provided $J_p^A\Hilm[F]\subseteq\Hilm[F]J_p^B$. Using that~$V_p$ is unitary, we see that this coincides with~$\psi_p(\Hilm_p)\Hilm[F]_{\CP}$. Proposition~\ref{prop:equivalent_covariant_cond} ensures that~$\psi$ is covariant on~$\Hilm[J]_A$. 

To see that~$\psi$ is also Nica covariant, let~$p,q\in P$, $T\in\Comp(\Hilm_p)$ and $S\in\Comp(\Hilm_q)$. Since~$$\psi_p(\Hilm_p)(\Hilm[F]_{\CP})\subseteq \Hilm[F]\otimes_B\mu_{\Hilm[G]_p}(\Hilm[G]_p\otimes_B\CP_{\mathcal{J}_B,\Hilm[G]})$$ and the representation of~$\Hilm[G]$ in $\CP_{\mathcal{J}_B,\Hilm[G]}$ is Nica covariant, it follows that~$$\psi^{(p)}(T)(\Hilm[F]\otimes_B\CP^{e}_{\mathcal{J}_B,\Hilm[G]})\subseteq\Hilm[F]\otimes_B\CP^p_{\Hilm[J]_B,\Hilm[G]}\CP^{p\,\,*}_{\Hilm[J]_B,\Hilm[G]}.$$ The same reasoning shows that~$$\psi^{(q)}(S)(\Hilm[F]\otimes_B\CP^{e}_{\mathcal{J}_B,\Hilm[G]})\subseteq\Hilm[F]\otimes_B\CP^q_{\Hilm[J]_B,\Hilm[G]}\CP^{q\,\,*}_{\Hilm[J]_B,\Hilm[G]}.$$ Now $\CP_{\mathcal{J}_B,\Hilm[G]}=\CP^e_{\mathcal{J}_B,\Hilm[G]}\CP_{\mathcal{J}_B,\Hilm[G]}$ and so we deduce that~$\psi^{(p)}(T)\psi^{(q)}(S)=0$ if $p\vee q=\infty$ because $(\CP^p_{\mathcal{J}_B,\Hilm[G]})_{g\in G}$ is simplifiable by Corollary~\ref{cor:CP_simplifiable_psystem}. In case~$p\vee q<\infty$, we then have $$\psi^{(p)}(T)\psi^{(q)}(S)(\Hilm[F]_{\CP})\subseteq \Hilm[F]\otimes_B\mu_{\Hilm[G]_{p\vee q}}(\Hilm[G]_{p\vee q}\otimes_B\CP_{\mathcal{J}_B,\Hilm[G]}).$$ The right-hand side above is contained in 
$\psi_{p\vee q}(\Hilm_{p\vee q})\Hilm[F]_{\CP}$, provided~$V_{p\vee q}$ is unitary. So we may argue as in Proposition~\ref{prop:equivalent_covariant_cond} to deduce that~$\psi$ is Nica covariant and therefore descends to a \Star homomorphism $\CP_{\mathcal{J}_{A},\Hilm}\to\Comp(\Hilm[F]_{\CP}),$ as desired. The last assertion in the statement follows from the fact that~$\Cst((\hat{\Hilm}_g)_{g\in G})$ is canonically isomorphic to~$\CP_{\mathcal{I}_{\Hilm},\Hilm}$ whenever~$\Hilm$ is a simplifiable product system of Hilbert bimodules (see Proposition~\ref{prop:associated_product_system}).
\end{proof}

Our next goal is to enrich the correspondence found in Section~\ref{sec:Fell_bundles_positive} between simplifiable product systems of Hilbert bimodules and Fell bundles extended from positive cones to an equivalence of bicategories, using the $\Cst$\nb-correspondence built in the previous proposition.

\begin{defn} Let $(B_g)_{g\in G}$ and~$(C_g)_{g\in G}$ be Fell bundles extended from~$P$. A correspondence $(\Hilm[F], U)\colon(B_g)_{g\in G}\to(C_g)_{g\in G}$ consists of a $\Cst$\nb-correspondence $\Hilm[F]\colon B_e\leadsto C_e$ and a family of isometries~$U=\{U_g\}_{g\in G}$, where~$U_g\colon B_g\otimes_{B_e}\Hilm[F]\to\Hilm[F]\otimes_{C_e}C_g$, such that $U_e\colon B_e\otimes_{B_e}\Hilm[F]\cong\Hilm[F]\otimes_{C_e}C_e$ is the isomorphism which sends $b\otimes(\xi c)$ to $\psi(b)\xi\otimes c$ and, for all~$p\in P$, $U_p$ is unitary. Here we are regarding the $B_g$'s as correspondences over~$B_e$. We also require the following diagram to commute for all~$g,h\in G$:  
\begin{equation}\label{def:bundle_correspondence}\begin{gathered}\xymatrix@C14pt{(B_g\!\otimes_{B_e}\!B_h)\!\otimes_{B_e}\!\Hilm[F]  \ar@{<->}[d]_{} \ar@{->}[rr]^{\ \ \ \ \hat{\mu}_{g,h}\otimes1}& & B_{gh}\!\otimes_{B_e}\!\Hilm[F]\ar@{->}[rr]^{U_{gh}}&&\Hilm[F]\otimes_{C_e}\!C_{gh} \ar@{<-}[d]^{1\otimes \hat{\mu}^{1}_{g,h}} \\ \!B_g\!\otimes_{B_e}\!(B_h\!\otimes_{B_e}\!\Hilm[F]) && &&  \Hilm[F]\!\otimes_{C_e}\!(C_g\!\otimes_{C_e}\!C_h)  \\  B_g\!\otimes_{B_e}\!(\Hilm[F]\!\otimes_{C_e}\!C_h)
\ar@{<->}[rr]^{}\ar@{<-}[u]^{1\otimes U_h} & &(B_g\!\otimes_{B_e}\!\Hilm[F])\!\otimes_{C_e}\!C_h \ar@{->}[rr]^{U_g\otimes 1}&& (\Hilm[F]\!\otimes_{C_e}\!C_g)\!\otimes_{C_e}\!C_h \ar@{<->}[u]^{} .}\end{gathered}\end{equation}
A correspondence~$(\Hilm[F], U)$ is \emph{proper} if~$\Hilm[F]$ is a proper correspondence. 
\end{defn} 

It is not clear to us whether all of the~$U_g$'s in the above definition are unitary whenever the~$U_p$'s are so.

\begin{defn} We will denote by \(\Corr^{(G,P)}\) the bicategory whose objects are Fell bundles over~$G$ extended from~$P$ and arrows $(B_g)_{g\in G}\rightarrow(C_g)_{g\in G}$ are correspondences as above. A $2$\nb-morphism $w\colon(\Hilm[F]_0,U_0)\Rightarrow(\Hilm[F]_1,U_1)$ is a correspondence isomorphism~$w\colon\Hilm[F]_0\to\Hilm[F]_1$ making the following diagram commute for all~$g\in G$:
  \[
    \xymatrix{
      B_g\otimes_{B_e}\Hilm[F]_0\ar@{->}[r]^{U_{0,g}} \ar@{->}[d]_{1_{B_g}\otimes w}&
      \Hilm[F]_0\otimes_{C_e}C_g\ar@{->}[d]^{w\otimes 1_{C_{g}}} \\
      B_g\otimes_{B_e}\Hilm[F]_1 \ar@{->}[r]^{U_{1,g}} &
      \Hilm[F]_1\otimes_{C_e}C_{g}.
    }
    \]The unit arrow on an object~$(B_g)_{g\in G}$ is the identity correspondence $B_e\colon B_e\to B_e$ with the family of isomorphisms~$\hat{\iota}_{G}=\{\iota_{B_g}\}_{g\in G}$, where~$\iota_{B_g}$ is the isomorphism~$B_e\otimes_{B_e}B_g\cong B_g\otimes_{B_e}B_e$ obtained as in Definition~\ref{def:P_bicategory}. The further data needed for a bicategory is also defined as in Definition~\ref{def:P_bicategory}. We let \(\Corr^{(G,P)}_{\proper}\) be the sub-bicategory of~\(\Corr^{(G,P)}\) whose arrows are proper correspondences.
\end{defn}

\begin{lem}\label{lem:restricted_corres} Let~$(\Hilm[F],U)\colon (B_g)_{g\in G}\to(C_g)_{g\in G}$ be a morphism in~\(\Corr^{(G,P)}_{\proper}\). Then its restriction to the positive fibres is a proper covariant correspondence~$$(B_e, \Hilm[B], \Hilm[I]_{\Hilm[B]})\to(C_e, \Hilm[C], \Hilm[I]_{\Hilm[C]}),$$ where~$\Hilm[I]_{\Hilm[B]}$ and $\Hilm[I]_{\Hilm[C]}$ denote the families of Katsura's ideals for~$\Hilm[B]$ and~$\Hilm[C]$, respectively. Moreover, if $U'=\{U_g'\}_{g\in G}$ is another family of isometries turning~$\Hilm[F]$ into a correspondence from~$(B_g)_{g\in G}$ to~$(C_g)_{g\in G}$ and such that~$U'_p=U_p$ for all~$p\in P$, then~$U'_g=U_g$ for all~$g\in G$.
\end{lem}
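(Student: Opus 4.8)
The plan is to treat the two assertions separately: the restriction is essentially a bookkeeping matter, while the rigidity of $U$ is where the real work lies.

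\smallskip

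For the first assertion, the correspondence $\Hilm[F]$ is proper by hypothesis, and for $p\in P$ the isometry $U_p$ is required to be unitary, so $\{U_p\}_{p\in P}$ is a family of correspondence isomorphisms $B_p\otimes_{B_e}\Hilm[F]\cong\Hilm[F]\otimes_{C_e}C_p$. The coherence diagram~\eqref{eq:coherence_covariant_corres} required of a covariant correspondence is precisely the restriction of~\eqref{def:bundle_correspondence} to pairs $g=p$, $h=q$ in~$P$, so it holds automatically. The only substantive point is the ideal condition $\BRAKET{B_p}{B_p}\Hilm[F]\subseteq\overline{\Hilm[F]\,\BRAKET{C_p}{C_p}}$, where $\BRAKET{B_p}{B_p}=B_pB_p^*$ and $\BRAKET{C_p}{C_p}=C_pC_p^*$ are the Katsura ideals. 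I would obtain this by tracing~\eqref{def:bundle_correspondence} for the pair $(p,p^{-1})$: the multiplication maps $\hat\mu_{p,p^{-1}}$ and $\hat\mu^C_{p,p^{-1}}$ have ranges $\BRAKET{B_p}{B_p}$ and $\BRAKET{C_p}{C_p}$, and chasing $\xi\otimes\eta^*\otimes f$ around the diagram rewrites $\psi(\xi\eta^*)f=(1\otimes\hat\mu^C_{p,p^{-1}})(U_p\otimes 1)\bigl(\xi\otimes U_{p^{-1}}(\eta^*\otimes f)\bigr)$, which manifestly lies in $\Hilm[F]\cdot C_pC_p^*$. Note that this step already uses the fibre $U_{p^{-1}}$, not only the positive ones.

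\smallskip

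For the uniqueness assertion I would follow the strategy of Proposition~\ref{prop:induced_correspondence} and pass to the ambient module $\Hilm[F]_{\CP}\defeq\Hilm[F]\otimes_{C_e}\Cst((C_g)_{g\in G})$. For $g\in G$ and $b\in B_g$, define $\rho_g(b)\in\Bound(\Hilm[F]_{\CP})$ by $\rho_g(b)(f\otimes c)=U_g(b\otimes f)\cdot c$, using $U_g(b\otimes f)\in\Hilm[F]\otimes_{C_e}C_g\subseteq\Hilm[F]_{\CP}$ and the right module structure. Each $\rho_g(b)$ is adjointable, being the right-linear extension of the adjointable operator $f\mapsto U_g(b\otimes f)$, and $U_g$ is recovered from $\rho_g$ through $U_g(b\otimes f)=\lim_\lambda\rho_g(b)(f\otimes u_\lambda)$ along an approximate unit $(u_\lambda)$ of~$C_e$; hence it suffices to show that the whole family $\{\rho_g\}$ is forced by $\{\rho_p\}_{p\in P}$. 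Chasing~\eqref{def:bundle_correspondence} exactly as in the multiplicativity computation gives $\rho_g(b)\rho_h(b')=\rho_{gh}(bb')$ for all $g,h\in G$, and in particular $\rho_e(a)=\psi(a)\otimes 1$ for $a\in B_e$. The key extra input is the involution relation $\rho_{p^{-1}}(\xi^*)=\rho_p(\xi)^*$ for $p\in P$: setting $D\defeq\rho_{p^{-1}}(\xi^*)-\rho_p(\xi)^*$, the identities $\rho_p(\xi)\rho_p(\xi)^*=\psi(\xi\xi^*)\otimes1$ (unitarity of $U_p$), $\rho_{p^{-1}}(\xi^*)^*\rho_{p^{-1}}(\xi^*)=\psi(\xi\xi^*)\otimes1$ (isometry of $U_{p^{-1}}$) and $\rho_p(\xi)\rho_{p^{-1}}(\xi^*)=\rho_e(\xi\xi^*)=\psi(\xi\xi^*)\otimes1$ (multiplicativity for $(p,p^{-1})$) make the four summands of $D^*D$ cancel, so $D^*D=0$ and $D=0$.

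\smallskip

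With these two facts the conclusion is immediate. Suppose $U'$ agrees with $U$ on~$P$, so $\rho'_p=\rho_p$ for all $p\in P$. If $g\vee e=\infty$, then $B_g=\{0\}=C_g$ by orthogonality and $U_g=U'_g=0$. If $g\vee e<\infty$, write $g=(g\vee e)(g^{-1}\vee e)^{-1}=pq^{-1}$ with $p,q\in P$; by~(S2) one has $B_g=\overline{B_pB_q^*}$, and for $\xi\in B_p$, $\eta\in B_q$ multiplicativity together with the involution relation give $\rho_g(\xi\eta^*)=\rho_p(\xi)\rho_{q^{-1}}(\eta^*)=\rho_p(\xi)\rho_q(\eta)^*$. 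Applying the same identity to $U'$ and using $\rho_p=\rho'_p$, $\rho_q=\rho'_q$ yields $\rho_g=\rho'_g$ on the dense subspace $B_pB_q^*\subseteq B_g$, whence $\rho_g=\rho'_g$ and therefore $U_g=U'_g$.

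\smallskip

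The hard part is the involution relation $\rho_{p^{-1}}(\xi^*)=\rho_p(\xi)^*$, that is, pinning the ``negative'' fibre $U_{p^{-1}}$ to the adjoint of the unitary $U_p$; this is the single place where the isometry of $U_{p^{-1}}$ and the unitarity of $U_p$ are used in tandem, and the $\Cst$\nb-identity $D^*D=0$ is what makes it work. A secondary technical point to verify carefully is the adjointability of the operators $\rho_g(b)$ and the identities $\rho_p(\xi)\rho_p(\eta)^*=\psi(\xi\eta^*)\otimes1$ and $\rho_p(\xi)^*\rho_p(\eta)=\psi(\xi^*\eta)\otimes1$ on the full module $\Hilm[F]_{\CP}$, which follow by right-linear extension from the corresponding identities for $U_pL_\xi$ on~$\Hilm[F]$.
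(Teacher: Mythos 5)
Your proposal has the same overall architecture as the paper's proof: part (1) is handled identically (the ideal condition is read off from the coherence diagram~\eqref{def:bundle_correspondence} applied to the pair $(p,p^{-1})$), and for the uniqueness you likewise convert $U$ into operators $\rho_g(b)$ on $\Hilm[F]_{\CP}=\Hilm[F]\otimes_{C_e}\Cst((C_g)_{g\in G})$, show that they form a representation of the Fell bundle, and recover $U_g$ from $\rho_g$. Your closing step, deducing $\rho_g=\rho'_g$ from orthogonality and from $B_g=\overline{B_pB_q^*}$ via (S2), is a correct and in fact more explicit substitute for the paper's appeal to Proposition~\ref{prop:associated_product_system} and the universal property of $\Cst((B_g)_{g\in G})$.

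The gap sits exactly where you locate the hard part. The identity $\rho_p(\xi)\rho_p(\eta)^*=\psi(\xi\eta^*)\otimes 1$ — the fourth term of your $D^*D$ computation — does \emph{not} follow from unitarity of~$U_p$ by ``right-linear extension.'' Writing $\rho_p(\xi)=(1_{\Hilm[F]}\otimes\mu_{C_p})\circ(U_p\otimes 1)\circ T_\xi$ with $T_\xi(v)=\xi\otimes v$, the multiplication map $1_{\Hilm[F]}\otimes\mu_{C_p}$ is an isometry that need not be adjointable, so the formal manipulation of adjoints breaks down. Concretely, $\rho_p(\xi)\rho_p(\eta)^*$ always has range inside the closed submodule $\overline{\Hilm[F]\otimes_{C_e}C_p\Cst((C_g)_{g\in G})}$, whereas the range of $\psi(\xi\eta^*)\otimes 1$ lies in that submodule precisely because $\BRAKET{B_p}{B_p}\Hilm[F]\subseteq\overline{\Hilm[F]\,\BRAKET{C_p}{C_p}}$; granted unitarity, your fourth term is essentially \emph{equivalent} to the ideal condition of part (1). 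It is an instance of Cuntz--Pimsner covariance — an extra property, not a consequence of the isometry identities — so your parenthetical justification ``unitarity of $U_p$'' cannot be right (it also tacitly uses $U_{p^{-1}}$, through part (1)). The step is repairable with what you already proved: the ideal condition together with surjectivity of $U_p$ gives $(\psi(\xi\eta^*)\otimes1)\Hilm[F]_{\CP}\subseteq\overline{\rho_p(B_p)\Hilm[F]_{\CP}}$; the difference $T\coloneqq\rho_p(\xi)\rho_p(\eta)^*-\psi(\xi\eta^*)\otimes1$ annihilates $\rho_p(B_p)\Hilm[F]_{\CP}$; and $T=\lim_\lambda T(\psi(u_\lambda)\otimes1)=0$ for an approximate unit $(u_\lambda)$ of $\BRAKET{B_p}{B_p}$ — this is the argument behind Propositions \ref{prop:equivalent_covariant_cond} and~\ref{prop:induced_correspondence}. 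The paper instead avoids the issue altogether: since $B_p=B_pB_p^*B_p$, it checks the involution relation only on elements $\xi=\xi_1\braket{\xi_2}{\xi_3}$, for which $\braket{\rho_{p^{-1}}(\xi^*)\eta_1}{\eta_2}=\braket{\eta_1}{\rho_p(\xi)\eta_2}$ follows from multiplicativity and the identities $\rho_p(\cdot)^*\rho_p(\cdot)=\psi(\braket{\cdot}{\cdot})\otimes1$ alone; either adopting that factorization trick or routing your argument through part (1) as above closes the gap.
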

\begin{proof} Let~$(\Hilm[F],U)$ be a proper correspondence from~$(B_g)_{g\in G}$ to~$(C_g)_{g\in G}$. By definition, $U_p\colon B_p\otimes_{B_e}\Hilm[F]\to\Hilm[F]\otimes_{C_e}C_p$  is unitary whenever~$p$ belongs to the positive cone~$P$. Thus, all we need to prove is that the ideal~$\BRAKET{B_p}{B_p}$ maps~$\Hilm[F]$ into~$\Hilm[F]\BRAKET{C_p}{C_p}$. This follows from~\eqref{def:bundle_correspondence}. We let~$p^{-1}$ play the role of~$q$ and obtain the commutative diagram $$\xymatrix{B_p\!\otimes_{B_e}\!B^*_{p}\!\otimes_{B_e}\!\Hilm[F]  \ar@{->}[rr]^{\hat{\mu}_{p,p^{-1}}\otimes1}& & B_pB_p^*\!\otimes_{B_e}\!\Hilm[F]\ar@{->}[rr]^{U_{e}}&&\Hilm[F]\otimes_{C_e}\!C_e \ar@{<-}[d]^{1\otimes \hat{\mu}^{1}_{p,{p^{-1}}}} \\  B_p\!\otimes_{B_e}\!\Hilm[F]\!\otimes_{C_e}\!C^*_{p} \ar@{<-}[u]^{1\otimes U_{p^{-1}}} \ar@{->}[rrrr]^{U_p\otimes 1} &&  &&\Hilm[F]\!\otimes_{C_e}\!C_p\!\otimes_{C_e}\!C^*_{p}  .}$$ The image of the top map is~$\BRAKET{B_p}{B_p}\Hilm[F]$ and the image of the right map is~$\Hilm[F]\BRAKET{C_p}{C_p}$. Hence $\BRAKET{B_p}{B_p}\Hilm[F]\subseteq\Hilm[F]\BRAKET{C_p}{C_p}$.

We are left with the task of proving that~$U=\{U_g\}_{g\in G}$ is completely determined by the unitaries~$\{U_p\}_{p\in P}$. We begin by using the~$U_g$'s to make $\Hilm[F]_{\CP}=\Hilm[F]\otimes_{C_e}\Cst((C_g)_{g\in G})$ into a proper correspondence~$\Cst((B_g)_{g\in G})\leadsto\Cst((C_g)_{g\in G})$, essentialy repeating the argument employed in the proof of Proposition~\ref{prop:induced_correspondence}.

For each~$g\in G$, we let~$U_g^!$ be the isometry given by the composite \[B_g\otimes_{B_{\!e}}\Hilm[F]_{\CP}\!=\!B_g\otimes_{B_{\!e}}\Hilm[F]\otimes_{C_{\!e}}\!\Cst((C_g\!)_{g\in G}\!)\overset{U_g\otimes\id}{\Longrightarrow}\Hilm[F]\otimes_{C_{\!e}} C_g\otimes_{C_{\!e}}\!\Cst((C_g\!)_{g\in G}\!)\overset{\id\otimes\mu_{C_g}}{\Longrightarrow}\Hilm[F]_{\CP},\] where $\mu_{C_g}$ is the isometry $C_g\otimes_{C_{e}}\Cst((C_g)_{g\in G})\Rightarrow\Cst((C_g)_{g\in G})$ obtained from the representation of~$C_g$ in $\Cst((C_g)_{g\in G})$. We set~$U^!=\{U_g^!\}_{g\in G}$. For each~$\xi\in B_g$, we let $$\psi_g(\xi)(\eta)\coloneqq U_p^!(\xi\otimes_{B_e}\eta),\quad\eta\in\Hilm[F]_\CP.$$  As in Proposition~\ref{prop:induced_correspondence}, $\eta\mapsto\xi\otimes_{B_e}\eta$ gives a compact operator from $\Hilm[F]$ to $B_g\otimes_A\Hilm[F]_{\CP}$. This is mapped to~$\Comp(\Hilm[F]_{\CP})$ when composed with~$U_g^!$ and so $\psi_g(\xi)$ is adjointable. Observe that this is just the representation built in Proposition~\ref{prop:induced_correspondence} when restricted to the positive fibres if we identify $\Cst((C_g)_{g\in G})$ with the relative Cuntz--Pimsner algebra~$\CP_{\Hilm[I]_{\Hilm[C]},\Hilm[C]}$ through Proposition~\ref{prop:associated_product_system}.

Clearly~$\psi=\{\psi_g\}_{g\in G}$ is compatible with the multiplication operation on~$(B_g)_{g\in G}$ because~$(\Hilm[F],U)$ satisfies the coherence axiom~\eqref{def:bundle_correspondence}. Thus all we need to prove to conclude that it is a representation of~$(B_g)_{g\in G}$ is that~$\psi$ also preserves the involution. Indeed, it suffices to show that, for all $p\in P$ and $\xi^*\in B_p^*= B_{p^{-1}}$, we have $\psi_{p^{-1}}(\xi^*)=\psi_p(\xi)^*$. We may suppose that $\xi=\xi_1\braket{\xi_2}{\xi_3}$ using the isomorphism~$B_p\cong B_pB_p^*B_p$. Let~$\eta_1,\eta_2\in \Hilm[F]_\CP$. We have 
\begin{align*}\braket{\psi_{p^{-1}}(\xi^*)\eta_1}{\eta_2}&=\braket{\psi_e(\braket{\xi_3}{\xi_2})\psi_{p^{-1}}(\xi_1^*)\eta_1}{\eta_2}\\&=\braket{\psi_p(\xi_2)\psi_{p^{-1}}(\xi_1^*)\eta_1}{\psi_p(\xi_3)\eta_2}\\&=\braket{\psi_e(\BRAKET{\xi_2}{\xi_1})\eta_1}{\psi_p(\xi_3)\eta_2}\\&=\braket{\eta_1}{\psi_e(\BRAKET{\xi_1}{\xi_2})\psi_p(\xi_3)\eta_2}\\&=\braket{\eta_1}{\psi_p(\BRAKET{\xi_1}{\xi_2}\xi_3)\eta_2}\\&=\braket{\eta_1}{\psi_p(\xi_1\braket{\xi_2}{\xi_3})\eta_2}\\&=\braket{\eta_1}{\psi_p(\xi)\eta_2}.\end{align*}
This shows that~$\psi_{p^{-1}}(\xi^*)=\psi_p(\xi)^*$, Therefore, $\psi=\{\psi_g\}_{g\in G}$ is a representation of~$(B_g)_{g\in G}$ in~$\Comp(\Hilm[F]_\CP)$. It induces a \Star homomorphism~$\bar{\psi}\colon \Cst((B_g)_{g\in G})\to\Comp(\Hilm[F]_\CP)$ by the universal property of $\Cst((B_g)_{g\in G})$.

Now if $U'=\{U'_g\}_{g\in G}$ is another family of isometries making~$\Hilm[F]$ into a correspondence~$$(\Hilm[F],U')\colon (B_g)_{g\in G}\to(C_g)_{g\in G}$$ and such that~$U'_p=U_p$ for all~$p\in P$, we have a \Star homomorphism $$\bar{\psi}_{U'}\colon \Cst((B_g)_{g\in G})\to\Comp(\Hilm[F]_\CP)$$ that coincides with~$\bar{\psi}$ on the positive fibres~$(B_p)_{p\in P}$. But $\bar{\psi}=\bar{\psi}_{U'}$ if and only if~$U_g=U'_{g}$ for all~$g\in G$. Since $(B_p)_{p\in P}$ completely determines a representation of~ $\Cst((B_g)_{g\in G}$ by Proposition~\ref{prop:associated_product_system}, it follows that~$U=U'$ as claimed.
\end{proof}

\begin{thm}\label{thm:induced_equivalence} There is an equivalence of bicategories~$\Corr^P_{\Bim}\rightarrow\mathfrak{C}^{(G,P)}_{\proper}$ which sends an object~$(A,\Hilm, \Hilm[I])$ to the associated Fell bundle~$(\hat{\Hilm}_g)_{g\in G}$ extended from~$P$. 
\begin{proof} Let $(\Hilm[F],V)\colon (A,\Hilm, \Hilm[I]_{\Hilm})\to(B,\Hilm[G], \Hilm[I]_{\Hilm[G]})$ be a proper covariant correspondence. This induces a proper correspondence $\CP_{\Hilm[F],V}\colon\Cst((\hat{\Hilm}_g)_{g\in G})\!\leadsto\! \Cst((\hat{\Hilm[G]}_g)_{g\in G})$ by Proposition~\ref{prop:induced_correspondence}. By Proposition~\ref{pro:relative_embedding}, $\CP^p_{\mathcal{I}_{\Hilm[G]},\Hilm[G]}$ is isomorphic to~$\Hilm[G]_p$ and hence $\CP^e_{\Hilm[F],V}=\Hilm[F]\otimes_B\CP^e_{\mathcal{I}_{\Hilm[G]},\Hilm[G]}\cong\Hilm[F]$. Combining this with the left action of $\Cst((\hat{\Hilm}_g)_{g\in G})$ on $\CP_{\Hilm[F],V},$ we obtain isometries $$U_g\colon \hat{\Hilm}_g\otimes_A\Hilm[F]\to\Hilm[F]\otimes_B\hat{\Hilm[G]}_g$$ through the identification $\hat{\Hilm}_g\otimes_A\Hilm[F]\cong\bar{\psi}(\hat{\Hilm}_g)\Hilm[F]$. Thus $U_p=V_p$ for all $p\in P$ so that~$U=\{U_g\}_{g\in G}$ extends~$V$. 

We let~$\Hilm[F]^\sharp$ be $\Hilm[F]$ viewed as a correspondence from the unit fibre of $\hat{\Hilm}$ to that of $\hat{\Hilm[G]}$. We set~$V^\sharp_g\coloneqq U_g$ and let~$V^\sharp=\{V_g^\sharp\}_{g\in G}$. The pair~$(\Hilm[F]^\sharp,V^\sharp)$ satisfies the coherence condition~\eqref{def:bundle_correspondence} because~$\Hilm[F]^\sharp$ and~$V^\sharp$ come from a correspondence $\Cst((\hat{\Hilm}_g)_{g\in G})\leadsto\Cst((\hat{\Hilm[G]}_g)_{g\in G}).$ 

In order to show that a $2$\nb-morphism $w\colon(\Hilm[F]_0,V_0)\Rightarrow(\Hilm[F]_1,V_1)$ produces a $2$\nb-arrow $w^\sharp\colon(\Hilm[F]_0^\sharp,V^\sharp_0)\Rightarrow(\Hilm[F]_1^\sharp,V^\sharp_1)$ such that $w^\sharp=w$ as a correspondence isomorphism $\Hilm[F]_0\cong\Hilm[F]_1$, we observe that a \Star homomorphism from $\Cst((\hat{\Hilm}_g)_{g\in G})$ in a $\Cst$\nb-algebra~$B$ is completely determined by its restriction to~$(A,\Hilm)$. We may use a $2$\nb-morphism $w\colon(\Hilm[F]_0,V_0)\Rightarrow(\Hilm[F]_1,V_1)$ to define a left action of $\CP_{\mathcal{I}_{\Hilm},\Hilm}$ on~$\Hilm[F]_1\otimes_B \CP_{\mathcal{I}_{\Hilm[G]},\Hilm[G]}$ by setting~$$\bar{\psi}_1'(\,\cdot\,)\coloneqq (w\otimes 1)\bar{\psi}_0(\,\cdot\,)(w^{-1}\otimes 1).$$ This is equal to~$\bar{\psi}_1$ on~$(A,\Hilm)$. By the above observation, $\bar{\psi}'_1=\bar{\psi}_1$ on $\CP_{\mathcal{I}_{\Hilm},\Hilm}$. As a consequence, the diagram \[
    \xymatrix{
      \hat{\Hilm}_g\otimes_{A}\Hilm[F]^\sharp_0\ar@{->}[r]^{V^\sharp_{0,g}} \ar@{->}[d]_{1_{\hat{\Hilm}_g}\otimes w}&
      \Hilm[F]^\sharp_0\otimes_{B}\hat{\Hilm[G]}_g\ar@{->}[d]^{w\otimes 1_{\hat{\Hilm[G]}_{g}}} \\
      \hat{\Hilm}_g\otimes_{A}\Hilm[F]^\sharp_1 \ar@{->}[r]^{V^\sharp_{1,g}} &
      \Hilm[F]^\sharp_1\otimes_{B}\hat{\Hilm[G]}_{g}
    }
    \] commutes for all~$g\in G$. So $w$ gives a $2$\nb-arrow $w^\sharp\colon(\Hilm[F]_0^\sharp,V^\sharp_0)\Rightarrow(\Hilm[F]_1^\sharp,V^\sharp_1)$. The map~$(\Hilm[F],V)\mapsto (\Hilm[F]^\sharp,V^\sharp)$, $w\mapsto w^\sharp$ clearly gives a functor $$\Corr^P_{\Bim}\big((A,\Hilm,\Hilm[I]_{\Hilm}),(B,\Hilm[G],\Hilm[I]_{\Hilm[G]})\big)\to\mathfrak{C}^{(G,P)}_{\proper}\big((\hat{\Hilm}_g)_{g\in G},(\hat{\Hilm[G]}_g)_{g\in G}\big)$$ between the groupoids of arrows associated to the objects $(A,\Hilm,\Hilm[I]_{\Hilm})$ and $(B,\Hilm[G],\Hilm[I]_{\Hilm[G]})$. Furthermore, it follows from Lemma~\ref{lem:restricted_corres} that such a functor is an equivalence between categories since it is fully faithful and essentially surjective. 

Now we define a homomorphism of bicategories $L^*\colon\Corr^P_{\Bim}\rightarrow\mathfrak{C}^{(G,P)}_{\proper}$ by sending a simplifiable product system~$\Hilm=(\Hilm_p)_{p\in P}$ to its associated Fell bundle~$(\hat{\Hilm}_g)_{g\in G}$ and a morphism~$(\Hilm[F],V)\colon(A,\Hilm,\Hilm[I]_{\Hilm})\to(B,\Hilm[G],\Hilm[I]_{\Hilm[G]})$ to the arrow~$(\Hilm[F]^\sharp,V^\sharp)\colon(\hat{\Hilm}_g)_{g\in G}\to(\hat{\Hilm[G]}_g)_{g\in G}$ built out of $(\Hilm[F],V)$ as above. A $2$\nb-arrow~$w$ is mapped to~$w^\sharp$. Given arrows $$(\Hilm[F],V)\colon(A,\Hilm,\Hilm[I]_{\Hilm})\to(A_1,\Hilm_1,\Hilm[I]_{\Hilm_1}),\quad(\Hilm[F]_1,V_1)\colon(A_1,\Hilm_1,\Hilm[I]_{\Hilm_1})\to(A_2,\Hilm_2,\Hilm[I]_{\Hilm_2}),$$ we have that $$(\Hilm[F]\otimes_{A_1}\Hilm[F]_1)^\sharp=\Hilm[F]^\sharp\otimes_{A_1}\Hilm[F]_1^\sharp=\Hilm[F]\otimes_{A_1}\Hilm[F]_1$$ as correspondences~$A\leadsto A_2$. Moreover, the product of arrows in~$\mathfrak{C}^{(G,P)}$ is defined as in~$\Corr^P_{\,\ast}$ and Lemma~\ref{lem:restricted_corres} tells us that $(\Hilm[F]\otimes_{A_1}\Hilm[F]_1, V\bullet V_1)$ extends uniquely to a correspondence~$(\hat{\Hilm}_g)_{g\in G}\to(\hat{\Hilm}_{2,g})_{g\in G}.$ This guarantees that~$L^*$ preserves the product of arrows. Thus, this is indeed a homomorphism of bicategories. 

We have proven that $L^*$ is locally an equivalence. That it is also essentially surjective follows from the fact that every Fell bundle extended from~$P$ is isomorphic to the Fell bundle constructed out of the simplifiable product system of Hilbert bimodules determined by its positive fibres (see Proposition~\ref{prop:associated_product_system}). So by~\cite[Lemma 3.1]{Gurski:Biequivalences_tricategories}, $L^*$ is an equivalence of bicategories. 
\end{proof}
\end{thm}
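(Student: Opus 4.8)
The plan is to build a homomorphism of bicategories $L^*\colon\Corr^P_{\Bim}\to\mathfrak{C}^{(G,P)}_{\proper}$ and then invoke the recognition criterion \cite[Lemma~3.1]{Gurski:Biequivalences_tricategories}: a homomorphism of bicategories that is locally an equivalence and essentially surjective on objects is automatically a biequivalence. So after specifying $L^*$ on objects, arrows and $2$\nb-arrows and checking that it is a genuine homomorphism, the theorem reduces to two independent verifications, namely local equivalence on the hom\nobreakdash-groupoids and essential surjectivity on objects.

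On objects $L^*$ sends $(A,\Hilm,\Hilm[I])$ to the Fell bundle $(\hat{\Hilm}_g)_{g\in G}$ from Theorem~\ref{thm:uniqueness_and_extension}. For an arrow $(\Hilm[F],V)$ I would first apply Proposition~\ref{prop:induced_correspondence} to obtain the proper correspondence $\CP_{\Hilm[F],V}$ between the cross-sectional $\Cst$\nb-algebras. Using Lemma~\ref{pro:relative_embedding}, which identifies $\CP^p_{\mathcal{I}_{\Hilm[G]},\Hilm[G]}$ with $\Hilm[G]_p$, the unit fibre of this correspondence is $\Hilm[F]\otimes_B\CP^e_{\mathcal{I}_{\Hilm[G]},\Hilm[G]}\cong\Hilm[F]$; combining this identification with the left action of $\Cst((\hat{\Hilm}_g)_{g\in G})$ on $\CP_{\Hilm[F],V}$ supplies isometries $U_g\colon\hat{\Hilm}_g\otimes_A\Hilm[F]\to\Hilm[F]\otimes_B\hat{\Hilm[G]}_g$ with $U_p=V_p$ for all $p\in P$. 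I set $(\Hilm[F]^\sharp,V^\sharp)\coloneqq(\Hilm[F],U)$ and put $L^*(\Hilm[F],V)=(\Hilm[F]^\sharp,V^\sharp)$. On $2$\nb-arrows I take $w^\sharp=w$; the required coherence square follows because a \Star homomorphism out of $\Cst((\hat{\Hilm}_g)_{g\in G})$ is determined by its restriction to $(A,\Hilm)$, which lets me transport the intertwining relation from the positive fibres to all of $G$. Preservation of composition and units is inherited from the correspondence bicategory via the identity $(\Hilm[F]\otimes_{A_1}\Hilm[F]_1)^\sharp=\Hilm[F]^\sharp\otimes_{A_1}\Hilm[F]_1^\sharp$, so $L^*$ is a homomorphism of bicategories.

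For local equivalence I would show that, for each pair of objects, the functor induced by $L^*$ on the hom\nobreakdash-groupoids is fully faithful and essentially surjective. The workhorse here is Lemma~\ref{lem:restricted_corres}: restricting a proper correspondence of Fell bundles to its positive fibres yields a proper covariant correspondence of the associated simplifiable product systems, and the family $\{U_g\}_{g\in G}$ is \emph{uniquely} determined by $\{U_p\}_{p\in P}$. Essential surjectivity of the local functor then follows because any $(\Hilm[F],U)$ is isomorphic to $L^*$ applied to its restriction, and full faithfulness because a $2$\nb-arrow agrees with its image as a correspondence isomorphism while the two coherence conditions coincide. Essential surjectivity of $L^*$ on objects is exactly Proposition~\ref{prop:associated_product_system}, which says every Fell bundle extended from $P$ is isomorphic to the one built from the simplifiable product system of Hilbert bimodules given by its positive fibres. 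The delicate point, and the step I expect to be the main obstacle, is the uniqueness clause of Lemma~\ref{lem:restricted_corres}: one must check that the extension $V\mapsto V^\sharp$ is well defined and that no information is lost in passing between the negative and positive fibres. This is precisely what forces the local functors to be fully faithful, and it rests on the fact that a representation of $\Cst((B_g)_{g\in G})$ is completely determined by the positive cone, as established in Proposition~\ref{prop:associated_product_system}.
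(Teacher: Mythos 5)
Your proposal is correct and follows essentially the same route as the paper's proof: the same construction of $L^*$ via Proposition~\ref{prop:induced_correspondence} and Lemma~\ref{pro:relative_embedding}, the same appeal to Lemma~\ref{lem:restricted_corres} for local equivalence (including the uniqueness of the extension $\{U_g\}_{g\in G}$), essential surjectivity on objects via Proposition~\ref{prop:associated_product_system}, and the final invocation of Gurski's recognition lemma. Your identification of the uniqueness clause in Lemma~\ref{lem:restricted_corres} as the crux of full faithfulness matches exactly where the paper places the weight of the argument.
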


\subsection{Relative Cuntz--Pimsner algebras as universal arrows} Let $\mathcal{B}$ and~$\mathcal{C}$ be bicategories and let $R\colon\mathcal{C}\to\mathcal{B}$ be a functor. Let $b\in\obj\mathcal{B}$ and~$c\in \obj\mathcal{C}$. An arrow~$g\colon b\to R(c)$ induces a functor~$$g^*\colon \mathcal{C}(R(c),x)\to\mathcal{B}(b, R(x))$$ defined by $f\mapsto R(f)\cdot g$, $w\mapsto R(w)\bullet 1_g$. It is a \emph{universal arrow} from $b$ to~$R$ if $g^*$ is an equivalence of categories \cite[Definition~9.4]{Fiore:Pseudo_biadjoints}. To each $(A,\Hilm, \mathcal{J})$ in $\obj\Corr^P_\proper$ we may associate an object $(\CP^e_{\Hilm[J],\Hilm}, (\CP^p_{\Hilm[J],\Hilm})_{p\in P}, \mathcal{I}_{\CP_{\Hilm[J],\Hilm}})$ of~$\Corr^P_\Bim$ by Corollary~\ref{cor:CP_simplifiable_psystem}. In what follows, we let $$\upsilon_{(A,\Hilm, \mathcal{J})}\colon(A,\Hilm, \mathcal{J})\rightarrow\left(\CP^e_{\Hilm[J],\Hilm}, (\CP^p_{\Hilm[J],\Hilm})_{p\in P}, \mathcal{I}_{\CP_{\Hilm[J],\Hilm}}\right)$$ be the canonical proper covariant correspondence from Example~\ref{ex:canonical_proper_ccorres}. That is, $\upsilon_{(A,\Hilm, \mathcal{J})}\coloneqq(\CP^e_{\Hilm[J],\Hilm}, \hat{\iota}_{\Hilm})$. It gives rise to a functor
 \[  \Corr^P_\Bim\bigl((\CP^e_{\Hilm[J],\Hilm},(\CP^p_{\Hilm[J],\Hilm})_{p\in P}, \mathcal{I}_{\CP_{\Hilm[J],\Hilm}}), (B,\Hilm[G],\mathcal{I}_{\Hilm[G]})\bigr)
  \to\Corr^P_\proper\bigl((A,\Hilm, \mathcal{J}), (B,\Hilm[G],\mathcal{I}_{\Hilm[G]})\bigr) \] as above. This is given by $$(\Hilm[F],V)\mapsto  (\Hilm[F],V)\circ\upsilon_{(A,\Hilm, \mathcal{J})},\;\;\qquad w\mapsto w\bullet1_{\CP^e_{\Hilm[J],\Hilm}}.$$

Our next result is a generalisation of \cite[Proposition~3.4]{Meyer-Sehnem:Bicategorical_Pimsner} to the context of compactly aligned product systems. It says that the above functor is an equivalence of categories. Hence $\upsilon_{(A,\Hilm, \mathcal{J})}$ is a universal arrow from $(A,\Hilm, \mathcal{J})$ to the inclusion functor $\Corr^P_\Bim\hookrightarrow\Corr^P_\proper$. Our argument to prove this fact is essentially that of~\cite[Proposition~3.4]{Meyer-Sehnem:Bicategorical_Pimsner} given in the context of single correspondences and so we limit ourselves to sketching some ideas. Among them, we will prove that a proper covariant correspondence $(\Hilm[F],V)\colon(A,\Hilm, \mathcal{J})\to(B,\Hilm[G],\mathcal{I}_{\Hilm[G]})$ yields a proper covariant correspondence $$(\Hilm[F]^\sharp,V^\sharp)\colon(\CP^e_{\Hilm[J],\Hilm},(\CP^p_{\Hilm[J],\Hilm})_{p\in P}, \mathcal{I}_{\CP_{\Hilm[J],\Hilm}})\to(B,\Hilm[G],\mathcal{I}_{\Hilm[G]})$$ using Proposition~\ref{prop:induced_correspondence}. This is a crucial part of the proof that the functor defined by composing objects with~$\upsilon_{(A,\Hilm, \mathcal{J})}$ is indeed essentially surjective.

 \begin{prop}\label{prop:P_universal_arrow} Let $(A,\Hilm, \mathcal{J})$ and $(B,\Hilm[G],\mathcal{I}_{\Hilm[G]})$ be objects of $\Corr^P_\proper$ and $\Corr^P_\Bim$, respectively. There is a groupoid equivalence  
 \[  \Corr^P_\Bim\bigl((\CP^e_{\Hilm[J],\Hilm},(\CP^p_{\Hilm[J],\Hilm})_{p\in P}, \mathcal{I}_{\CP_{\Hilm[J],\Hilm}}), (B,\Hilm[G],\mathcal{I}_{\Hilm[G]})\bigr)
  \cong\Corr^P_\proper\bigl((A,\Hilm, \mathcal{J}), (B,\Hilm[G],\mathcal{I}_{\Hilm[G]})\bigr), \]
 which is defined by composing objects with~$\upsilon_{(A,\Hilm, \mathcal{J})}$.
 \end{prop}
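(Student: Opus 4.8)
The plan is to show that the functor $F\defeq(\,\cdot\,)\circ\upsilon_{(A,\Hilm, \mathcal{J})}$ is fully faithful and essentially surjective, hence an equivalence of groupoids. A quasi-inverse is furnished by the assignment $(\Hilm[F],V)\mapsto(\Hilm[F]^\sharp,V^\sharp)$ announced before the statement. Given a proper covariant correspondence $(\Hilm[F],V)\colon(A,\Hilm, \mathcal{J})\to(B,\Hilm[G],\mathcal{I}_{\Hilm[G]})$, Proposition~\ref{prop:induced_correspondence} produces a proper correspondence $\CP_{\Hilm[F],V}\colon\CP_{\mathcal{J},\Hilm}\leadsto\CP_{\mathcal{I}_{\Hilm[G]},\Hilm[G]}$ together with a representation $\bar{\psi}$ of $\CP_{\mathcal{J},\Hilm}$ by compact operators on $\Hilm[F]_{\CP}\defeq\Hilm[F]\otimes_B\CP_{\mathcal{I}_{\Hilm[G]},\Hilm[G]}$. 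Since $(B,\Hilm[G],\mathcal{I}_{\Hilm[G]})$ lies in~$\Corr^P_{\Bim}$, Lemma~\ref{pro:relative_embedding} gives $\CP^e_{\mathcal{I}_{\Hilm[G]},\Hilm[G]}\cong B$ and $\CP^p_{\mathcal{I}_{\Hilm[G]},\Hilm[G]}\cong\Hilm[G]_p$; hence $\Hilm[F]^\sharp\defeq\CP^e_{\Hilm[F],V}=\Hilm[F]\otimes_B\CP^e_{\mathcal{I}_{\Hilm[G]},\Hilm[G]}\cong\Hilm[F]$ becomes a correspondence $\CP^e_{\mathcal{J},\Hilm}\leadsto B$ once we let $\CP^e_{\mathcal{J},\Hilm}$ act through~$\bar{\psi}$. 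Exactly as in the construction of the isometries~$U_g$ in the proof of Theorem~\ref{thm:induced_equivalence}, the left action of the positive fibres and the identifications $\CP^p_{\mathcal{J},\Hilm}\otimes_{\CP^e_{\mathcal{J},\Hilm}}\Hilm[F]^\sharp\cong\bar{\psi}(\CP^p_{\mathcal{J},\Hilm})\Hilm[F]^\sharp$ yield unitaries $V^\sharp_p\colon\CP^p_{\mathcal{J},\Hilm}\otimes_{\CP^e_{\mathcal{J},\Hilm}}\Hilm[F]^\sharp\cong\Hilm[F]^\sharp\otimes_B\Hilm[G]_p$. That $(\Hilm[F]^\sharp,V^\sharp)$ is an arrow of~$\Corr^P_{\Bim}$ from $(\CP^e_{\mathcal{J},\Hilm},(\CP^p_{\mathcal{J},\Hilm})_{p\in P}, \mathcal{I}_{\CP_{\mathcal{J},\Hilm}})$ to $(B,\Hilm[G],\mathcal{I}_{\Hilm[G]})$ follows from the properties of~$\CP_{\Hilm[F],V}$: properness is inherited from Proposition~\ref{prop:induced_correspondence}; the coherence diagram~\eqref{eq:coherence_covariant_corres} holds because $\Hilm[F]^\sharp$ and~$V^\sharp$ descend from a genuine correspondence of cross-sectional $\Cst$\nb-algebras; and the covariance $\BRAKET{\CP^p_{\mathcal{J},\Hilm}}{\CP^p_{\mathcal{J},\Hilm}}\Hilm[F]^\sharp\subseteq\Hilm[F]^\sharp\BRAKET{\Hilm[G]_p}{\Hilm[G]_p}$ is extracted from~\eqref{eq:coherence_covariant_corres} by the diagram chase of Lemma~\ref{lem:restricted_corres}.

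For essential surjectivity I will check that $(\Hilm[F]^\sharp,V^\sharp)\circ\upsilon_{(A,\Hilm, \mathcal{J})}\cong(\Hilm[F],V)$. The underlying correspondence of the composite is $\CP^e_{\mathcal{J},\Hilm}\otimes_{\CP^e_{\mathcal{J},\Hilm}}\Hilm[F]^\sharp\cong\Hilm[F]$ as a correspondence $A\leadsto B$, the left $A$\nb-action being the restriction of~$\bar{\psi}$ along the canonical map $A\to\CP^e_{\mathcal{J},\Hilm}$, which is the original action of~$A$ on~$\Hilm[F]$. On the $p$\nb-component the composite isometry $\hat{\iota}_{\Hilm_p}\bullet V^\sharp_p$ factors as $\Hilm_p\otimes_A\Hilm[F]\xrightarrow{\hat{\iota}_{\Hilm_p}\otimes1}\CP^p_{\mathcal{J},\Hilm}\otimes_{\CP^e_{\mathcal{J},\Hilm}}\Hilm[F]^\sharp\xrightarrow{V^\sharp_p}\Hilm[F]^\sharp\otimes_B\Hilm[G]_p$; since $\hat{\iota}_{\Hilm_p}$ identifies~$\Hilm_p$ with~$j_p(\Hilm_p)\subseteq\CP^p_{\mathcal{J},\Hilm}$ and $V^\sharp_p$ restricts on $\bar{\psi}(j_p(\Hilm_p))\Hilm[F]^\sharp$ to the map induced by~$V_p$—this is the identity $U_p=V_p$ of Theorem~\ref{thm:induced_equivalence} transported to the present setting—this composite equals~$V_p$. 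Hence the canonical identification $\Hilm[F]^\sharp\cong\Hilm[F]$ is an isomorphism of covariant correspondences $(\Hilm[F]^\sharp,V^\sharp)\circ\upsilon_{(A,\Hilm, \mathcal{J})}\Rightarrow(\Hilm[F],V)$, so every object of $\Corr^P_\proper\bigl((A,\Hilm, \mathcal{J}), (B,\Hilm[G],\mathcal{I}_{\Hilm[G]})\bigr)$ lies in the essential image of~$F$.

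Full faithfulness remains. Faithfulness is immediate, since $F$ sends a $2$\nb-arrow~$w'$ to $w'\bullet 1_{\CP^e_{\mathcal{J},\Hilm}}$, whose underlying map is~$w'$ itself under $\CP^e_{\mathcal{J},\Hilm}\otimes_{\CP^e_{\mathcal{J},\Hilm}}\Hilm[F]_i'\cong\Hilm[F]_i'$. For fullness, let $\tilde{w}\colon\Hilm[F]_0'\to\Hilm[F]_1'$ be a correspondence isomorphism $A\leadsto B$ intertwining the composite isometries $\hat{\iota}_{\Hilm_p}\bullet V_{0,p}'$ and $\hat{\iota}_{\Hilm_p}\bullet V_{1,p}'$; as these involve only the $A$\nb-action, the condition is meaningful, and the task is to promote~$\tilde{w}$ to a $\CP^e_{\mathcal{J},\Hilm}$\nb-linear map. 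By Proposition~\ref{prop:CP_grading_description} the algebra $\CP^e_{\mathcal{J},\Hilm}$ is the closed span of the elements $j_p(\xi)j_p(\eta)^*$, and on $\Hilm[F]_i'\otimes_B\Hilm[G]_p$ the operator $j_p(\xi)j_p(\eta)^*\otimes1$ equals $L^i_\xi(L^i_\eta)^*$, where $L^i_\xi(\zeta)\defeq(\hat{\iota}_{\Hilm_p}\bullet V_{i,p}')(\xi\otimes\zeta)$. Intertwining of the composites then forces $\bigl(\tilde{w}\,(j_p(\xi)j_p(\eta)^*)-(j_p(\xi)j_p(\eta)^*)\,\tilde{w}\bigr)\otimes1_{\Hilm[G]_p}=0$. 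Since $j_p(\xi)j_p(\eta)^*\in\BRAKET{\CP^p_{\mathcal{J},\Hilm}}{\CP^p_{\mathcal{J},\Hilm}}$ and covariance places the relevant vectors in $\Hilm[F]_1'\BRAKET{\Hilm[G]_p}{\Hilm[G]_p}$, the faithful action of Katsura's ideal $\BRAKET{\Hilm[G]_p}{\Hilm[G]_p}$ on the Hilbert bimodule~$\Hilm[G]_p$ allows us to cancel the leg~$1_{\Hilm[G]_p}$ and conclude that~$\tilde{w}$ commutes with each~$j_p(\xi)j_p(\eta)^*$, hence is $\CP^e_{\mathcal{J},\Hilm}$\nb-linear. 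Once $\tilde{w}$ is $\CP^e_{\mathcal{J},\Hilm}$\nb-linear, intertwining of the~$V_{i,p}'$ follows from that of the composites because $\hat{\iota}_{\Hilm_p}$ maps onto all of~$\CP^p_{\mathcal{J},\Hilm}$. Thus~$\tilde{w}$ underlies a $2$\nb-arrow of the source mapping to it under~$F$, so $F$ is full.

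The step I expect to be the main obstacle is the promotion of~$\tilde{w}$ to a $\CP^e_{\mathcal{J},\Hilm}$\nb-linear map in the fullness step: one must recover the left action of the generators $j_p(\xi)j_p(\eta)^*$ of~$\CP^e_{\mathcal{J},\Hilm}$ purely from the covariant correspondence data and then cancel the extra tensor leg~$\Hilm[G]_p$. This is exactly where the hypothesis that~$\Hilm[G]$ is a simplifiable product system of \emph{Hilbert bimodules}—so that $\BRAKET{\Hilm[G]_p}{\Hilm[G]_p}$ is Katsura's ideal and therefore acts faithfully on~$\Hilm[G]_p$—enters essentially. Everything else, namely the identity $U_p=V_p$ in essential surjectivity, the coherence diagram for $(\Hilm[F]^\sharp,V^\sharp)$, and the naturality making the assignment a genuine quasi-inverse, reduces to diagram chases already carried out for single correspondences in~\cite[Proposition~3.4]{Meyer-Sehnem:Bicategorical_Pimsner} and for Fell bundles in Lemma~\ref{lem:restricted_corres} and Theorem~\ref{thm:induced_equivalence}.
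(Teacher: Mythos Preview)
Your proposal is correct. The construction of $(\Hilm[F]^\sharp,V^\sharp)$ and the verification of essential surjectivity match the paper's argument almost verbatim; the paper likewise builds $(\Hilm[F]^\sharp,V^\sharp)$ from the graded correspondence $\CP_{\Hilm[F],V}$ of Proposition~\ref{prop:induced_correspondence}, checks covariance via the $G$\nb-grading (your appeal to the diagram chase of Lemma~\ref{lem:restricted_corres} is the same idea, since the needed negative-fibre maps are supplied by the grading of $\CP_{\Hilm[F],V}$), and identifies $(\Hilm[F]^\sharp,V^\sharp)\circ\upsilon_{(A,\Hilm,\mathcal{J})}\cong(\Hilm[F],V)$.

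The genuine difference is in how you close the argument. The paper introduces an explicit inverse-on-objects $(\Hilm[F],V)\mapsto(\Hilm[F]^\flat,V^\flat)$, verifies the identities $(\Hilm[F]^{\sharp\flat},V^{\sharp\flat})=(\Hilm[F],V)$ and $(\Hilm[F]^{\flat\sharp},V^{\flat\sharp})=(\Hilm[F],V)$ by reduction to \cite[Proposition~3.4]{Meyer-Sehnem:Bicategorical_Pimsner}, and then notes that $\flat$ is naturally isomorphic to composition with~$\upsilon_{(A,\Hilm,\mathcal{J})}$. You instead prove full faithfulness of composition with~$\upsilon_{(A,\Hilm,\mathcal{J})}$ directly: using that $\CP^e_{\mathcal{J},\Hilm}$ is spanned by elements $j_p(\xi)j_p(\eta)^*$ (Proposition~\ref{prop:CP_grading_description}), you recover their action on $\Hilm[F]_i'$ as $L^i_\xi(L^i_\eta)^*$ after tensoring with $\Hilm[G]_p$, and then cancel the extra leg via the faithful action of $\BRAKET{\Hilm[G]_p}{\Hilm[G]_p}$ on the Hilbert bimodule~$\Hilm[G]_p$. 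This is a clean, self-contained alternative that makes explicit where the hypothesis $(B,\Hilm[G],\mathcal{I}_{\Hilm[G]})\in\Corr^P_{\Bim}$ is used, whereas the paper's route is shorter but defers the work to the single-correspondence case.
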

 \begin{proof} Let $(\Hilm[F],V)\colon(A,\Hilm, \mathcal{J})\to (B,\Hilm[G],\mathcal{I}_{\Hilm[G]})$ be a morphism in~$\Corr^P_\proper$. Let~$\CP_{\Hilm[F],V}$ be the correspondence~$\CP_{\Hilm[J],\Hilm}\leadsto\CP_{\mathcal{I}_{\Hilm[G]},\Hilm[G]}$ induced by~$(\Hilm[F],V)$ built in Proposition~\ref{prop:induced_correspondence}. By Proposition~\ref{pro:relative_embedding},  $\CP^e_{\mathcal{I}_{\Hilm[G]},\Hilm[G]}$ is isomorphic to~$B$ and hence $\CP^e_{\Hilm[F],V}=\Hilm[F]\otimes_B\CP^e_{\mathcal{I}_{\Hilm[G]},\Hilm[G]}\cong\Hilm[F]$. Since~$\CP^e_{\Hilm[J],\Hilm}$ acts by $G$\nb-grading-preserving operators on~$\CP_{\Hilm[F],V}$,  we then obtain a nondegenerate \Star homomorphism from~$\CP^e_{\Hilm[J],\Hilm}$ to $\Comp(\Hilm[F])$ by restricting its left action to~$\CP^{e}_{\Hilm[F],V}$. This makes~$\Hilm[F]$ into a proper correspondence~$\CP^e_{\Hilm[J],\Hilm}\leadsto B$, which we denote by $\Hilm[F]^\sharp$. We also have correspondence isomorphisms $$V_p^{\sharp}\colon\CP^p_{\Hilm[J],\Hilm}\otimes_{\CP^e_{\Hilm[J],\Hilm}}\Hilm[F]^\sharp\cong\Hilm[F]^\sharp\otimes_B\Hilm[G]_p$$ obtained from $\CP_{\Hilm[F],V}$ because $\CP^p_{\Hilm[F],V}=\Hilm[F]\otimes_B\CP^p_{\mathcal{I}_{\Hilm[G]},\Hilm[G]}\cong\Hilm[F]\otimes_B\Hilm[G]_p$ for all~$p\in P$ and $$\bar{\psi}(\CP^p_{\Hilm[J],\Hilm})\CP^e_{\Hilm[F],V}=\psi_p(\Hilm_p)\bar{\psi}(\CP^e_{\Hilm[J],\Hilm})\CP^e_{\Hilm[F],V}=\psi_p(\Hilm_p)\CP^e_{\Hilm[F],V}\cong\CP^p_{\Hilm[F],V}.$$ Thus we let $V^\sharp=\{V^\sharp_p\}_{p\in P}$. The coherence axiom \eqref{eq:coherence_covariant_corres} holds because~$V^\sharp$ comes from the \Star homomorphism~$\bar{\psi}\colon\CP_{\Hilm[J],\Hilm}\to\Comp(\CP_{\Hilm[F],V})$.

In order to see that the pair $(\Hilm[F]^\sharp,V^\sharp)$ is a proper covariant correspondence $$(\CP^e_{\Hilm[J],\Hilm},(\CP^p_{\Hilm[J],\Hilm})_{p\in P}, \mathcal{I}_{\CP_{\Hilm[J],\Hilm}})\to(B,\Hilm[G],\mathcal{I}_{\Hilm[G]}),$$ it remains to prove that~$I_p^{\CP_{\Hilm[J],\Hilm}}\Hilm[F]\subseteq\Hilm[F]I_p^{\Hilm[G]}$ for all~$p\in P$. The ideal $I_p^{\CP_{\Hilm[J],\Hilm}}$ is determined by the left inner product, so that $I_p^{\CP_{\Hilm[J],\Hilm}}=\CP^p_{\Hilm[J],\Hilm}\CP^{p\,\,*}_{\mathcal{J},\Hilm}$. Now $\CP^{p\,\,*}_{\mathcal{J},\Hilm}$ sends~$\CP^e_{\Hilm[F],V}$ to $\CP^{p^{-1}}_{\Hilm[F],V}=\Hilm[F]\otimes_B\CP^{p\,\,*}_{{\mathcal{I}_{\Hilm[G]},\Hilm[G]}}\cong\Hilm[F]\otimes_B\Hilm[G]_p^*,$ while~$\CP^p_{\Hilm[J],\Hilm}$ maps $\Hilm[F]\otimes_B\CP^{p\,\,*}_{\Hilm[G]}$ into $\Hilm[F]\otimes_B\CP^{p}_{{\mathcal{I}_{\Hilm[G]},\Hilm[G]}}\CP^{p\,\,*}_{{\mathcal{I}_{\Hilm[G]},\Hilm[G]}}$. The isomorphism $\CP^{p}_{{\mathcal{I}_{\Hilm[G]},\Hilm[G]}}\CP^{p\,\,*}_{{\mathcal{I}_{\Hilm[G]},\Hilm[G]}}\cong\BRAKET{\Hilm[G]_p}{\Hilm[G]_p}$ implies that~$(\Hilm[F]^\sharp,V^\sharp)$ is indeed a proper covariant correspondence. 

Now let $(\Hilm[F],V)\colon(\CP^e_{\Hilm[J],\Hilm},(\CP^p_{\Hilm[J],\Hilm})_{p\in P}, \mathcal{I}_{\CP_{\Hilm[J],\Hilm}})\to (B,\Hilm[G],\mathcal{I}_{\Hilm[G]})$ be a proper covariant correspondence. Composition with~$\upsilon_{(A,\Hilm, \mathcal{J})}$ yields a proper correspondence~$\CP_{\Hilm[J],\Hilm}\otimes_{\CP_{\Hilm[J],\Hilm}}\Hilm[F]\colon A\leadsto B$, which we naturally identify with the correspondence~$\Hilm[F]\colon A\leadsto B$ with left action induced by the \Star homomorphism $j_{\Hilm[J]_A}\colon A\to\CP_{\Hilm[J],\Hilm}$. We denote this correspondence~$A\leadsto B$ by~$\Hilm[F]^\flat$. We let~$V^\flat=\{V_p^\flat\}_{p\in P}$ be the associated family of isomorphisms $$V_p^\flat\colon\Hilm_p\otimes_A\Hilm[F]^\flat\cong\Hilm_p\otimes_A\CP^e_{\Hilm[J],\Hilm}\otimes_{\CP^e_{\Hilm[J],\Hilm}}\Hilm[F]\cong\CP^p_{\Hilm[J],\Hilm}\otimes_{\CP^e_{\Hilm[J],\Hilm}}\Hilm[F]\xRightarrow{V_p}\Hilm[F]^\flat\otimes_B\Hilm[G]_p.$$ This gives a proper covariant correspondence $(\Hilm[F]^\flat,V^\flat)\colon(A,\Hilm, \mathcal{J})\to (B,\Hilm[G],\mathcal{I}_{\Hilm[G]}).$    

The proof that~$(\Hilm[F]^{\sharp\flat},V^{\sharp\flat})=(\Hilm[F],V)$ and that $(\Hilm[F]^{\flat\sharp},V^{\flat\sharp})=(\Hilm[F],V)$ follows as for single correspondences. So we refer the reader to \cite[Proposition~3.4]{Meyer-Sehnem:Bicategorical_Pimsner} for further details. Thus $(\Hilm[F],V)\mapsto(\Hilm[F]^\flat,V^\flat)$, $w\mapsto w^\flat$ is an equivalence of categories. This functor is naturally equivalent to the one defined by composition with~$\upsilon_{(A,\Hilm, \mathcal{J})}$. Such an equivalence has component at $(\Hilm[F],V)$ determined by the canonical correspondence isomorphism $\CP^e_{\Hilm[J],\Hilm}\otimes_{\CP^e_{\Hilm[J],\Hilm}}\Hilm[F]\cong\Hilm[F]^\flat.$ Therefore, composition with~$\upsilon_{(A,\Hilm, \mathcal{J})}$ establishes a groupoid equivalence.
\end{proof}

Let~$\mathcal{B}$ and~$\mathcal{C}$ be bicategories. By \cite[Theorem~9.17]{Fiore:Pseudo_biadjoints}, a functor $R\colon\mathcal{C}\to\mathcal{B}$ has a left adjoint if and only if there exists a universal arrow from~$b$ to~$R$ for every $b\in\obj\mathcal{B}$. As a consequence, we have the following:

\begin{cor}[see \cite{Meyer-Sehnem:Bicategorical_Pimsner}*{Corollary~4.7}]\label{cor:P_adjunction} The sub-bicategory~$\Corr^P_\Bim\subseteq\Corr^P_\proper$ is reflective. That is, the inclusion homomorphism $R\colon\Corr^P_\Bim\hookrightarrow\Corr^P_\proper$ has a left adjoint $L\colon\Corr^P_\proper\to\Corr^P_\Bim$ (reflector). This is defined on objects by 
$$(A,\Hilm,\Hilm[J])\mapsto (\CP^e_{\Hilm[J],\Hilm}, (\CP^p_{\Hilm[J],\Hilm})_{p\in P},\Hilm[I]_{\CP_{\Hilm[J],\Hilm}}).$$
\end{cor}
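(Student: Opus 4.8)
The plan is to deduce this immediately from Proposition~\ref{prop:P_universal_arrow} together with the cited characterisation of left adjoints between bicategories. First I would observe that Proposition~\ref{prop:P_universal_arrow} says exactly that, for each object $(A,\Hilm, \mathcal{J})$ of $\Corr^P_\proper$, the proper covariant correspondence $\upsilon_{(A,\Hilm, \mathcal{J})}$ is a universal arrow from $(A,\Hilm, \mathcal{J})$ to the inclusion homomorphism $R\colon\Corr^P_\Bim\hookrightarrow\Corr^P_\proper$. Indeed, the functor induced by $\upsilon_{(A,\Hilm, \mathcal{J})}$ through composition is precisely the functor $g^*$ appearing in \cite[Definition~9.4]{Fiore:Pseudo_biadjoints} with $g=\upsilon_{(A,\Hilm, \mathcal{J})}$, and Proposition~\ref{prop:P_universal_arrow} establishes that it is an equivalence onto each hom-groupoid $\Corr^P_\proper\bigl((A,\Hilm, \mathcal{J}),(B,\Hilm[G],\Hilm[I]_{\Hilm[G]})\bigr)$.

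Next, since such a universal arrow exists for every object of $\Corr^P_\proper$, I would invoke \cite[Theorem~9.17]{Fiore:Pseudo_biadjoints}, which asserts that a homomorphism of bicategories admits a left adjoint precisely when there is a universal arrow from every object of its codomain. Applying this to $R$ produces a left adjoint $L\colon\Corr^P_\proper\to\Corr^P_\Bim$, so that $\Corr^P_\Bim$ is a reflective sub-bicategory with reflector $L$. The value of $L$ on objects is read off from the codomains of the universal arrows: by construction $\upsilon_{(A,\Hilm, \mathcal{J})}$ has target $(\CP^e_{\Hilm[J],\Hilm}, (\CP^p_{\Hilm[J],\Hilm})_{p\in P}, \Hilm[I]_{\CP_{\Hilm[J],\Hilm}})$, whence $L$ sends $(A,\Hilm,\Hilm[J])$ to this triple, as asserted.

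There is no real obstacle at this stage, since all of the substantive work was carried out in Propositions~\ref{prop:induced_correspondence} and~\ref{prop:P_universal_arrow}. The only point requiring care is to match the data precisely with the hypotheses of \cite[Theorem~9.17]{Fiore:Pseudo_biadjoints}: one must confirm that the groupoid equivalence furnished by Proposition~\ref{prop:P_universal_arrow} is exactly the equivalence $g^*$ demanded in the definition of a universal arrow, which is immediate from how $g^*$ acts by composition with $g$ on arrows and by whiskering on $2$\nb-arrows. Granting this identification, the corollary is a formal consequence.
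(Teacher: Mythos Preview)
Your proposal is correct and matches the paper's own argument essentially verbatim: the paper states immediately before the corollary that, by \cite[Theorem~9.17]{Fiore:Pseudo_biadjoints}, the existence of a universal arrow from every object (which is Proposition~\ref{prop:P_universal_arrow}) yields the left adjoint, and reads off $L$ on objects from the codomain of~$\upsilon_{(A,\Hilm,\mathcal{J})}$.
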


 The proof of \cite[Theorem~9.17]{Fiore:Pseudo_biadjoints} builds the reflector~$L$: it maps an arrow $(\Hilm[F],V)\colon(A,\Hilm,\Hilm[J])\to(A_1,\Hilm_1,\Hilm[J]_1)$ to $$L(\Hilm[F],V)=(\upsilon_{(A_1,\Hilm_1, \mathcal{J}_1)}\circ(\Hilm[F],V))^\sharp=\big((\Hilm[F]\otimes_{A_1}\CP^e_{\Hilm[J]_1,\Hilm_1})^\sharp,(V\bullet\bar{\iota}_{\Hilm_1})^\sharp\big).$$ It is defined on a $2$\nb-arrow $w\colon(\Hilm[F]_0,V_0)\Rightarrow(\Hilm[F]_1,V_1)$ by $L(w)=(w\otimes1_{\CP_{\Hilm[J]_1,\Hilm_1}})^\sharp.$ 

Let $(\Hilm[F],V)\colon\!(A,\Hilm,\Hilm[J])\to(A_1,\Hilm_1,\Hilm[J]_1)$ and $(\Hilm[F]_1,V_1)\colon\!(A_1,\Hilm_1,\Hilm[J]_1)\to(A_2,\Hilm_2,\Hilm[J]_2)$ be proper covariant correspondences. The isomorphism $$\lambda((\Hilm[F],V),(\Hilm[F]_1,V_1))\colon L(\Hilm[F]_1,V_1)\circ L(\Hilm[F],V)\cong L((\Hilm[F]_1,V_1)\circ(\Hilm[F],V))$$ is built out of the left action of~$\CP^e_{\Hilm[J]_1,\Hilm_1}$ on $(\Hilm[F]_1\otimes_{A_2}\CP^e_{\Hilm[J]_2,\Hilm_2})^\sharp$ constructed in Proposition~\ref{prop:P_universal_arrow}. That is, it is given by the canonical isomorphism $$(\Hilm[F]\otimes_{A_1}\CP^e_{\Hilm[J]_1,\Hilm_1})^\sharp\otimes_{\CP^e_{\Hilm[J]_1,\Hilm_1}}(\Hilm[F]_1\otimes_{A_2}\CP^e_{\Hilm[J]_2,\Hilm_2})^\sharp\cong(\Hilm[F]\otimes_{A_1}\Hilm[F]_1\otimes_{A_2}\CP^e_{\Hilm[J]_2,\Hilm_2})^\sharp.$$ The compatibility isomorphism for units is obtained from the nondegenerate \Star homomorphism~$j_{\Hilm[J]}\colon A\to\CP^e_{\Hilm[J],\Hilm}$.

\subsection{Morita equivalence for relative Cuntz--Pimsner algebras}\label{subsec: Morita}

Let~$\Corr^G$ denote the bicategory whose objects are $\Cst$\nb-algebras carrying a nondegenerate full coaction of~$G$. Arrows are correspondences with a coaction of~$G$ compatible with those on the underlying $\Cst$\nb-algebras and $2$\nb-arrows are correspondence isomorphisms that intertwine the left and right actions of the $\Cst$\nb-algebras. We refer to \cite[Definition 2.10]{Echterhoff-Kaliszewski-Quigg-Raeburn:Categorical} for a precise definition. See also \cite[Theorem 2.15]{Echterhoff-Kaliszewski-Quigg-Raeburn:Categorical} for~$\Corr^G$. Let~$\Corr^G_{\proper}$ be the sub-bicategory of~$\Corr^G$ whose arrows are proper correspondences.
\begin{cor}\label{cor:P_functoriality} The construction of relative Cuntz--Pimsner algebras is functorial. There is a homomorphism of bicategories~$\Corr^P_{\proper}\to\Corr^G_{\proper}$ which is defined on objects by $$(A,\Hilm, \Hilm[J])\mapsto\CP_{\Hilm[J],\Hilm}.$$ This functor is naturally isomorphic to the composite $$\Corr^P_{\proper}\xrightarrow{L}\Corr^P_{\Bim}\rightarrow \Corr^G_{\proper},$$ where the functor on the right-hand side sends a simplifiable product system of Hilbert bimodules to its relative Cuntz--Pimsner algebra for the family of Katsura's ideals.
\end{cor}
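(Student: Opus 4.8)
The plan is to define the homomorphism $F\colon\Corr^P_{\proper}\to\Corr^G_{\proper}$ directly, using Propositions~\ref{prop:CP_grading_description} and~\ref{prop:induced_correspondence}, and then to identify it with the composite through the reflector~$L$. On objects I would send $(A,\Hilm,\Hilm[J])$ to $\CP_{\Hilm[J],\Hilm}$ equipped with the full nondegenerate coaction of~$G$ from Proposition~\ref{prop:CP_grading_description}, which makes it an object of~$\Corr^G_{\proper}$. On an arrow $(\Hilm[F],V)\colon(A,\Hilm,\mathcal{J}_A)\to(B,\Hilm[G],\mathcal{J}_B)$ I would take the induced proper correspondence $\CP_{\Hilm[F],V}=\Hilm[F]\otimes_B\CP_{\mathcal{J}_B,\Hilm[G]}$ of Proposition~\ref{prop:induced_correspondence}. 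The first thing to check is that it carries a compatible $G$\nb-coaction: its $g$\nb-component is $\Hilm[F]\otimes_B\CP^g_{\mathcal{J}_B,\Hilm[G]}$, and the left action constructed in Proposition~\ref{prop:induced_correspondence} sends $\psi_p(\Hilm_p)$, hence the grade\nobreakdash-$g'$ part of $\CP_{\mathcal{J}_A,\Hilm}$, into grade~$g'g$, so the left and right actions are $G$\nb-graded. This is exactly a coaction on $\CP_{\Hilm[F],V}$ compatible with those on the coefficient algebras. A $2$\nb-arrow $w$ maps to $w\otimes\id$, which is grading preserving. Functoriality — the compositor and unitor — follows as in \cite{Meyer-Sehnem:Bicategorical_Pimsner}: the canonical isomorphism $\CP_{\Hilm[F]_1,V_1}\otimes_{\CP_{\mathcal{J}_{A_1},\Hilm_1}}\CP_{\Hilm[F],V}\cong\Hilm[F]\otimes_{A_1}\Hilm[F]_1\otimes_{A_2}\CP_{\mathcal{J}_{A_2},\Hilm_2}$ supplies the comparison $2$\nb-cell, and associativity of the internal tensor product gives coherence.

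For the second assertion, recall that the composite $\Corr^P_{\proper}\xrightarrow{L}\Corr^P_{\Bim}\to\Corr^G_{\proper}$ sends $(A,\Hilm,\Hilm[J])$ first to $L(A,\Hilm,\Hilm[J])=(\CP^e_{\Hilm[J],\Hilm},(\CP^p_{\Hilm[J],\Hilm})_{p\in P},\Hilm[I]_{\CP_{\Hilm[J],\Hilm}})$ by Corollary~\ref{cor:P_adjunction}, and then to its relative Cuntz--Pimsner algebra $\CP_{\Hilm[I]_{\CP},(\CP^p_{\Hilm[J],\Hilm})_{p\in P}}$ for Katsura's family. By Proposition~\ref{prop:associated_product_system}, applied to the Fell bundle $(\CP^g_{\Hilm[J],\Hilm})_{g\in G}$, this is canonically isomorphic to the cross-sectional $\Cst$\nb-algebra $\Cst((\CP^g_{\Hilm[J],\Hilm})_{g\in G})$; and since the coaction of Proposition~\ref{prop:CP_grading_description} is full, the latter is canonically and $G$\nb-equivariantly isomorphic to $\CP_{\Hilm[J],\Hilm}$ itself. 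I would take these canonical isomorphisms, regarded as invertible arrows in $\Corr^G_{\proper}$, as the components $\theta_{(A,\Hilm,\Hilm[J])}$ of the natural isomorphism.

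It then remains to verify naturality, i.e.\ that for every arrow $(\Hilm[F],V)$ the correspondence $\CP_{L(\Hilm[F],V)}$ agrees, under the identifications~$\theta$, with $\CP_{\Hilm[F],V}$. Here the key point is that $L$ is the reflector attached to the universal arrow $\upsilon_{(A,\Hilm,\mathcal{J})}$ of Proposition~\ref{prop:P_universal_arrow}: by construction $L(\Hilm[F],V)=(\upsilon_{(A_1,\Hilm_1,\mathcal{J}_1)}\circ(\Hilm[F],V))^\sharp$, and running this through the right-hand functor recovers the very representation of $\Hilm$ on $\Hilm[F]\otimes_B\CP_{\mathcal{J}_B,\Hilm[G]}$ that defines $\CP_{\Hilm[F],V}$ in Proposition~\ref{prop:induced_correspondence}, now read as the left action of $\CP_{\Hilm[J],\Hilm}\cong\Cst((\CP^g_{\Hilm[J],\Hilm})_{g\in G})$; both are completely determined by their restriction to the fibres over~$P$ (Proposition~\ref{prop:associated_product_system} and Lemma~\ref{lem:restricted_corres}). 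The naturality squares then commute up to the coherent comparison cells already produced, so by \cite[Theorem~9.17]{Fiore:Pseudo_biadjoints}, exactly as in the single-correspondence case treated in \cite{Meyer-Sehnem:Bicategorical_Pimsner}, we obtain the asserted natural isomorphism between $F$ and the composite.

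I expect the main obstacle to be the verification that the induced correspondence $\CP_{\Hilm[F],V}$ is genuinely $G$\nb-graded compatibly with the coactions on domain and codomain — that is, that it defines an arrow in $\Corr^G_{\proper}$ and not merely a correspondence of the underlying $\Cst$\nb-algebras. Once this grading is in place, functoriality and the naturality of~$\theta$ reduce, via the universal property of the reflector established in Proposition~\ref{prop:P_universal_arrow} and the equivalence of Theorem~\ref{thm:induced_equivalence}, to the coherence and naturality facts already available from \cite{Meyer-Sehnem:Bicategorical_Pimsner}.
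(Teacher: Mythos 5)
Your proposal follows essentially the same route as the paper's proof: the homomorphism is defined directly on objects, arrows and $2$\nb-arrows via Proposition~\ref{prop:induced_correspondence} together with the grading compatibility coming from Proposition~\ref{prop:CP_grading_description}, and the natural isomorphism with the composite through the reflector~$L$ is obtained from Proposition~\ref{prop:associated_product_system} applied to the Fell bundle $(\CP^g_{\Hilm[J],\Hilm})_{g\in G}$, which is extended from~$P$. The paper is terser about naturality, but your use of Lemma~\ref{lem:restricted_corres} and the universal arrow of Proposition~\ref{prop:P_universal_arrow} is consistent with what it implicitly invokes.

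One justification, however, is wrong as stated and needs repair. You claim that ``since the coaction of Proposition~\ref{prop:CP_grading_description} is full, $\Cst\big((\CP^g_{\Hilm[J],\Hilm})_{g\in G}\big)$ is canonically isomorphic to $\CP_{\Hilm[J],\Hilm}$.'' Fullness (even nondegenerate fullness) of a coaction only yields a topological $G$\nb-grading, hence a canonical \emph{surjection} $\Cst\big((\CP^g_{\Hilm[J],\Hilm})_{g\in G}\big)\to\CP_{\Hilm[J],\Hilm}$; injectivity of this map is \emph{maximality} of the coaction, which fullness does not imply. For instance, $\Cst_r(G)$ for a non-amenable group~$G$ carries a full nondegenerate coaction $\lambda_g\mapsto\lambda_g\otimes u_g$ whose associated Fell bundle has full cross-sectional $\Cst$\nb-algebra $\Cst(G)\neq\Cst_r(G)$. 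In the situation at hand the isomorphism is true, but it must be deduced from universal properties: the fibrewise embeddings $\Hilm_p\xrightarrow{j_p}\CP^p_{\Hilm[J],\Hilm}\subseteq\Cst\big((\CP^g_{\Hilm[J],\Hilm})_{g\in G}\big)$ form a Nica covariant representation of~$\Hilm$ that is Cuntz--Pimsner covariant on~$\Hilm[J]$, and the resulting \Star homomorphism $\CP_{\Hilm[J],\Hilm}\to\Cst\big((\CP^g_{\Hilm[J],\Hilm})_{g\in G}\big)$ inverts the canonical surjection; this is exactly the mechanism behind Proposition~\ref{prop:associated_product_system} and Theorem~\ref{thm:uniqueness_and_extension}, which is what the paper relies on when it asserts this isomorphism. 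With that repair your argument goes through. (A minor slip: the compositor should be the isomorphism $\CP_{\Hilm[F],V}\otimes_{\CP_{\mathcal{J}_{A_1},\Hilm_1}}\CP_{\Hilm[F]_1,V_1}\cong\Hilm[F]\otimes_{A_1}\Hilm[F]_1\otimes_{A_2}\CP_{\mathcal{J}_{A_2},\Hilm_2}$; your displayed tensor product has the factors in the wrong order for the paper's composition convention.)
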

\begin{proof} It follows from Proposition~\ref{prop:induced_correspondence} that a proper covariant correspondence $(\Hilm[F],V)\colon(A,\Hilm,\Hilm[J])\to(A_1,\Hilm_1,\Hilm[J]_1)$ yields a proper $\Cst$\nb-correspondence $$\CP_{\Hilm[F],V}\colon\CP_{\Hilm[J],\Hilm}\leadsto\CP_{\Hilm[J]_1,\Hilm_1}.$$ This carries a coaction of~$G$ that is compatible with the coactions on~$\CP_{\Hilm[J],\Hilm}$ and~$\CP_{\Hilm[J]_1,\Hilm_1}$. Repeating the arguments in the proof of Theorem~\ref{thm:induced_equivalence}, we deduce that a $2$\nb-arrow $w\colon (\Hilm[F]_0,V_0)\Rightarrow(\Hilm[F]_1,V_1)$ produces an isomorphism $$w\otimes 1_{\CP_{\Hilm[J]_1,\Hilm_1}}\colon \CP_{\Hilm[F]_0,V_0}\Rightarrow\CP_{\Hilm[F]_1,V_1}$$ that intertwines the left and right actions of $\CP_{\Hilm[J],\Hilm}$ and~$\CP_{\Hilm[J]_1,\Hilm_1}$, respectively. The remaining data for composition of arrows and compatibility of units comes from the canonical isomorphisms $$\CP_{\Hilm[J],\Hilm}\otimes_{\CP_{\Hilm[J],\Hilm}} \CP_{\Hilm[F],V}\cong  \CP_{\Hilm[F],V},\quad A\otimes_A \CP_{\Hilm[J],\Hilm}\cong \CP_{\Hilm[J],\Hilm}.$$ So we obtain a homomorphism~$\Corr^P_\proper\to \Corr^G_{\proper}$ that sends $(A,\Hilm,\Hilm[J])$ to $\CP_{\Hilm[J],\Hilm}$, $(\Hilm[F],V)$ to $\CP_{\Hilm[F],V}$ and maps a $2$\nb-morphism~$w$ to $w\otimes 1$. 

The above functor restricts to a homomorphism~$\Corr^P_{\Bim}\rightarrow \Corr^G_{\proper}$ that sends a simplifiable product system of Hilbert bimodules to its relative Cuntz--Pimsner algebra for the family of Katsura's ideals. We know that $\CP_{\Hilm[J],\Hilm}$ is isomorphic to the cross-sectional $\Cst$\nb-algebra of the Fell bundle~$(\CP^g_{\Hilm[J],\Hilm})_{g\in G}$ associated to the coaction of~$G$. This establishes a canonical isomorphism $$\CP_{\Hilm[I]_{\CP_{\Hilm[J],\Hilm}},\CP_{\Hilm[J],\Hilm}}\cong\CP_{\Hilm[J],\Hilm}$$  by Proposition~\ref{prop:associated_product_system} because $(\CP^g_{\Hilm[J],\Hilm})_{g\in G}$ is extended from~$P$. Here $\CP_{\Hilm[I]_{\CP_{\Hilm[J],\Hilm}},\CP_{\Hilm[J],\Hilm}}$ is the relative Cuntz--Pimsner algebra for Katsura's ideals of~$(\CP^p_{\Hilm[J],\Hilm})_{p\in P}$. This produces a natural isomorphism between the homomorphism $\Corr^P_{\proper}\rightarrow \Corr^G_{\proper}$ built above and the composite of the reflector~$L\colon \Corr^P_{\proper}\rightarrow\Corr^P_{\Bim}$ from Corollary~\ref{cor:P_adjunction} with the homomorphism~$\Corr^P_{\Bim}\to\Corr^G_{\proper}$ obtained by restriction.
\end{proof}

\begin{cor}\label{cor:Morita_equivalence} Let~$(A,\Hilm,\Hilm[J])$ and~$(B,\Hilm[G],\Hilm[J]_B)$ be objects of~$\Corr^P_{\proper}$. Then~$\CP_{\Hilm[J],\Hilm}$ and~$\CP_{\Hilm[J]_{\!B},\Hilm[G]}$ are Morita equivalent if there is a covariant correspondence $$(\Hilm[F],V)\colon(A,\Hilm,\Hilm[J])\to(B,\Hilm[G],\Hilm[J]_B)$$ so that $J_p^A\Hilm[F]=\Hilm[F]J_p^B$ for all~$p\in P$ and $\Hilm[F]\colon A\leadsto B$ establishes a Morita equivalence. For objects in~$\Corr^P_{\Bim}$, this equivalence preserves amenability of Fell bundles.
\end{cor}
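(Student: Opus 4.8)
The plan is to upgrade~$(\Hilm[F],V)$ to an invertible arrow of~$\Corr^P_{\proper}$ and then transport this invertibility along the homomorphism of bicategories $\Corr^P_{\proper}\to\Corr^G_{\proper}$ of Corollary~\ref{cor:P_functoriality}. Since a homomorphism of bicategories preserves internal equivalences, the image~$\CP_{\Hilm[F],V}$ of an invertible arrow is again invertible in~$\Corr^G_{\proper}$; and an invertible arrow of~$\Corr^G_{\proper}$ is exactly a $G$\nb-equivariant Morita equivalence between the underlying $\Cst$\nb-algebras. This would yield the Morita equivalence $\CP_{\Hilm[J],\Hilm}\sim\CP_{\Hilm[J]_B,\Hilm[G]}$.

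To invert~$(\Hilm[F],V)$, let $\Hilm[F]^*\colon B\leadsto A$ be the adjoint imprimitivity bimodule, so that $\Hilm[F]\otimes_B\Hilm[F]^*\cong A$ and $\Hilm[F]^*\otimes_A\Hilm[F]\cong B$. I would equip~$\Hilm[F]^*$ with the correspondence isomorphisms
\[
\Hilm[G]_p\otimes_B\Hilm[F]^*\cong\Hilm[F]^*\otimes_A(\Hilm[F]\otimes_B\Hilm[G]_p)\otimes_B\Hilm[F]^*\xrightarrow{1\otimes V_p^{-1}\otimes1}\Hilm[F]^*\otimes_A(\Hilm_p\otimes_A\Hilm[F])\otimes_B\Hilm[F]^*\cong\Hilm[F]^*\otimes_A\Hilm_p,
\]
where the outer isomorphisms insert $\Hilm[F]^*\otimes_A\Hilm[F]\cong B$ and collapse $\Hilm[F]\otimes_B\Hilm[F]^*\cong A$, respectively; call this family $W=\{W_p\}_{p\in P}$. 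The ideal condition $J^B_p\Hilm[F]^*\subseteq\Hilm[F]^*J^A_p$ demanded of a covariant correspondence from $(B,\Hilm[G],\Hilm[J]_B)$ to $(A,\Hilm,\Hilm[J])$ is precisely the adjoint of the hypothesis: applying the involution to $J_p^A\Hilm[F]=\Hilm[F]J_p^B$ gives $J_p^B\Hilm[F]^*=\Hilm[F]^*J_p^A$. This is the point at which the equality, rather than the mere inclusion of Definition~\ref{defn:cov-correspondence}, is needed. The coherence axiom~\eqref{eq:coherence_covariant_corres} for $(\Hilm[F]^*,W)$ then follows from that for~$(\Hilm[F],V)$ by a diagram chase using associativity of the imprimitivity isomorphisms.

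It then remains to verify that~$(\Hilm[F]^*,W)$ is a two-sided inverse up to $2$\nb-isomorphism. The composite $(\Hilm[F]^*,W)\circ(\Hilm[F],V)$ has underlying correspondence $\Hilm[F]\otimes_B\Hilm[F]^*\cong A$, and under this identification $V\bullet W$ is $2$\nb-isomorphic to the family~$\iota_{\Hilm}$ of the unit arrow on $(A,\Hilm,\Hilm[J])$; interchanging the roles of~$\Hilm$ and~$\Hilm[G]$ treats the other composite. Hence~$(\Hilm[F],V)$ is an equivalence in~$\Corr^P_{\proper}$, and by the opening remark $\CP_{\Hilm[J],\Hilm}$ and $\CP_{\Hilm[J]_B,\Hilm[G]}$ are Morita equivalent.

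For the final assertion, I restrict to objects of~$\Corr^P_{\Bim}$. Proposition~\ref{prop:associated_product_system} identifies $\CP_{\Hilm[J],\Hilm}$ and $\CP_{\Hilm[J]_B,\Hilm[G]}$ with the full cross-sectional algebras $\Cst((\hat{\Hilm}_g)_{g\in G})$ and $\Cst((\hat{\Hilm[G]}_g)_{g\in G})$, with the canonical coactions coming from the gradings. As~$\CP_{\Hilm[F],V}$ is an invertible arrow of~$\Corr^G_{\proper}$, it is a $G$\nb-equivariant Morita equivalence, so passing to spectral subspaces it restricts to an equivalence of the Fell bundles $(\hat{\Hilm}_g)_{g\in G}$ and $(\hat{\Hilm[G]}_g)_{g\in G}$. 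Such an equivalence also induces a Morita equivalence of the reduced cross-sectional algebras, compatible with the regular representations~$\Lambda$; under the Rieffel correspondence of ideals the kernel of~$\Lambda$ for one bundle matches that for the other, so one regular representation is faithful if and only if the other is. Thus amenability of $(\hat{\Hilm}_g)_{g\in G}$ is equivalent to amenability of $(\hat{\Hilm[G]}_g)_{g\in G}$. I expect the main obstacle to be exactly this last step: ensuring that the $G$\nb-equivariant Morita equivalence descends to the reduced level compatibly enough that faithfulness of the regular representation is genuinely transported, which is where the theory of equivalence of Fell bundles must be invoked, whereas the invertibility of~$(\Hilm[F],V)$ itself is essentially formal.
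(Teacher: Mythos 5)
Your first half — the Morita equivalence itself — is the paper's own argument almost verbatim: the paper likewise reduces, via Corollary~\ref{cor:P_functoriality} and \cite[Lemma~2.4]{Echterhoff-Kaliszewski-Quigg-Raeburn:Categorical}, to showing that $(\Hilm[F],V)$ is invertible in $\Corr^P_{\proper}$; it takes the adjoint bimodule $\Hilm[F]^*$ equipped with exactly your isometries $W_p$ (there denoted $\widetilde{V}_p$), obtains $J_p^B\Hilm[F]^*=\Hilm[F]^*J_p^A$ by applying the involution to the hypothesised equality, and proves the coherence axiom by tensoring the diagram~\eqref{eq:coherence_covariant_corres} for $(\Hilm[F],V)$ with $\Hilm[F]^*$ on both sides, with the canonical isomorphisms $\Hilm[F]\otimes_B\Hilm[F]^*\cong A$ and $\Hilm[F]^*\otimes_A\Hilm[F]\cong B$ serving as the $2$\nb-arrows. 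Where you genuinely diverge is the amenability assertion. The paper gives a direct, self-contained argument: since $\Cst\big((\hat{\Hilm[G]}_g)_{g\in G}\big)$ is amenable, its faithful conditional expectation $E$ onto $B$ turns it into a faithful correspondence $\Cst\big((\hat{\Hilm[G]}_g)_{g\in G}\big)_E\colon \Cst\big((\hat{\Hilm[G]}_g)_{g\in G}\big)\leadsto B$; composing $\CP_{\Hilm[F],V}\otimes_{\CP_{\Hilm[I]_{\Hilm[G]},\Hilm[G]}}\Cst\big((\hat{\Hilm[G]}_g)_{g\in G}\big)_E\otimes_B\Hilm[F]^*$ and identifying the result — using unitarity of the $V_p$'s and the induced isomorphisms $\Hilm_p^*\otimes_A\Hilm[F]\cong\Hilm[F]\otimes_B\Hilm[G]_p^*$ — with $\bigoplus_{g\in G}\hat{\Hilm}_g$ carrying its regular-representation structure, it concludes that the regular representation of $\hat{\Hilm}$ is faithful. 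You instead route through the theory of equivalences of Fell bundles: the $G$\nb-equivariant imprimitivity bimodule should restrict, on spectral subspaces, to a bundle equivalence between $(\hat{\Hilm}_g)_{g\in G}$ and $(\hat{\Hilm[G]}_g)_{g\in G}$, and such equivalences preserve amenability. This is legitimate — it is precisely the alternative the paper records in the remark following the corollary, citing \cite{Abadie-Ferraro} together with Theorem~\ref{thm:induced_equivalence} — but as written it outsources the decisive step: both the claim that the equivariant equivalence restricts to a Fell-bundle equivalence and the claim that kernels of the regular representations match under the Rieffel correspondence are exactly the nontrivial content of the Abadie--Ferraro theory, so to be complete you must either cite that paper explicitly or reprove those facts. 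The trade-off is clear: your route is conceptually cleaner and immediately transferable to any setting where bundle equivalence is known to preserve amenability, while the paper's expectation argument avoids that machinery at the cost of an explicit correspondence computation.
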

\begin{proof} First, notice that $\Hilm[F]$ is automatically a proper correspondence. By \cite[Lemma~2.4]{Echterhoff-Kaliszewski-Quigg-Raeburn:Categorical}, a correspondence~$\Hilm[F]\colon A\leadsto B$ is an imprimitivity $A,B$\nb-bimodule if there exists a correspondence~$\widetilde{\Hilm[F]}\colon A\leadsto B$ with correspondence isomorphisms $$\Hilm[F]\otimes_B\widetilde{\Hilm[F]}\cong A,\quad \widetilde{\Hilm[F]}\otimes_A\Hilm[F]\cong B.$$ So by functoriality for relative Cuntz--Pimsner algebras established in Corollary~\ref{cor:P_functoriality}, it suffices to show that~$\Hilm[F]$ is an invertible arrow in~$\Corr^P_{\proper}$. That is, there is a proper covariant correspondence $(\Hilm[F]^*,\widetilde{V})\colon(B,\Hilm[G],\Hilm[J]_B)\to(A,\Hilm,\Hilm[J])$ with (invertible) $2$\nb-arrows $$w\colon(\Hilm[F]\otimes_B\Hilm[F]^*,V\bullet\widetilde{V})\Rightarrow(A,\iota_{\Hilm}),\quad\widetilde{w}\colon(\Hilm[F]^*\otimes_A\Hilm[F],\widetilde{V}\bullet V)\Rightarrow(B,\iota_{\Hilm[G]}).$$

Let $\Hilm[F]^*$ be the Hilbert $B,A$\nb-bimodule adjoint to $\Hilm[F]$. For each~$p\in P$, we use the identifications $\Hilm[F]\otimes_B\Hilm[F]^*\cong A$ and $\Hilm[F]^*\otimes_A\Hilm[F]\cong B$ to define a correspondence isomorphism $\widetilde{V}_p\colon\Hilm[G]_p\otimes_B\Hilm[F]^*\cong\Hilm[F]^*\otimes_A\Hilm_p$ as the composite
\begin{align*}\Hilm[G]_p\otimes_B\Hilm[F]^*&\cong\Hilm[F]^*\otimes_A\Hilm[F]\otimes_B\Hilm[G]_p\otimes_B\Hilm[F]^*&(1_{\Hilm[F]^*}\otimes V_p^{-1}\otimes1_{\Hilm[F]^*})\\&\cong\Hilm[F]^*\otimes_A\Hilm_p\otimes_A\Hilm[F]\otimes_B\Hilm[F]^*&\\ &\cong\Hilm[F]^*\otimes_A\Hilm_p.&\end{align*} We set $\widetilde{V}=\{\widetilde{V}_p\}_{p\in P}$. Observe that~$J_p^A\Hilm[F]=\Hilm[F]J_p^B$ implies $J_p^B\Hilm[F]^*=\Hilm[F]^*J_p^A$. So in order to conclude that~$(\Hilm[F]^*,\widetilde{V})$ is a covariant correspondence from $(B,\Hilm[G],\Hilm[J]_B)$ to $(A,\Hilm,\Hilm[J])$, all we need to prove is that it satisfies the coherence axiom~\eqref{eq:coherence_covariant_corres}. To do so, let~$p,q\in P$. Tensoring the coherence diagram \eqref{eq:coherence_covariant_corres} for~$(\Hilm[F],V)$ with $\Hilm[F]^*$ on the left and on the right, we obtain the following commutative diagram:
   \begin{equation}\label{eq:coherence_for_inverse}\begin{gathered}
    \xymatrix{
    \Hilm[F]^*\otimes_A\Hilm_p\otimes_A\Hilm_q\otimes_A\Hilm[F]\otimes_B\Hilm[F]^* \ar@{->}[r]^{} \ar@{<-}[d]_{1\otimes V_q^{-1}\otimes1_{\Hilm[F]^*}}&   \Hilm[F]^*\otimes_A\Hilm_{pq}\otimes_A\Hilm[F]\otimes_B\Hilm[F]^* 
\ar@{}[d]^{} \\
    \Hilm[F]^*\otimes_A\Hilm_p\otimes_A\Hilm[F]\otimes_B\Hilm[G]_q\otimes_B\Hilm[F]^* \ar@{<-}[d]_{1_{\Hilm[F]^*}\otimes V_p^{-1}\otimes1} &  \\
    \Hilm[F]^*\otimes_A\Hilm[F]\otimes_B\Hilm[G]_p\otimes_B\Hilm[G]_q\otimes_B\Hilm[F]^* \ar@{->}[r]^{} & \Hilm[F]^*\otimes_A\Hilm[F]\otimes_B\Hilm[G]_{pq}\otimes_B\Hilm[F]^*  \ar@{->}[uu]_{1_{\Hilm[F]^*}\otimes V_{pq}^{-1}\otimes1_{\Hilm[F]^*}}.
    }\end{gathered}
\end{equation}

The following diagram also commutes:
  \begin{equation}\label{eq:coherence_for_left-side}\begin{gathered}
    \xymatrix{
    \Hilm[F]\otimes_B\Hilm[G]_p\otimes_B\Hilm[F]^*\otimes_A\Hilm[F]\otimes_B\Hilm[G]_q  \ar@{->}[r]^{1\otimes V_q^{-1}} \ar@{->}[d]_{ V_p^{-1}\otimes1}&
  \Hilm[F]\otimes_B\Hilm[G]_p\otimes_B\Hilm[F]^*\otimes_A\Hilm_q\otimes_A\Hilm[F]\ar@{->}[d]^{V_p^{-1}\otimes1} \\ 
\Hilm_p\otimes_A\Hilm[F]\otimes_B\Hilm[F]^*\otimes_A\Hilm[F]\otimes_B\Hilm[G]_q  \ar@{->}[r]^{1\otimes V_q^{-1}} &
\Hilm_p\otimes_A\Hilm[F]\otimes_B\Hilm[F]^*\otimes_A\Hilm_q\otimes_A\Hilm[F].
    }
   \end{gathered}
\end{equation}
  
Since all the maps involved in the above diagrams are $A,B$\nb-bimodule maps, commutativity of \eqref{eq:coherence_for_left-side} implies that the top-left composite of \eqref{eq:coherence_for_inverse} is precisely the bottom-left composite of \begin{align*}\xymatrix@C=50pt{\Hilm[G]_p\otimes_B\Hilm[G]_q\otimes_B\Hilm[F]^* \ar@{->}[d]_{1\otimes \widetilde{V}_q} \ar@{->}[r]^{\mu^{2}_{p,q}\otimes1}&  \Hilm[G]_{pq}\otimes_B\Hilm[F]^*\ar@{->}[r]^{\widetilde{V}_{pq}}&\Hilm[F]^*\otimes_A\Hilm_{pq} \ar@{=}[d]^{} \\ \Hilm[G]_p\otimes_B\Hilm[F]^*\otimes_A\Hilm_q  \ar@{->}[r]^{ \widetilde{V}_p\otimes1} &  \Hilm[F]^* \otimes_A\Hilm_p\otimes_A\Hilm_q  \ar@{->}[r]^{1\otimes \mu^1_{p,q}} &\Hilm[F]^* \otimes_A\Hilm_{pq}}\end{align*}up to the usual identifications. Hence the above diagram commutes and so~$(\Hilm[F]^*,\bar{V})$ is a proper covariant correspondence as desired. The canonical isomorphisms $w\colon\Hilm[F]\otimes_B\Hilm[F]^*\cong A$ and $\widetilde{w}\colon\Hilm[F]^*\otimes_A\Hilm[F]\cong B$ are the required $2$\nb-arrows.

  If~$(\Hilm[F], V)\colon(A,\Hilm,\Hilm[I]_{\Hilm})\to(B,\Hilm[G],\Hilm[I]_{\Hilm[G]})$ is an equivalence in~$\Corr^P_\Bim$ and~$(\hat{\Hilm[G]}_g)_{g\in G}$ is amenable, then $(\hat{\Hilm}_g)_{g\in G}$ is also amenable. Indeed, by functoriality, \allowbreak$\CP_{\Hilm[F], V}\colon \CP_{\Hilm[I]_{\Hilm},\Hilm}\allowbreak\leadsto  \CP_{\Hilm[I]_{\Hilm[G]},\Hilm[G]}$ is an imprimitivity bimodule. In particular, $\CP_{\Hilm[I]_{\Hilm},\Hilm}\cong\Cst\big((\hat{\Hilm}_g)_{g\in G}\big)$ acts faithfully on~$\CP_{\Hilm[F], V}$. Since~$\Cst\big((\hat{\Hilm[G]}_g)_{g\in G}\big)\cong\Cst_r\big((\hat{\Hilm[G]}_g)_{g\in G}\big)$ through the regular representation, we can turn $\Cst\big((\hat{\Hilm[G]}_g)_{g\in G}\big)$ into a faithful correspondence $$\Cst\big((\hat{\Hilm[G]}_g)_{g\in G}\big)_{E}\colon\Cst\big((\hat{\Hilm[G]}_g)_{g\in G}\big)\leadsto B$$ using the (faithful) conditional expectation $E\colon\Cst\big((\hat{\Hilm[G]}_g)_{g\in G}\big)\to B$ to define the right $B$\nb-valued inner product (see \cite[Proposition~19.7]{Exel:Partial_dynamical}). Now composing with $\Hilm[F]^*$ we obtain a faithful correspondence from $\Cst\big((\hat{\Hilm}_g)_{g\in G}\big)$ to~$A$ which is given by $$\CP_{\Hilm[F], V}\otimes_{\CP_{\Hilm[I]_{\Hilm[G]},\Hilm[G]}}\Cst\big((\hat{\Hilm[G]}_g)_{g\in G}\big)_E\otimes_B\Hilm[F]^* \colon \Cst\big((\hat{\Hilm}_g)_{g\in G}\big)\leadsto A.$$ We use the canonical correspondence isomorphism $$ \CP_{\Hilm[I]_{\Hilm[G]},\Hilm[G]}\otimes_{ \CP_{\Hilm[I]_{\Hilm[G]},\Hilm[G]}}\Cst\big((\hat{\Hilm[G]}_g)_{g\in G}\big)_E\cong \Cst\big((\hat{\Hilm[G]}_g)_{g\in G}\big)_E$$ coming from the nondegenerate left action of $ \CP_{\Hilm[I]_{\Hilm[G]},\Hilm[G]}$ on $\Cst\big((\hat{\Hilm[G]}_g)_{g\in G}\big)_E$ to identify $\CP_{\Hilm[F], V}\otimes_{\CP_{\Hilm[I]_{\Hilm[G]},\Hilm[G]}}\Cst\big((\hat{\Hilm[G]}_g)_{g\in G}\big)_E$ with the direct sum of correspondences $\bigoplus_{\substack{g\in G}}(\Hilm[F]\otimes_B \hat{\Hilm[G]}_g)$. This is isomorphic to $(\bigoplus_{\substack{g\in G}}\hat{\Hilm}_g)\otimes_A\Hilm[F]$ because the $V_p$'s are unitary and $\widetilde{V}_p\colon \Hilm[G]_p\otimes_B\Hilm[F]^*\cong\Hilm[F]^*\otimes_A\Hilm_p$ induces an isomorphism $\Hilm_p^*\otimes_A\Hilm[F]\cong \Hilm[F]\otimes_B\Hilm[G]_p^*$ for each $p\in P$. Thus the isomorphism $\Hilm[F]\otimes_A\Hilm[F]^*\cong A$ provides the direct sum $\bigoplus_{\substack{g\in G}}\hat{\Hilm}_g$ with a structure of faithful correspondence $\Cst\big((\hat{\Hilm}_g)_{g\in G}\big)\leadsto A$ which coincides with the structure coming from the regular representation of $\hat{\Hilm}=(\hat{\Hilm}_g)_{g\in G}$ in $\Bound\big(\bigoplus_{\substack{g\in G}}\hat{\Hilm}_g\big)$. Therefore, $\hat{\Hilm}$ must be amenable.
\end{proof}

\begin{rem} That an equivalence between objects in~$\Corr^P_{\Bim}$ preserves amenability also follows from~\cite{Abadie-Ferraro} and Theorem~\ref{thm:induced_equivalence}.
\end{rem}

\begin{example} Let~$A$ and~$B$ be $\Cst$\nb-algebras and let $\Hilm[F]\colon A\to B$ be an imprimitivity $A,B$\nb-bimodule. A compactly aligned product system $\Hilm=(\Hilm_p)_{p\in P}$ over $A$ induces a compactly aligned product system $\Hilm[G]=(\Hilm[G]_p)_{p\in P}$ over~$B$ as follows. We set $\Hilm[G]_p\coloneqq\Hilm[F]^*\otimes_A\Hilm_p\otimes_A\Hilm[F]$. The multiplication map~$\widetilde{\mu}_{p,q}\colon\Hilm[G]_p\otimes_A\Hilm[G]_q\cong\Hilm[G]_{pq}$ is defined using the isomorphism~$\Hilm[F]\otimes_A\Hilm[F]^*\cong A$. More explicitly, it is given by \begin{align*}\Hilm[G]_p\otimes_B\Hilm[G]_q& =\Hilm[F]^*\otimes_A\Hilm_p\otimes_A\Hilm[F]\otimes_B\Hilm[F]^*\otimes_A\Hilm_q\otimes_A\Hilm[F] &\\&\cong\Hilm[F]^*\otimes_A\Hilm_p\otimes_A\Hilm_q\otimes_A\Hilm[F]&(1_{\Hilm[F]^*}\otimes\mu_{p,q}\otimes1_{\Hilm[F]})\\&\cong\Hilm[F]^*\otimes_A\Hilm_{pq}\otimes_A\Hilm[F]=\Hilm[G]_{pq}.&\end{align*} The multiplication maps $\{\widetilde{\mu}_{p,q}\}_{p,q\in P}$ satisfy the coherence axiom required for product systems because~$\{\mu_{p,q}\}_{p,q\in P}$ do so.

We claim that~$\Hilm[G]$ is compactly aligned. Indeed, let~$p,q\in P$ with $p\vee q<\infty$. Notice that $\Comp(\Hilm[G]_p)$ is canonically isomorphic to~$\Hilm[F]^*\otimes_A\Comp(\Hilm_p)\otimes_A\Hilm[F]$ through the $B$\nb-bimodule map
\begin{align*}\Hilm[F]^*\otimes_A\Hilm_p\otimes_A\Hilm[F]\otimes_B(\Hilm[F]^*\otimes_A\Hilm_p\otimes_A\Hilm[F])^*&\cong\Hilm[F]^*\otimes_A\Hilm_p\otimes_A\Hilm[F]\otimes_B\Hilm[F]^*\otimes_A\Hilm_p^*\otimes_A\Hilm[F]\\&\cong \Hilm[F]^*\otimes_A\Hilm_p\otimes_A\Hilm_p^*\otimes_A\Hilm[F]\\ &\cong\Hilm[F]^*\otimes_A\Comp(\Hilm_p)\otimes_A\Hilm[F].\end{align*} So take $T\in\Comp(\Hilm_p)$ and $S\in\Comp(\Hilm_q)$. Let $\zeta_1,\zeta_2,\eta_1,\eta_2\in\Hilm[F]$ and let $\eta^*\otimes\xi\otimes\zeta$ be an elementary tensor of $\Hilm[F]^*\otimes_A\Hilm_{p\vee q}\otimes_A\Hilm[F]$. We have that \begin{align*}\iota_{q}^{p\vee q}(\eta_2^*\otimes S\otimes\zeta_2)(\eta^*\otimes\xi\otimes\zeta)&=\eta_2^*\otimes\iota_q^{p\vee q}(S)\big(\varphi_{p\vee q}(\BRAKET{\zeta_2}{\eta})(\xi)\big)\otimes\zeta.
\end{align*} Applying~$\iota_{p}^{p\vee q}(\eta_1^*\otimes T\otimes\zeta_1)$ to both sides of the above equality, we deduce that \begin{align*}\iota_{p}^{p\vee q}(\eta_1^*\otimes T\otimes\zeta_1)&\iota_{q}^{p\vee q}(\eta_2^*\otimes S\otimes\zeta_2)(\eta^*\otimes\xi\otimes\zeta)\\=&\,\,\eta_1^*\!\otimes\iota_{p}^{p\vee q}(T)\big(\!\varphi_{p\vee q}(\BRAKET{\zeta_1}{\eta_2}\!)\iota_q^{p\vee q}(S)\big(\!\varphi_{p\vee q}(\BRAKET{\zeta_2}{\eta})(\xi)\big)\!\big)\!\otimes\zeta.\end{align*} Define $T'\in\Comp(\Hilm_{p\vee q})$ by $T'=\iota_{p}^{p\vee q}(T)\varphi_{p\vee q}(\BRAKET{\zeta_1}{\eta_2})\iota_q^{p\vee q}(S)$. Then $$\eta_1^*\otimes T'\big(\varphi_{p\vee q}(\BRAKET{\zeta_2}{\eta})(\xi)\big)\otimes\zeta=(\eta_1^*\otimes T'\otimes\zeta_2)(\eta^*\otimes\xi\otimes\zeta).$$ So~$\Hilm[G]$ is also compactly aligned, as claimed.

Given $p\in P$, an element $b\in B$ is compact on~$\Hilm[G]_p$ if and only if $b\Hilm[F]^*\subseteq\Hilm[F]^*\varphi^{-1}_p(\Comp(\Hilm_p))$, provided~$\Hilm[F]^*$ is an equivalence (see also~\cite[Corollary~3.7]{Pimsner:Generalizing_Cuntz-Krieger}). The bijection between the lattices of ideals of~$A$ and~$B$, respectively, obtained from the Rieffel correspondence, yields a one-to-one correspondence between ideals in~$A$ acting by compact operators on~$\Hilm_p$ and ideals in~$B$ mapped to compact operators on~$\Hilm[G]_p$. Precisely, this sends $J^A_p\idealin \varphi_p^{-1}(\Comp(\Hilm_p))$ to $J_p^B=\braket{J_p^A\Hilm[F]}{\Hilm[F]}.$ Its inverse maps an ideal $J_p^B\idealin\widetilde{\varphi}_p^{-1}(\Comp(\Hilm[G]_p))$ to $J_p^A=\BRAKET{\Hilm[F]J_p^B}{\Hilm[F]}$.

The equivalence $\Hilm[F]$ may be turned into a proper covariant correspondence $(\Hilm[F],V)\colon(A,\Hilm, \Hilm[J]_A)\to(B,\Hilm[G],\Hilm[J]_B)$, where~$V=\{V_p\}_{p\in P}$ and~$V_p\colon \Hilm_p\otimes_A\Hilm[F]\cong\Hilm[F]\otimes_B\Hilm[G]_p$ arises from the canonical isomorphism $$\Hilm_p\otimes_A\Hilm[F]\cong\Hilm[F]\otimes_B\Hilm[F]^*\otimes_A\Hilm_p\otimes_A\Hilm[F]=\Hilm[F]\otimes_B\Hilm[G]_p.$$ Here~$\Hilm[J]_A$ and~$\Hilm[J]_B$ are related by the bijection described above. 

It follows from Corollary~\ref{cor:Morita_equivalence} that $(\Hilm[F],V)$ is invertible in $\Corr^P_\proper$ and produces a Morita equivalence between~$\CP_{\Hilm[J]_A,\Hilm}$ and~$\CP_{\Hilm[J]_B,\Hilm[G]}$. Therefore, up to equivariant Morita equivalence, the relative Cuntz--Pimsner algebras associated to~$\Hilm$ correspond bijectively to those associated to~$\Hilm[G]$. In particular, if~$\Hilm$ is a simplifiable product system of Hilbert bimodules, the cross-sectional $\Cst$\nb-algebra of the Fell bundle associated to~$\Hilm$ is Morita equivalent to that of~$\Hilm[G]$. This is so because the family of Katsura's ideals~$\Hilm[I]_{\Hilm}$ corresponds to~$\Hilm[I]_{\Hilm[G]}$ under the Rieffel correspondence. 
\end{example}

The next proposition characterises equivalences between product systems built out of semigroups of injective endomorphisms with hereditary range as in Example~\ref{ex:semigroup_of_endomorphisms}. This generalises~\cite[Proposition 2.4]{Muhly-Solel:Morita_equivalence_of_tensor_algebras}. The idea of the proof is also taken from there, but notice that we do not require that the actions be by automorphisms. We will see below that our notion of equivalence between product systems associated to these semigroup actions is an analogue of Morita equivalence for actions of groups as studied in \cite{10.1112/plms/s3-49.2.289, CMW}.

\begin{prop}\label{prop:charact_extendible} Let $\alpha\colon P\to\mathrm{End}(A)$ and $\beta\colon P\to\mathrm{End}(B)$ be actions by extendible injective endomorphisms with hereditary range. Let~$\tensor*[_\alpha]{A}{}$ and~$\tensor*[_\beta]{B}{}$ be the associated product systems of Hilbert bimodules over~$P^{\mathrm{op}}$. There is an equivalence~$(\Hilm[F],V)\colon(A,\tensor*[_\alpha]{A}{},\mathcal{I}_{\tensor*[_\alpha]{A}{}})\to(B, \tensor*[_\beta]{B}{},\mathcal{I}_{\tensor*[_\beta]{B}{}})$ if and only if there are an imprimitivity $A,B$\nb-bimodule~$\Hilm[F]$ and a semigroup homomorphism~$p\mapsto U_p$ from~$P$ to the semigroup of $\CC$\nb-linear isometries on~$\Hilm[F]$ such that, for all~$p\in P$ and $\xi,\eta\in \Hilm[F],$ \begin{equation}\label{eq:scaling_inner_product}\begin{gathered}\BRAKET{U_p(\xi)}{U_p(\eta)}=\alpha_p(\BRAKET{\xi}{\eta}),\qquad\braket{U_p(\xi)}{U_p(\eta)}=\beta_p(\braket{\xi}{\eta}).\end{gathered}\end{equation}
\end{prop}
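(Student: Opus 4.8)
The plan is to prove both implications through a single dictionary between the family $V=\{V_p\}_{p\in P}$ of correspondence isomorphisms and the family $U=\{U_p\}_{p\in P}$ of isometries, using the explicit description of $\tensor*[_\alpha]{A}{}$ and $\tensor*[_\beta]{B}{}$ from Example~\ref{ex:semigroup_of_endomorphisms}. First I would record a reduction. Since $\tensor*[_{\alpha_p}]{A}{}=\alpha_p(1)A$ has full left inner product, each Katsura ideal $I_p=\BRAKET{\tensor*[_{\alpha_p}]{A}{}}{\tensor*[_{\alpha_p}]{A}{}}$ equals $A$, and likewise for $\tensor*[_\beta]{B}{}$; hence the ideal condition $J_p^A\Hilm[F]=\Hilm[F]J_p^B$ of Corollary~\ref{cor:Morita_equivalence} reads $A\Hilm[F]=\Hilm[F]B$, which holds automatically by nondegeneracy. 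By the invertibility criterion contained in the proof of Corollary~\ref{cor:Morita_equivalence}, an equivalence $(\Hilm[F],V)$ in the ambient bicategory $\Corr^{P^{\op}}_{\proper}$ is exactly a proper covariant correspondence whose underlying $\Hilm[F]\colon A\leadsto B$ is an imprimitivity bimodule and whose $V_p$ are correspondence isomorphisms. So the content is to convert between $V$ and $U$.

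The key step is two canonical identifications. I would show that $\alpha_p(1)a\otimes\xi\mapsto\alpha_p(1)a\xi$ (where $\alpha_p(1)\otimes\xi$ is the norm limit of $\alpha_p(1)u_\lambda\otimes\xi$, using extendibility of $\alpha_p$ and nondegeneracy of the left action) induces an isometric correspondence isomorphism $\Phi_p\colon\tensor*[_{\alpha_p}]{A}{}\otimes_A\Hilm[F]\cong\overline{\alpha_p(1)\Hilm[F]}$ onto the corner of $\Hilm[F]$ cut out by the projection $\alpha_p(1)\in M(A)$, with left $A$\nobreakdash-action there twisted by $\alpha_p$; this uses only $\braket{\alpha_p(1)}{\alpha_p(1)}=\alpha_p(1)$. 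Dually, the right inner product of $\Hilm[F]\otimes_B\tensor*[_{\beta_p}]{B}{}$ is $\beta_p$\nobreakdash-twisted, namely $\braket{\xi\otimes\beta_p(1)b}{\eta\otimes\beta_p(1)c}=b^*\beta_p(\braket{\xi}{\eta})c$. Given the data $U$, I would define $V_p\defeq\Psi_p^{-1}\circ\Phi_p$, where $\Psi_p\colon\Hilm[F]\otimes_B\tensor*[_{\beta_p}]{B}{}\to\Hilm[F]$ is the isometry $\xi\otimes\beta_p(1)b\mapsto U_p(\xi)b$; conversely, given $V$, I would set $U_p(\xi)\defeq\Phi_p\bigl(V_p^{-1}(\xi\otimes\beta_p(1))\bigr)$, with $\xi\otimes\beta_p(1)$ again an approximate-unit limit.

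To verify the scaling relations, right-linearity of $V_p$ together with the twisted inner-product formula gives $\braket{U_p\xi}{U_p\eta}=\beta_p(\braket{\xi}{\eta})$ at once. Left- and right-linearity of $V_p$ force the equivariance $U_p(a\xi b)=\alpha_p(a)U_p(\xi)\beta_p(b)$, and then the left scaling $\BRAKET{U_p\xi}{U_p\eta}=\alpha_p(\BRAKET{\xi}{\eta})$ follows from the imprimitivity compatibility $\BRAKET{\xi}{\eta}\zeta=\xi\braket{\eta}{\zeta}$ by evaluating both sides on vectors of the form $U_p\theta$. The same compatibility makes the construction consistent: it yields $\alpha_p(1)\zeta=\lim_i U_p(\xi_i)\braket{U_p\eta_i}{\zeta}$, so that $\overline{\alpha_p(1)\Hilm[F]}=\overline{U_p(\Hilm[F])B}$, which is precisely the assertion that $\Phi_p$ and $\Psi_p$ have the same range and hence that $V_p$ is onto (equivalently, that $U_p$ is everywhere defined). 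Finally, the semigroup law $U_{qp}=U_qU_p$ and the coherence diagram~\eqref{eq:coherence_covariant_corres} correspond to one another: chasing~\eqref{eq:coherence_covariant_corres} through the $\Phi$'s and $\Psi$'s, and using that the multiplication maps of $\tensor*[_\alpha]{A}{}$ and $\tensor*[_\beta]{B}{}$ are those of $P^{\op}$ together with $\alpha_{qp}=\alpha_q\circ\alpha_p$, reduces the diagram to $U_{qp}=U_qU_p$, so that $p\mapsto U_p$ is a homomorphism from $P$.

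I expect the main obstacle to be the surjectivity point in the previous paragraph: establishing $\overline{\alpha_p(1)\Hilm[F]}=\overline{U_p(\Hilm[F])B}$, i.e. that $V_p$ is a genuine isomorphism and that $U_p$ is defined on all of $\Hilm[F]$ rather than on a proper corner. This is exactly where the imprimitivity-bimodule structure of $\Hilm[F]$ (not merely its being a correspondence) is indispensable, through the compatibility of the two inner products; the multiplier computations needed to make sense of $\alpha_p(1)\otimes\xi$ and $\xi\otimes\beta_p(1)$ in the balanced tensor products rely on extendibility of the endomorphisms. The remaining verifications — that $(\Hilm[F],V)$ satisfies the coherence axiom and that the two assignments are mutually inverse — are routine and parallel those of~\cite[Proposition~2.4]{Muhly-Solel:Morita_equivalence_of_tensor_algebras}, the only genuine difference being that $\alpha_p,\beta_p$ are injective endomorphisms with hereditary range rather than automorphisms, which is what forces the corner $\alpha_p(1)\Hilm[F]\beta_p(1)$ to appear.
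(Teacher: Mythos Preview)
Your proposal is correct and follows essentially the same strategy as the paper's proof: both directions hinge on the same dictionary between the $V_p$'s and the $U_p$'s, the same scaling computations, and the same surjectivity step $\overline{\alpha_p(A)\Hilm[F]}=\overline{U_p(\Hilm[F])B}$ derived from the imprimitivity identity $\BRAKET{U_p\xi}{U_p\eta}\zeta=U_p(\xi)\braket{U_p\eta}{\zeta}$. The only cosmetic difference is packaging: the paper routes the construction through the adjoint bimodules $\tensor*[_{\beta_p}]{B}{}^*$ and $\tensor*[_{\alpha_p}]{A}{}^*$ and the isomorphisms $\tensor*[_{\beta_p}]{B}{}\otimes_B\tensor*[_{\beta_p}]{B}{}^*\cong B$, whereas you work directly with the corner $\alpha_p(1)\Hilm[F]$ and the multiplier element $\xi\otimes\beta_p(1)$; these are equivalent descriptions of the same maps.
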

\begin{proof} Let $(F,V)\colon(A,\tensor*[_\alpha]{A}{},\mathcal{I}_{\tensor*[_\alpha]{A}{}})\to(B, \tensor*[_\beta]{B}{},\mathcal{I}_{\tensor*[_\beta]{B}{}})$ be an equivalence. Then~$\Hilm[F]$ is an imprimitivity $A,B$\nb-bimodule. Observing that, for all $p\in P$, $$\BRAKET{\tensor*[_{\beta_p}]{\!B}{}}{\tensor*[_{\beta_p}]{\!B}{}}=\beta_p^{-1}(\beta_p(B))=B,$$ we define a correspondence isomorphism~$U'_p\colon\Hilm[F]\to\tensor*[_{\alpha_p}]{\!A}{}\otimes_A\Hilm[F]\otimes_B\tensor*[_{\beta_p}]{\!B}{}^*$ by 
$$\Hilm[F]\cong\Hilm[F]\otimes_B\tensor*[_{\beta_p}]{\!B}{}\otimes_B\tensor*[_{\beta_p}]{\!B}{}^*\xRightarrow{V^{-1}_p\otimes1}\tensor*[_{\alpha_p}]{\!A}{}\otimes_A\Hilm[F]\otimes_B\tensor*[_{\beta_p}]{\!B}{}^*.$$ We identify $\tensor*[_{\beta_p}]{\!B}{}^*$ with $B_{\beta_p}=B\beta_p(1)$ via $\widetilde{\beta_p(1)b}\mapsto b^*\beta_p(1)$ to obtain a linear map $$\tensor*[_{\alpha_p}]{\!A}{}\otimes_A\Hilm[F]\otimes_B\tensor*[_{\beta_p}]{\!B}{}^*\to\Hilm[F]$$ defined on an elementary tensor $\alpha_p(1)a\otimes_A\xi\otimes_Bb\beta_p(1)$ by 
$(\alpha_p(1)a)\xi (b\beta_p(1)).$ This is isometric because~$\beta_p^{-1}$ is an injective \Star homomorphism between $\Cst$\nb-alge\-bras. Its composition with~$U_p'$ yields a linear map $\Hilm[F]\to\Hilm[F]$, which we denote by~$U_p$. Given $\xi,\eta\in\Hilm[F]$, we have that $\braket{\xi}{\eta}=\braket{U_p'(\xi)}{U_p'(\eta)},$ that is, $U_p'$ preserves inner products. From this we deduce $$\braket{U_p(\xi)}{U_p(\eta)}=\beta_p\big(\braket{U_p'(\xi)}{U_p'(\eta)}\big)=\beta_p(\braket{\xi}{\eta}).$$ Similarly, $\BRAKET{U_p'(\xi)}{U_p'(\eta)}\!=\!\BRAKET{\xi}{\eta}$ and we see that $\BRAKET{U_p(\xi)}{U_p(\eta)}\!=\!\alpha_p(\!\BRAKET{\xi}{\eta}\!)$.

It remains to verify that $p\mapsto U_p$ is a semigroup homomorphism from $P^{}$ to the semigroup of $\CC$\nb-linear isometries on~$\Hilm[F]$. First, let $\alpha_q(1)a\in\tensor*[_{\alpha_q}]{\!A}{}$ and notice that, given an elementary tensor $\xi\otimes\alpha_p(1)b$ of~$\Hilm[F]\otimes_B\tensor*[_{\beta_p}]{\!B}{}$, one has $$V_p^{-1}\big(\alpha_q(1)a\xi\otimes\beta_p(1)b\big)=\alpha_q(1)aV_p^{-1}\big(\xi\otimes\beta_p(1)b\big).$$ Since the left action of~$A$ on~$\tensor*[_{\alpha_p}]{\!A}{}$ is implemented by~$\alpha_p$,  it follows that the image of $V_p^{-1}\big(\alpha_q(1)a\xi\otimes\beta_p(1)b\big)$ in $\Hilm[F]$ under the map~$\tensor*[_{\alpha_p}]{\!A}{}\otimes_A\Hilm[F]\to\Hilm[F]$ determined by the left action of~$A$ on~$\Hilm[F]$ coincides with the image of $$(\mu^{\alpha}_{q,p}\otimes1)\big(\alpha_q(1)a\otimes_AV_p^{-1}(\xi\otimes\beta_p(1)b)\big)$$ under the corresponding map~$\tensor*[_{\alpha_{pq}}]{\!A}{}\otimes_A\Hilm[F]\to\Hilm[F]$. Here $\mu^{\alpha}_{q,p}$ is the correspondence isomorphism $\tensor*[_{\alpha_q}]{\!A}{}\otimes_A\tensor*[_{\alpha_p}]{\!A}{}\cong\tensor*[_{\alpha_{pq}}]{\!A}{}.$

Now let $p,q\in P$ and let $(u_\lambda)_{\lambda\in\Lambda}$ be an approximate identity for~$B$. Fix $\lambda\in\Lambda$ and let $\xi\in\Hilm[F]$ and $b\in B$. Then $$U'_q(\xi u_\lambda u_\lambda b)=V^{-1}_q(\xi\otimes\beta_q(u_\lambda))\otimes \beta_q(u_\lambda b).$$ From the above observation and from the fact that $V_p^{-1}$ and $V_q^{-1}$ intertwine the right actions of~$B$, we conclude that \begin{equation*}\begin{aligned}U_p'U_q(\xi u_\lambda u_\lambda b) &=(\mu^{\alpha}_{q,p}\!\otimes\!1)(1\!\otimes\! V_p^{-1}\!\otimes\!1_{B_{\beta_p}})\big(V^{-1}_q(\xi\!\otimes\!\beta_q(u_\lambda))\!\otimes\! \beta_{pq}(u_\lambda)\!\otimes\!\beta_{pq}( b)\big).\end{aligned}\end{equation*} Combining this with the coherence condition~\eqref{eq:coherence_covariant_corres} we may replace the right-hand side of the above equality by \begin{align*}(V^{-1}_{pq}\otimes1_{B_{\beta_p}})(1\otimes\mu_{q,p}^{\beta}\otimes1)\big(\xi\otimes(\beta_q(u_\lambda)\otimes \beta_{pq}(u_\lambda))\otimes\beta_{pq}( b)\big)=&\\=(V^{-1}_{pq}\otimes1_{B_{\beta_p}})(\xi\otimes\beta_{pq}(u_\lambda u_{\lambda}))\otimes\beta_{pq}(b).\end{align*}

This implies $U_pU_q(\xi u_\lambda u_\lambda b)=U_{pq}(\xi u_\lambda u_\lambda b)$. Using that all the~$U_p$'s are continuous and $$\xi b=\underset{\lambda}{\lim}(\xi u_\lambda u_\lambda b),$$ we obtain~$U_pU_q(\xi  b)=U_{pq}(\xi  b)$. This shows that $p\mapsto U_p$ is a semigroup homomorphism, as asserted.

Conversely, suppose that we are given an imprimitivity~$A,B$\nb-bimodule~$\Hilm[F]$ and a semigroup homomorphism~$p\mapsto U_p$ from~$P$ to the semigroup of $\CC$\nb-linear isometries on~$\Hilm[F]$ satisfying \eqref{eq:scaling_inner_product}. For each $p\in P$, $\xi\in \Hilm[F]$ and~$b\in B$, we have that~$U_p(\xi b)=U_p(\xi)\beta_p(b)$ because \begin{align*}\braket{U_{\!p}(\xi b)\!-\!U_{\!p}(\xi)\beta_p(b)\!}{\!U_{\!p}(\xi b)\!-\!U_{\!p}(\xi)\beta_p(b)}&\!=\!\braket{U_{\!p}(\!\xi b)}{U_{\!p}(\!\xi b)}\!-\!\braket{U_{\!p}(\!\xi b)}{U_{\!p}(\!\xi)\beta_p(b)}\\-\braket{U_{\!p}(\!\xi)\beta_p(b)}{U_{\!p}(\!\xi b)}+&\braket{U_{\!p}(\!\xi)\beta_p(b)}{U_{\!p}(\!\xi)\beta_p(b)}\\ &=\beta_p(\braket{\xi b}{\xi b})-\beta_p(\braket{\xi b}{\xi})\beta_p(b)\\-\beta_p(b)^*\beta_p(\braket{\xi}{\xi b})&+\beta_p(b^*)\beta_p(\braket{\xi}{\xi})\beta_p(b)=0.
\end{align*} The same reasoning shows that~$U_p(a\xi)=\alpha_p(a)U_p(\xi)$ for all~$a\in A$.

We then define a map~$V_p'\colon\tensor*[_{\alpha_p}]{\!A}{}^*\otimes_A\Hilm[F]\otimes_B \tensor*[_{\beta_p}]{\!B}{}\to\Hilm[F]$ on elementary tensors by $$a\alpha_p(1)\otimes\xi\otimes_B\beta_p(1)b\mapsto aU_p(\xi)b.$$ In order to verify that this preserves the $B$\nb-valued inner product, let $a,c\in A$, $b,d\in B$ and $\xi,\eta\in \Hilm[F]$. Let $(u_{\lambda})_{\lambda\in\Lambda}$ be an approximate identity for~$A$ and fix~$\lambda\in \Lambda$. Then
\begin{align*}\braket{aU_p(u_{\lambda}\xi)b}{cU_p(u_{\lambda}\eta)d}&=b^*\braket{a\alpha_p(u_{\lambda})U_p(\xi)}{c\alpha_p(u_{\lambda})U_p(\eta)}d\\&=b^*\braket{U_p(\xi)}{\alpha_p(u_{\lambda})a^*c\alpha_p(u_{\lambda})U_p(\eta)}d\\&=b^*\braket{U_p(\xi)}{U_p\big(\alpha_p^{-1}(\alpha_p(u_{\lambda})a^*c\alpha_p(u_{\lambda}))\eta\big)}d\\&=b^*\beta_p\big(\braket{\xi}{\alpha_p^{-1}(\alpha_p(u_{\lambda})a^*c\alpha_p(u_{\lambda}))\eta}\big)d\\&=\braket{a\alpha_p(u_{\lambda})\otimes\xi\otimes\beta_p(1)b}{c\alpha_p(u_{\lambda})\otimes\eta\otimes\beta_p(1)d}.\end{align*}Using that $U_p$ is continuous and $\xi=\lim_{\substack{\lambda}}u_{\lambda}\xi$, $\eta=\lim_{\substack{\lambda}}u_{\lambda}\eta$, we conclude that~$V_p'$ preserves the inner product. In addition, it intertwines the left and right actions of~$A$ and~$B$. 

Now we let~$\widetilde{V}_p\colon\Hilm[F]\otimes_B\tensor*[_{\beta_p}]{\!B}{}\Rightarrow\tensor*[_{\alpha_p}]{\!A}{}\otimes_A\Hilm[F]$ be the composite $$\Hilm[F]\otimes_B\tensor*[_{\beta_p}]{\!B}{}\cong \tensor*[_{\alpha_p}]{\!A}{}\otimes_A\tensor*[_{\alpha_p}]{\!A}{}^*\otimes_A\Hilm[F]\otimes_B \tensor*[_{\beta_p}]{\!B}{}\xRightarrow{1_{}\otimes V_p'}\tensor*[_{\alpha_p}]{\!A}{}\otimes_A\Hilm[F],$$ where the isomorphism on the left-hand side comes from the identification $$\tensor*[_{\alpha_p}]{\!A}{}\otimes_A\tensor*[_{\alpha_p}]{\!A}{}^*\cong \BRAKET{ \tensor*[_{\alpha_p}]{\!A}{}}{ \tensor*[_{\alpha_p}]{\!A}{}}=A.$$ Then~$\widetilde{V}_p$ is an isometry between correspondences $A\leadsto B$. To see that it is indeed unitary, we need to prove that it is also surjective. 

First, observe that $$\alpha_p(\BRAKET{\xi}{\eta})\zeta=\BRAKET{U_p(\xi)}{U_p(\eta)}\zeta=U_p(\xi)\braket{U_p(\eta)}{\zeta}.$$ This implies $\alpha_p(A)\Hilm[F]=U_p(\Hilm[F])\braket{U_p(\Hilm[F])}{\Hilm[F]}$, provided~$\BRAKET{\Hilm[F]}{\Hilm[F]}=A$. Again we let~$(u_{\lambda})_{\lambda\in\Lambda}$ be an approximate identity for~$A$ and fix~$\lambda\in\Lambda$. Let~$c\in A$ be such that~$u_\lambda=c^*c$. Take $a\in A$ and~$\xi\in\Hilm[F]$. Then $$\alpha_p(u_{\lambda})a\otimes_A\xi=\alpha_p(c^*)\otimes_A\alpha_p(c)(a\xi)\in\alpha_p(c^*)\otimes_A U_p(\Hilm[F])\braket{U_p(\Hilm[F])}{\Hilm[F]}.$$ Using that $U_p(\Hilm[F])=\alpha_p(A)U_p(\Hilm[F])$, we deduce that~$\alpha_p(u_{\lambda})a\otimes_A\xi$ belongs to the image of~$\widetilde{V}_p$. This has closed range and hence $\alpha_p(1)a\otimes\xi$ also lies in~$\widetilde{V}_p(\Hilm[F]\otimes_B\tensor*[_{\beta_p}]{\!B}{})$. Applying again the fact that~$\widetilde{V}_p$ has closed range, we conclude that it is indeed unitary. 

We let~$V_p=\widetilde{V}_p^*$ and $V=\{V_p\}_{p\in P}$. We shall now prove that $(\Hilm[F],V)$ is a proper covariant correspondence. In this case, it suffices to show that it satisfies the coherence axiom~\eqref{eq:coherence_covariant_corres} and that~$V_e$ is the canonical isomorphism obtained from the left and right actions of~$A$ and~$B$, respectively. This latter fact follows from the identities $$\BRAKET{U_e(\xi)}{\eta}=\BRAKET{\xi}{U_e(\eta)}=\BRAKET{\xi}{\eta}=\BRAKET{U_e(\xi)}{U_e(\eta)},$$ so that $U_e=\id_{\Hilm[F]}$. The above equalities may be derived from the computation
$$\BRAKET{U_e(\xi)}{\eta}\!=\!\alpha_e(\BRAKET{U_e(\xi)}{\eta})\!=\!\BRAKET{U_e(U_e(\xi))}{U_e(\eta)}\!=\!\BRAKET{U_e(\xi)}{U_e(\eta)}\!=\!\BRAKET{\xi}{\eta}.$$

Finally, given~$a,c\in A$, $b,d\in B$ and~$\xi\in\Hilm[F]$, we have $$cU_q(aU_p(\xi)b)d=c\alpha_q(a)U_q(U_p(\xi))\beta_q(b)d=c\alpha_q(a)U_{qp}(\xi)\beta_q(b)d.$$ This leads to a commutative diagram for~$\widetilde{V}_p$, $\widetilde{V}_q$ and $\widetilde{V}_{qp}$ as in \eqref{eq:coherence_covariant_corres}. By reversing arrows, we conclude that~$(\Hilm[F],V)$ also makes such a diagram commute. This completes the proof.\end{proof}

\begin{bibdiv}
  \begin{biblist}
   \bib{Abadie-Eilers-Exel:Morita_bimodules}{article}{
  author={Abadie, Beatriz},
  author={Eilers, S\o ren},
  author={Exel, Ruy},
  title={Morita equivalence for crossed products by Hilbert $C^*$\nobreakdash -bimodules},
  journal={Trans. Amer. Math. Soc.},
  volume={350},
  date={1998},
  number={8},
  pages={3043--3054},
  issn={0002-9947},
review={ MR1467459},
}

\bib{Abadie-Ferraro}{article}{
       author = {{Abadie}, Fernando},
author={ {Ferraro}, Dami{\'a}n},
        title = {Equivalence of Fell bundles over groups},
      journal = {J. Operator Theory},
volume={81},
pages={276--319},
number={2},
year={2019},     
review={ MR3959060},
}

\bib{Adji-S}{article}{
    AUTHOR = {Adji, Sriwulan},
     TITLE = {Invariant ideals of crossed products by semigroups of
              endomorphisms},
 BOOKTITLE = {Functional analysis and global analysis ({Q}uezon {C}ity,
              1996)},
     PAGES = {1--8},
 PUBLISHER = {Springer, Singapore},
      YEAR = {1997},
   MRCLASS = {46L55 (46L40)},
  review = { MR1658035},
}

\bib{Albandik-Meyer:Product}{article}{
  author={Albandik, Suliman},
  author={Meyer, Ralf},
  title={Product systems over Ore monoids},
  journal={Doc. Math.},
  volume={20},
  date={2015},
  pages={1331--1402},
  issn={1431-0635},
  review={ MR3452185},
  eprint={http://www.math.uni-bielefeld.de/documenta/vol-20/38.html},
}

\bib{Buss-Meyer-Zhu:Higher_twisted}{article}{
  author={Buss, Alcides},
  author={Meyer, Ralf},
  author={Zhu, {Ch}enchang},
  title={A higher category approach to twisted actions on \(\textup C^*\)\nobreakdash -algebras},
  journal={Proc. Edinb. Math. Soc. (2)},
  date={2013},
  volume={56},
  number={2},
  pages={387--426},
  issn={0013-0915},
  review={ MR3056650},
}

\bib{Carlsen-Larsen-Sims-Vittadello:Co-universal}{article}{
  author={Carlsen, Toke M.},
  author={Larsen, Nadia S.},
  author={Sims, Aidan},
  author={Vittadello, Sean T.},
  title={Co-universal algebras associated to product systems, and gauge-invariant uniqueness theorems},
  journal={Proc. Lond. Math. Soc. (3)},
  volume={103},
  date={2011},
  number={4},
  pages={563--600},
  issn={0024-6115},
  review={ MR2837016},
}

\bib{MR3595491}{article}{
  author={Clark, Lisa Orloff},
  author={an Huef, Astrid},
  author={Raeburn, Iain},
  title={Phase transitions on the Toeplitz algebras of Baumslag-Solitar semigroups},
  journal={Indiana Univ. Math. J.},
  volume={65},
  date={2016},
  number={6},
  pages={2137--2173},
  issn={0022-2518},
  review={ MR3595491}
}

\bib{10.1112/plms/s3-49.2.289}{article}{
    author = {Combes, F.},
    title = {Crossed Products and Morita Equivalence},
    journal = {Proc. Lond. Math. Soc.},
    volume = {s3-49},
    number = {2},
    pages = {289-306},
    year = {1984},
    month = {09},
    issn = {0024-6115},
review={ MR0748991},
    eprint = {http://oup.prod.sis.lan/plms/article-pdf/s3-49/2/289/4324121/s3-49-2-289.pdf},
}

\bib{crisp_laca_2002}{article}{title={On the Toeplitz algebras of right-angled and finite-type Artin groups}, 
volume={72}, 
review={ MR1887134},
number={2}, 
journal={J. Aust. Math. Soc.}, 
publisher={Cambridge University Press}, 
author={Crisp, John},
author={Laca, Marcelo}, 
year={2002}, 
pages={223–246}}

\bib{CMW}{article}{
author = {Curto, Raul},
author={ Muhly, Paul},
author={Williams, Dana},
year = {1984},
month = {04},
pages = {},
title = {Cross Products of Strongly Morita Equivalent $\Cst$-Algebras},
volume = {90},
journal = {Proc. Amer. Math. Soc.},
review={ MR0733400}
}

\bib{Dykema-shlyakhtenko}{article}{
author = {J. Dykema, Kenneth},
author={Shlyakhtenko, Dimitri},
year = {1970},
month = {02},
number={2},
pages = {425--444},
title = {Exactness Of Cuntz-Pimsner C*-Algebras},
volume = {44},
journal = {Proc. Edinb. Math. Soc. (2)},
review={ MR1880402}
}

\bib{Echterhoff-Kaliszewski-Quigg-Raeburn:Categorical}{article}{
  author={Echterhoff, Siegfried},
  author={Kaliszewski, Steven P.},
  author={Quigg, John},
  author={Raeburn, Iain},
  title={A categorical approach to imprimitivity theorems for $C^*$\nobreakdash-dynamical systems},
  journal={Mem. Amer. Math. Soc.},
  volume={180},
  date={2006},
  number={850},
  pages={viii+169},
  issn={0065-9266},
  review={ MR2203930},
}

\bib{Exel:Circle_actions}{article}{
  author={Exel, Ruy},
  title={Circle actions on $C^*$\nobreakdash-algebras, partial automorphisms, and a generalized Pimsner--Voiculescu exact sequence},
  journal={J. Funct. Anal.},
  volume={122},
  date={1994},
  number={2},
  pages={361--401},
  issn={0022-1236},
  review={ MR1276163}
}

\bib{Exel:Partial_amenable_free}{article}{
  author={Exel, Ruy},
  title={Partial representations and amenable Fell bundles over free groups},
  journal={Pacific J. Math.},
  volume={192},
  date={2000},
  number={1},
  pages={39--63},
  issn={0030-8730},
  review={ MR1741030},
}

\bib{Exel:Partial_dynamical}{book}{
  author={Exel, Ruy},
  title={Partial dynamical systems, Fell bundles and applications},
  series={Mathematical Surveys and Monographs}
  volume={224},
  date={2017},
  pages={321},
  isbn={978-1-4704-3785-5},
  isbn={978-1-4704-4236-1},
  publisher={Amer. Math. Soc.},
  place={Providence, RI},
review={ MR3699795}
}

\bib{Fiore:Pseudo_biadjoints}{article}{
  author={Fiore, Thomas M.},
  title={Pseudo limits, biadjoints, and pseudo algebras: categorical foundations of conformal field theory},
  journal={Mem. Amer. Math. Soc.},
  volume={182},
  date={2006},
  number={860},
  pages={x+171},
  issn={0065-9266},
  review={ MR2229946},
}

\bib{Fowler:Product_systems}{article}{
  author={Fowler, Neal J.},
  title={Discrete product systems of Hilbert bimodules},
  journal={Pacific J. Math.},
  volume={204},
  date={2002},
  number={2},
  pages={335--375},
  issn={0030-8730},
  review={ MR1907896}
}

\bib{Gurski:Biequivalences_tricategories}{article}{
  author={Gurski, Nick},
  title={Biequivalences in tricategories},
  journal={Theory Appl. Categ.},
  volume={26},
  date={2012},
  pages={No. 14, 349--384},
  issn={1201-561X},
  review={ MR2972968},
}

\bib{Hochster}{article}{
author={ Hochster, Melvin},
journal={Proc. Amer. Math. Soc.},
title={Subsemigroups of amenable groups},
volume={ 21},
year={1969}, 
pages={363--364},
  review={ MR0240223}
}

\bib{Katsura:Cstar_correspondences}{article}{
  author={Katsura, Takeshi},
  title={On $C^*$\nobreakdash -algebras associated with $C^*$\nobreakdash -correspondences},
  journal={J. Funct. Anal.},
  volume={217},
  date={2004},
  number={2},
  pages={366--401},
  issn={0022-1236},
  review={ MR2102572}
}

\bib{Laca}{article}{
author = {Laca, Marcelo},
title = {From Endomorphisms to Automorphisms and Back: Dilations and Full Corners},
journal = {J. London Math. Soc. (2)},
volume = {61},
number = {3},
pages = {893-904},
review={  MR1766113},
year = {2000}
}

\bib{10.2307/2160888}{article}{
 ISSN = {00029939, 10886826},
 URL = {http://www.jstor.org/stable/2160888},
 author = {Laca, Marcelo},
author={Raeburn, Iain},
 journal = {Proc. Amer. Math. Soc.},
 number = {2},
 pages = {355--362},
 publisher = {American Mathematical Society},
 title = {Extending Multipliers from Semigroups},
 volume = {123},
review={ MR1227519},
 year = {1995}
}

\bib{LACA1996415}{article}{
title = {Semigroup Crossed Products and the Toeplitz Algebras of Nonabelian Groups},
journal = {J. Funct. Anal.},
volume = {139},
number = {2},
pages = {415--440},
year = {1996},
issn ={0022-1236},
doi = {10.1006/jfan.1996.0091},
 review={ MR1402771},
url = {http://www.sciencedirect.com/science/article/pii/S0022123696900919},
author ={Laca, Marcelo},
author={Raeburn, Iain}
}

\bib{MR1955617}{article}{
  author={Larsen, Nadia S.},
  title={Crossed products by semigroups of endomorphisms and groups of partial automorphisms},
  journal={Canad. Math. Bull.},
  volume={46},
  date={2003},
  number={1},
  pages={98--112},
  issn={0008-4395},
  review={ MR1955617},
}

\bib{Larsen:Crossed_abelian}{article}{
  author={Larsen, Nadia S.},
  title={Crossed products by abelian semigroups via transfer operators},
  journal={Ergodic Theory Dynam. Systems},
  volume={30},
  date={2010},
  number={4},
  pages={1147--1164},
  issn={0143-3857},
  review={ MR2669415}
}

\bib{Meyer-Sehnem:Bicategorical_Pimsner}{article}{ title={A bicategorical interpretation for relative Cuntz-Pimsner algebras},
author={Meyer, Ralf},
author={Sehnem, Camila F.},
volume={125}, 
url={https://www.mscand.dk/article/view/112630}, 
review={ MR4009326},
number={1},
 journal={Math. Scand.}, 
 year={2019},
pages={84--112}
 }

\bib{Muhly-Solel:Tensor}{article}{
  author={Muhly, Paul S.},
  author={Solel, Baruch},
  title={Tensor algebras over $C^*$\nobreakdash -correspondences: representations, dilations, and $C^*$\nobreakdash -envelopes},
  journal={J. Funct. Anal.},
  volume={158},
  date={1998},
  number={2},
  pages={389--457},
  issn={0022-1236},
  review={ MR1648483},
}

\bib{Muhly-Solel:Morita_equivalence_of_tensor_algebras}{article}{
  author={Muhly, Paul S.},
  author={Solel, Baruch},
  title={On the Morita equivalence of tensor algebras},
  journal={Proc. London Math. Soc. (3)},
  volume={81},
  date={2000},
  number={1},
  pages={113--168},
  issn={0024-6115},
  review={ MR1757049},
}

\bib{Nica:Wiener--hopf_operators}{article}{
  ISSN = {0379-4024},
 URL = {http://www.jstor.org/stable/24715075},
 author = {Nica, A.},
 journal = {J. Operator Theory},
 number = {1},
 pages = {17--52},
 publisher = {Theta Foundation},
 title = {$C^*$\nobreakdash-algebras generated by isometries and Wiener--Hopf operators},
 review={ MR1241114},
 volume = {27},
 year = {1992}
}

\bib{Pimsner:Generalizing_Cuntz-Krieger}{article}{
  author={Pimsner, Mihai V.},
  title={A class of $C^*$\nobreakdash -algebras generalizing both Cuntz--Krieger algebras and crossed products by~$\mathbf Z$},
  conference={ title={Free probability theory}, address={Waterloo, ON}, date={1995}, },
  book={ series={Fields Inst. Commun.}, volume={12}, publisher={Amer. Math. Soc.}, place={Providence, RI}, },
  date={1997},
  pages={189--212},
  review={ MR1426840},
}

\bib{Schweizer:Crossed_Cuntz-Pimsner}{article}{
  author={Schweizer, J\"urgen},
  title={Crossed products by $C^*$\nobreakdash -correspondences and Cuntz--Pimsner algebras},
  conference={ title={$C^*$\nobreakdash -Algebras}, address={M\"unster}, date={1999}, },
  book={ publisher={Springer}, place={Berlin}, },
  date={2000},
  pages={203--226},
  doi={10.1007/978-3-642-57288-3},
  review={ MR1798598},
}

\bib{SEHNEM2019558}{article}{
title ={On $\Cst$-algebras associated to product systems},
journal = { J. Funct. Anal.},
volume = {277},
number = {2},
pages = {558 - 593},
year = {2019},
issn = {0022-1236},  
review={ MR3952163}, 
url = {http://www.sciencedirect.com/science/article/pii/S0022123618303823},
author = {Sehnem, Camila F.},}

\bib{Spielberg:Baumslag-Solitar}{article}{
  author={Spielberg, Jack},
  title={$C^*$\nobreakdash-algebras for categories of paths associated to the Baumslag--Solitar groups},
  journal={J. Lond. Math. Soc. (2)},
  volume={86},
  date={2012},
  number={3},
  pages={728--754},
  issn={0024-6107},
  review={ MR3000828},
}

  \end{biblist}
\end{bibdiv}
\end{document}